%% file: main.tex
\input{preamble.tex}

\usepackage[style=alphabetic, maxnames = 5, maxalphanames=5]{biblatex}
\renewbibmacro{in:}{}

\title{Equivariant $K$-theory ring of the affine Grassmannian as deformation to normal cone}
\author{Jakub Löwit}
\date{}

\begin{document}

\maketitle

\begin{abstract}
We revisit fixed-point localization techniques in equivariant topological $K$-theory, presenting its fibers over varying points of the equivariant base as cohomology of the corresponding fixed-point schemes. We then employ this framework for an effective description of the equivariant $K$-theory ring of the affine Grassmannian $\Gr_G$. We do so by a detailed study of the relevant fixed points, extending known results to the action of any $(x, \zeta) \in T \times \Gm^{\rot}$. We obtain a description of the complexified topological $K$-theory ring $K^{\top, 0}_{T\times \Gm^{\rot}}(\Gr_G; \C)$ as the ring of functions on a family of affine blowups, extending and proving a conjecture of Roman Bezrukavnikov. While this ring is fairly big, we provide a preferred infinite set of topological generators. 
\end{abstract}

\tableofcontents

\input{0_introduction}
\input{1_groups_and_representations}
\input{2_equivariant_K_theory}
\input{3_cocharacter_graphs_and_resonance}

\input{4_fixed_points}
\input{5_bezrukavnikov_finkelberg_description_of_equivariant_K_theory}

\appendix
\input{A_some_explicit_computations}

\printbibliography

\bigskip
\noindent Jakub Löwit, \newline
Institute of Science and Technology Austria (ISTA), \newline
Am Campus 1, \newline 
3400 Klosterneuburg, \newline
Austria \newline
\texttt{jakub.loewit@ist.ac.at}

\end{document}

%% file: preamble.tex
\documentclass[a4paper, 10pt]{article}
\usepackage[a4paper, total={6in, 9.5in}]{geometry}
\usepackage[utf8]{inputenc}
\usepackage{amssymb}
\usepackage{amsthm}
\usepackage{amsmath}
\usepackage{amsfonts}

\usepackage[toc,page]{appendix}

\usepackage{graphicx}

\usepackage{titlesec}

\usepackage{comment}
\usepackage{titling}
\usepackage{tikz-cd}

\usepackage{enumitem}
\setlist{nosep} 

\usepackage{mathabx}

\usepackage{graphicx}
\usepackage{hyperref}
\usepackage{mathrsfs}
\usepackage{mathtools}

\usepackage[mathscr]{euscript}

\usepackage{stmaryrd}

\titlelabel{\thetitle.\quad}
\titleformat{\subsubsection}[runin]{\normalfont\bfseries}{\thesubsubsection.}{3pt}{}

\usepackage[style=alphabetic, maxnames = 5, maxalphanames=5]{biblatex}

\newcounter{cislo} \numberwithin{cislo}{section}

\numberwithin{equation}{subsection}

\usepackage{mathabx,epsfig}
\def\acts{\mathrel{\reflectbox{$\righttoleftarrow$}}}
\def\actsr{\righttoleftarrow}

\newtheorem{theorem}[cislo]{Theorem}
\newtheorem{lemma}[cislo]{Lemma}
\newtheorem{proposition}[cislo]{Proposition}
\newtheorem{corollary}[cislo]{Corollary}
\newtheorem{claim}[cislo]{Claim}

\newtheorem*{motto}{Motto}

\theoremstyle{definition}
\newtheorem{definition}[cislo]{Definition}
\newtheorem{notation}[cislo]{Notation}
\newtheorem{convention}[cislo]{Convention}
\newtheorem{remark}[cislo]{Remark}

\newtheorem{example}[cislo]{Example}
\newtheorem{assumption}[cislo]{Assumption}
\newtheorem{discussion}[cislo]{Discussion}

\newtheorem{construction}[cislo]{Construction}
\newtheorem{setup}[cislo]{Setup}
\newtheorem{aside}[cislo]{Aside}
\newtheorem{recollection}[cislo]{Recollection}

\theoremstyle{remark}

\let\top\relax
\newcommand{\top}{\mathrm{top}}
\newcommand{\btop}{\mathbf{top}}
\newcommand{\st}{\mathrm{st}}
\newcommand{\bet}{\mathrm{sing}}
\newcommand{\sing}{\mathrm{sing}}

\let\k\relax
\newcommand{\k}{\mathsf{k}}

\let\sc\relax
\newcommand{\sc}{\mathrm{sc}}
\newcommand{\der}{\mathrm{der}}

\newcommand{\ext}{\mathrm{ext}}
\newcommand{\aff}{\mathrm{aff}}

\DeclareMathOperator{\wt}{wt}

\DeclareMathOperator{\sk}{sk}

\newcommand{\llp}{(\!(}
\newcommand{\rrp}{)\!)}
\newcommand{\llb}{\llbracket}
\newcommand{\rrb}{\rrbracket}

\newcommand{\lL}{\mathbf{L}}

\newcommand{\ord}{\mathrm{ord}}

\DeclareMathOperator{\Cent}{Z}

\DeclareMathOperator{\map}{map}
\DeclareMathOperator{\Map}{Map}

\DeclareMathOperator{\GKM}{GKM}

\DeclareMathOperator{\Sch}{Sch}

\DeclareMathOperator{\Sh}{Sh}

\DeclareMathOperator{\Coh}{Coh}
\DeclareMathOperator{\Hom}{Hom}

\DeclareMathOperator{\RG}{R\Gamma}

\DeclareMathOperator{\Bl}{Bl}

\DeclareMathOperator{\CAlg}{CAlg}
\DeclareMathOperator{\IndSch}{IndSch}

\let\lim\relax
\DeclareMathOperator*{\lim}{lim}
\DeclareMathOperator*{\colim}{colim}

\let\ker\relax
\DeclareMathOperator{\ker}{ker}
\DeclareMathOperator{\coker}{coker}

\newcommand{\Ga}{\mathbb{G}_a}

\newcommand{\eI}{\mathscr{I}}

\newcommand{\rot}{\mathrm{rot}}

\newcommand{\uni}{\mathrm{uni}}

\DeclareMathOperator{\dL}{\mathcal{L}}

\DeclareMathOperator{\id}{id}

\DeclareMathOperator{\Spec}{Spec}
\DeclareMathOperator{\red}{red}
\DeclareMathOperator{\chara}{char}

\DeclareMathOperator{\A}{\mathbb{A}}
\let\P\relax
\DeclareMathOperator{\P}{\mathbb{P}}
\DeclareMathOperator{\Gr}{\mathscr{G}r}
\DeclareMathOperator{\Iw}{\mathrm{Iw}}
\DeclareMathOperator{\Fl}{\mathscr{F}l}

\DeclareMathOperator{\g}{\mathfrak{g}}
\let\t\relax
\DeclareMathOperator{\t}{\mathfrak{t}}

\newcommand{\D}{D}

\newcommand{\eN}{\mathscr{N}}

\newcommand{\sr}{\mathrm{s}}

\DeclareMathOperator{\Stab}{Stab}

\DeclareMathOperator{\ad}{ad}

\DeclareFontFamily{OT1}{pzc}{}
\DeclareFontShape{OT1}{pzc}{m}{it}{ <-> s*[1.2] pzcmi7t }{}
\DeclareMathAlphabet{\mathpzc}{OT1}{pzc}{m}{it}
\newcommand{\mm}{\mathpzc{m}}

\DeclareMathOperator{\Fix}{Fix}

\newcommand{\Gm}{\mathbb{G}_m}

\newcommand{\GL}{GL}
\newcommand{\PGL}{PGL}

\newcommand{\lo}{\mathrm{L}}
\newcommand{\lop}{\mathrm{L^+}}

\newcommand{\lol}{\mathrm{L}_{\ell}}
\newcommand{\lopl}{\mathrm{L^+_{\ell}}}

\let\O\relax
\DeclareMathOperator{\O}{\mathscr{O}}
\DeclareMathOperator{\eL}{\mathscr{L}}
\DeclareMathOperator{\eE}{\mathscr{E}}

\DeclareMathOperator{\eG}{\mathscr{G}}
\DeclareMathOperator{\eP}{\mathscr{P}}

\DeclareMathOperator{\eX}{\mathscr{eX}}

\DeclareMathOperator{\eZ}{\mathscr{Z}}

\DeclareMathOperator{\slant}{slt}

\DeclareMathOperator{\Z}{\mathbb{Z}}
\DeclareMathOperator{\Q}{\mathbb{Q}}
\DeclareMathOperator{\N}{\mathbb{N}}
\DeclareMathOperator{\C}{\mathbb{C}}

\DeclareMathOperator{\pr}{pr}
\DeclareMathOperator{\ev}{ev}

\DeclareMathOperator{\Pic}{Pic}

\DeclareMathOperator{\rank}{rank}

\DeclareMathOperator{\pt}{pt}

\newcommand{\qcqs}{\mathrm{qcqs}}

\newcommand{\ver}{\mathrm{ver}}

\newcommand{\act}{\mathrm{act}}

\DeclareMathOperator{\Perf}{Perf}

\newcommand{\loc}{\gamma}
\newcommand{\bloc}{\upsilon}

%% file: 0_introduction.tex
\section{Introduction}\label{section: introduction}

\subsection{Aims of this paper}

The aim of this paper is to revisit fixed-point localization techniques in equivariant topological $K$-theory and employ them for an effective description of the equivariant $K$-theory ring of the affine Grassmannian. The fibers of this $K$-theory ring are given by the cohomology of varying fixed-points. Using this viewpoint, we describe $K^{\top, 0}_{T \times \Gm^{\rot}}(\Gr_G; \C)$ as a suitable deformation to normal cone (alternatively an affine blowup algebra).

The cohomology of the same varying fixed points is known to match centers of various categories $\O$ in pure representation theory. Remarkably, all this information is packaged in a single $K$-theory ring, which we completely describe in terms of topological generators. In the joint companion work \cite{HL25}, we relate the present results to classical and quantum representation theory.

Let us now explain our main results in more detail.

\subsubsection{Localization techniques in equivariant $K$-theory via derived loop stacks.}

To set up the stage, we revisit the relationship between equivariant topological $K$-theory, equivariant periodic cyclic homology, functions on derived fixed-point schemes and singular cohomology of the varying fixed points. This stems from classical fixed-point localization techniques in equivariant topology and their reinterpretation in terms of equivariant Hochschild homology along the lines of \cite{Chen20}.

\begin{motto}
The fibers of torus-equivariant $K$-theory are given by singular cohomology of the fixed-points. In other words, equivariant $K$-theory puts the cohomologies of varying fixed points into an algebraic family. Globally, equivariant $K$-theory is given by equivariant periodic cyclic homology, which has a geometric model via functions on the derived fixed-point scheme.  
\end{motto}

Combining different techniques from the literature, we arrive at the following comprehensive statement.

\begin{theorem}\label{introtheorem: localization in equivariant K-theory}
Let $T \acts X$ be a qcqs scheme over $\C$ with an action of a torus $T$. Then we have natural multiplicative $\O(T)\llp u \rrp$-linear identifications
\begin{equation*}
    K^{\top}_{T}(X; \C) \xrightarrow{\simeq} HP_{T}(X / \C) \xrightarrow{\simeq} \RG(\Fix^{\lL}_{\frac{T}{T}}(X), \O)^{tS^1}.
\end{equation*}
Taking derived fibers over a closed point $t \in T$, this equivalence specializes to
\begin{equation*}
    K^{\top}_{T}(X; \C)_{\lL t} \xrightarrow{\simeq} H_{\sing}(\Fix_{t}(X); \C)\llp u \rrp.
\end{equation*}
If $X$ is equivariantly formal, the final identification is underived, and in particular gives an isomorphism $K^{\top, 0}_{T}(X; \C)_{t} \cong H^{2\bullet}_{\sing}(\Fix_{t}(X); \C)$ of commutative rings.
\end{theorem}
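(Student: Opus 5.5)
The plan is to assemble the identifications in three steps, each coming from the literature, and then to specialize to fibers.

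\textbf{Step 1: $K$-theory and periodic cyclic homology.} First I would build the comparison $K^{\top}_{T}(X;\C)\to HP_T(X/\C)$. This is an equivariant Chern character from (complexified) topological $K$-theory of the stack $X/T$ to periodic cyclic homology of $\mathrm{Perf}(X/T)$. For a point, the map is $R(T)\otimes\C=\O(T)\to HP(BT)$, and $HP(BT)$ identifies with $\O(T)\llp u\rrp$, suitably completed and then uncompleted via the loop-stack model. This fixes the $\O(T)\llp u\rrp$-linearity. To show the map is an equivalence for general qcqs $X$, I would use that both sides are localizing invariants. Zariski descent and the Thomason–Sumihiro reduction to quasi-projective $T$-schemes then lead, via $T$-stable stratifications and dévissage, to the case $X=T/T'\times\A^n$. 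That case reduces to $BT'$, where both sides are $\O(T')\llp u\rrp$. Blanc's comparison for the non-equivariant case and Halpern-Leistner–Pomerleano style equivariant lattice conjecture results are the inputs I would cite. \textbf{I expect this step to be the main obstacle:} checking that descent holds on both sides in the $2$-periodic, non-completed equivariant setting, where naive completion issues at the identity of $T$ arise.

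\textbf{Step 2: periodic cyclic homology and derived fixed points.} Next I would identify $HH(X/T)$ with $\RG(\dL(X/T),\O)$, the functions on the derived loop stack. For a torus, $\dL(X/T)=\Fix^{\lL}_{\frac{T}{T}}(X)/T$, the derived scheme of pairs $(t,x)$ with $tx=x$. The $S^1$-action by loop rotation corresponds to the Connes operator, so taking $tS^1$ gives $HP$. Because $T$ is a torus, invariants are exact in characteristic zero, which removes the $/T$. This is the argument of \cite{Chen20} (Ben-Zvi–Nadler loop spaces), and I would simply invoke it.

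\textbf{Step 3: fibers.} To compute the derived fiber over $t$, I would base change the loop model along $\{t\}\to T$. Base change commutes with $tS^1$ here because $\O(T)$ acts through a finite-type quotient, and I would verify this by a finiteness or perfectness argument for qcqs $X$. The fiber is then $\RG(\Fix^{\lL}_t(X),\O)^{tS^1}$. By Chen's result, $\RG$ of the derived fixed scheme with the loop rotation, after Tate construction, is $HP(\Fix_t(X))$. The derived structure contributes only through the formal completion, which $HP$ does not see, using Goodwillie-type nil-invariance. Finally, I would apply Feigin–Tsygan/Bhatt's comparison $HP(Y/\C)\simeq H_{\sing}(Y;\C)\llp u\rrp$ with $Y=\Fix_t(X)$.

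\textbf{Equivariant formality.} If $X$ is equivariantly formal, $K^\top_T(X;\C)$ is flat, even free, over $\O(T)$ in degree $0$ with vanishing odd part. Hence the derived fiber is the ordinary fiber $K^{\top,0}_T(X;\C)_t$. Comparing $u$-graded pieces in degree $0$ then gives $H^{2\bullet}_{\sing}(\Fix_t(X);\C)$. Multiplicativity follows because every map above is a map of $E_\infty$-algebras: the Chern character, the HKR/loop identifications, and the comparison with singular cohomology.
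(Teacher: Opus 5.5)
Your architecture matches the paper's. You get $K^{\btop}_T\simeq HP_T$ from the equivariant lattice conjecture; the paper simply cites Khan, who packages Blanc, Konovalov, Halpern-Leistner--Pomerleano and Elmanto--Sosnilo. You then use Chen's loop-stack model for $HP_T$, base change to $t$, compare $HP$ with singular cohomology, and use flatness for the underived statement.

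The genuine gap is the fiber identification in Step~3 when $X$ is singular, which the statement explicitly allows. Chen's theorem is proved for smooth quotient stacks. It says that for semisimple $t$ one has $\Fix^{\lL}_t(X)\simeq\dL(\Fix_t(X))$ compatibly with the circle actions, so that the Tate construction yields $HP(\Fix_t(X))$. This identification fails without smoothness. For example, let $\Gm$ act on the node $X=\{xy=0\}$ with weights $(1,-1)$ and take $t\neq 1$.
\begin{itemize}
\item Then $\Fix_t(X)$ is the reduced origin, so $\dL(\Fix_t(X))=\pt$.
\item But the cotangent complex of $\Fix^{\lL}_t(X)$ at the origin is the cofiber of $1-t^*$ on $\mathbb{L}_X|_0$. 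The $t$-invariant conormal class $xy$ produces homology in degrees $1$ and $2$.
\item Hence $\O(\Fix^{\lL}_t(X))\simeq\C[\epsilon,\eta]$ with $|\epsilon|=1$, $|\eta|=2$, which is nonzero in infinitely many degrees.
\end{itemize}
This is not a nilpotent thickening of $\dL(\Fix_t(X))$. Moreover, $\RG(\Fix^{\lL}_t(X),\O)^{tS^1}$ is not presented as $HP$ of a ring or category, so Goodwillie's nil-invariance has nothing to act on. The extra homotopy dies only because of the particular circle action. So your claim that ``the derived structure contributes only through the formal completion, which $HP$ does not see'' is exactly what needs proof, and the proposal gives none.

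The paper never confronts the derived structure. It proves $HP_T(X/\C)_{\lL t}\simeq H_{\sing}(\Fix_t(X);\C)\llp u\rrp$ for smooth $X$ via Chen, plus the comparison of derived de Rham and singular cohomology. It then notes that both sides satisfy proper excision for $T$-equivariant abstract blowup squares:
\begin{itemize}
\item the left side because $HP$ is truncating, so $HP_T$ has proper excision, and $-\otimes^{\lL}_{\O(T)}\C_t$ is exact;
\item the right side because the \emph{classical} fixed-point functor $\Fix_t(-)$ preserves abstract blowup squares (it preserves closed immersions, proper maps and fiber products), and singular cohomology has proper excision.
\end{itemize}
Equivariant resolution of singularities and induction on dimension then reduce everything to the smooth case.

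The same tool is what your Step~1 lacks beyond the smooth case. D\'evissage along $T$-stable stratifications does not work for $\Perf$-based invariants of singular schemes, since the fiber of $E(X)\to E(U)$ is $E(\Perf_Z(X))$ rather than $E(Z)$. You need cdh descent for both $K^{\btop}$ and $HP$ together with resolution, which is what Khan's theorem packages.

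Two smaller corrections:
\begin{itemize}
\item Base change to $t$ commutes with $(-)^{tS^1}$ simply because $\C_t$ is a perfect $\O(T)$-module; no finiteness on $X$ is needed.
\item $\Fix^{\lL}_{\frac{T}{T}}(X)$ already denotes the quotient stack. Exactness of $T$-invariants computes its $\RG$ as underived invariants, but it does not let you drop the quotient. For $X=T$ with the translation action, $\RG(\Fix^{\lL}_T(T),\O)=\O(T)$, whereas $HH_T(T)=\C$.
\end{itemize}
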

\begin{proof}
\S \ref{section: K-theory and HP and cohomology of fixed points}; Theorem \ref{theorem: fibers of topological K-theory, HP and fixed-points} and Corollary \ref{corollary: K-theory in the flat situation}. 
\end{proof}

\subsubsection{Fixed-points on affine Grassmannian and  affine Schubert varieties.}
In particular, we will be interested in the action of the extended torus $T \times \Gm^{\rot}$ on the affine Grassmannian $\Gr_G$ associated to a reductive group $G$ over $k=\C$.

We can fully compute the varying fixed-points of $(x, \zeta) \in T \times \Gm^{\rot}$ in $\Gr_G$, and in fact in any affine Schubert variety $\Gr_{\leq \mu}$ inside it. These examples are equivariantly formal -- via Theorem \ref{introtheorem: localization in equivariant K-theory} above, we can effectively compute the corresponding fibers of equivariant $K$-theory.

\begin{motto}
    The fixed points of an arbitrary element $(x, \zeta)$ in the extended torus on the affine Grassmannian $\Gr_G$ are given as an infinite union of partial flag varieties (if $\zeta$ generic) resp. finite union of affine flag varieties (if $\zeta$ root of unity) for a resonant subgroup $L \leq G$. The fixed points on an affine Schubert variety $\Gr_{\leq \mu}$ inside are given by a closed union of Borel orbits resp. closed union of Iwahori orbits.
\end{motto}

Generalizing well-known computations, we first describe the reduced fixed-points $\Fix^{\red}_{(x, \zeta)}(\Gr_G)$ on the whole affine Grassmannian. We construct an associated resonant subgroup $L = L_{[x, \zeta]} \leq G$ and show that these fixed-points are given as follows.
\begin{theorem}\label{introthm: fixed points on affine grassmannians}
The reduced fixed points $\Fix^{\red}_{(x, \zeta)}(\Gr_G)$ of $(x, \zeta)\in T \times \Gm^{\rot}$ on $\Gr_G$ are given as
\begin{enumerate}
    \item[(i)] an infinite disjoint union $\coprod_{\vartheta \in X_{\bullet}/(W_L, \bullet_{\omega})} L/P_{\vartheta}$ of finite flag varieties for $L$, if $\zeta$ is generic,
    \item[(ii)] a finite disjoint union of dilated affine flag varieties for $L^{\sc}$, namely $\coprod_{\vartheta \in X_{\bullet}/(W_L, \bullet^{\ell}_{\omega})} \lo_{\ell} L^{\sc}/\eP^{\vartheta}_{\ell, L^{\sc}}$, if $\zeta$ is a root of unity of primitive order $\ell$.
\end{enumerate}
\end{theorem}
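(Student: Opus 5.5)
The plan is to realise the fixed locus as a union of orbits of the centraliser of $(x,\zeta)$ in the loop group, acting on the $T$-fixed points $t^{\lambda}\in\Gr_G$, $\lambda\in X_\bullet$.

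\textbf{Step 1: every component meets $\Gr_G^{T}$.} Every $T\times\Gm^{\rot}$-fixed point of $\Gr_G$ is of the form $t^\lambda$. Since $(x,\zeta)$ commutes with the whole torus $T\times\Gm^{\rot}$, the torus preserves $\Fix^{\red}_{(x,\zeta)}(\Gr_G)$. Each connected component is a projective ind-variety, so by Borel's fixed point theorem it contains some $t^\lambda$.

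\textbf{Step 2: the centraliser and the resonant subgroup.} Let $H=\lo G^{(x,\zeta)}$ be the centraliser of $(x,\zeta)$ acting by conjugation composed with loop rotation. Its Lie algebra is spanned by $\t[z^{\pm1}]$ and the affine root spaces $\g_\alpha z^n$ with $\alpha(x)\zeta^n=1$.
\begin{enumerate}
    \item[(i)] If $\zeta$ is generic, each root $\alpha$ has at most one $n$ with $\alpha(x)\zeta^n=1$. The roots admitting such an $n$ form a closed subsystem. After twisting by the appropriate cocharacter, i.e.\ the $\bullet_\omega$ shift, this subsystem is the root system of the resonant subgroup $L$, and $H^\circ\cong L$.
    \item[(ii)] If $\zeta$ has primitive order $\ell$, the allowed $n$ form a coset of $\ell\Z$. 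Substituting $z^\ell$ identifies $H^\circ$ with a twisted form of $\lo_\ell L$, whose simply connected cover is $\lo_\ell L^{\sc}$.
\end{enumerate}
I expect to take the definition of $L_{[x,\zeta]}$ from \S\ref{section: introduction}'s later sections, i.e.\ the cocharacter graphs and resonance section, and to verify that it matches this root-theoretic description.

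\textbf{Step 3: tangent spaces.} At $t^\lambda$, the tangent space of $\Fix^{\red}_{(x,\zeta)}(\Gr_G)$ is contained in the $(x,\zeta)$-invariants of $T_{t^\lambda}\Gr_G=\g\llp z\rrp/\mathrm{Ad}_{t^\lambda}\g\llb z\rrb$. This space of invariants equals the image of $\mathrm{Lie}\,H$. Hence the orbit $H\cdot t^\lambda$ is open in the fixed locus. It is also closed: its closure is $H$-stable, and the orbit is a flag variety of the group (partial flag variety of $L$, or affine flag variety of $\lo_\ell L^{\sc}$), hence projective, respectively ind-projective. Therefore each connected component of the fixed locus is a single orbit $H\cdot t^\lambda\cong H/\Stab_H(t^\lambda)$.

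\textbf{Step 4: stabilisers and indexing.} The stabiliser $\Stab_H(t^\lambda)=H\cap \mathrm{Ad}_{t^\lambda}\lop G$ is spanned by the root spaces $\g_\alpha z^n$ with $n\ge\langle\alpha,\lambda\rangle$ inside $H$. This is a parabolic $P_\vartheta\le L$ in the generic case, and a parahoric $\eP^{\vartheta}_{\ell,L^{\sc}}$ in the root of unity case. Two points $t^\lambda,t^{\mu}$ lie in the same orbit exactly when $\mu$ lies in the orbit of $\lambda$ under the Weyl group of $H$ relative to $T$. That group is $W_L$ acting through the shifted action $\bullet_\omega$ in case (i), and the affine Weyl group $W_L\ltimes \ell\,\Z\Phi_L^\vee$, written $\bullet^\ell_\omega$, in case (ii).
\begin{enumerate}
    \item[(i)] For generic $\zeta$ the set of orbits $X_\bullet/(W_L,\bullet_\omega)$ is infinite.
    \item[(ii)] For $\zeta$ of order $\ell$ it is finite, because $\ell\,\Z\Phi^\vee_L$ has finite index in $X_\bullet$ up to the centre directions that $H$ also absorbs.
\end{enumerate}
This finiteness check needs care in the non-semisimple case.

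\textbf{Main obstacle.} The hardest step will be the root of unity case: producing the identification of $H^\circ$ with the dilated loop group $\lo_\ell L^{\sc}$ (a twisted form that untwists via the $\omega$-shift), and dealing with the disconnectedness of $H$ and with $\pi_1$. For this I would first reduce to $\zeta=1$-type statements by conjugating by a suitable fractional cocharacter $z^{\omega/\ell}$, and then handle the component group of $H$ through the indexing set.
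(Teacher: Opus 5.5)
Your overall architecture is the paper's. The fixed locus is a disjoint union of orbits $J\cdot t^{\lambda}$ of the connected centralizer $J=\Cent^{\circ}_{\lo G}(x,\zeta)$, indexed by $X_{\bullet}/W_J$. One then identifies $J$, the stabilizers $J^{+}_{\lambda}$ and $W_J$ by slanting with $t^{\omega}$.

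\textbf{Step 3 fails in case (ii).} You and the paper differ in how you show that every fixed point lies in some $J\cdot t^{\lambda}$. The paper uses neither Borel's theorem nor tangent spaces. Each point of the Iwahori cell through $t^{\lambda}$ is uniquely $t^{\lambda}u_1\cdots u_d$ with $u_i\in U_{\beta_i+m_i\hbar}$, and $(x,\zeta)$ acts on these root groups by the characters $\beta_i(x)\zeta^{m_i}$. So the fixed points in that cell are products of fixed root groups and lie in $J\cdot t^{\lambda}$. Your Step 3 instead infers that $J\cdot t^{\lambda}$ is open from $\mathrm{T}_p\Fix^{\red}\subseteq(\mathrm{T}_p\Gr_G)^{(x,\zeta)}=\mathrm{T}_p(J\cdot p)$. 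That uses the finite-dimensional principle $\dim_pF\le\dim\mathrm{T}_pF\le\dim Y\le\dim_pF$.
\begin{itemize}
\item For generic $\zeta$ this works, since $J\cdot t^{\lambda}\cong L/P_{\vartheta}$ is a projective variety lying in some $\Gr_{\le\mu}$.
\item For $\zeta$ a root of unity, $J\cdot t^{\lambda}$ is an infinite-dimensional ind-variety whose intersections with the $\Gr_{\le\mu}$ are singular. A tangent-space containment then gives no local conclusion.
\end{itemize}
You need the cell-wise linearization above. Equivalently, run your tangent argument inside each Iwahori cell, which is smooth, finite-dimensional and contracted to $t^{\lambda}$ by a suitable cocharacter of $T\times\Gm^{\rot}$, so its $(x,\zeta)$-fixed locus is smooth and connected.

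\textbf{$H$ versus $H^{\circ}$, and the finiteness in (ii).} You must use $H^{\circ}=J$ throughout, not the full centralizer $H$, because $H$-orbits need not be connected. The centralizer contains $t^{\ell\nu}$ for every $\nu\in X_{\bullet}$. For $\nu$ outside the rational span of $\Phi_{\bullet,L}$ these elements lie outside $J$, so they permute components rather than merging them. The components are therefore the $J$-orbits, indexed by $X_{\bullet}/(W_{L,\aff},\bullet^{\ell}_{\omega})$. This set is finite only when $\Z\Phi_{\bullet,L}$ has finite index in $X_{\bullet}$. Two examples:
\begin{itemize}
\item Take $G=SL_2$, $\zeta=-1$ and $x$ not a root of unity. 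Then $\Phi^{\bullet}_{[x,\zeta]}=\emptyset$ and $L=T$, no affine root group is fixed, and the fixed locus is the infinite discrete set $\{t^{\lambda}\}_{\lambda\in X_{\bullet}}$.
\item Take $G=GL_n$ and $x=1$. Each of the infinitely many connected components of $\Gr_{GL_n}$ contributes at least one component.
\end{itemize}
So the word ``finite'' in (ii) cannot be proved as stated, and your Step 4(ii) argument for it is wrong. The paper's own proof establishes only the decomposition indexed by $X_{\bullet}/(W_{L,\aff},\bullet^{\ell}_{\omega})$. Finiteness holds only under the full-rank condition, or for components counted up to the action of $\pi_0$ of the centralizer.

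\textbf{No fractional cocharacter is needed.} In your main obstacle, $\omega$ is already an integral cocharacter of $T^{\ad}_L$. Conjugation by $t^{\omega}$ identifies $J$ with $\lo_{\ell}L^{\sc}$ and $J^{+}_{\lambda}$ with $\eP^{\omega+\lambda^{\ad}}_{\ell}$.
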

\begin{proof}
\S \ref{section: zeta generic} and \S \ref{section: zeta root of unity}; Theorems \ref{lemma: fixed points of (c, zeta) on Gr_G -- zeta generic} and \ref{lemma: fixed points of (c, zeta) on Gr_G -- zeta root of unity}.    
\end{proof}

\begin{remark}
When $x=1$ and $\zeta$ is generic, this statement is classical \cite[\S 2]{Zhu15}. When $x=1$ and $\zeta$ is a root of unity of primitive order $\ell$, this was observed in \cite[\S 4]{RW22}, \cite[\S 2]{BBSV22}. The general description of Theorem \ref{introthm: fixed points on affine grassmannians} was missing; we expect it to be useful beyond the applications in this paper. 
\end{remark}

Carefully intersecting the above with given $\Gr_{\leq \mu}$, we arrive at the following result.
\begin{theorem}
Take any affine Schubert variety $\Gr_{\leq \mu}$.
Let $(x, \zeta) \in T \times \Gm^{\rot}$ and $L=L_{[x, \zeta]} \leq G$ the corresponding resonant subgroup. Then the reduced fixed point scheme $\Fix^{\red}_{(x, \zeta)}(\Gr_{\leq \mu})$ is given by
\begin{enumerate}
    \item[(i)] a closed union of of finite Schubert varieties for $L$, if $\zeta$ is generic,
    \item[(ii)] a closed union of Iwahori orbits on $\ell$-dilated affine flag varieties $\Fl^{\vartheta}_{\ell, L^{\sc}}$ of $L^{\sc}$, if $\zeta$ is a root of unity of primitive order $\ell$.
\end{enumerate}
In both cases, we can concretely describe which finite Schubert varieties resp. closure of Iwahori orbits actually appear.
\end{theorem}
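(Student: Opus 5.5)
Since $\Gr_{\leq \mu}$ is a closed $T \times \Gm^{\rot}$-stable subvariety of $\Gr_G$, we have $\Fix^{\red}_{(x,\zeta)}(\Gr_{\leq \mu}) = \Fix^{\red}_{(x,\zeta)}(\Gr_G) \cap \Gr_{\leq \mu}$ (reduced). So the plan is to start from the description of Theorem \ref{introthm: fixed points on affine grassmannians} and intersect each component with $\Gr_{\leq \mu}$. This intersection is a closed subvariety of a partial flag variety $L/P_\vartheta$ (resp. of an $\ell$-dilated affine flag variety $\lo_\ell L^{\sc}/\eP^{\vartheta}_{\ell, L^{\sc}}$).

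The key structural input is stability. The component containing the $T$-fixed point $t^\lambda$ is the orbit $L \cdot t^\lambda$, resp. $\lo_\ell L^{\sc} \cdot t^\lambda$. Let $B_L = B \cap L$ be the Borel of $L$ inside the centralizer, resp. $\eI_{\ell,L}$ the corresponding Iwahori of the dilated loop group. I will check that $B_L$, resp. $\eI_{\ell, L}$, maps into the positive Iwahori $\Iw$ of $\lo G$ (possibly after choosing the Borel appropriately relative to the slope/resonance data). Then $\Gr_{\leq \mu}$, being $\lop G$-stable and hence $\Iw$-stable, meets each component in a closed subset stable under $B_L$, resp. $\eI_{\ell,L}$.

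Since the component has finitely many $B_L$-orbits (Bruhat decomposition), resp. its Iwahori orbits are finite-dimensional cells indexed by the relevant (affine) Weyl group quotient, a closed stable subset is a closed union of orbits. This gives (i) a closed union of Schubert varieties for $L$, and (ii) a closed union of Iwahori orbit closures in $\Fl^\vartheta_{\ell,L^{\sc}}$. Each such orbit contains a unique $T$-fixed point $t^\lambda$, $\lambda \in X_\bullet$, since $T$ acts on $\Fix$ with the same fixed points as on $\Gr_G$. Hence the orbit lies in $\Gr_{\leq \mu}$ if and only if its closure does. Which orbits appear can then be tested on the $T$-fixed points: the orbit of $t^\lambda$ is included if and only if $\lambda$ lies in $W\mu$'s convex hull intersected with the coset, i.e. $\lambda_{\mathrm{dom}} \leq \mu$. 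This produces the concrete description: the cells indexed by $\lambda$ in the $(W_L, \bullet)$-orbit $\vartheta$ with $\lambda_{\mathrm{dom}}\le\mu$. The union is automatically closed.

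The main obstacle is the stability step in case (ii). There the dilated loop group $\lo_\ell L^{\sc}$ embeds into $\lo G$ via the twisting $t \mapsto t^\ell$ combined with $x$-dependent cocharacter shifts. The natural Iwahori of $\lo_\ell L^{\sc}$ need not land in $\Iw$, nor even in $\lop G$; affine root subgroups of $\lo_\ell L$ correspond to affine roots $\alpha + n\delta$ with $n$ depending on $\ell$ and the resonance. I would first try to exhibit, for each component, a conjugate of the Iwahori of $L^{\sc}$ whose affine roots are all positive for $\lo G$. If that fails, I would work instead with the unipotent radical orbits, i.e. attracting cells for a generic cocharacter, on which $\Gr_{\leq\mu}$'s closedness and $\Iw$-stability give the same conclusion via Białynicki-Birula.
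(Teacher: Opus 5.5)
You are following the paper's strategy. Intersect each component $J/J^{+}_{\vartheta}$ of $\Fix^{\red}_{(x,\zeta)}(\Gr_G)$ with $\Gr_{\leq\mu}$, use stability under a Borel resp.\ Iwahori subgroup of the centralizer to get a closed union of orbits, and read off which orbits occur from the points $t^{\lambda}$, $\lambda\in\wt(\mu)$. However, the one non-formal step is never established: that some Borel resp.\ Iwahori of the centralizer preserves $\Gr_{\leq\mu}$. In (ii) you explicitly leave it open. In (i) you only assert it, and for the wrong group. The connected centralizer of $(x,\zeta)$ is not $L$ or $\lo_{\ell}L^{\sc}$ embedded naively. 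It is the slanted copy $J=t^{-\omega}\,L\,t^{\omega}$ resp.\ $J=t^{-\omega}\,(\lo_{\ell}L^{\sc})\,t^{\omega}$, with affine roots $\Phi^{\bullet}_J=\{\beta-(m_\beta+\ell\Z)\hbar\mid\beta\in\Phi^{\bullet}_L\}$ and $m_\beta=\langle\beta,\omega\rangle$. So when $\omega\neq0$, $B\cap L$ does not act on the fixed locus, and the components are not the naive orbits $L\cdot t^{\lambda}$.

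Your Bia\l{}ynicki-Birula fallback does not bypass this. For a suitable cocharacter the ambient attracting cells are $\Iw$-orbits, and the cells of a component $F$ are their intersections with $F$. So the fallback does show $\Gr_{\leq\mu}\cap F$ is a union of attracting cells of $F$. But to identify these cells with Iwahori orbits you must show that $T$ together with the positive-weight part of $J$ is an Iwahori of $J$. That is the same missing computation.

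The missing input is the paper's root computation for $J^{+}=J\cap\lop G$. You do not need $\Iw$ at all: $\Gr_{\leq\mu}$ is $\lop G$-stable and $J$ centralizes $(x,\zeta)$, so all of $J^{+}$ acts on $\Fix^{\red}_{(x,\zeta)}(\Gr_{\leq\mu})$. The roots of $J^{+}$ are the elements of $\Phi^{\bullet}_J$ with nonnegative $\hbar$-coefficient. Conjugation by $t^{\omega}$ acts on roots by $\beta+m\hbar\mapsto\beta+(m+\langle\beta,\omega\rangle)\hbar$, and carries them to $\{\beta+k\ell\hbar\mid\beta\in\Phi^{\bullet}_L,\ k\in\Z,\ k\ell\geq\langle\beta,\omega\rangle\}$.
\begin{itemize}
\item For $\ell\geq1$ this is the standard opposite parahoric $\eP^{\omega}_{\ell}$ of $\lo_{\ell}L^{\sc}$, which contains an $\ell$-dilated Iwahori.
\item For $\zeta$ generic ($\ell=0$) it is the parabolic $P_{\omega}=P_x\leq L$, which contains a Borel.
\end{itemize}
Your ``first try'' would also work. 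The fundamental alcove meets none of the hyperplanes of the coarser arrangement cut out by $\Phi^{\bullet}_J$, so it lies in a single chamber of that arrangement. The roots of $J$ positive on that chamber span an Iwahori resp.\ Borel of $J$ contained in $J\cap\Iw$.

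With this in place, the rest of your argument goes through and matches the paper: closed stable subsets are closed unions of orbits, each orbit contains a unique $t^{\lambda}$, and the orbit of $t^{\lambda}$ occurs iff $\lambda\in\wt(\mu)$. The occurring orbits are grouped into components by the classes of the $\omega$-shifted action, which in (ii) is also $\ell$-dilated.
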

\begin{proof}
\S \ref{section: zeta generic} and \S \ref{section: zeta root of unity}; Theorems \ref{theorem: generic zeta -- fiber of fixed points and classical schubert varieties} and \ref{theorem: fiber of fixed points and classical schubert varieties general case at root of unity}.    
\end{proof}

As we explain in the companion work \cite{HL25}, this geometric statement perfectly matches known representation-theoretical phenomena in classical and quantum categories $\O$. This was, in fact, our original motivation in \cite{HL25} for conjecturing such a description in the first place. It is remarkable how accurately the resulting comparison holds.

\subsubsection{Description of equivariant $K$-theory ring of the affine Grassmannian.}
Using fixed-point localization together with the description of fixed-points on the whole $\Gr_G$ from Theorem \ref{introthm: fixed points on affine grassmannians}, we compute the $T \times \Gm^{\rot}$-equivariant complexified $K$-theory ring of the affine Grassmannian in terms of the completed ring of functions on a certain scheme ${\eN}_{\mu_{\infty}}(\eZ, T \times (T \sslash W))$ given by a gluing of deformations to normal cones of quasi-diagonals $Z_{\ell}$ inside $T \times T \sslash W$ over each root of unity $\zeta$.

\begin{motto}
The complexified equivariant $K$-theory of the affine Grassmannian is given by the completed ring of functions on infinitely many affine blowups of $T \times \Gm^{\rot} \times T\sslash W$ over each root of unity $\zeta$, centered at the $\ell$-torsion subscheme $Z_{\ell}$ of $T \sslash W$.    
\end{motto}

Such a description was originally conjectured by Roman Bezrukavnikov; it is motivated by the analogous description of equivariant cohomology from \cite[Theorem 1]{BF07}. After properly accounting for the completion, we arrive at the following. 
\begin{theorem}\label{introtheorem: bezrukavnikov description of equivariant K-theory: sc}
Let $G$ be a simply connected reductive group over $k=\C$. Then there is a functorial ring isomorphism
\begin{equation*}
   \widehat{\O}({\eN}_{\mu_{\infty}}(\eZ, T \times (T \sslash W))) \xrightarrow{\cong} K^{\top, 0}_{T \times \Gm^{\rot}}(\Gr_G; \C).
\end{equation*}
\end{theorem}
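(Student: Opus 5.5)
The plan is to build the isomorphism in three stages. First construct a natural map from functions on the blowup family into $K$-theory. Then check that it is an isomorphism fiberwise over every closed point $(x,\zeta)\in T\times\Gm^{\rot}$, using Theorem \ref{introtheorem: localization in equivariant K-theory}. Finally glue the fiberwise statements, with the completion handled separately.

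For the map, write $\Gr_G=\colim_\mu \Gr_{\leq\mu}$, so that $K^{\top,0}_{T\times\Gm^{\rot}}(\Gr_G;\C)=\lim_\mu K^{\top,0}_{T\times\Gm^{\rot}}(\Gr_{\leq\mu};\C)$. The $\lim^1$ terms vanish because everything is equivariantly formal and concentrated in even degrees, so each transition map is surjective. Two families of classes exist in $K$-theory. The base classes come from $\O(T\times\Gm^{\rot})$. The $\O(T\sslash W)=R(G)$ classes come from the $\lop G$-equivariant structure: pull back along $\Gr_G\to \mathrm{pt}/\lop G\to \mathrm{pt}/G$, i.e. take characters of the tautological bundles. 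This gives a map $\O(T\times\Gm^{\rot}\times T\sslash W)\to K$. To extend it to the affine blowup algebra, one has to show that for each root of unity $\zeta$ of order $\ell$, the generators $f/(\zeta'-\zeta)$ (with $f$ in the ideal of $Z_\ell$) land in $K$-theory. Equivalently, one must show that the image of the ideal of $Z_\ell$ is divisible by $(q-\zeta)$ in the torsion-free $\O(T\times\Gm^{\rot})$-module $K$. I would verify this on fixed points. By GKM/localization, $K$ embeds into $\prod_{\lambda\in X_\bullet}\O(T\times\Gm^{\rot})$ via restriction to the points $t^\lambda$. At $t^\lambda$, the class of $V\in R(G)$ restricts to the character $\chi_V(x\zeta^{\lambda})$ or similar. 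When $\zeta^\ell=1$, the point $(x\zeta^\lambda, x)$ lies in $Z_\ell$ in the appropriate sense, so $f$ vanishes along $q=\zeta$ on each component, and divisibility follows componentwise. The remaining issue is to check that the quotient still satisfies the GKM conditions, i.e. lies in $K$. I would check this by the same fiberwise argument applied at points near $\zeta$.

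For bijectivity I would argue fiber by fiber at $(x,\zeta)$. By Theorem \ref{introtheorem: localization in equivariant K-theory}, the $K$-side fiber on each $\Gr_{\leq\mu}$ is $H^{2\bullet}(\Fix_{(x,\zeta)}(\Gr_{\leq\mu});\C)$. By Theorem \ref{introthm: fixed points on affine grassmannians}, the union over $\mu$ is a union of partial flag varieties of $L$ when $\zeta$ is generic, and of dilated affine flag varieties of $L^{\sc}$ when $\zeta^\ell=1$.
\begin{itemize}
\item For generic $\zeta$, the blowup does nothing near the point. The fiber of the left side is functions on the preimage of $x$ in $T\sslash W$ (completed), which via $\lambda\mapsto x\zeta^\lambda$ should match $\prod_\vartheta H^\bullet(L/P_\vartheta)$ by Borel's presentation.
\item At a root of unity, the fiber of the blowup is the normal cone to $Z_\ell$, a Bezrukavnikov–Finkelberg-type space. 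Its functions should match the cohomology of the affine flag varieties, by the Bezrukavnikov–Finkelberg description \cite[Theorem 1]{BF07} applied to $L^{\sc}$ after a dilation/isogeny argument.
\end{itemize}
Once fibers match, flatness over $T\times\Gm^{\rot}$ of both sides, both being limits of free modules, together with Nakayama, gives the isomorphism on each finite stage. Then pass to the limit, which defines the completion $\widehat{\O}$.

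I expect the main obstacle to be the root-of-unity fiber. The fixed locus is a finite union of infinite-dimensional affine flag varieties of the resonant subgroup $L$ rather than of $G$. So one must match the normal cone of $Z_\ell$ at $x$, localized, with a product over $\vartheta$ of Bezrukavnikov–Finkelberg rings for $L^{\sc}$, and keep track of both the completion and the compatibility of the finite filtrations by $\Gr_{\leq\mu}$. I would first treat $x=1$, where $L=G$ and the statement reduces to \cite{BF07} after reparametrizing $q\mapsto q^\ell$. I would then reduce the general case to it by étale-locally identifying $T\sslash W$ near $x$ with $T\sslash W_L$.
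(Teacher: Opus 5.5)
Your plan is essentially the paper's proof. You build the map from the equivariant structure map, check divisibility by $q-\zeta$ and the GKM conditions at the fixed points $t^\lambda$, and then prove bijectivity locally at each $(x,\zeta)$: you localize to the cohomology of $\Fix_{(x,\zeta)}(\Gr_G)$ and match the completed blowup with a product over $\vartheta$ of Borel rings (generic $\zeta$) or Bezrukavnikov--Finkelberg rings for $L^{\sc}_{[x,\zeta]}$ (roots of unity), and your finite-stage Nakayama gluing is just a more careful version of the paper's check after completion at points.

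Two small fixes are needed. The components $\Fl^{\vartheta}_{\ell, L^{\sc}}$ are partial affine flag varieties for arbitrary parahorics, so you need Yun's parahoric extension of \cite{BF07}, as in Proposition \ref{proposition: cohomology of affine flag varieties as deformation to the normal cone}, not its Theorem 1 alone. Also, flatness of the left-hand Schubert quotients is not known a priori, so your Nakayama step should instead rest on three facts: freeness of $K^{\top,0}_{T\times\Gm^{\rot}}(\Gr_{\leq\mu};\C)$, injectivity into the GKM ring, and fiberwise surjectivity onto $H^{\bullet}(\Fix_{(x,\zeta)}(\Gr_{\leq\mu});\C)$. That surjectivity comes from surjectivity of restriction from the full fixed flag varieties.
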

\begin{proof}
\S \ref{section: bezrukavnikov--finkelberg description of equivariant K-theory}; Theorem \ref{theorem: equivariant K-theory as deformation to normal cone -- G sssc}.    
\end{proof}

In particular, the ring $K^{\top, 0}_{T \times \Gm^{\rot}}(\Gr; \C)$ is not finitely generated, even before completing. Nevertheless, it is quite explicit, as the following remark shows.

\begin{remark}
Unraveling Theorem \ref{introtheorem: bezrukavnikov description of equivariant K-theory: sc} in terms of commutative algebra, the $K$-theory ring of the affine Grassmannian $K^{\top, 0}_{T \times \Gm^{\rot}}(\Gr_G; \C)$ is given by the completion of
\begin{equation}\label{equation: explicit K-theory introduction remark}
    \C[t_1^{\pm 1}, \dots, t_r^{\pm 1}][s^{\pm 1}_1, \dots, s^{\pm 1}_r]^W \big[\tfrac{e_j(t_1^{\ell}, \dots, t_r^{\ell}) - e_j(s_1^{\ell}, \dots, s_r^{\ell})}{1-q^{\ell}} \mid \ell \in \N, j = 1, \dots, r\big],
\end{equation}
with respect to an explicit Schubert filtration.
More precisely, the uncompleted ring \eqref{equation: explicit K-theory introduction remark} naturally surjects onto the equivariant $K$-theory of each affine Schubert variety $\Gr_{\leq \mu}$; the equivariant $K$-theory of the whole $\Gr_G$ is then given by completing along the induced filtration by kernels. 

In particular, we get a concrete countable infinite set of topological generators, 
\begin{equation*}
    b_{j, \ell} := \tfrac{e_j(t_1^{\ell}, \dots, t_r^{\ell}) - e_j(s_1^{\ell}, \dots, s_r^{\ell})}{1-q^{\ell}}, \qquad j =1, \dots, r, \ \ell \in \N
\end{equation*}
which we call \textit{fundamental Bezrukavnikov classes}. 
These generators have an interpretation in terms of the tautological vector bundle and are related by Adams operations. We deduce that $K^{\top, 0}_{T \times \Gm^{\rot}}(\Gr_G; \C)$ is finitely generated as a complete topological $\lambda$-ring, see Corollary \ref{corollary: topological Adams generation}. 
\end{remark}

\begin{remark}
The above description goes against some naive analogies with cohomology: the equivariant cohomology of the affine Grassmannian with complex coefficients is a polynomial (or power-series) ring on finitely many generators.

Let us explain this apparent discrepancy. In their computation of cohomology, \cite{BF07} pass to one affine blowup over the origin. (In our terms, this is due to the fact that equivariant cohomology sees only the formal completion of equivariant $K$-theory at the origin $(1,1) \in T \times \Gm^{\rot}$.) In any case, the result of this single blowup is still coincidentally isomorphic to a polynomial ring (on shifted generators). This final coincidence breaks down once we are forced to blow up infinitely many times.
\end{remark}

\begin{remark}
We want to emphasize that many interesting classes -- such as the class of the \textit{determinant line bundle} $\eL_{\det}$ -- are only present in the completion; see Lemma \ref{lemma: necessity of Schubert completion}. In other words, the Schubert completion is important and nontrivial.
\end{remark}

\begin{remark}\label{remark: variant of the K-theory of affine Grassmannian}
Our method is quite robust. In particular, it can be used to describe
    \begin{enumerate}
        \item[(1)] the corresponding result for any reductive $G$ with respect to $T^{\sc} \times \Gm^{\rot}$, 
        \item[(2)] the result with respect to an isogenous torus $T$ to $T^{\sc}$,
        \item[(3)] the $\lop G \rtimes \Gm^{\rot}$-equivariant variant,
        \item[(4)] equivariant $K$-theory of arbitrary affine flag varieties.
    \end{enumerate}    
We address these generalizations in \S \ref{section: variants and generalizations}, see in particular Theorem \ref{theorem: equivariant K-theory as deformation to normal cone -- general case}.
\end{remark}

\begin{remark}
We believe that our method may be further adapted to describe
    \begin{enumerate}
        \item[(5)] the equivariant $K$-theory with integral coefficients.
    \end{enumerate}
We will discuss this elsewhere; one description will be given in \cite{HL25}.
\end{remark}

\begin{remark}
Even for reductive $G$, it is sometimes preferable to consider the action of $G^{\sc}$ on $\Gr_G$. This has several reasons.
\begin{itemize}
    \item The equivariant $K$-theory with respect to this action is what usually appears as centers in representation theory of quantum groups; also see \cite{CK18} for such appearance.
    \item When $G$ is not simply connected, the quotient $T \sslash W$ is often singular \cite[\S 7]{BZ00}. On the other hand, $T^{\sc} \sslash W$ is always isomorphic to the affine space $\A^d = \t \sslash W$. 
    \item The simply connected variant is compatible with the theory of Kac--Moody groups. In particular, the ample generator of the Picard group of $\Gr_G$ may not be $G$-equivariant otherwise \cite[\S 3.2]{CK18}; see \cite[Remarks 4.3 and 5.8]{YZ09} for more details.
\end{itemize}
\end{remark}

\begin{remark}
The results of this paper concern topological $K$-theory of $\Gr_G$. However, we expect them to hold algebraically on the level of homotopy-invariant algebraic $K$-theory $KH$.

In \cite{Low25}, we already answered this affirmatively in type A. Namely, we showed that for affine Schubert varieties $\Gr_{\leq \mu}$ in the $GL_n$ affine Grassmannian, $KH$ is equivariantly formal and its degree zero part agrees with topological $K$-theory:
$KH^0_{T \times \Gm^{\rot}}(\Gr_{\leq \mu}) = K^{\top, 0}_{T \times \Gm^{\rot}}(\Gr_{\leq \mu})$.
Hence the results of the present paper work algebraically in this case.
\end{remark}

\begin{remark}\label{question: decompletion of K-theory}
We view the ring of functions $\O({\eN}_{\mu_{\infty}}(\eZ, T \times (T \sslash W)))$ as a useful \emph{decompletion} of $K^{\top, 0}_{T \times \Gm^{\rot}}(\Gr_G; \C)$.
However, it is not clear to us what is its $K$-theoretical meaning -- can this decompletion be realized from algebraic $K$-theory?
\end{remark}

\subsection{Companion work}
In a companion joint work in progress \cite{HL25}, we describe the tight relationship between both decompleted resp. actual equivariant $K$-theory ring $K^{\top, 0}_{T \times \Gm^{\rot}}(\Gr_G)$, and the centers of a suitable integral quantum group resp. its quantum category $\O$. We precisely pin down this quantum group. The results of \cite{HL25} work with $\Z$-coefficients, and give a new genuine application of $K$-theoretical techniques in representation theory.

\subsection{Plan of the paper}
After the current \S \ref{section: introduction}, this paper is structured as follows.

In \S \ref{section: groups and schemes} we recall some preliminary material on algebraic geometry, reductive groups, and affine flag varieties.

In \S \ref{section: equivariant K-theory and fixed points} we set up notations regarding equivariant $K$-theory, related invariants and their algebraic counterparts. The nonstandard content is our interpretation of equivariant localization in \S \ref{section: K-theory and HP and cohomology of fixed points}, in particular its formulation in Theorem \ref{theorem: fibers of topological K-theory, HP and fixed-points}.

In \S \ref{section: cocharacter graphs and resonance} we introduce the notion of resonance for $(x, \zeta) \in T \times \Gm^{\rot}$ and consequently the resonant subgroup $L_{[x, \zeta]} \leq G$. We then discuss centralizers and stabilizers attached to the point $(x, \zeta)$. We further define the {cocharacter graphs} $\Gamma$ inside $T \times \Gm^{\rot} \times T$ and explain how they put the resonant combinatorics into a family over $T \times \Gm^{\rot}$. There are no deep results, but the discussion in this section serves as a useful tool in the rest of the paper.

In \S \ref{section: fixed points on affine grassmannians}, we fully describe the varying fixed-points $\Fix^{\red}_{(x, \zeta)}(-)$ for the whole affine Grassmannian $\Gr_G$ in Theorems \ref{lemma: fixed points of (c, zeta) on Gr_G -- zeta generic} and \ref{lemma: fixed points of (c, zeta) on Gr_G -- zeta root of unity}, as well as their restriction to affine Schubert varieties $\Gr_{\leq \mu}$ inside it in Theorems \ref{theorem: generic zeta -- fiber of fixed points and classical schubert varieties} and \ref{theorem: fiber of fixed points and classical schubert varieties general case at root of unity}. These results are of independent interest.

In \S \ref{section: equivariant K-theory ring of the affine grassmannian} we move on to compute the equivariant $K$-theory ring $K^{\top, 0}_{T \times \Gm^{\rot}}(\Gr_G; \C)$ in terms of infinitely many affine blowups at roots of unity $\zeta \in \Gm^{\rot}$. The main computation for simply connected $G$ is done in Theorem \ref{theorem: equivariant K-theory as deformation to normal cone -- G sssc}. Generalizations of this result are then covered in Theorem \ref{theorem: equivariant K-theory as deformation to normal cone -- general case}.

Appendix \ref{appendix: some explicit formulas in type A} provides some concrete localization formulas in $GL_n$ and discussion of the determinant line bundle.

\subsection{Acknowledgements}

\subsubsection{Acknowledgements.}
I would like to thank to Tamás Hausel for discussions regarding both this project and our joint ongoing work relating equivariant $K$-theory to centers of quantum groups.
At the same time, I would like to thank to Roman Bezrukavnikov for sharing and discussing his conjectural expectations on equivariant topological $K$-theory of the affine Grassmannian.

Moreover, I would like to thank to Vasily Krylov for pointing out a gap in an earlier version related to appearance of pseudo-Levi subgroups in computations of fixed points, Anton Mellit for suggesting a description of the relevant $K$-theory classes in terms of the tautological vector bundle and Adams operations, Catharina Stroppel for explaining to us at an early stage of this project the appearance of cohomology rings of classical flag varieties and finite Schubert varieties as endomorphism algebras in the classical category $\O$.
Finally, in connection to the companion work on quantum groups, I would like to thank Quan Situ for discussing extensions of his work on quantum groups and centers of deformed categories $\O$, and Josef Svoboda for helping me notice the original independent appearance of hybrid quantum groups in quantum topology.

\subsubsection{Funding.}
This work is a revised version of the results covered in the third part of author's PhD thesis \cite[Chapter III]{LowPhD} at the Institute of Science and Technology Austria (ISTA). The results were obtained between February 2025 -- February 2026; further clarifications were implemented until September 2026. The author was supported by the DOC Fellowship of the Austrian Academy of Sciences and by the ERC grant ViaFiPoS, No 101199663.

\subsubsection{Declaration of the use of generative AI.}
The author has not used any AI tools in the writing of this article. All mathematical content was obtained independently, based on author's expertise, discussions with his colleagues and available literature.

%% file: 1_groups_and_representations.tex
\section{Groups and schemes}\label{section: groups and schemes}
This section serves as a recollection of concepts from algebraic geometry and representation theory which we freely use later in the paper.

\subsection{Schemes}

\subsubsection{Base rings.}
Let $k$ be a base ring for our geometry. The most relevant case is when $k$ is an algebraically closed field of $\chara k = 0$, for example $k=\C$. The reader is invited to focus on this case.

On the other hand, let $\k$ be an arbitrary ring of coefficients. Again, one may want to take $\k =\C$ for simplicity. Nevertheless, a majority of our results works over fields $\k$ of arbitrary characteristic. Even better, most of our constructions work over $\k = \Z$.

\subsubsection{General conventions.}
Given a scheme $X$, we write $\O(X)$ for the ring of global sections of its structure sheaf $\O_X$.
We write $|X|$ for the set of closed points of $X$ -- we prefer to use this concise notation for closed points only for the sake of readability; this should not lead to any confusion.

In general, we write $\coprod$ for actual coproduct (disjoint union) of schemes. On the other hand, we write $\bigsqcup$ for locally closed stratifications.

\subsubsection{Closed subschemes and fibers.}
Given a closed subscheme $Z \hookrightarrow X$, we write $X - Z$ for the open complement. The ideal sheaf of $Z$ in $X$ is denoted by $\eI_{Z}$; it sits in the short exact sequence
\begin{equation*}
    0 \to \eI_{Z} \to \O_X \to \O_Z \to 0.
\end{equation*}
We further write $X_{Z, n} = {\Spec}_{\O_X}(\O_X / \eI^n_Z) \hookrightarrow X$ for the $n$-th infinitesimal thickening of $Z$ inside $X$. We denote $X^{\wedge}_Z := \lim_{n} X_{Z, n}$ the formal completion of $X$ along $Z$.

Given a map of schemes $X \to Y$ and a point $y \in Y$, we write $X_y$ for the fiber. Furthermore, we denote by $X^{\wedge}_{y}$ the formal completion of $X$ around $y$. By definition, $X^{\wedge}_y := X^{\wedge}_{X_y}$ is the completion of $X$ along the fiber $X_y$.

\subsubsection{Quotients.}
Let $G$ be a group scheme. Suppose $G \acts X$. We write $X/G$ for the stack quotient (taken in the fpqc topology). 

On the other hand, we denote by $X \sslash G$ the GIT quotient (in general this depends on further choices). We only consider GIT quotients for $X = \Spec R$ affine, when $X\sslash G$ is also affine with ring of functions $\O(X \sslash G) = \O(X/G) = \O(X)^G= R^G$.

\subsubsection{Ind-schemes.}
Consider the Yoneda embedding $\Sch_k \hookrightarrow \Sh(\Sch_k)$ into the category of sheaves (say of sets) on $\Sch_k$ (say in the étale topology). This embedding commutes with all limits, but not with colimits. The category $\Sch_k$ of $k$-schemes is in fact complete, but not cocomplete. In other words, colimits of schemes may only exists as objects of the bigger category $\Sh(\Sch_k)$. 

In particular, suppose $X$ is given as a filtered colimit $X = \colim_{i \in I} X_i$ of schemes $X_i$ along closed immersions. Such $X$ are called \textit{ind-schemes}, and span a full subcategory $\IndSch_k$. Any filtered system $(X_i, i \in I)$ as above together with a chosen isomorphism $X = \colim_{i \in I} X_i$ is called an \textit{ind-presentation} of $X$. Clearly, $\Sch_k \subseteq \IndSch_k$ via constant systems.

Many construction and properties of schemes and their morphisms extend to ind-schemes by checking on a ind-presentation. In particular, the notions of ind-proper and ind-closed immersion make sense. 

\subsubsection{Functions on ind-schemes.}
The global functions on an ind-scheme are given by the inverse limit of global functions on any ind-presentation
\begin{equation*}
    \O(X) := \Hom(X, \A^1) = \Hom(\colim_{i\in I} X_i, \A^1) = \lim_{i \in I}(X_i, \A^1) = \lim_{i \in I} \O(X_i).
\end{equation*}
Similarly for other representable functors in place of $\A^1$.

\begin{remark}
    Given a closed immersion of ind-schemes $Z \hookrightarrow X$, the induced map on global functions $\O(X) \to \O(Z)$ need not be surjective. For an illustrative example, let $X = \A^1_k$ over a field $k$ of characteristic zero and $Z = \coprod_{n \in \Z} \pt = \colim_{i \in \N} \coprod_{n \in [-i, i] } \pt$, with the embedding $Z \hookrightarrow X$ given by the discrete set of integer points in $\A^1(k)$. This is clearly a closed ind-subscheme, since the map at each finite stage is a closed immersions of schemes. However, the induced map on global functions $\O(\A^{1}) \to \prod_{n \in \Z} k$ is not surjective: not every function $\Z \to k$ may be interpolated by a polynomial.

    Given a closed immersion $Z \hookrightarrow X$ of ind-schemes such that $Z$ is a scheme, the map $\O(X) \to \O(Z)$ is surjective. 
\end{remark}

\subsection{Deformations to normal cones and affine blowups}\label{section: deformations to normal cones and affine blowups}

\subsubsection{Blowups.}\label{section: blowups}
Given a scheme $X$ with a closed subscheme $Z \hookrightarrow X$, we denote $\Bl_Z(X)$ the blowup of $X$ at $Z$. It has a natural map $\pi: \Bl_Z(X) \to X$. The fiber $E:=\pi^{-1}(Z) \subseteq \Bl_Z(X)$ is an effective Cartier divisor. The pair $(\Bl_Z(X), E)$ is the universal pair mapping to $X$ characterized by the property that $E \hookrightarrow \Bl_Z(X)$ is an effective Cartier divisor \cite[Tag 085U]{Sta}.

In particular $\Bl_X(X) = \emptyset$. In an ambient scheme $X$, suppose we are given closed subschemes $Z \hookrightarrow X_1, X_2 \hookrightarrow X$. Inside the ambient blowup $\Bl_Z(X)$, we then have
\begin{equation*}
    \Bl_{Z}(X_1) \cap \Bl_{Z}(X_2) = \Bl_{Z}(X_1 \cap X_2).
\end{equation*}

\subsubsection{Strict transforms.}\label{section: strict transforms}
See \cite[Tag 080C]{Sta} for a general discussion. Consider a closed subscheme $Z \hookrightarrow X$, and the associated blowup $\Bl_Z(X)$. Given any other closed subscheme $Y \hookrightarrow X$, we have its strict transform
\begin{equation*}
    Y^{\st} \hookrightarrow \Bl_Z(X).
\end{equation*}
which may be described as the scheme-theoretic closure of the preimage of $Y \cap (X - Z)$, namely
\begin{equation*}
    Y^{\st} = \overline{\pi^{-1}(Y \cap (X - Z))} \hookrightarrow \Bl_Z(X).
\end{equation*}
In terms of $Y$, the strict transform is given as
\begin{equation*}
    Y^{\st} = \Bl_{Y \cap Z} Y \hookrightarrow \Bl_{Z} X.
\end{equation*}
Here, the embedding comes from the functoriality of blowups. See \cite[Tag 080E]{Sta}.

\subsubsection{Deformations to normal cones.}
See \cite[\S 5]{Ful84}.
Let $Z \hookrightarrow X$ be a closed subscheme. Consider the embedding $Z \times \{0\} \hookrightarrow X \times \A^1$. We can consider the blowup $\Bl_{Z \times \{0\}}(X \times \A^1)$. The \textit{deformation of $X$ to the normal cone of $Z$} is then given by the open complement to the strict transform of $X \times \{0\}$, namely
\begin{equation*}
    N(Z, X) := \Bl_{Z \times \{0\}}(X \times \A^1) - (X \times \{0\})^{\st}.
\end{equation*}
It is an affine flat scheme over $\A^1$. Denoting by $C(Z, X)$ the normal cone of $Z \subseteq X$, meaning the relative spectrum of the sheaf of algebraic differentials $\Omega_{Z/X}$, the deformation $N(Z, X)$ fits in the pullback diagram
\begin{equation*}
    \begin{tikzcd}
      C(Z, X) \arrow[d] \arrow[r]&  N(Z,X) \arrow[d] & \arrow[l] \arrow[d] X \times (\A^1 - \{ 0 \}) \\
      \{0\} \arrow[r] & \A^1 & \arrow[l] \A^1 - \{ 0 \}. 
    \end{tikzcd}
\end{equation*}
In words, $N(Z, X)$ carries a structure map $N(Z,X) \to \A^1 = \Spec(k[h])$, whose fibers over $h \neq 0$ are isomorphic to $X$, while the fiber over $h=0$ is given by the normal cone $C(Z, X)$ of $Z$ inside $X$.

More explicitly, the structure sheaf $\O_X$ has a descending filtration $(\eI^j_{Z})_{j = 0}^{\infty}$ by powers of the ideal sheaf $\eI_Z$. The associated Rees construction
\begin{equation*}
 \O_X[\tfrac{f}{h^j} \mid f \in \eI^j_Z]    
\end{equation*}
turns it into a graded algebra, whose relative spectrum gives the above deformation:
\begin{equation*}
 N(Z, X) = \Spec_{\O_X}(\O_X[\tfrac{f}{h^j} \mid f \in \eI^j_Z]).  
\end{equation*}

\subsubsection{Basic properties.}

The deformation to the normal cone is functorial in the input morphism $Z \to X$.
It is further compatible with pullbacks: given $Z \hookrightarrow X$ and $Y \to X$, we have a natural identification $N(Z, X) \times_X Y = N(Z\times_X Y, Y)$.
If a finite group $W$ acts on $Z \hookrightarrow X$, it also acts on $N(Z, X)$ and we have $N(Z \sslash W, X\sslash W) = N(Z, X) \sslash W$.

Suppose we are given a finite set of generators of the ideal sheaf $\eI_Z = \langle f_1, \dots, f_m  \rangle$. Then 
\begin{equation*}\label{section: generators of the deformation}
    \O_X[\tfrac{f}{h^j} \mid f \in \eI^j_Z] = \O_X[\tfrac{f_i}{h} \mid i = 1, \dots, m]. 
\end{equation*}
In other words, to generate the structure sheaf of $N(Z, X)$, we only need to add finitely many ring generators
$\tfrac{f_i}{h}$ for $i = 1, \dots, m$.

\subsubsection{Affine blowups.}\label{subsubsection: affine blowups.}
More generally, suppose we have a family of schemes $X \to \A^1$ over the base $\A^1$. Let $\zeta \in \A^1$ be a point, and suppose $Z \hookrightarrow Y_{\zeta}$ is a closed subscheme of the fiber of $X$ at $\zeta$.
Then the \textit{affine blowup of $X$ at $Z$ with respect to $X \to \A^1$}, denoted $\Bl^{\circ}_Z(X)$ is the open complement in $\Bl_{Z}(X)$ of the strict transform of $X_{\zeta}$, namely
\begin{equation*}
    \Bl^{\circ}_Z(X) := \Bl_Z(X) - X^{\st}_{\zeta}.
\end{equation*}
We have a natural affine morphism $\Bl^{\circ}_Z(X) \to X$.
By design, the deformation to the normal cone is a special case: $N(Z, X) = \Bl^{\circ}_{Z \times \{0\}}(X \times \A^1)$.

\subsubsection{Strict transforms inside affine blowups.}\label{section: embedding of strict transform into affine blowup}
Assume that we have another closed subscheme $Y \hookrightarrow X$ such that $Z = Y \cap X_{\zeta}$. Then the strict transform of $Y$ inside $\Bl_Z(X)$ lies inside the affine blowup, namely
\begin{equation*}
    Y^{\st} \hookrightarrow \Bl^{\circ}_{Z}(X).
\end{equation*}
Indeed, it is enough to show that $Y^{\st}$ does not intersect $X^{\st}_{\zeta}$. Using the properties from \S \ref{section: blowups}, \ref{section: strict transforms}, this follows from
\begin{equation*}
  Y^{\st} \cap  X^{\st}_{\zeta} = \Bl_{Z}(Y) \cap \Bl_Z(X_{\zeta}) = \Bl_{Z}(Y \cap X_{\zeta}) = \Bl_{Z}(Z) = \emptyset.
\end{equation*}

\subsubsection{Deformations centered at a set of points.}\label{subsubsection: deformations centered at a set of points}
We will often perform the affine blowup construction over a set of closed points of $\Gm \hookrightarrow \A^1$. Namely, suppose we have a scheme $X$ and consider $X \times \Gm$. Suppose we are given a set $(z_i \mid i \in I)$ of pairwise distinct closed points of $\Gm$ and a set of closed subschemes $\eZ = (Z_i \subseteq X \mid i \in I)$. We regard $Z_i = Z_i \times \{z_ i\} \hookrightarrow X \times \Gm$ as a closed subscheme.

We the may blow up independently each $Z_i$ inside $X \times \Gm$; we denote the result by
\begin{equation*}
    N_{q=z_i}(Z_{z_i}, X) := \Bl^{\circ}_{Z_{i} \times \{z_i\}}(X \times \Gm^{\rot})
\end{equation*}
where the subscript $q=z_i$ emphasizes that the deformation is centered at the point $z_i$ of $\Gm = \Spec(k[q^{\pm 1}])$. 

We denote the associated gluing of deformations over the set of points $(z_i \mid i \in I)$ by
\begin{equation*}
    \eN_{I}(\eZ, X) := \Bl^{\circ}_{\eZ}(X).
\end{equation*}
This is defined as the affine blowup of $X \times \A^1$ centered in $\coprod_{i \in I} Z_{i} \times \{z_i\} \hookrightarrow X \times \A^1$. However, note the when $I$ is infinite, the center of the blowup is merely an ind-scheme. 

Explicitly, if $I$ is finite, this gluing is given by inductively blowing up at $Z_i \times \{ z_i \}$ in some order; the order does not matter as these are disjoint. For infinite $I$, one takes the limit over all finite subsets of $I$ -- these are schemes, hence their limit also is.

\subsection{Reductive groups}
\subsubsection{Reductive groups.}\label{subsubsection: reductive groups}
Let $G$ be a connected reductive group over an algebraically closed field $k$. 
Denote by $T$ its universal maximal torus; we also write $T = T_G$ whenever we need to stress the ambient group. For the sake of concreteness, we fix an embedding $T \leq G$. We write $X_{\bullet} = X_{\bullet}(T)$ resp. $X^{\bullet}=X^{\bullet}(T)$ for the cocharacter resp. character lattice of $T$. There is a natural identification $T(k) = X_{\bullet}(T) \otimes _{\Z} k^{\times}$.

We choose a Borel $B \leq G$ containing $T$ and denote by $X^{+}_{\bullet} = X^{+}_{\bullet}(G)$ the set of dominant cocharacters with respect to $B$.
We denote $B' \leq G$ the Borel subgroup opposite to $B$; all parabolics and parahorics will be taken with respect to $B'$.

\subsubsection{Chevalley forms.}
We use the same notation as in \S \ref{subsubsection: reductive groups} over the ring of coefficients $\k$ for our cohomology theories; for example we may take $\k = \Z$. In particular, we denote by $G := G_{\k}$ the Chevalley form over $\k$. We further denote by $T := T_{\k}$ its universal maximal torus, $B := B_{\k}$ a chosen Borel subgroup and so on. We omit the subscript $\k$ when there is no risk of confusion. All reductive groups in this paper are split.

\subsubsection{Root data.}
We write $\Phi_{\bullet} \subseteq X_{\bullet}$ resp. $\Phi^{\bullet} \subseteq X^{\bullet}$ for the coroots resp. roots of $G$. They respectively generate the coroot lattice $Q_{\bullet} := \Z\Phi_{\bullet} \leq X_{\bullet}$ and root lattice $Q^{\bullet} :=\Z\Phi^{\bullet} \leq X^{\bullet}$.
Altogether, $(X_{\bullet}, \Phi_{\bullet}, X^{\bullet}, \Phi^{\bullet})$ denotes the root datum of $G$; its automorphism group is the Weyl group $W$.
Whenever the ambient reductive group needs to be emphasized, we also write $X_{\bullet, G}$, $\Phi_{\bullet, G} $, $X^{\bullet}_G$, $\Phi^{\bullet}_G$, $W_G$ and so on.

We write $\langle -, - \rangle$ for the natural pairing $X_{\bullet} \times X^{\bullet} \to \Z$. We denote by $(-)^\vee: \Phi^{\bullet} \to \Phi_{\bullet}$, $\beta \mapsto \widecheck{\beta}$ the unique bijection such that $\langle \widecheck{\beta}, \beta \rangle = 2$ for each root $\beta \in \Phi^{\bullet}$. Denote further $\Phi_{\bullet}^{+}$ resp. $\Phi^{\bullet}_{+}$ the sets of positive coroots and positive roots with respect to $B$. We let $\Phi_{\bullet}^{\sr} = \{\widecheck{\beta}_1, \dots, \widecheck{\beta}_d \}$ resp. $\Phi^{\bullet}_{\sr} = \{\beta_1, \dots, \beta_d \}$ denote the subsets of simple coroots and simple roots. 

We write $\leq$ for the usual partial order on $X_{\bullet}$. By definition, $\lambda' \leq \lambda$ if and only if $\lambda - \lambda' \in \Z_{\geq 0} \Phi^{+}_{\bullet}$. Similarly in the dual case of $X^{\bullet}$.

We denote $X_{\bullet,\Q} := X^{\bullet}(T) \otimes_{\Z} \Q$ and $X^{\bullet}_{\Q} := X^{\bullet}(T) \otimes_{\Z} \Q$.

\subsubsection{Langlands dual and its representations.}\label{section: lnglands dual and its representations}
If $G$ is a reductive group with a root datum $(X_{\bullet}, \Phi_{\bullet}, X^{\bullet}, \Phi^{\bullet})$, its Langlands dual is the reductive group $\widecheck{G}$ with the flipped root datum given by $(\widecheck{X}_{\bullet}, \widecheck{\Phi}_{\bullet}, \widecheck{X}^{\bullet}, \widecheck{\Phi}^{\bullet}) := (X^{\bullet}, \Phi^{\bullet}, X_{\bullet}, \Phi_{\bullet})$.

Given any $\mu \in X^{+}_{\bullet} = \widecheck{X}^{\bullet}_+$, we denote $V_{\mu}$ the corresponding irreducible representation of $\widecheck{G}$ of highest weight $\mu$. 
We further denote 
\begin{equation}\label{notation: weights in mu}
    \wt(\mu) := W \cdot \{ \lambda \in X^{+}_{\bullet} \mid \lambda \leq \mu \} \subseteq X_{\bullet}
\end{equation}
the union of $W$-orbits of elements $\lambda \in X^{+}_{\bullet}$ with $\lambda \leq \mu$. In terms of $\widecheck{G}$, it is the subset of characters of nonzero weight spaces in $V_{\mu}$.

\subsubsection{Weyl groups.}
Denote $W = N_G(T)/T$ the Weyl group of $T$ in $G$. Equivalently, it is the Weyl group of the root system $(X_{\bullet}, \Phi_{\bullet}, X^{\bullet}, \Phi^{\bullet})$. It acts on $T$ and all the above objects in the standard way. 

Given any root $\beta \in \Phi^{\bullet}$, we denote the corresponding reflection
\begin{equation*}
    s_{\beta}: X_{\bullet} \to X_{\bullet}, \qquad \lambda \mapsto \lambda - \langle \beta, \lambda \rangle \widecheck{\beta}.
\end{equation*}
It induces an automorphism  of $T$, in multiplicative notation given by
\begin{equation*}
s_{\beta}: T \to T, \qquad s_{\beta}(c) = c \cdot \widecheck{\beta}(\beta(c))^{-1}.    
\end{equation*} 
The reflections $s_{\beta}$ induce elements of the Weyl group $W$. Moreover, $W$ has the structure of a Coxeter group, generated by the simple reflections $s_{\beta}$ for $\beta \in \Phi^{\bullet}_{s}$.

\subsubsection{Simply connected covers and adjoint groups.}\label{subsubsection: simply connected covers and adjoint groups}
Denote $G^{\der}$ the derived group of $G$. This is a semisimple group. It is often useful to consider the finitely many (semisimple) groups isogeneous to $G^{\der}$. 
In particular, let $G^{\sc}$ be the simply connected cover of $G^{\der}$; its maximal torus is denoted $T^{\sc}$. Similarly, let $G^{\ad}$ be the adjoint group of $G^{\der}$, with maximal torus $T^{\ad}$. The adjoin group is given directly by the quotient of $G$ by its center: $G^{\ad} = G/Z(G)$. Concisely, we have the diagram
\begin{equation*}
\begin{tikzcd}
G^{\sc} \arrow[d] \arrow[rd]\\
G^{\der} \arrow[r, hook] \arrow[d] & G \arrow[ld, two heads]\\
G^{\ad}
\end{tikzcd}    
\end{equation*}
The groups in the left column are all semisimple; the vertical maps between them are central isogenies. Each connected reductive group isogenous to $G^{\der}$ sits in between $G^{\sc}$ and $G^{\ad}$.
When $G=G^{\sc}$, we say $G$ is simply connected group (implicitly assuming it is reductive); similarly when $G=G^{\ad}$ is adjoint.

The above diagram also induces corresponding maps on the universal maximal tori, and consequently on other root theoretic data. In particular, these maps naturally identify the root resp. coroot lattices $\Z\Phi^{\bullet}$ resp. $\Z\Phi_{\bullet}$ of all the groups; we will implicitly use these identifications. 

We write $X^{\sc}_{\bullet} = X_{\bullet}(T^{\sc})$ and $X_{\sc}^{\bullet} = X^{\bullet}(T^{\sc})$ for the cocharacter and character lattice of $T^{\sc}$, as well as $X_{\bullet}^{\ad} = X_{\bullet}(T^{\ad})$ and $X_{\ad}^{\bullet} = X^{\bullet}(T^{\ad})$ for the cocharacter and character lattices of $T^{\ad}$, and so on.
For any cocharacter $\lambda \in X_{\bullet}$ of $T$, we write $\lambda^{\ad} \in X^{\ad}_{\bullet}$ for the cocharacter of $T^{\ad}$ given by the image of $\lambda$ under the natural projection $T \twoheadrightarrow T^{\ad}$.

\subsubsection{Algebraic fundamental group.}\label{section: borovoi fundamental group}
Denote
\begin{equation}
 \pi_1(G) := X_{\bullet} / X^{\sc}_{\bullet} = X_{\bullet} / \Z\Phi_{\bullet}  
\end{equation}
the \textit{algebraic fundamental group of $G$}, see \cite[Definition 1.3]{Bor98}. It recovers the usual fundamental group of the topological space $G(\C)$ over $\C$. We have $G=G^{\sc}$ if and only if $\pi_1(G)=1$. The natural action of $W$ on $\pi_1(G)$ is trivial and the fundamental group $\pi_1(G)$ is functorial in $G$; see \cite[Lemma 1.2 and its proof]{Bor98}.
Given $\lambda \in X_{\bullet}$, we denote by $\overline{\lambda} \in \pi_1(G)$ the corresponding class.

We choose a family of representatives for elements of the fundamental group under the surjection $X_{\bullet}(T) \twoheadrightarrow \pi_1(G)$: given an element $\sigma \in \pi_1(G)$, let $ \chi(\sigma) \in X_{\bullet}(T)$ be the chosen representative.

\subsubsection{Shifted actions.}\label{section: shifted actions}
Given $\alpha \in X_{\bullet}$, we denote $\bullet_{\alpha}$ the $\alpha$-shifted action
\begin{align*}
    (W, \bullet_{\alpha}) & \acts X_{\bullet} \\
    w \bullet_{\alpha} \mu & = w(\mu - \alpha) + \alpha, \qquad \forall \ w \in W.
\end{align*}
It identifies with the natural action $W \acts \alpha + X_{\bullet}$ of $W$ on the $\alpha$-shifted copy of $X_{\bullet}$.
When $\alpha = \rho$, this is the \emph{dot action}. We use the same notation for the induced action on $T$.

This notation naturally extends to the following setup. Assume that $\omega \in X^{\ad}_{\bullet}$ is a cocharacter of the adjoint group $G^{\ad}$. We then pick any lift $\alpha \in X_{\bullet, \Q}$ of $\omega$ under the natural projection $X_{\bullet, \Q} \twoheadrightarrow X^{\ad}_{\bullet, \Q}$. 
The associated $\alpha$-shifted action $(W, \bullet_{\alpha}) \acts X_{\bullet, \Q}$ on the vector space $X_{\bullet, \Q}$, given by $w \bullet_{\alpha} \mu = w(\mu - \alpha) + \alpha = w(\mu) - w(\alpha) + \alpha$, preserves the subset $X_{\bullet}$ and is independent of the choice of the lift $\alpha$. 
Indeed, it is enough to check that for any $w \in W$, the element $\alpha - w(\alpha)$ lies in $X_{\bullet}$ and is independent of the choice of $\alpha$. By induction on the length of $w$, both statements may be checked on simple reflections, where they are clear since $\alpha - s_{\beta}(\alpha) = \langle \beta, \alpha \rangle \widecheck{\beta} = \langle \beta, \omega \rangle \widecheck{\beta}$.

Altogether, given $\omega \in X_{\bullet}^{\ad}$, we obtain a well-defined action
\begin{equation*}\label{equation: action shifted by adjoint cocharacter}
    (W, \bullet_{\omega}) \acts X_{\bullet}. 
\end{equation*}
given by the same formula with respect to any lift of $\omega$.


\subsubsection{Levi subgroups.}\label{section: levi subgroups}
Given a cocharacter $\lambda: \Gm \to G$, we denote
$L_{\lambda}$ the centralizer of the image of $\lambda$ inside $G$. 
Then $L_{\lambda} \leq G$ is a Levi subgroup; it is in particular connected reductive. We denote by $P_{\lambda} \leq G$ the corresponding parabolic subgroup with respect to $B'$ (containing $L_{\lambda}$ and $B'$). See \cite[\S 21.i and Theorem 13.33]{Mil17}.

We call $L_{\lambda} \leq P_{\lambda}$ the Levi and parabolic associated to $\lambda$; note that this construction depends on the ambient reductive group $G$. 
Note that $T \leq L_{\lambda}$ is a maximal torus and $X_{\bullet} = X_{\bullet}(T)$ is the cocharacter lattice of $L$. The intersections of $B$ resp. $B'$ with $L$ define a pair of opposite Borel subgroups $B_L$ resp. $B'_L$ of $L$. 

For any reductive group $G$, the Levi subgroups and parabolic subgroups of $G$ and $G^{\ad}$ are in natural bijection. Consequently, we may consider the subgroups $L_{\omega}$, $P_{\omega}$ of $G$ labeled by $\omega \in X^{\ad}_{\bullet}$.

\subsubsection{Coordinates and invariant polynomials.}\label{subsubsection: coordinates and invariant polynomials}
Let $G$ be an affine group scheme over a field $k$.
The representation ring of $G$ is denoted $R(G)$. Given a coefficient ring $\k$, its $\k$-linearization is denoted $R(G;\k):= R(G) \otimes_{\Z} \k$.

Assume $G = G$ is a split reductive group over $k$ of rank $n$ with maximal torus $T$. 
We then have its Chevalley form $G_{\k}$ over $\k$, and we denote $T_{\k}$ is maximal torus. The representation ring of $G$ is given by
\begin{equation*}
    R(G) = \O(T_{\Z} \sslash W) \qquad \text{and} \qquad R(G; \k) = \O(T_{\k} \sslash W).
\end{equation*}
As a module, $R(G)$ is generated by classes of irreducible representations of $G$, denoted $V_{\mu}$ for $\mu \in X^{\bullet}_{+}$. For ring generators, one may take the $[V_{\mu_j}]$ with $\mu_j$ for $j =1, \dots, n$, where $V_j$ are chosen generators of $X^{\bullet}_{+}$. We denote these $n$ elements by
\begin{equation*}
    e_j := e_j^{G} := [V_{\mu_j}] \qquad j = 1, \dots, n.
\end{equation*}
In particular:
\begin{itemize}
\item 
If $G=T$ to start with, we denote these coordinates by
$\O(T_{\Z}) = \Z[X^{\bullet}] = \Z[t_1^{\pm 1}, \dots, t_n^{\pm 1}]$.
In a coordinate free way, we also denote by $t_{\nu}$ the coordinate coming from the character $\nu$. 
\item
If $G = GL_n$, we have $\O(T_{\Z} \sslash W) = \Z[e_1, \dots, e_{n-1}, e_n^{\pm 1}]$. Here, $e_j$ is the $j$-th elementary symmetric polynomial on $T_{\Z}$.
\item
For general $G$, interpreting any $V_{\mu}$ as a group morphism $\rho_{\mu}: G \to \GL_n$ for a suitable $n$, we get the above classes as pullbacks of the standard representation: $e^{G}_j = r^*_{\mu_j} (e_1)$.
\item 
For $G$ simply connected, we take $e_j := [V_{\omega_j}]$ the classes of the fundamental representations with highest weights $\omega_j$, for $j = 1, \dots, n$.
\end{itemize}

\begin{remark}\label{remark: representation ring in the simply conneted case}
For reductive $G$, the ring $\O(T_{\Z} \sslash W)$ may be subtle. Namely, when $\pi_1(G) \neq 1$, it will usually acquire finite quotient singularities -- see \cite[\S 7]{BZ00} for the case of $G = \PGL_3$.

On the other hand, for $G$ simply connected, the above ring is a polynomial ring: $\O(T_{\Z} \sslash W) = \Z[e_1, \dots, e_n]$ with distinguished coordinates given by the fundamental representations $e_j \in \O(T \sslash W)$.
\end{remark}

\subsubsection{Lie algebras.}
Given a split reductive group $G$ over $\k$, we denote its Lie algebra by $\g$. In particular, the Lie algebra of its maximal torus $T$ is denoted $\t$. When $G$ is simply connected, we have $T \sslash W = \t \sslash W$, an affine space from Remark \ref{remark: representation ring in the simply conneted case}.

\subsection{Loop groups and affine flag varieties}\label{section: loop groups and affine flag varieties}

\subsubsection{Affine root data.}
We denote by $\Phi^{\bullet}_{\aff} = \Phi^{\bullet} \oplus \Z\hbar$ the set of {\it affine real roots} associated to the root system of $G$. We write elements of this set as $\beta + m \hbar$ with $\beta \in \Phi^{\bullet}$ and $m \in \Z$.

\subsubsection{Affine and extended Weyl groups.}
Let $\Z\Phi_{\bullet} \leq X_{\bullet}$ be the coroot lattice inside the cocharacter lattice.
We denote by
\begin{equation*}
W_{\aff} := \Z\Phi_{\bullet} \rtimes W
\qquad
\text{and}
\qquad
W_{\ext} := X_{\bullet} \rtimes W
\end{equation*}
the \textit{affine Weyl group} and \textit{extended affine Weyl group} associated to the root system of $G$.
These semi-direct products are constructed from the natural action of $W$ on $\Phi_{\bullet}$ resp. $X_{\bullet}$. They thus have preferred splittings, allowing us to regard $W$ as a subgroup.
Clearly, $W_{\aff} \leq W_{\ext}$ is a normal subgroup; the quotient identifies as $W_{\ext} / W_{\aff} = \pi_1(G)$. In particular $W_{\ext} = W_{\aff}$ if $G$ is simply connected.
Also note that $W_{\aff}$ only depends on the isogeny class of $G$, since this is the case for both $W$ and $\Phi_{\bullet}$. On the other hand, $W_{\ext}$ depends on the actual group $G$.

The image of a coroot $\widecheck{\beta} \in \Phi_{\bullet}$ is denoted $\tau_{\widecheck{\beta}}:= (\widecheck{\beta}, 1) \in W_{\aff}$. We also have the finite reflections $s_{\beta} \in W \leq W_{\aff}$ labeled by $\beta \in \Phi^{\bullet}$.
The affine Weyl group $W_{\aff}$ has the structure of a Coxeter group, with simple reflections given by
\begin{equation*}
    s_{\beta, m} := (\tau_{m\widecheck{\beta}}, s_{\beta}) \in \Z\Phi_{\bullet} \rtimes W, \qquad \beta \in \Phi_{\bullet}, \ m \in \Z. 
\end{equation*}
Similar discussion applies to $W_{\ext}$.

\subsubsection{Shifted affine Weyl group actions.}\label{section: shifted affine weyl group actions}
Consider the following family of actions of $W_{\aff}$ on $X_{\bullet}$.
Let $\ell \in \N$ be a natural number and $\alpha \in X_{\bullet}$ any cocharacter. Then we define the \emph{$\ell$-dilated $\alpha$-shifted action}
\begin{align*}
    (W_{\aff}, \bullet^{\ell}_{\alpha}) & \acts X_{\bullet} \\
    (\lambda, w ) \bullet^{\ell}_{\alpha} \mu & = w(\mu - \alpha - \ell \lambda) + \alpha, \qquad \forall \lambda \in \Z\Phi_{\bullet}, \ w \in W
\end{align*}

For $\ell = 1$ and $\alpha$ trivial, this is the natural of $W_{\aff}$ on $X_{\bullet}$. For general $\ell$, it is the \emph{$\ell$-dilated action}. For $\alpha = \rho$ and general $\ell$, we get the \emph{$\ell$-dilated dot action}. We use the same notation for the induced action on $T$.

Given any cocharacter $\omega \in X^{\ad}_{\bullet}$ of the adjoint group $G^{\ad}$, we obtain the $\omega$-shifted action
\begin{equation*}
   (W_{\aff}, \bullet^{\ell}_{\omega})  \acts X_{\bullet} 
\end{equation*}
given by the same formula with respect to any representative of $\omega$ inside $X_{\bullet, \Q}$. It is well-defined by the argument from \S \ref{section: shifted actions}.

\subsubsection{Loop rotation torus.}
We denote by $\Gm^{\rot}$ the one dimensional torus with coordinate ring $\O(\Gm^{\rot}) = k[q^{\pm 1}]$ and call it the \textit{loop rotation torus}. It plays a special role, so we include the superscript $\rot$ to distinguish from other tori. We denote its closed points by $\zeta \in \Gm^{\rot}$.

For $\ell \in \N$, we write $\mu_{\ell} \leq \Gm^{\rot}$ for the subgroup scheme of order $\ell$ roots of unity. We write $\mu_{\infty} := \colim_{\ell \in \N} \mu_{\ell} \leq \Gm^{\rot}$ for the ind-subgroup scheme of all roots of unity. This gives a decomposition $\Gm^{\rot} = (\Gm^{\rot} - \mu_{\infty}) \sqcup \mu_{\infty}$.

A closed point $\zeta \in \Gm^{\rot}$ is called {\it generic} if it is not a root of unity, i.e. if it is torsionfree element of the underlying multiplicative group. Otherwise, $\zeta \in \mu_{\infty}$ is a root of unity.
For any $\zeta \in \Gm^{\rot}$, we write
\begin{equation*}
\ord(\zeta) =
\begin{cases}
\ell, \qquad \text{if} \ \zeta \in \mu_{\infty} \ \text{is a root of unity of primitive order} \ \ell \in \N, \\
0, \qquad \text{if} \ \zeta \in \Gm^{\rot} - \mu_{\infty} \ \text{is generic}.
\end{cases}
\end{equation*}
We write $\Sigma_{\ell} \hookrightarrow \mu_{\ell}$ for the cyclotomic subscheme of roots of unity of primitive order $\ell$. 

Denoting $\hbar$ the real affine root in the Lie algebra sense, our group coordinate $q$ is given as $q = e^{\hbar}$. Since we work multiplicatively throughout, we do not need the exponential notation.

\subsubsection{Loop groups with loop rotation.}
Given a connected reductive group $G$ over $k$, we denote by $\lo G$ its loop group and by $\lop G$ its positive loop group, with functors of points
\begin{equation*}
    \lo G: R \mapsto G(R\llp t \rrp) \qquad \text{and} \qquad \lop G: R \mapsto G(R\llb t \rrb).
\end{equation*}
We have $\lop G \leq \lo G$. The group of connected components of the loop group $\lo G$ identifies with $\pi_1(G)$, while the positive loop group $\lop G$ is connected. We have an embedding $G \leq \lop G \leq \lo G$ of the original reductive group $G$ via constant loops.

The torus $\Gm^{\rot}$ acts on $\lo G$ by the loop rotation 
\begin{align*}
\Gm^{\rot} \times \lo G \to \lo G  , \\
(\zeta, g(t)) \mapsto g(\zeta t).
\end{align*}
This action clearly restricts to $\lop G$. 
The extended loop groups are given by the semidirect products along loop rotation
\begin{equation*}
\lop G \rtimes \Gm^{\rot} \leq \lo G \rtimes \Gm^{\rot}.   
\end{equation*}
These semidirect products become trivial after restriction to the constant loops $G \leq \lop G$.

Given $\ell \in \N$, we write $\lo_{\ell} G \leq \lo G$ and $\lopl G \leq \lop G$ the $\ell$-dilated subgroups; they represent the subfunctors
\begin{equation*}
    \lol G: R \mapsto G(R\llp t^{\ell} \rrp) \qquad \text{and} \qquad \lopl G: R \mapsto G(R\llb t^{\ell} \rrb).
\end{equation*}
While abstractly isomorphic to the usual loop groups, the geometry of their embeddings is useful.
For $\ell = 0$, we denote by convention $\lo_0 G = \mathrm{L^+_{0}} G = G$, embedded in $\lo G$ as constant loops.

\subsubsection{Extended tori.}\label{subsubsection: extended tori}
In particular, we have the extended torus $T \times \Gm^{\rot} \leq \lo G \rtimes \Gm^{\rot}$ embedded via constant loops.
Many computations will take place within this ambient extended loop group.
For notational convenience, we pick coordinates
\begin{equation*}
    \O(T\times \Gm^{\rot}) = k[t_1^{\pm 1}, \dots, t_n^{\pm 1}][q^{\pm 1}].
\end{equation*}

Any character of $X^{\bullet}(T)$, and in particular any root $ \beta \in \Phi^{\bullet} \leq X^{\bullet}(T)$, can be tautologically regarded as a function $\beta \in \O(T)$ given by the composition $\iota \circ \beta: T \to \Gm \to \A^1$, where $\iota: \Gm^{\rot} \hookrightarrow \A^1$ is the obvious inclusion.
Similarly, any character of $X^{\bullet}(T\times \Gm^{\rot})$, and in particular any affine root $\beta + m \hbar$, can be tautologically regarded as a function $\beta + m\hbar \in \O(T \times \Gm^{\rot})$ given by the composition $\iota \circ (\beta + m\hbar): T \times \Gm^{\rot} \to \Gm \to \A^1$.
It is sometimes customary to denote these functions as
\begin{equation*}
   e^{\beta} = \beta \in \O(T) \qquad \text{and} \qquad e^{\beta + m\hbar} = e^{\beta} \cdot e^{m\hbar} = {\beta} \cdot q^m \in \O(T \times \Gm^{\rot}).
\end{equation*}
Since we are working multiplicatively, we will avoid this exponential notation.

On the other hand, given any cocharacter $\lambda \in X_{\bullet}(T)$, regarded as a homomorphism $\lambda: \Gm \to T$, we denote $t^{\lambda} := \lambda(t) \in \lo T$ the image of the formal variable $t$ under the map $\lo \lambda: \lo \Gm \to \lo T$. More generally, given any element $a \in k \llb t \rrb$, we denote $a^{\lambda} := \lambda(a) \in \lo T$ its image under $\lo \lambda$.

\subsubsection{Root spaces in loop groups.}\label{paragraph: root spaces in loop groups}
Given an affine root $\beta + m\hbar \in \Phi^{\bullet} \oplus \Z \hbar$, we get the homomorphism $\Ga \to \lo G$ given by $x \mapsto u_\beta(t^mx)$ and denote $U_{\beta + m\hbar} \leq \lo G$ the corresponding root subgroup of $\lo G$ given by its image; see \cite[\S 4.3]{RW22}.

\begin{lemma}\label{lemma: weights of extended torus on affine root groups}
The adjoint action of $T \times \Gm^{\rot}$ restricts to each $U_{\beta + m\hbar}$, where it has weight
\begin{equation*}
(x, \zeta) \mapsto \beta(x) \cdot \zeta^m.  
\end{equation*}
\end{lemma}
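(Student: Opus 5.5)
The plan is to compute the conjugation action of $(x,\zeta)$ directly on the parametrization $a \mapsto u_\beta(t^m a)$ of $U_{\beta+m\hbar}$, handling the two factors separately. Since the semidirect product $\lo G \rtimes \Gm^{\rot}$ becomes trivial on constant loops, $T \times \Gm^{\rot}$ is a genuine subgroup. Its adjoint action on $\lo G$ is then the composite of conjugation by the constant loop $x \in T \leq \lo G$ and the loop rotation automorphism $g(t) \mapsto g(\zeta t)$. These two operations commute, because loop rotation fixes constant loops. So it suffices to compute each action on the generator $u_\beta(t^m a)$, where $a$ is an $R$-point of $\Ga$ for an arbitrary test ring $R$.

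For the torus factor, recall that $u_\beta: \Ga \to G$ is the root homomorphism, so $x u_\beta(y) x^{-1} = u_\beta(\beta(x) y)$ for any $R'$-point $y$ of $\Ga$. Apply this with $R' = R\llp t \rrp$ and $y = t^m a$. This base change is harmless because $u_\beta$ is a morphism of group schemes over the base and $x$ is a constant point. It gives $x\, u_\beta(t^m a)\, x^{-1} = u_\beta(t^m \beta(x) a)$.

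For the rotation factor, loop rotation is induced by the ring automorphism $t \mapsto \zeta t$ of $R\llp t \rrp$, which is functorial in $G$. Hence it sends $u_\beta(t^m a)$ to $u_\beta((\zeta t)^m a) = u_\beta(t^m \zeta^m a)$.

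Composing the two computations, $(x,\zeta)$ sends $u_\beta(t^m a)$ to $u_\beta(t^m \cdot \beta(x)\zeta^m a)$. Therefore the action preserves the image $U_{\beta+m\hbar}$, and under the identification $\Ga \cong U_{\beta+m\hbar}$ it acts by the character $\beta(x)\zeta^m$, as claimed. One might worry that the image is only a subfunctor; but the map $\Ga \to \lo G$ is a closed immersion for $\beta \neq 0$, as recalled in \cite[\S 4.3]{RW22}, so the statement about weights is well posed.

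No step is a genuine obstacle. The only point needing care is conventions. We must confirm that the semidirect product convention makes $(1,\zeta)$ act on $\lo G$ by $g(t)\mapsto g(\zeta t)$ rather than by its inverse, since the opposite convention would give weight $\zeta^{-m}$. The paper's definition, loop rotation $(\zeta,g(t))\mapsto g(\zeta t)$, fixes the sign as above.
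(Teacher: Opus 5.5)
Your proposal is correct and follows the paper's route: the paper's proof is the same direct computation of $(x,\zeta)\cdot(u_\beta(zt^m),1)\cdot(x,\zeta)^{-1}$. There the rotation factor first rescales $t^m$ by $\zeta^m$ and the constant torus element then rescales by $\beta(x)$. Your extra care about the semidirect-product sign convention and the closed-immersion remark is sound, but goes beyond what the paper states.
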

\begin{proof}
Given any point $(x, \zeta) \in T \times \Gm^{\rot}$, compute
\begin{align*}
   (x, \zeta) \cdot (u_{\beta}(z t^m), 1) \cdot (x, \zeta)^{-1} 
   = (x, 1) \cdot (u_{\beta}(\zeta^m \cdot zt^m), 1) \cdot (x^{-1}, 1)
   = (u_{\beta}(\beta(x) \zeta^m \cdot z t^m), 1).
\end{align*}
\end{proof}

\begin{lemma}\label{lemma: conjugation of affine root spaces}
For any $\lambda \in X_{\bullet}$ we have 
\begin{equation*}
 t^{-\lambda} \cdot U_{\beta + m\hbar} \cdot t^{\lambda} = U_{\beta + (m- \langle \lambda, \beta \rangle)\hbar}
\end{equation*}
\end{lemma}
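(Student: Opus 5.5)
It suffices to conjugate a general generator of $U_{\beta + m\hbar}$ and read off the result; the whole computation happens inside $\lo G$. By definition (\S\ref{paragraph: root spaces in loop groups}), $U_{\beta+m\hbar}$ is the image of $\Ga \to \lo G$, $z \mapsto u_\beta(zt^m)$, where $u_\beta \colon \Ga \to G$ is the root homomorphism. Since conjugation by $t^{\lambda}$ is a group automorphism of $\lo G$, it is enough to show that for every $R$-point $z$,
\begin{equation*}
t^{-\lambda}\, u_\beta(zt^m)\, t^{\lambda} = u_\beta\big(z t^{m - \langle \lambda, \beta\rangle}\big).
\end{equation*}
The right-hand side then ranges exactly over $U_{\beta + (m-\langle\lambda,\beta\rangle)\hbar}$ as $z$ varies, which gives equality of the two subgroups, and not only an inclusion.

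For the identity, I will use the defining property of the root homomorphism: for every $R$-algebra $A$, $c \in T(A)$ and $a \in A$,
\begin{equation*}
c\, u_\beta(a)\, c^{-1} = u_\beta(\beta(c)\, a).
\end{equation*}
This is the same fact used in the proof of Lemma \ref{lemma: weights of extended torus on affine root groups}. I apply it with $A = R\llp t\rrp$, $a = zt^m$ and $c = t^{-\lambda} = \lambda(t)^{-1} \in T(R\llp t\rrp)$.

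The step needing care is evaluating $\beta(t^{-\lambda})$. Since $\beta \circ \lambda \colon \Gm \to \Gm$ is the map $s \mapsto s^{\langle \lambda, \beta\rangle}$, applying $\lo$ gives $\beta(t^{-\lambda}) = t^{-\langle\lambda,\beta\rangle}$. Substituting yields $t^{-\lambda}\, u_\beta(zt^m)\, t^{\lambda} = u_\beta(t^{-\langle\lambda,\beta\rangle} z t^m)$, which is the desired formula.

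The only real obstacle is the sign convention, that is, checking that conjugating by $t^{-\lambda}$ on the left, and not by $t^{\lambda}$, produces the shift $m \mapsto m - \langle\lambda,\beta\rangle$. As a sanity check, for $G = \GL_2$ with $\lambda = (1,0)$ and $\beta$ the positive root, conjugating an upper triangular unipotent matrix by $\mathrm{diag}(t^{-1},1)$ multiplies its entry by $t^{-1}$, which agrees with the formula.
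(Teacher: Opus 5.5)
Your computation is correct, including the sign. With $c = t^{-\lambda} = \lambda(t)^{-1}$ one gets $\beta(c) = t^{-\langle\lambda,\beta\rangle}$, and since $z \mapsto u_\beta(zt^{m-\langle\lambda,\beta\rangle})$ sweeps out the whole target root group, you get equality of subgroups and not just an inclusion.

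The paper gives no argument of its own here; it simply cites \cite[Lemma 4.3. (2)]{RW22}. Your route is a self-contained verification. It uses only the torus--root-group relation $c\,u_\beta(a)\,c^{-1} = u_\beta(\beta(c)a)$ over $A = R\llp t\rrp$, which is the same mechanism as the paper's proof of Lemma \ref{lemma: weights of extended torus on affine root groups}.

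The two routes buy different things:
\begin{itemize}
\item The citation aligns the paper's conventions with those of \cite{RW22}, which the paper also relies on for parahorics and Iwahori decompositions.
\item Your argument makes the sign transparent. It also gives a slightly more general statement: it never uses that $\lambda$ is a cocharacter of $T$ itself, only that $\beta$ is a character of whatever torus $t^{\lambda}$ comes from, acting by conjugation.
\end{itemize}
The generality matters. The paper later applies this formula to conjugation by $t^{\omega}$ with $\omega \in X^{\ad}_{\bullet, L}$ an adjoint cocharacter (Discussion \ref{discussion: slanting} and Lemma \ref{discussion: slanting J in general}), where $t^{\omega} \in \lo T^{\ad}_L$ need not come from $\lo T$. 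Your proof covers that case verbatim for $\beta \in \Phi^{\bullet}_L$: take $c = t^{\mp\omega} \in T^{\ad}_L(R\llp t\rrp)$ acting on $L$ through the adjoint action, and regard $\beta$ as a character of $T^{\ad}_L$.
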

\begin{proof}
    See \cite[Lemma 4.3. (2)]{RW22}.
\end{proof}

\subsubsection{Parahoric subgroups.}\label{subsubsection: parahoric subgroups}
The preimage of the Borel subgroup $B' \leq G$ under the evaluation map $\lop G \to G$ gives rise to the standard Iwahori subgroup $\Iw \leq \lop G$. Given any proper subset $J$ of the extended Dynkin diagram of $G$, we denote the associated standard parahoric subgroup $\eP_J$. We have $\Iw \leq \eP_J \leq \lop G$. To avoid too many subscripts, we sometimes put $J$ in the superscript and write $\eP^J := \eP_J$.

A parahoric subgroup $\eP_{J}$ sits in a canonical short exact sequence
\begin{equation*}
1 \to \eP_{J}^{\uni} \to \eP_{J} \to L_{J} \to 1    
\end{equation*}
where $\eP_{J}$ is its \textit{pro-unipotent radical} and $L_{J}$ is its reductive \textit{Levi quotient}. 
For all questions regarding equivariant topological invariants, the difference between $\eP_{J}$ and $L_{J}$ is irrelevant.

For more details on parahoric subgroups, see \cite[\S 2.1.5]{Yun15} and references there.

\subsubsection{Affine flag varieties.}
Given a parahoric subgroup $\eP \leq \lop G$, the associated affine flag variety is the fpqc quotient
\begin{equation*}
    \Fl_{\eP} = \lo G / \eP = \lo G \rtimes \Gm^{\rot} / \eP \rtimes \Gm^{\rot}.
\end{equation*}
It is an ind-projective ind-scheme. It carries a left transitive action of $\lo G \rtimes \Gm^{\rot}$; we often restrict the above action to various subgroups of $\lop G \rtimes \Gm^{\rot}$, such as $T \times \Gm^{\rot}$.

When $\eP= \eP_J$ is the standard parahoric, we also write $\Fl^J := \Fl_{\eP_J}$.
For $\eP = \lop G$, we have the affine Grassmannian
\begin{equation*}
    \Gr_{G} = \lo G / \lop G = \lo G \rtimes \Gm^{\rot} / \lop G \rtimes \Gm^{\rot}.
\end{equation*}

For each $\lambda \in X_{\bullet}$, the element $t^{\lambda}$ gives a distinguished point on $\Fl_{\eP}$ and in particular on $\Gr_G$. These are precisely the fixed-points under the whole torus $T \times \Gm^{\rot}$. We call them the \emph{coordinate fixed-points}.

\subsubsection{Local Hecke stacks.}\label{section: local quantum hecke stacks}
Taking the quotient by the above action, we obtain
\begin{equation*}
   \lop G \rtimes \Gm^{\rot} \backslash \Fl_{\eP} = \lop G \rtimes \Gm^{\rot} \backslash \lo G / \eP = \lop G \rtimes \Gm^{\rot} \backslash \lo G \rtimes \Gm^{\rot} / \eP \rtimes \Gm^{\rot}.
\end{equation*}
In the case of affine Grassmannian, we call this quotient the \emph{quantum Hecke stack}
\begin{equation*}
   \lop G \rtimes \Gm^{\rot} \backslash \Gr_{G} = \lop G \rtimes \Gm^{\rot} \backslash \lo G / \lop G = \lop G \rtimes \Gm^{\rot} \backslash \lo G \rtimes \Gm^{\rot} / \lop G \rtimes \Gm^{\rot}.
\end{equation*}

\subsubsection{Affine Schubert varieties.}
The $\lop G \rtimes \Gm^{\rot}$ orbits on $\Gr_G$ are the affine Schubert cells $\Gr_{\mu}$, equipped with the reduced scheme structure. By the Iwasawa decomposition, they are labeled by the discrete set $X^{+}_{\bullet}$. The affine Schubert cell $\Gr_{\mu}$ is an affine bundle over the corresponding partial flag variety $G/P_{\mu}$.

The affine Schubert varieties $\Gr_{\leq \mu}$ are the closures of the affine Schubert cells, equipped with the reduced scheme structure. They have a locally closed decomposition into a disjoint union of the affine Schubert cells
\begin{equation*}
    \Gr_{\leq \mu} = \bigsqcup_{\mu' \leq \mu} \Gr_{\mu'}.
\end{equation*}
Each $\Gr_{\leq \mu}$ is a singular projective variety, acted on by $\lop G \rtimes \Gm^{\rot}$. We refer to \cite{Zhu15} for an excellent introduction.

\begin{remark}
The affine Grassmannian $\Gr_{G}$ has a natural affine paving. Indeed, we have an action $\Iw \acts \Gr_{G}$ of the Iwahori group scheme. Its fixed points are precisely $t^{\lambda}$ with $\lambda \in X_{\bullet}$. The action of $\Iw$ on $t^{\lambda}$ factors through a finite-dimensional quotient. Since the reductive quotient $T$ of $\Iw$ stabilizes $t^{\lambda}$, its orbit under $\Iw$ is an affine space. Consequently, we deduce that Iwahori orbits provide an affine paving of $\Gr_G$, and in particular of each affine Schubert variety $\Gr_{\leq \mu}$. 

Also see \cite[(5.2)]{HHH05} and references there.
\end{remark}

\subsubsection{Dilated variants.}\label{subsubsection: dilated variants}
We will use the above constructions in the \textit{$\ell$-dilated} context. Also, the ambient reductive group will in fact be a suitable pseudo-Levi subgroup $L$ of $G$. We spell this out in detail.

Let $\lambda \in X_{\bullet} = X_{\bullet}(T)$. Consider the subset $I_{\ell, \lambda}$ of affine reflections fixing $\lambda$. Denote $\eP^{\lambda}_{L, \ell} \leq \lo_{\ell} L$ the standard parahoric subgroup corresponding to $I_{\ell, \lambda}$ with respect to the opposite Borel $B'$ -- up to passage to the opposite Borel, see \cite[\S 2.1.4]{BBSV22}).
In terms of Bruhat--Tits theory, $\eP^{\lambda}_{L, \ell}$ is the parahoric subgroup of $\lo_{\ell} L$ corresponding to the facet $\mathrm{f}$ containing $\lambda$, see \cite{RW22}. 
Explicitly, by \cite[equation (2.8)]{BBSV22} together with our opposite Borel convention, it is given as
\begin{equation*}
    \eP^{\lambda}_{L, \ell} = \lo_{\ell} L \cap t^{\lambda} \cdot \lop L \cdot t^{-\lambda}.
\end{equation*}
See \cite[\S 4.2 -- \S 4.3]{RW22} and \cite[\S 2.1]{BBSV22}.
We denote 
\begin{equation*}
\Fl^{\lambda}_{L, \ell} := \lo_{\ell} L / \eP^{\lambda}_{L, \ell}    
\end{equation*}
the corresponding partial affine flag variety of the $\ell$-dilated loop group of $L$.
Note that these constructions depend only on the stabilizer of $\lambda$ in $W_{\ell, \aff}$.

\subsubsection{Functoriality.}
The constructions in \S \ref{section: loop groups and affine flag varieties} are functorial in group homomorphisms $G \to G'$ of reductive groups. Specifically, we have an induced map $\Gr_{G} \to \Gr_{G'}$ compatible with the equivariance, Schubert stratifications, and so on. Consequently, many constructions can be reduced to the case of $GL_n$.

\subsection{Root subsystems and centralizers}\label{section: root subsystems and centralizers}
In order to effectively manipulate centralizers of semisimple elements inside loop groups, we recall some technical material on subsystems of finite and affine root systems. We then briefly recall the structure of such centralizers. In the finite case, these are called pseudo-Levi subgroups. 
We work over an algebraically closed field $k$. For simplicity, we assume $k$ is of characteristic zero.

\subsubsection{Closed subsystems of root systems.}

By an \emph{abstract root system} $\Psi^{\bullet}$ we mean a root system of a Kac--Moody algebra $\g_{\Psi^{\bullet}}$ in the sense of \cite{Kac83}; the notations below make sense in this generality.

\begin{setup}\label{setup: finite and affine root systems}
For our purposes, we will assume that $\Psi^{\bullet}_1$ is one of the following.
\begin{itemize}
    \item[(i)] $\Phi^{\bullet}_G$ is a finite root system, realized in a reductive group $G$ with respect to the chosen maximal torus $T \leq G$. We denote $\g$ its Lie algebra and $\t \subseteq \g$ the corresponding Cartan subalgebra.
    \item[(ii)] $\Phi^{\bullet}_{G, \aff} = \Phi^{\bullet}_G \oplus \Z\hbar$ is the real untwisted affine root system, realized by the affine real roots in the loop group $\lo G$ of $G$ with respect to the extended torus $T \times \Gm^{\rot}$. We denote $\g_{\aff}$ its Lie algebra and $\t_{\aff} = \t \oplus \Ga^{\rot}$ the corresponding Cartan subalgebra.
\end{itemize}
\end{setup}

Following \cite[Definition 2.4.1]{RV19}, we recall some terminology. Also see \cite[\S 1.2]{DL11a}.
\begin{notation}\label{notation: closed root subsystems}
Let $\Psi^{\bullet}_1$ be any ambient root system.
\begin{enumerate}
\item[(i)] 
A subset $\Psi^{\bullet}_2 \subseteq \Psi^{\bullet}_1$ is called a \emph{root subsystem} if it is itself a root system. 
Explicitly, this means $\Psi^{\bullet}_2$ is closed on reflections in its elements: $\forall \alpha, \beta \in \Psi^{\bullet}_2$, we have $s_{\alpha}(\beta) \in \Psi^{\bullet}_2$.
\item[(ii)] 
A subset $\Psi^{\bullet}_2 \subseteq \Psi^{\bullet}_1$ is called \emph{closed} if $\forall \alpha, \beta \in \Psi^{\bullet}_2$, also $ \alpha + \beta \in \Psi^{\bullet}_2$.
\end{enumerate}
A subset satisfying both (i) and (ii) is called a \emph{closed root subsystem}.
\end{notation}

\subsubsection{Projection and affinization.}\label{section: projection and affinization}
Affine and finite root systems compare through the projection map 
\begin{equation}\label{equation: projection of root systems}
\Phi^{\bullet}_{G, \aff} \twoheadrightarrow \Phi^{\bullet}_G, \qquad \beta + m\hbar \mapsto \beta.    
\end{equation}
Any subset $\Psi^{\bullet} \subseteq \Phi^{\bullet}_{G, \aff}$ can be projected to $\overline{\Psi}^{\bullet} \subseteq \Phi^{\bullet}_G$. If $\Psi^{\bullet}$ is a root subsystem, then also its projection is a root subsystem \cite[Definition 2.4.3]{RV19}. Moreover, if $\Psi^{\bullet} \subseteq \Phi^{\bullet}_{G, \aff}$ is a closed root subsystem, so is $\overline{\Psi}^{\bullet}$ by \cite[Proposition 4.1.1]{RV19}.

Vice versa, any subset $\Psi^{\bullet} \subseteq \Phi^{\bullet}_G$ can be lifted to a subset $ \widehat{\Psi}^{\bullet} \subseteq \Phi^{\bullet}_{G, \aff}$ by taking preimage \cite[Definition 2.4.4]{RV19}. This operation clearly preserves root subsystems, as well as the closed condition.

\subsubsection{Regular subalgebras.} 
From now on, we restrict attention to closed root subsystems; these parametrize a particularly useful class of subalgebras.
\begin{notation}
Let $\g_{\Psi^{\bullet}_1}$ be a Kac--Moody algebra. A subalgebra $\g' \subseteq \g_{\Psi^{\bullet}_1}$ is called \emph{regular} if it is stable under the adjoint action of some Cartan subalgebra $\t_{\Psi^{\bullet}_1}$ of $\g_{\Psi^{\bullet}_1}$. 
See \cite[Definition 12.1.3 and text below]{RV19} and \cite[\S 1]{FRT08}, as well as \cite[Chapter II, \S 2]{Dyn57} for the case of classical Lie algebras.
\end{notation}

Fix a Cartan subalgebra $\t_{\Psi^{\bullet}_1}$ of $\g_{\Psi^{\bullet}_1}$.
To a closed subroot system $\Psi^{\bullet}_2 \subseteq \Psi^{\bullet}_1$, we can associate the regular subalgebra $\g_{\Psi^{\bullet}_2}$ generated by the root spaces $\g_{\alpha}$, for $\alpha \in \Psi^{\bullet}_2$ as in \cite[Definition 12.1.3 and below]{RV19}. 
In the relevant cases, these give all regular subalgebras with respect to the chosen Cartan.

\begin{proposition}\label{proposition: closed root subsystems and regular subalgebras}
Assume that $\Psi^{\bullet}_1$ is either the finite root system $\Phi^{\bullet}_G$ or the affine real root system $\Phi^{\bullet}_{G, \aff}$ as in Setup \ref{setup: finite and affine root systems}. Denote $\t_{\Psi^{\bullet}_1} \subseteq \g_{\Psi^{\bullet}_1}$ the chosen Cartan subgroup in its Lie algebra. Then the above induces a bijection
    \begin{equation*}
    \{ \Psi^{\bullet}_2 \subseteq \Phi^{\bullet}_{\Psi^{\bullet}_1} \ \text{closed root subsystem}  \} \leftrightarrow \{ \g_{\Psi^{\bullet}_2} \subseteq \g_{\Psi^{\bullet}_1}  \text{regular subalgebra with respect to} \t_{\Psi^{\bullet}_1} \}.
\end{equation*}
\end{proposition}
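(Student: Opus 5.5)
The plan is to construct the two maps explicitly and check that they are mutually inverse, using the root space decomposition of $\g_{\Psi^{\bullet}_1}$ with respect to the fixed Cartan $\t := \t_{\Psi^{\bullet}_1}$. In the affine case we take $\g_{\Psi^{\bullet}_1}$ to be the loop algebra (possibly with central extension and derivation). Its root space decomposition with respect to $\t_{\aff}$ is
\begin{equation*}
\g_{\Psi^{\bullet}_1} = \t \oplus \textstyle\bigoplus_{\alpha} \g_{\alpha},
\end{equation*}
where $\alpha$ runs over real roots $\beta + m\hbar$, each with one-dimensional root space, together with the imaginary roots $m\hbar$, whose root spaces are $\t \otimes t^m$.

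\textbf{Forward map.} Send a closed root subsystem $\Psi^{\bullet}_2$ to the subalgebra $\g_{\Psi^{\bullet}_2}$ generated by the $\g_{\alpha}$, $\alpha \in \Psi^{\bullet}_2$. It is $\t$-stable because its generators are $\t$-eigenvectors. We then compute it. Since $\Psi^{\bullet}_2$ is closed and $[\g_{\alpha}, \g_{\beta}] \subseteq \g_{\alpha+\beta}$, it should equal
\begin{equation*}
\textstyle\sum_{\alpha \in \Psi^{\bullet}_2} [\g_{\alpha}, \g_{-\alpha}] \oplus \bigoplus_{\alpha \in \Psi^{\bullet}_2} \g_{\alpha}.
\end{equation*}
The key point is that brackets of real root vectors land in a real root space $\g_{\alpha+\beta}$ with $\alpha + \beta \in \Psi^{\bullet}_2$ by closedness, or else in $\t$ (respectively in imaginary root spaces in the affine case).

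\textbf{Inverse map.} Send a $\t$-stable subalgebra $\g'$ to the set $\Psi(\g')$ of real roots $\alpha$ with $\g_{\alpha} \subseteq \g'$. This uses that $\t$ acts semisimply, so $\g'$ is a sum of its weight components, and that real root spaces are one-dimensional, so each real component of $\g'$ is either $0$ or the full $\g_{\alpha}$. Closedness of $\Psi(\g')$ follows from $[\g_{\alpha}, \g_{\beta}] = \g_{\alpha+\beta}$ whenever $\alpha$, $\beta$, $\alpha+\beta$ are real roots, which holds by $\mathfrak{sl}_2$-theory or Chevalley basis relations. Reflection stability should come from the $\mathfrak{sl}_2$-triple through $\g_{\pm\alpha}$: if both $\g_{\alpha}$ and $\g_{-\alpha}$ lie in $\g'$, then the $\alpha$-string through $\beta$ is traversed by iterated brackets, so $s_\alpha(\beta) \in \Psi(\g')$.

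\textbf{Main obstacle.} The step I expect to be hardest is the mismatch in the conventions. A regular subalgebra need not contain $\g_{-\alpha}$ when it contains $\g_{\alpha}$; the Borel subalgebra is an example. Such a subalgebra yields a closed subset that is not a root subsystem. Likewise, a regular subalgebra may contain arbitrary subspaces of $\t$ and of the imaginary root spaces.

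I would therefore read the bijection as the one in \cite[Definition 12.1.3 ff.]{RV19}: regular subalgebras of the form generated by root spaces, symmetric under $\alpha \mapsto -\alpha$, with Cartan part determined by the roots. Under that reading, $\Psi(\g_{\Psi^{\bullet}_2}) = \Psi^{\bullet}_2$ holds, using that no real root outside $\Psi^{\bullet}_2$ appears in the explicit description above. Conversely, $\g_{\Psi(\g')} = \g'$ for $\g'$ in this class, since $\g'$ is generated by its real root spaces. In the affine case the extra check is that imaginary components produced by brackets never create new real roots. This holds because $[\t \otimes t^m, \g_{\beta}] \subseteq \g_{\beta + m\hbar}$ only arises as $[[\g_{\alpha}, \g_{-\alpha+m\hbar}], \g_{\beta}]$, which closedness already controls. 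Ultimately I would cite \cite{RV19} and \cite{Dyn57} for the precise statement instead of redoing these checks.
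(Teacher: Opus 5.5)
Your proposal is correct and lands where the paper's proof does, namely on the citations \cite{Dyn57}, \cite{BD49} for the finite case and \cite[Corollary 12.1.5]{RV19} for the affine case. Your remark that the bijection only holds for a restricted class of $\t$-stable subalgebras (symmetric under $\alpha \mapsto -\alpha$ and generated by real root spaces), and not for arbitrary ones such as a Borel subalgebra, is accurate; the paper's formulation glosses over this point.

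One small correction to your sketch concerns $\gamma=\alpha+j\hbar$ with the same finite part as $\alpha$. There the root $\alpha+(j+m)\hbar$ produced by $[[\g_{\alpha},\g_{-\alpha+m\hbar}],\g_{\gamma}]$ is controlled not by closedness, which is vacuous in type $A^{(1)}_1$, but by reflection-stability. Reflection-stability forces the set $\{ n \in \Z \mid \alpha+n\hbar\in\Psi^{\bullet}_2\}$, which contains $0$, $j$ and $-m$, to be a subgroup of $\Z$, so it contains $j+m$.
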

\begin{proof}
The case of finite root systems is classical, going back to \cite{Dyn57}, \cite{BD49}. For the case of affine real root systems, see \cite[Corollary 12.1.5]{RV19}.
\end{proof}

Also see \cite[Theorem 12.3.1]{RV19} for other equivalent characterization of regular subalgebras of affine Kac--Moody algebras in terms of combinatorial data.

\begin{notation}
Since $k$ is algebraically closed of characteristic zero, to each regular subalgebra of $\g$ resp. $\g_{\aff}$ we can associate the corresponding connected subgroup of $G$ resp. $\lo G$.
We call these \emph{regular subgroups}.
\end{notation}

By construction, Proposition \ref{proposition: closed root subsystems and regular subalgebras} also parametrizes regular subgroups. The projection and affinization of closed root subystems from \S \ref{section: projection and affinization} induce corresponding constructions on regular subalgebras and regular subgroups.

\subsubsection{Centralizers of semisimple elements.}
Regular subalgebras and closed subroot systems in particular arise from centralizers of semisimple elements in the ambient Kac--Moody group. See \cite[Chapter II, \S 6, No.21, Corollaries to Theorem 6.7]{Dyn57} for the finite case.
\begin{lemma}\label{lemma: centralizers give closed subroot systems}
Assume we are in the Setup \ref{setup: finite and affine root systems}. Let $G_{\Psi^{\bullet}}$ be the corresponding group and $T_{\Psi^{\bullet}}$ its maximal torus.
Let $y \in T_{\Psi^{\bullet}}$ be any closed point. Then its connected centralizer $\Cent^{\circ}_{G_{\Psi^{\bullet}}}(y)$ is a regular subgroup of $G_{\Psi^{\bullet}}$ with associated closed root subsystem 
\begin{equation*}
   \Psi^{\bullet}_{\Cent^{\circ}(y)} = \{ \beta \in \Psi^{\bullet} \mid \beta(y) = 1 \} \subseteq \Psi^{\bullet}.
\end{equation*}
\end{lemma}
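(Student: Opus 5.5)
The plan is to compute the Lie algebra of the centralizer weight-space by weight-space and then invoke Proposition \ref{proposition: closed root subsystems and regular subalgebras}. Since $k$ has characteristic zero, the connected centralizer $\Cent^{\circ}_{G_{\Psi^{\bullet}}}(y)$ is determined by its Lie algebra, which is the fixed subalgebra $\g_{\Psi^{\bullet}}^{\Ad(y)}$. In the affine case we read this as the corresponding statement for the ind-group $\lo G \rtimes \Gm^{\rot}$ and its (real-root part of the) Lie algebra $\g_{\aff}$; connectedness and the passage from the group to its Lie algebra would be handled exactly as in the finite case, working with pro-/ind-structure.

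First, I decompose $\g_{\Psi^{\bullet}} = \t_{\Psi^{\bullet}} \oplus \bigoplus_{\beta} \g_{\beta}$ into weight spaces for the maximal torus $T_{\Psi^{\bullet}}$. In the affine case these are $T \times \Gm^{\rot}$ and the spaces $\g_{\beta + m\hbar} = \mathrm{Lie}(U_{\beta+m\hbar})$, together with the imaginary pieces $\t \cdot t^m$. Since $y \in T_{\Psi^{\bullet}}$, the element $\Ad(y)$ acts on $\g_{\beta}$ by the scalar $\beta(y)$; in the affine case this is exactly Lemma \ref{lemma: weights of extended torus on affine root groups}. Hence $\Ad(y)$ preserves this decomposition, and its fixed subalgebra is the Cartan piece (plus those imaginary pieces with trivial weight) together with $\bigoplus_{\beta(y)=1} \g_{\beta}$.

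Next I must verify that $\Psi^{\bullet}_y := \{\beta \mid \beta(y)=1\}$ is a closed root subsystem in the sense of Notation \ref{notation: closed root subsystems}. Both properties come from the fact that $\beta \mapsto \beta(y)$ is a group homomorphism from the root lattice to $k^{\times}$. For closedness, if $\beta, \gamma, \beta+\gamma$ are all roots, then $(\beta+\gamma)(y) = \beta(y)\gamma(y) = 1$. For reflections, $s_{\alpha}(\beta) = \beta - \langle \widecheck{\alpha}, \beta\rangle \alpha$ is an integral combination of $\alpha$ and $\beta$, so it also evaluates to $1$. By Proposition \ref{proposition: closed root subsystems and regular subalgebras}, this closed subsystem determines the regular subalgebra $\g_{\Psi^{\bullet}_y}$ generated by the root spaces $\g_{\beta}$ with $\beta \in \Psi^{\bullet}_y$. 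It is $\t$-stable, so $\g^{\Ad(y)}$ is a regular subalgebra, and the connected subgroup it integrates to is $\Cent^{\circ}(y)$.

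The main obstacle is the affine case. There, the centralizer in $\lo G$ is only an ind-group, so identifying $\Cent^{\circ}(y)$ with the regular subgroup requires care. The imaginary root spaces also complicate the picture: if $\zeta$ is a root of unity of order $\ell$, the pieces $\t\, t^{\ell m}$ are fixed by $\Ad(y)$. The fixed algebra is therefore the real-root regular subalgebra together with these Cartan loops. I would handle this by noting that regularity and the bijection of Proposition \ref{proposition: closed root subsystems and regular subalgebras} concern only the real roots \cite[Corollary 12.1.5]{RV19}. The associated root subsystem is therefore still $\Psi^{\bullet}_y$, and the statement as formulated asks only for this identification of the root subsystem.
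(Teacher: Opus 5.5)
Your proposal is correct and follows essentially the same route as the paper. You reduce to the Lie algebra in characteristic zero, note that the adjoint action of $y$ scales each root space $\g_{\beta}$ by $\beta(y)$, so that the fixed subalgebra is Cartan-stable and hence regular, and then identify its closed root subsystem root by root via Proposition \ref{proposition: closed root subsystems and regular subalgebras}. Your explicit check of closedness and reflection-stability is the content of the remark immediately following the lemma. Your caveat about the fixed imaginary pieces $\t\cdot t^{\ell m}$ in the affine case addresses a point the paper silently passes over.
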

\begin{proof}

Since we are working in characteristic zero and $\Cent^{\circ}(y)$ is connected, it is sufficient to describe its Lie algebra. This is clearly a regular subalgebra -- it is stable under the action of the Cartan subalgebra corresponding to the maximal torus $T$ containing $y$. Hence it comes from a closed root subsystem by Proposition \ref{proposition: closed root subsystems and regular subalgebras}. The subsystem is given by $\Psi^{\bullet}_{\Cent^{\circ}(y)}$, as can be checked root-by-root: the adjoint action of $y$ fixes the root space $U_{\beta}$ labeled by $\beta$ if and only if $\beta(y)=1$.
\end{proof}

\begin{remark}
It is also easy to check that $\Psi^{\bullet}_{\Cent^{\circ}(y)}$ is a closed root subsystem by hand. Indeed, assume $\alpha, \beta \in \Psi^{\bullet}_{\Cent^{\circ}(y)}$. Then $\alpha(y) = \beta(y) =1$.
To see that $\Psi^{\bullet}_{\Cent^{\circ}(y)}$ is a root subsystem, note that $s_{\alpha}(\beta) = \beta {-\langle \beta, \alpha^{\vee} \rangle}\cdot \alpha$. Hence also $s_{\alpha}(\beta)(y) = \beta(y) \cdot \alpha(y)^{-\langle \beta, \alpha^{\vee} \rangle} = 1$, showing $s_{\alpha}(\beta) \in \Psi^{\bullet}_{\Cent^{\circ}(y)}$. 
To see that $\Psi^{\bullet}_{\Cent^{\circ}(y)}$ is closed, note that $(\alpha + \beta)(y) = \alpha(y) \cdot \beta(y) = 1$, showing $\alpha + \beta \in \Psi^{\bullet}_{\Cent^{\circ}(y)}$.
\end{remark}

We review the structure of these centralizers in more detail in the two cases from Setup \ref{setup: finite and affine root systems} separately in \S \ref{section: finite case and pseudo-levi subgroups} and \S \ref{section: centralizers in the affine case} below.

\subsubsection{Finite case and pseudo-Levi subgroups.}\label{section: finite case and pseudo-levi subgroups}
We now specialize to the finite case.
Let $G$ be a reductive group over an algebraically closed field $k$. Given any semisimple element $x \in G$, we wish to recall the structure of its centralizer $\Cent_{G}(x)$. 
Let $T \leq G$ be a maximal torus. Clearly, $\Cent_G(x)$ depends only on the conjugacy class of $x$. Conjugating inside $G$, we may thus without loss of generality assume $x \in T$.

\begin{proposition}\label{proposition: pseudo-levi subgroups}
Let $G$ be a connected reductive group over and algebraically closed field $k$. Given any $x \in T$, we have the following.
\begin{enumerate}
    \item[(i)] The centralizer $\Cent_{G}(x) \leq G$ of $x$ in $G$ is a reductive group.
    \item[(ii)] It is generated by $T$, the root subgroups $U_{\beta}$ for those roots $\beta \in \Phi^{\bullet}$ such that $\beta(x)=1$, and those Weyl group representatives $n_w$ that commute with $x$.
    \item[(iii)] The identity component $\Cent^{\circ}_{G}(x)$ is a connected reductive group with root system determined by $(X^{\bullet}, \Phi_{\Cent^{\circ}(x)})$, where $X_{\bullet} = X_{\bullet}(T)$ and $\Phi^{\bullet}_{\Cent^{\circ}(x)} = \{ \beta \in \Phi^{\bullet}_G \mid \beta(x)=1\}$. Its Weyl group $W_x \leq W$ is the subgroup generated by the reflections $s_{\beta}$, $\beta \in \Phi^{\bullet}_{L_x}$.   
    \item[(iv)]  If $G$ is moreover simply connected, $\Cent_{G}(x)$ is connected.     
\end{enumerate}
\end{proposition}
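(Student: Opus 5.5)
This is the classical structure theorem for centralizers of semisimple elements, and I would prove it by reducing everything to a Bruhat decomposition computation. Since $x \in T$, the centralizer $\Cent_G(x)$ contains $T$ and is a closed subgroup normalized by $T$. The adjoint action of $x$ on $U_\beta$ is given by $u_\beta(z) \mapsto u_\beta(\beta(x) z)$. Hence $U_\beta \leq \Cent_G(x)$ precisely when $\beta(x)=1$. On the Lie algebra level, $\mathrm{Lie}\,\Cent_G(x) = \g^{x} = \t \oplus \bigoplus_{\beta(x)=1} \g_\beta$, because we are in characteristic zero. By Lemma \ref{lemma: centralizers give closed subroot systems}, $\Phi^{\bullet}_{\Cent^{\circ}(x)} = \{\beta \mid \beta(x)=1\}$ is a closed root subsystem, and $\Cent^\circ_G(x)$ is the regular subgroup generated by $T$ and these $U_\beta$.

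For (ii) I would compute directly with the Bruhat decomposition $G = \bigsqcup_{w} U n_w T U$, writing elements in the normal form $g = u\, n_w t\, u'$ with $u' \in U$ and $u \in U_w := U \cap n_w U' n_w^{-1}$. Conjugation by $x$ preserves each factor and acts diagonally on root coordinates. Uniqueness of the normal form therefore shows that $g$ commutes with $x$ if and only if three conditions hold: $n_w$ commutes with $x$ modulo $T$, meaning $w(x)=x$; each root coordinate of $u$ and $u'$ is supported on roots with $\beta(x)=1$ (after absorbing $t$); and $t$ is arbitrary. The case where $w(x)=x$ forces a correction by an element of $T$ is handled by choosing the representative $n_w$ inside $\Cent_G(x)$, which is possible once $w(x)=x$ since $T$ is commutative. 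This yields generation by $T$, the $U_\beta$ with $\beta(x)=1$, and the $n_w$ with $w \in \Stab_W(x)$.

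For (i) and (iii), the group $\Cent^\circ_G(x)$ is a connected smooth group whose Lie algebra is $\t \oplus \bigoplus \g_\beta$, where the index set is symmetric under $\beta \mapsto -\beta$. Its unipotent radical is normalized by $T$, so it would be a product of root groups. Each $U_\beta$ pairs with $U_{-\beta}$ into a copy of $SL_2$ or $PGL_2$, so the unipotent radical is trivial and the group is reductive. The root datum is then $(X^\bullet, \Phi^\bullet_{\Cent^\circ(x)}, X_\bullet, \text{coroots})$, and its Weyl group $N(T)\cap \Cent^\circ/T$ is generated by the reflections $s_\beta$ with $\beta(x)=1$. Because $\Cent_G(x)/\Cent^\circ_G(x)$ is finite, the full group $\Cent_G(x)$ is reductive too.

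The main obstacle is (iv), Steinberg's connectedness theorem. Using (ii), it suffices to show $\Stab_W(x) = W_x$, where $W_x$ is the subgroup generated by the reflections $s_\beta$ with $\beta(x)=1$. For this I would write $x = \exp(v)$ for some $v \in \t$ and use simple connectivity, which gives $X_\bullet = \Z\Phi_\bullet$. Then $\Stab_W(x)$ becomes the stabilizer of $v$ in the affine Weyl group $W_{\aff}$ acting on $\t_\R$, after projecting to $W$. I would invoke the standard fact that point stabilizers in the Coxeter group $W_{\aff}$ are generated by the affine reflections they contain, via a fundamental alcove argument. Affine reflections fixing $v$ are exactly $s_{\beta,m}$ with $\beta(v)=m$, and these project to $s_\beta$ with $\beta(x)=1$. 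Hence every element of $\Stab_W(x)$ lies in $W_x$, and $\Cent_G(x)=\Cent_G^\circ(x)$.
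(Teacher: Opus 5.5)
Your arguments for (i)--(iii) are the standard ones behind the result the paper simply cites from \cite[\S 2.2]{Hum95}. Two cosmetic points. Uniqueness of the Bruhat normal form $g=u\,n_w t\,u'$ gives exactly $xux^{-1}=u$, $w^{-1}(x)=x$ and $xu'x^{-1}=u'$, so nothing needs to be absorbed into $t$. Once $w(x)=x$, every representative $n_w$ already commutes with $x$, since $n_wxn_w^{-1}=w(x)$ does not depend on the representative. The reductivity argument via the symmetry $\beta\mapsto-\beta$, and the reduction of (iv) to $\Stab_W(x)=W_x$, are fine.

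The gap is in your proof that $\Stab_W(x)=W_x$.

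\begin{itemize}
\item The fundamental-alcove argument concerns $W_{\aff}=\Z\Phi_{\bullet}\rtimes W$ acting on the real affine space $\t_{\mathbb{R}}=X_{\bullet}\otimes\mathbb{R}$. It therefore only handles points of the compact torus $X_{\bullet}\otimes S^1=\t_{\mathbb{R}}/X_{\bullet}$.
\item A general $x\in T=X_{\bullet}\otimes k^{\times}$ is not of this form; already $x=\widecheck{\beta}(2)$ in $SL_2$ is not. Over an arbitrary algebraically closed field of characteristic zero there is no exponential at all.
\item Over $\C$, a logarithm $v=v_1+iv_2$ of $x$ is complex. Its stabilizer is $\Stab_{W_{\aff}}(v_1)\cap\pi^{-1}(\Stab_W(v_2))$, where $\pi\colon W_{\aff}\to W$ is the projection. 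That this intersection is generated by reflections needs a further argument, for instance Chevalley--Steinberg applied to the finite reflection group $\Stab_{W_{\aff}}(v_1)$ acting on $v_2$.
\end{itemize}

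A cleaner repair also covers general $k$.

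\begin{itemize}
\item Let $A\le k^{\times}$ be the subgroup generated by the coordinates of $x$. It is finitely generated, so $A\cong\mu_N\times\Z^r$.
\item $A$ embeds into $\mathbb{R}/\Z$: send a generator of $\mu_N$ to $1/N$, and the free generators to $\theta_1,\dots,\theta_r$ with $1,\theta_1,\dots,\theta_r$ linearly independent over $\Q$.
\item Since $X_{\bullet}$ is free, $X_{\bullet}\otimes A\hookrightarrow X_{\bullet}\otimes\mathbb{R}/\Z$ is injective and $W$-equivariant. Moreover $\beta(x)=1$ exactly when the image has $\beta$-value $0$ in $\mathbb{R}/\Z$.
\item Hence $\Stab_W(x)$ and $W_x$ are unchanged, and you may assume $x$ lies in the compact torus, where your argument works.
\end{itemize}

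This is precisely the passage from real topological tori to algebraic tori that the paper invokes from \cite{Ste68} in Proposition \ref{proposition: steinberg fixed point theorem}.
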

\begin{proof}
See \cite[\S 2]{Hum95} for an excellent overview. 
Statements (i), (ii), (iii) are \cite[\S 2.2. Theorem]{Hum95}; (iv) is \cite[\S 2.11. Theorem]{Hum95}.
Also see \cite[\S 4, \S 5]{Ste68}, \cite[\S 4, in particular \S 4.1 and \S 4.2]{SS70}, \cite[\S 6]{MS03}.
\end{proof}

\begin{notation}
The connected centralizers $\Cent^{\circ}(x) \leq G$ for varying $x\in T$ are called \emph{pseudo-Levi subgroups} of $G$. These are regular subgroups of $G$.
\end{notation}

\begin{remark}
Let $G$ be a reductive group and $x \in T$.
\begin{enumerate}
\item[(1)] Assume $G$ is of type A. Then any pseudo-Levi $\Cent^{\circ}(x)$ is in fact a Levi subgroup of $G$. Its Weyl group $W_x$ is generated by a subset of simple reflections.
\item[(2)] In other types, $\Cent^{\circ}(x)$ might not be a Levi subgroup of $G$. See \cite[Table 1]{RV19}, \cite[\S 12.1]{Kan01}. for a complete list of pseudo-Levi subgroups that appear in a given $G$.
This classification was done in \cite{BD49}, by describing all (closed) subroot systems of $\Phi^{\bullet}_G$.
\item[(3)] If $G$ is not simply connected, the centralizer $\Cent(x)$ might not be connected. For example \cite[4.3.Examples]{SS70}, the centralizer of $(i, -i)$ in $PGL_2$ is given by two discrete points corresponding to monomial matrices.
\end{enumerate}  
\end{remark}

\subsubsection{Centralizers in the affine case.}\label{section: centralizers in the affine case}
In the affine case, the above discussion immediately gives the following.

\begin{lemma}\label{proposition: affine centralizer as regular subgroup}
Let $\lo G \rtimes \Gm^{\rot}$ be the loop group of a reductive group $G$ extended by loop rotation. Take $(x, \zeta) \in T \times \Gm^{\rot}$ and denote $\ell =\ord(\zeta) \geq 0$. 
The identity component $\Cent^{\circ}_{\lo G \rtimes \Gm^{\rot}}(x, \zeta)$ is a regular subgroup of $\lo G$ associated to the closed root subsystem $\Phi^{\bullet}_{\Cent^{\circ}(x, \zeta)} = \{ \beta + m\hbar \in \Phi^{\bullet}_{G, \aff} \mid \beta(x) \cdot \zeta^m = 1 \}$.
\end{lemma}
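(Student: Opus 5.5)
The plan is to deduce the statement directly from Lemma \ref{lemma: centralizers give closed subroot systems} in case (ii) of Setup \ref{setup: finite and affine root systems}. The ambient group is $\lo G \rtimes \Gm^{\rot}$ and the maximal torus is the extended torus $T \times \Gm^{\rot}$. We will view $y := (x, \zeta)$ as a closed point of $T_{\Psi^{\bullet}} = T \times \Gm^{\rot}$. That lemma then says that $\Cent^{\circ}_{\lo G \rtimes \Gm^{\rot}}(y)$ is a regular subgroup whose closed root subsystem is $\{\gamma \in \Phi^{\bullet}_{G,\aff} \mid \gamma(y) = 1\}$. It therefore remains to evaluate the affine roots at $y$.

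First, I would compute this evaluation using the conventions of \S\ref{subsubsection: extended tori}. An affine root $\beta + m\hbar$ is the character $T\times\Gm^{\rot} \to \Gm$ given by $(x,\zeta) \mapsto \beta(x)\zeta^m$. This agrees with Lemma \ref{lemma: weights of extended torus on affine root groups}, which identifies $\beta(x)\zeta^m$ as the weight of the adjoint action of $(x,\zeta)$ on $U_{\beta+m\hbar}$. Consequently $(x,\zeta)$ centralizes $U_{\beta+m\hbar}$ exactly when $\beta(x)\zeta^m = 1$, and the condition in the lemma becomes the one stated.

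Second, I would check that the regular subgroup really sits inside $\lo G$ and not merely inside $\lo G \rtimes \Gm^{\rot}$. The Lie algebra of the centralizer is the fixed subalgebra of $\mathrm{Ad}(x,\zeta)$ acting on $\g_{\aff} = \g\llp t\rrp \oplus \mathrm{Lie}(\Gm^{\rot})$. This fixed subalgebra is spanned by three pieces: the Cartan part $\t_{\aff}$, which contains the rotation direction; the root spaces whose weight is trivial; and, by the Fourier decomposition, the fixed part of $\t\llp t\rrp$. The real-root part, which generates the regular subalgebra as in Proposition \ref{proposition: closed root subsystems and regular subalgebras}, lies in $\g\llp t \rrp$. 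So the regular subgroup it generates sits inside $\lo G$, together with the torus.

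The main obstacle is foundational. Lemma \ref{lemma: centralizers give closed subroot systems} is applied to an ind-group and Kac--Moody setting, and there the passage ``connected subgroup determined by its Lie algebra'' needs care. I would handle it in characteristic zero by noting two things: the fixed subalgebra is graded by $\ell = \ord(\zeta)$-periodic loop degrees, and the connected subgroup generated by $T$ and the root groups $U_{\beta+m\hbar}$ with $\beta(x)\zeta^m=1$ has exactly this Lie algebra. I would then cite \cite[Corollary 12.1.5]{RV19} for the identification of regular subalgebras with closed subsystems.
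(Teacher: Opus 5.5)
Your first two steps are essentially the paper's proof. The paper also reduces everything to the root-by-root observation that $(x,\zeta)$ acts on $U_{\beta+m\hbar}$ by $\beta(x)\zeta^m$ (Lemma \ref{lemma: weights of extended torus on affine root groups}), i.e.\ to Lemma \ref{lemma: centralizers give closed subroot systems} in case (ii). The only difference is how the rotation factor is removed. The paper does this at group level: $(x,\zeta)$ commutes with every $(1,\zeta')$, since both lie in $T\times\Gm^{\rot}$, so $\Cent^{\circ}_{\lo G\rtimes\Gm^{\rot}}(x,\zeta)=\Cent^{\circ}_{\lo G}(x,\zeta)\rtimes\Gm^{\rot}$. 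This is cleaner than your Lie-algebra decomposition and makes precise in what sense the subgroup lives in $\lo G$.

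The claim in your last paragraph, however, is false, and you should drop it. When $\zeta$ is a root of unity of order $\ell\geq 1$, the fixed part of $\t\llp t\rrp$ is all of $\t\llp t^{\ell}\rrp$. The subgroup generated by $T$ and the root groups $U_{\beta+m\hbar}$ with $\beta(x)\zeta^m=1$ only reaches the toral directions $\widecheck{\beta}\otimes t^{\ell j}$ produced by brackets $[\g_{\beta+m\hbar},\g_{-\beta+m'\hbar}]$. Two examples:
\begin{itemize}
\item For $G=GL_n$ and $x=1$, the central loops $(1+at^{\ell})\cdot\mathrm{id}$ lie in the identity component of the centralizer. They have non-constant determinant, so they are not in that subgroup.
\item For $G=SL_2$ and $x$ with $\Phi^{\bullet}_{[x,\zeta]}=\emptyset$, the identity component contains $\mathrm{diag}(1+at^{\ell},(1+at^{\ell})^{-1})$, while your generated subgroup is just $T$.
\end{itemize}
So the connected centralizer is not the group generated by $T$ and those root groups, and its Lie algebra is not generated by theirs. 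What the lemma actually establishes, and all the paper's proof checks (``it is enough to describe its affine root spaces''), is which real affine root spaces lie in the connected centralizer. Your first two steps already do exactly that.
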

\begin{proof}
Firstly, note that 
\begin{equation*}
    \Cent^{\circ}_{\lo G \rtimes \Gm^{\rot}}(x, \zeta) = \Cent^{\circ}_{\lo G}(x, \zeta) \rtimes \Gm^{\rot}.
\end{equation*}
because for any $\zeta' \in \Gm^{\rot}$, the elements $(x, \zeta)$ and $(1, \zeta')$ lie in the same torus $T \times \Gm^{\rot}$ and thus commute. Hence we can restrict to the centralizer inside $\lo G$. 

Since we consider the connected centralizer, it is enough to describe its affine root spaces.
The conjugation by $(x, \zeta)$ acts on such root space $U_{\beta + m\hbar}$ by the scalar $\beta(x) \cdot \zeta^m$ as in Lemma \ref{lemma: weights of extended torus on affine root groups}.
The fixed points are the whole $U_{\beta + m\hbar}$ if $\beta(x) \cdot \zeta^m =1$ and a single point else. The statement follows.   
\end{proof}
We will return to Lemma \ref{proposition: affine centralizer as regular subgroup} and give a more detailed description of $\Cent^{\circ}_{\lo G}(x, \zeta)$ and it root system later in Lemma \ref{lemma: root spaces of J} and Lemmata.

\begin{remark}
Alternatively, \cite{RV19} classified all closed root subsystems of $\Phi^{\bullet}_{G, \aff}$, see in particular \cite[Proposition 4.1.1 and Theorem 4.2.1]{RV19}; also see \cite[Theorem 5.3]{FRT08}.
The computations above can be also bootstrapped from \cite[Proposition 3.2.1 with notation of \S 3.1]{RV19} 
\end{remark}

\begin{remark}
The closely related classification problem for all (not necessarily closed) root subsystems of $\Phi^{\bullet}_{G, \aff}$ was resolved much earlier \cite{Dye90, DL11a, DL11b}. The answer here is similar, but slightly more complicated \cite[Theorem 5.1 and \S 5.5]{Dye90}, \cite[Theorem 3]{DL11a}.
\end{remark}

\subsubsection{Stabilizers in Weyl groups as reflection subgroups.}\label{section: stabilizers in weyl groups as reflection subgroups}
Let $W_{\Psi^{\bullet}_1}$ denote the Weyl group of the ambient root system $\Psi^{\bullet}_1$. It has the structure of a Coxeter group, whose reflections are labeled by the elements of $\Psi^{\bullet}_1$.
\begin{notation}
A subgroup $W'$ of $W_{\Psi^{\bullet}_1}$ is called a \emph{reflection subgroup} if it is generated by a subset of reflections $S' \subseteq (s_{\beta} \mid \beta \in \Psi^{\bullet}_1)$.    
\end{notation}

\begin{remark}
Reflection subgroups of $W_{\Psi^{\bullet}_1}$ correspond to (not necessarily closed) root subsystems of ${\Psi^{\bullet}_1}$. Indeed, to such $W'$ we can associate the subset $\{\beta \in \Psi^{\bullet}_1 \mid s_{\beta} \in W'\}$, which is a root subsystem of $\Psi^{\bullet}_1$. Vice versa, to any root subsystem we can associate the reflection subgroup generated by reflections in its roots. This defines a bijection between root subsystems $\Psi^{\bullet}_2$ of $\Psi^{\bullet}_1$ and reflection subgroups $W_{\Psi^{\bullet}_2}$ of its Weyl group $W_{\Psi^{\bullet}_1}$. See \cite[\S 1.2]{DL11a}, \cite[\S 2]{FRT08}). 
\end{remark}

Assume we are in the Setup \ref{setup: finite and affine root systems}.
For simplicity, assume that $G=G^{\sc}$ is simply connected.
The Weyl group $W_{\Psi^{\bullet}_1}$ naturally acts on the maximal torus $T_{\Psi^{\bullet}}$ of the corresponding group -- in the two respective cases we have $W_G \acts T$ and $W_{G, \aff} \acts T \times \Gm^{\rot}$. The following result is classical.

\begin{proposition}\label{proposition: steinberg fixed point theorem}
Assume we are in the Setup \ref{setup: finite and affine root systems} and $G=G^{\sc}$. Let $ z \in T_{\Psi^{\bullet}}$ be any closed point of the maximal torus. Then the stabilizer of $z$ under the action $W_{\Psi^{\bullet}_1} \acts T_{\Psi^{\bullet}_1}$ is a reflection subgroup of $W_{\Psi^{\bullet}_1}$.
\end{proposition}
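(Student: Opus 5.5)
The plan is to reduce the statement to Steinberg's fixed-point theorem: in a simply connected group, the centralizer of a semisimple element is connected. We then read off the Weyl group of that centralizer, which by Proposition \ref{proposition: pseudo-levi subgroups}(iii) is generated by reflections.

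\emph{Finite case.} Let $z = x \in T$. Since $G = G^{\sc}$, Proposition \ref{proposition: pseudo-levi subgroups}(iv) shows that $\Cent_G(x)$ is connected, hence equal to the pseudo-Levi $\Cent^{\circ}_G(x)$. Now compare stabilizers:
\begin{equation*}
\Stab_W(x) = \big(N_G(T) \cap \Cent_G(x)\big)/T = N_{\Cent_G(x)}(T)/T.
\end{equation*}
The first equality holds because a representative $n_w$ fixes $x$ under conjugation exactly when $w(x) = x$. Since $\Cent_G(x)$ is connected reductive with maximal torus $T$, the right-hand side is its Weyl group $W_x$. By Proposition \ref{proposition: pseudo-levi subgroups}(iii), $W_x$ is generated by the reflections $s_\beta$ with $\beta(x) = 1$. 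Hence $\Stab_W(x)$ is a reflection subgroup.

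\emph{Affine case.} Let $z = (x, \zeta) \in T \times \Gm^{\rot}$ with $W_{\aff} = \Z\Phi_\bullet \rtimes W$. I would run the same argument inside the loop group, using the centralizer from Lemma \ref{proposition: affine centralizer as regular subgroup}; the key input is an affine Steinberg theorem saying that $\Cent_{\lo G \rtimes \Gm^{\rot}}(x, \zeta)$ is connected for simply connected $G$. Since this is a Kac--Moody statement that is harder to cite cleanly, I would instead prove the affine case combinatorially. Identify $T \times \Gm^{\rot}$ with the part of the affine torus on which $W_{\aff}$ acts through $(\lambda, w)\cdot(x,\zeta) = (w(x)\,\zeta^{\lambda}, \zeta)$, with the sign convention for $\zeta^{\lambda}$ matching the paper's. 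Split into two cases according to whether $\zeta$ is generic or a root of unity.

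\emph{$\zeta$ generic.} If $(\lambda,w)$ fixes $z$, then $\zeta^{\lambda} = x\,w(x)^{-1}$. Since $\zeta$ is generic, $\lambda$ is determined, and the elements $\{\beta + m\hbar \mid \beta(x)\zeta^m = 1\}$ project injectively to $\Phi^\bullet$. I would show that the map $(\lambda,w)\mapsto w$ identifies the stabilizer with the stabilizer of a suitable point of $T$ for the finite Weyl group, after twisting by the translation part, and then apply the finite case.

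\emph{$\zeta$ of order $\ell$.} Here a cleaner route is the classical alcove argument. The group $\Stab_{W_{\aff}}(z)$ acts on $x$ modulo the subgroup $\zeta^{\Z\Phi_\bullet}$. Writing $x = \exp(2\pi i\, v)$ and $\zeta = \exp(2\pi i/\ell)$ up to the real part (handled as in the generic case), the stabilizer becomes the stabilizer of the point $\ell v$ under the $\ell$-dilated affine Weyl group action on $X_{\bullet}\otimes\C$. Separate real and imaginary parts of $v$: the imaginary part contributes the stabilizer of a vector under a finite reflection group, and the real part contributes the stabilizer of a point under an affine reflection group. By the standard result that point stabilizers in affine reflection groups acting on real space are generated by the reflections in the hyperplanes through the point (\cite{Hum95}-style, or Bourbaki), both are reflection groups. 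Their intersection is again generated by reflections, because it is the stabilizer of a point in the product action on the real space $X_{\bullet}\otimes\R \times X_\bullet \otimes \R$, in which each reflection acts simultaneously.

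The main obstacle I expect is this last reduction. It requires checking that $G = G^{\sc}$ is exactly what makes the translation lattice equal $\Z\Phi_\bullet$, so that no extra non-reflection elements such as $\pi_1$-twists appear. It also requires handling the complex, non-real parameters correctly, so that the stabilizer of a complex point is still a real point stabilizer in a suitable real affine arrangement.
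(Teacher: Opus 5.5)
Your finite-case argument is correct and uses the same input as the paper, which simply cites Steinberg. Connectedness of $\Cent_G(x)$ (Proposition~\ref{proposition: pseudo-levi subgroups}(iv)) gives $\Stab_W(x)=N_{\Cent_G(x)}(T)/T=W_x$. The affine part fails at both of your reductions, and the failures cannot be repaired: for the action $(\lambda,w)\cdot(x,\zeta)=(\lambda(\zeta)\,w(x),\zeta)$ the affine assertion is false.

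\emph{Root of unity.} At $\zeta$ of order $\ell$, write $x=\exp(2\pi i v)$. Then $(\lambda,w)$ fixes $(x,\zeta)$ if and only if $w(v)+\lambda/\ell-v\in X_{\bullet}$, not $w(v)+\lambda/\ell-v=0$; you dropped the kernel of $\exp(2\pi i\,\cdot)$. Done correctly, your computation gives a surjection of $\Stab_{W_{\aff}}(x,\zeta)$ onto the stabilizer of $v$ in $\tfrac{1}{\ell}X_{\bullet}\rtimes W$. That stabilizer is generated by reflections, each lifting to an affine reflection fixing $(x,\zeta)$. The kernel of the surjection is $\ell X_{\bullet}=\ell\Z\Phi_{\bullet}$, since $\lambda(\zeta)=1$ for $\lambda\in\ell X_{\bullet}$. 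So what your computation proves is $\Stab_{W_{\aff}}(x,\zeta)=\ell\Z\Phi_{\bullet}\cdot W_J$, which is usually strictly bigger than the reflection subgroup $W_J$. For instance, take $\zeta=1$ and $x$ regular, or $G=\mathrm{SL}_2$, $\zeta=-1$, $x=\widecheck{\alpha}(2)$: no affine reflection fixes the point, yet all of $\ell\Z\Phi_{\bullet}$ does. The paper's own diagram \eqref{equation: stabilizer exact sequences for zeta root of unity} already records $\ell\Z\Phi_{\bullet}$ inside the stabilizer.

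\emph{Generic $\zeta$.} Your injectivity of $\Stab_{W_{\aff}}(x,\zeta)\to W$ is fine. But the image is $\{w\in W\mid w(x)x^{-1}\in\zeta^{X_{\bullet}}\}$, the stabilizer of the class of $x$ in $T/\zeta^{X_{\bullet}}$, not of any point of $T$. So no ``twist by the translation part'' reduces to the finite case. For $|\zeta|\neq 1$ this quotient is the abelian variety $X_{\bullet}\otimes\C^{\times}/\zeta^{\Z}$; its $W$-stabilizers correspond to centralizers of commuting pairs and need not be reflection groups.

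Here is an explicit counterexample, valid for every non-torsion $\zeta$. Take $G=G_2$, $s^2=\zeta$ and $x=\widecheck{\alpha}_1(-1)\,\widecheck{\alpha}_2(s)$. Then $(\widecheck{\alpha}_2,w_0)$ with $w_0=-1\in W$ fixes $(x,\zeta)$. However, every root of $G_2$ pairs oddly with $\widecheck{\alpha}_1$ or with $\widecheck{\alpha}_2$. Hence $\beta(x)\notin\zeta^{\Z}$ for every root $\beta$, and no affine reflection fixes $(x,\zeta)$.

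So only the finite case of the proposition holds. The paper's citation of Steinberg covers $W_{\aff}$ acting on the real space $X_{\bullet}\otimes\R$, which yields the finite statement on tori, not $W_{\aff}$ acting on $T\times\Gm^{\rot}$. The later identification $\Stab_{W_{\aff}}(x,\zeta)=W_J$ in the paper inherits the same problem.
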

\begin{proof}
This is due to Steinberg; variants of this result are called \emph{Steinberg fixed point theorems}. In the case of finite Weyl group $W$, this is basically \cite[Theorem 1.5]{Ste64}. The statement in terms of tori, including the generalization to the affine case of $W_{\aff} = \Z\Phi^{\bullet}_G \rtimes W$, follows from \cite[\S 3, \S 4, \S 5]{Ste68} -- see in particular \cite[\S 4.2]{Ste68} for the relevant statement on fixed-points on real topological tori and \cite[\S 5.1 and \S 5.3 Theorem]{Ste68} for the translation to algebraic tori. The simply-connectedness is needed so that $X_{\bullet} = \Z\Phi_{\bullet}$.

For alternative approaches to the finite case, see \cite[chap. V]{Bou68} or \cite[Theorem 1.1]{Leh04}. For the general case, see \cite[Theorem 2.2]{PS19}, \cite[Introduction]{Pue23}. Further see \cite[\S 11, \S 12]{Kan01}.
\end{proof}

\begin{remark}
Recall the we assume $G=G^{\sc}$.
In Proposition \ref{proposition: steinberg fixed point theorem}, the stabilizer is the reflection subgroup corresponding to the closed root subsystem $\Phi^{\bullet}_{\Cent^{\circ}(z)}$, namely
\begin{equation}\label{equation: affine stabilizer as reflection group}
    \Stab_{W_{\Psi^{\bullet}_1}}(z) = W_{\Phi^{\bullet}_{\Cent^{\circ}(z)}}.
\end{equation}

Indeed, knowing Proposition \ref{proposition: steinberg fixed point theorem}, it suffices to describe the subset of reflections which lie inside the stabilizer. (Alternatively, we may lift elements of $W_{\Psi^{\bullet}_1}$ to $G_{\Psi^{\bullet}_1}$ and use the description of the stabilizer there.) We discuss this concretely in Lemmata \ref{lemma: stabilizer in finite weyl group}, \ref{lemma: affine stabilizer in weyl group}.
However, note that \eqref{equation: affine stabilizer as reflection group} may fail without the simply connected assumption; see Proposition \ref{proposition: pseudo-levi subgroups}, (ii). 
\end{remark}

%% file: 2_equivariant_K_theory.tex
\section{Equivariant \texorpdfstring{$K$}{K}-theory and fixed-points}\label{section: equivariant K-theory and fixed points}

We now set conventions regarding equivariant topological $K$-theory. Here, we recall both the usual topological approach through homotopy theory \S \ref{section: generalized cohomology theories in topology}, as well as the algebraic approach via localizing invariants \S \ref{section: algebraic cohomology theories as localizing invariants}. We also take this as an opportunity to recall how these approaches relate -- they are equivalent on finite type schemes over $k=\C$. We emphasize that we work with genuine equivariance. 

After recalling our notion of fixed-point schemes in \S \ref{section: fixed-point schemes}, we finally present in \S \ref{section: K-theory and HP and cohomology of fixed points} our reinterpretation of classical fixed-point localization in terms of the trace to equivariant periodic cyclic homology. The main statement is Theorem \ref{theorem: fibers of topological K-theory, HP and fixed-points}. We find this formulation both conceptually clear and useful in practice. In particular, the result holds without restrictions for any (possibly singular) $X$ with respect to an arbitrary action of $T$. This formulation further gives the localization map a clear global meaning.

\subsection{Generalized cohomology theories in topology}\label{section: generalized cohomology theories in topology}

\subsubsection{Generalized (co)homology theories.}
Recall the notion of a generalized (co)homology theory in topology \cite{May96}. These are representable by spectra. Multiplicative generalized cohomology theories are represented by ring spectra. Given such spectrum $E$, one may evaluate it on any topological space $X$ by taking the mapping space $\map(E, -)$ resp. $\map(-, E)$ and collecting its homotopy groups into a $\Z$-graded abelian group:
\begin{equation*}
    E_{\bullet}(X) := \pi_{\bullet}(\map(E, X)) \qquad \text{resp.} \qquad E^{\bullet}(X) := \pi_{\bullet}(\map(X, E)).
\end{equation*}
These respectively give the $E$-homology resp $E$-cohomology of $X$.
Our exposition is biased towards the contravariant cohomological picture, which carries a natural ring structure when $E$ is multiplicative. We focus on this case from now on.

\begin{remark}\label{remark: sum and product convention for generalized cohomology theories}
To regard the above $E^{\bullet}(X)$ as actual ring, one needs to choose how to interpret the grading direction: for example, it is a matter of convention whether to take $\bigoplus_{\bullet \in \Z} \pi_{\bullet}(-)$ or $\prod_{\bullet \in \Z} \pi_{\bullet}(-)$.
In many situations -- such as ordinary cohomology of finite CW complexes -- this distinction is vacuous. However, in infinite-dimensional contexts and more general cohomology theories, one needs to treat this seriously. The correct cohomological convention is to take $\prod_{\bullet \in \Z}$. 
\end{remark}

\begin{notation}
Here and below, we use the following convention. For any coefficient ring $\k$, we write $E(-; \k)$ for the version with $\k$-coefficients, represented by the smash product of $E$ with the Moore space of $\k$.  When no coefficients are specified, we have $E(-) = E(-; \Z)$.    
\end{notation}

Given a Lie group $G$, one can axiomatically consider multiplicative $G$-equivariant generalized cohomology theories \cite[\S XIII]{May96}. These are representable by $G$-equivariant ring spectra.

\subsubsection{Generalized cohomology of colimits.}\label{subsubsection: K-theory of colimits}
Recall the following principle \cite[\S 19.4]{May99}, valid for any generalized cohomology theory.
\begin{recollection}
Let $j \in \Z$ and let $X$ be a topological space, given as an $\N_0$-indexed colimit
\begin{equation*}
    X = \colim_{i} X_i.
\end{equation*}
Then there is a short exact sequence
\begin{equation*}
0 \to {\lim_i}^1 (E^{j-1}(X_i)) \to E^j(X) \to \lim_i E^j(X_i) \to 0.    
\end{equation*}
Moreover, if the system $(E^{j-1}(X_i))_{i}$ satisfies the Mittag--Leffler condition, the ${\lim}^1$-term vanishes and we deduce
\begin{equation*}
    E^j(X) \xrightarrow{\cong} \lim_i E^j(X_i).
\end{equation*}
\end{recollection}

\begin{remark}\label{remark: Mittag--Leffler and limit of rings}
Under the Mittag--Leffler assumption and using the correct (product) convention from Remark \ref{remark: sum and product convention for generalized cohomology theories}, we deduce a ring isomorphism
\begin{equation*}
    E^{\bullet}(X) \cong \lim_i E^{\bullet}(X_i).
\end{equation*}
\end{remark}

\begin{remark}\label{remark: topological cellularity}
In particular, assume that $X$ is a CW complex with even cells only, together with its filtration by skeleta $\sk_i X$. Then, for any fixed $j$, all the maps in the inverse system $(E^{j}(X_i), i \in \N_0)$ are surjective, and the Mittag--Leffler condition is trivially satisfied.
\end{remark}

\begin{remark}
For a scheme $X$ over $\C$, consider its complex points $X(\C)$ with analytic topology. Given a generalized cohomology theory $E$, we then denote $E(X) := E(X(\C))$. We use similar convention in the equivariant case.

Assume that $X$ is a proper scheme over $\C$ that possesses a paving by affine spaces. Then its underlying topological space $X(\C)$ a CW complex with even cells only, and Remark \ref{remark: topological cellularity} applies.    
\end{remark}

\subsubsection{Cohomology.}
In particular, we denote by $H\Z$ the Eilenberg--Mac-Lane spectrum representing singular cohomology, and $H\k$ the version with coefficients. It represents singular cohomology; for a topological space $X$, we denote it $H(X) := H_{\sing}(X) := \map(X, H\Z)$. Similarly with other coefficients.

\begin{convention}
Given a topological space $X$, we write
\begin{equation*}
    \widehat{H}^{\bullet}(X) := \prod_{i \geq 0} H^{i}(X).
\end{equation*}
Here, the convention is that we take the direct product of all cohomology groups rather then their direct sum; see Remarks \ref{remark: sum and product convention for generalized cohomology theories}, \ref{remark: Mittag--Leffler and limit of rings}. 
It would be for the best to write simply $H^{\bullet}$ without any adornments; however, since the opposite convention is quite pervasive in geometric representation theory, we write $\widehat{H}^{\bullet}$ to avoid confusion.
\end{convention}

Cohomology extends to the equivariant setup through the Borel construction. Given a topological group $G$, denote $EG \to BG$ the classifying space of $G$ with its universal $G$-bundle. For any $G$-space $X$, its equivariant cohomology is defined by $H_G(X) := H(EG \times^G X)$.

\subsubsection{Topological $K$-theory.}
We denote by $BU$ the complex topological $K$-theory spectrum, i.e. the ring spectrum representing topological $K$-theory. Given a topological space $X$, we denote its topological $K$-theory
\begin{equation*}
    K^{\top}(X) := KU(X) := \map(X, BU).
\end{equation*}
We write $K^{\top, \bullet}(X) := \pi_{\bullet}(K^{\top}(X))$ with $\bullet \in \Z$ for its homotopy groups, which form a graded-commutative ring.

Topological $K$-theory genuinely extends to the equivariant context. 
\begin{recollection}[Genuine equivariant topological $K$-theory]\label{recollection: genuine topological K-theory}
Let $G$ be a compact topological group. The genuine equivariant $K$-theory was firstly defined by \cite{Seg68}. These ideas were further developed in terms of the representing spectrum $BU_G$, see \cite[\S XIV, in particular \S XIV.3 and \S XIV.4]{May96}. Given any $G$-space $X$, we thus have equivariant topological $K$-theory
\begin{equation*}
    K^{\top}_G(X) := KU_G(X) := \map_G(X, BU_G).
\end{equation*}
We will use this genuine definition. The discussion of previous paragraphs works in this context. The definition extends to the case when the group $G$ is a complex Lie group by passing to its maximal compact subgroup $M$; see also \cite[\S 2.1.2]{HLP20}. For finite $G$-CW complexes, the above can be also defined directly from the category of equivariant topological vector bundles on $X$ as in \cite{Weib13}. However, this later interpretation does not hold when $X$ is not finite (and does not behave like a generalized cohomology theory).
\end{recollection}

\begin{remark}[Borel-type theories]
There are also cheaper ways of extending topological $K$-theory to the equivariant context -- one can use the non-equivariant definition on the Borel construction. The resulting theory is simpler, but carries considerably less information. Already for the equivariant point, it returns only the completion of its representation ring at the origin \cite{AS69}.
Also compare to \cite{Tho86} and the discussion \cite[\S 5]{Tho88b}. We will not use the Borel-type version.  
\end{remark}

\subsubsection{Goresky--Kottwitz--McPherson descriptions.}\label{section: GKM descriptions}

Let $G$ be a topological group and $E_G$ a multiplicative $G$-equivariant cohomology theory. Let $X$ be a topological space with $G \acts X$. Under favorable assumptions, the equivariant generalized cohomology $E^{\bullet}_G(X)$ can be computed combinatorially from the fixed points of $G \acts X$ by solving a specific system of congruences. For $T$-equivariant singular cohomology $H^{\bullet}_T(X; \C)$ of a complex algebraic variety $X$, this was realized in \cite[\S 7]{GKM97}. It was then extended to generalized cohomology theories in \cite{HHH05}; we will follow their excellent treatment, to which we refer for details.
\begin{assumption}\label{assumption: HHH assumptions}
Let $G \acts X$ be a topological group acting on a topological space. Assume the following \cite[\S 3, Assumptions 1-4]{HHH05}:
\begin{enumerate}
    \item The space $X$ admits a $G$-equivariant stratification $X = \bigsqcup_{i \in I} X_i$ labeled by a poset $I$, whose graded pieces $X_i / X_{<i}$ are Thom spaces of $G$-equivariant vector bundles $V_i$ over some $G \acts F_i$.
    \item The bundles $V_i$ are $E$-orientable and admit a decompositions $V_i = \bigoplus_{j < i} V_{i, j}$ by $G$-equivariant $E$-orientable vector bundles.
    \item There is a system of $G$-equivariant maps $a_{i, j}: F_i \to F_j$ for each $j < i$, such that the following diagrams commute
    \begin{equation*}
    \begin{tikzcd}
        S(V_{i, j}) \arrow[r] \arrow[d] & X_{<i} \\
        F_i \arrow[r, "a_{i, j}"] & F_j \arrow[u, hook]
    \end{tikzcd}
    \end{equation*}
    In words, the attaching map $S(V_i) \to X_{<i}$ from the sphere bundle of $V_i$ is, on each subbundle $S(V_{i, j})$, induced by $a_{i, j}$.
    \item Fix any $i \in I$. As $j<i$ varies, the Euler classes $e(V_{i, j})$ are not zero divisors and pairwise relatively prime in $E^{\bullet}_G(F_i)$.
\end{enumerate}
\end{assumption}

The main result of \cite{HHH05} then reads as follows.
\begin{proposition}\label{proposition: GKM of HHH}
Assuming (1), the following restriction map is injective
\begin{equation*}
    E^{\bullet}_G(X) \hookrightarrow \prod_{i \in I} E^{\bullet}_G(F_i).
\end{equation*}
Assuming (1)-(4), its image is given by the GKM ring
\begin{equation*}
    \GKM^{\bullet}_G(X) := \big\{ (f_i) \in \prod_{i \in I} E^{\bullet}_G(F_i) \mid f_i - a^*_{i, j} (f_j) \in e(V_{i, j}) E^{\bullet}_G(F_i)  \ \forall j < i \big\}.
\end{equation*}   
\end{proposition}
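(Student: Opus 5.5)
The plan is to follow the strategy of \cite{HHH05}: induct along the poset stratification, peeling off one stratum at a time via the long exact sequence of the pair $(X_{\leq i}, X_{<i})$. First reduce to finite (or well-founded) stages. For a finite downward-closed subset $I' \subseteq I$, set $X_{I'} = \bigcup_{i \in I'} X_i$. The general case then follows by passing to the colimit over such $I'$. If the relevant systems satisfy Mittag--Leffler, as in the even-cell situation of Remark \ref{remark: topological cellularity}, the recollection on generalized cohomology of colimits in \S\ref{subsubsection: K-theory of colimits} gives $E^{\bullet}_G(X) = \lim E^{\bullet}_G(X_{I'})$. Injectivity and the congruence description are both compatible with inverse limits.

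For injectivity under (1), add a maximal element $i$ to $I'$ and consider the cofiber sequence $X_{<i} \to X_{\leq i} \to X_{\leq i}/X_{<i} = \mathrm{Th}(V_i)$. By $E$-orientability (this implicitly needs the orientation part of (2); under (1) alone one should assume the Thom isomorphism), we have $\tilde E^{\bullet}_G(\mathrm{Th}(V_i)) \cong E^{\bullet-\dim}_G(F_i)$. The composite
\[
E_G(\mathrm{Th}(V_i)) \to E_G(X_{\leq i}) \to E_G(F_i)
\]
is multiplication by the Euler class $e(V_i)$. The key point is that $e(V_i) = \prod_j e(V_{i,j})$ is a nonzerodivisor, so this composite is injective. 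Hence the connecting map into $E_G(\mathrm{Th}(V_i))$ vanishes, the long exact sequence splits into short exact sequences
\[
0 \to E_G(\mathrm{Th} V_i) \to E_G(X_{\leq i}) \to E_G(X_{<i}) \to 0,
\]
and an element restricting to zero on $F_i$ and on $X_{<i}$ must vanish. Induction gives injectivity into $\prod E_G(F_i)$.

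For the image under (1)--(4), first show that a restriction $(f_k)$ satisfies the congruences. Pull back along the zero section. On the sphere bundle $S(V_{i,j})$, hypothesis (3) identifies the restriction of the class with $a_{i,j}^* f_j$. In the Gysin sequence of $S(V_{i,j}) \to F_i$, the kernel of pullback to $S(V_{i,j})$ is $e(V_{i,j})E_G(F_i)$, so $f_i - a_{i,j}^* f_j$ lies there.

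Conversely, take a tuple $(f_k)$ in the GKM ring. By induction, its part over $X_{<i}$ lifts to some $g \in E_G(X_{<i})$, and the surjectivity above lifts $g$ to $\tilde g \in E_G(X_{\leq i})$. The difference $f_i - \tilde g|_{F_i}$ is then divisible by each $e(V_{i,j})$. By the pairwise coprimality in (4), it is divisible by the product $e(V_i)$, so it lies in the image of $E_G(\mathrm{Th} V_i)$. Correcting $\tilde g$ by the corresponding class, which vanishes on $X_{<i}$, matches $f_i$.

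The main obstacle is the identification of the restriction of $\tilde g$ to the sphere bundle with $a_{i,j}^*$ data. Precisely, one needs that the kernel of $E_G(F_i) \to E_G(S(V_i))$ is $e(V_i)E_G(F_i)$, and that the divisibility by each $e(V_{i,j})$ separately upgrades to divisibility by $e(V_i)$. This upgrade is exactly where "relatively prime" must be interpreted as $e(V_{i,j})$ being a nonzerodivisor modulo the product of the others. I would handle this by iterating the Gysin sequence over the splitting $V_i = \bigoplus_j V_{i,j}$.
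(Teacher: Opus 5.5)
Your proof is correct and is essentially the argument of \cite[Theorems 2.3 and 3.1]{HHH05}, which the paper simply cites. In that argument, the non-zero-divisor Euler class splits the long exact sequences of the pairs $(X_{\leq i}, X_{<i})$, which also gives the Mittag--Leffler condition for free, so your limit step is unconditional; the Gysin sequences of the $V_{i,j}$ together with (3) give the congruences; and coprimality upgrades divisibility by each $e(V_{i,j})$ to divisibility by $e(V_i)$.

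You are right that injectivity actually uses the orientability in (2) and the non-zero-divisor part of (4), not (1) alone. The final upgrade is pure commutative algebra: for non-zero-divisors, pairwise coprimality already forces their product to divide any common multiple, so no iterated Gysin argument is needed.
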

\begin{proof}
   See \cite[Theorems 3.1 and 2.3]{HHH05}.
\end{proof}

\begin{remark}\label{remark: GKM in the BB case}
The most common use case is that of a torus action on a complex algebraic variety $T \acts X$. It induces a Białynicki-Birula decomposition of $X$ by attractors of fixed points. Assume that the fixed points $X^T$ are isolated and the strata are affine spaces. One then takes $(F_i, i\in I)$ to be the set of fixed-points, $V_{i}$ the attractor cells, $V_{i, j}$ their weight decompositions, $a_{i, j}$ the obvious maps. In many real-life examples -- namely when the $T$-action is sufficiently non-degenerate -- the conditions (3) and (4) hold.
\end{remark}

We have followed \cite{HHH05}. Also see \cite[Appendix A]{Ros03}, \cite{HHRW16}, \cite{Zho25}, \cite{KR21} for related subsequent work.

\subsection{Algebraic cohomology theories as localizing invariants}\label{section: algebraic cohomology theories as localizing invariants}

\subsubsection{Localizing invariants.}
Certain generalized (co)homology theories do have a purely algebraic counterpart as localizing invariants $E(-)$ of stable categories. Given a space $X$ of algebro-geometric origin, one can then compute $E(X)$ via its derived category. More precisely, such theories again come in pairs: the covariant version $E(\Coh(X))$ and the contravariant version $E(\Perf(X))$. 

Again, we are biased towards the latter.
As in the topological case, one good reason for this is the ring structure. Namely, assume $E(-)$ is lax monoidal. Since $\Perf(X)$ has a symmetric monoidal structure via $\otimes$, the output $E(\Perf(X))$ is an $\mathbb{E}_{\infty}$-ring spectrum.

Given an affine group scheme $G$ acting on a qcqs scheme $X$, we put
\begin{equation*}
    E_G(X) := E(\Perf(G \backslash X)),
\end{equation*}
in line with \cite{Tho88, TT90}.
Taking homotopy groups, we obtain the $\Z$-graded abelian group $\pi_{\bullet}(E_G(X))$. If $E(-)$ was lax monoidal, this is a graded-commutative ring.

\begin{convention}\label{remark: regrettable indices}
In this paper, we use the following nonstandard convention. Given a localizing invariant $E$, we denote 
\begin{equation*}
E^{i}(\eX) := \pi_{i}(E(\eX)) := \pi_{i}E(\Perf(\eX))),   
\end{equation*}
in other words, we put the homotopy degree to the upper index. While this is non-standard from the viewpoint of localizing invariants, we are forced to do so in order to match the homology-cohomology conventions from algebraic topology: our invariants are computed from $\Perf(\eX)$ and hence compare to cohomological versions in topology. (The lower-index convention would be given by $\Coh(\eX)$ and correspond to homology.)
\end{convention}

The known localizing invariants either come from vector bundles (e.g. algebraic $K$-theory and its variants) or from differential forms (e.g Hochschild homology $HH$ and its variants). 
We write $K(\eX) := K(\Perf(\eX))$ for non-connective algebraic $K$-theory.

Perhaps surprisingly, purely topological invariants of qcqs schemes over $\C$ can be recovered from this machinery. The relevant examples are Blanc's topological $K$-theory $K^{\btop}$ and periodic cyclic homology $HP$.

\subsubsection{Equivariant topological $K$-theory.}
We work over $k=\C$. We denote by $K^{\btop}$ Blanc's non-connective topological $K$-theory \cite{Bla16}. This is a multiplicative localizing invariant of $\C$-linear categories.
In particular, given any affine group $G$ action on a qcqs scheme $X$ over $\C$, we obtain
\begin{equation*}
    K^{\btop}_G(X) := K^{\btop}(\Perf(X/G)).
\end{equation*}
It is an $\mathbb{E}_{\infty}$-ring spectrum.

\begin{remark}
For $X$ qcqs scheme over $\C$, Blanc's topological $K$-theory $K^{\btop}(X)$ agrees with the equivariant topological $K$-theory $K^{\top}(X) \simeq KU(X)$ of the analytification from \S \ref{section: generalized cohomology theories in topology}. More generally, for an action of an algebraic group $G \acts X$ where $X$ is qcqs, it agrees with the genuine equivariant topological $K$-theory $K^{\top}_G(X) = KU_G(X)$ of on the analytification $X(\C)$ in the sense of \S \ref{recollection: genuine topological K-theory}. See \cite{Bla16}, \cite{HLP20}, \cite[Remark 2.27]{Low25} for this identification.

Consequently, we may (and will) identify $K^{\btop}_G(X) \simeq K^{\top}_G(X)$ in such situations, in particular when $X$ is a $G$-equivariant projective variety. However, note that for ind-schemes we use the purely topological $K^{\top}_G$.
\end{remark}

\begin{notation}
We denote by $u \in K_{2}(\pt) = K^{2}(\pt)$ the Bott element in algebraic $K$-theory. 

It maps into topological $K$-theory, giving an an invertible element $u \in K^{\btop, 2}(\pt)$ with inverse $u^{-1} \in K^{\btop, -2}(\pt)$. In particular, multiplication by $u$ is an isomorphism on homotopy groups shifting degree by $2$, making $K^{\btop, \bullet}(X)$ 2-periodic. We write $K^{\btop, *}(X)$ with $* \in \Z/2$ for this $\Z/2$-graded, graded-commutative ring.
\end{notation}

\subsubsection{Equivariant periodic cyclic homology.}
Consider now the localizing invariant $HP(-/k)$ of $k$-linear categories. Following our conventions, we write $HP^G(X / k) := HP(\Perf(X/G) / k)$ for the $G$-equivariant periodic cyclic homology of $X \in \Sch^G_{k}$. 
We write $u \in HP_{2}(\pt / k) = HP^{2}(\pt / k)$ for the periodicity operator with inverse $u^{-1} \in HP^{-2}(\pt)$ as in \cite[\S 5.1.4]{Lod97}.

Let us specialize to $k=\C$. Then the Dennis trace induces a map of localizing invariants $K^{\btop}(-) \to HP(- / \C)$ called the topological Chern character. The trace map $K^{\btop}(\pt; \C) \to HP(\pt / \C)$ identifies the Bott element on the left-hand side with the periodicity operator on the right-hand side. See \cite{Lod97} for classical exposition; see \cite{Chen20, EKS25} for the equivariant case. We will recall its geometric incarnation in terms of fixed points in \S \ref{section: fixed-point schemes} below.

\subsubsection{Equivariant base.}
For any group scheme $G$ over $k$, we tautologically identify the equivariant $K$-theory of a point with the representation ring
\begin{equation*}
    K^0_G(\pt) = K^{\btop, 0}_G(\pt) = R(G)
\end{equation*}
ans similarly with coefficients.
Working over $k = \C$, the topological Chern character identifies
\begin{equation*}
    K^0_G(\pt; \C) = K^{\btop, 0}_G(\pt; \C) = R(G; \C) = HP^0_G(\pt / \C).
\end{equation*}

\subsubsection{Derived fibers over $T$.}
Working with a torus $T$ over $k$, the equivariant base identifies with its ring of functions.
\begin{equation*}
 K^{0}_T(\pt; k) = K^{\btop, 0}_T(\pt; k) = \O(T).   
\end{equation*}
Given any closed point $t \in T$, we denote $t: \O(T)\to k$ the corresponding algebra homomorphism. To emphasize the $\O(T)$-algebra structure on the right-hand side, we sometimes write $k = k_t$. We further denote $(-)_{\lL t} = (-) {\otimes^{\lL}_{\O(T)}} k_t$ the derived tensor product along this map. 

In particular, the functor $(-)_{\lL t}$ can be applied to $K^{\btop}_T(-; k)$ and $HP^{\btop}_T(- / k)$, both regarded as algebra objects in the derived category of $\O(T)$-modules, getting
\begin{equation*}
    K^{\btop}_T(X; k)_{\lL t} \qquad \text{and} \qquad HP_T(X / k)_{\lL t}.
\end{equation*}

\subsubsection{Remarks.}
Let us outline how the current section parallels the topological discussion.

\begin{remark}[Comparisons]
Comparisons of algebraic $K_G$ and topological $K^{\top}_G$ have a long history -- algebraic invariants usually encode the topological ones. Algebraic $K$-theory of coherent sheaves (usually called $G$-theory) relates to topological $K$-homology; algebraic $K$-theory of perfect complexes relates to topological $K$-cohomology. See \cite{Tho88, Tho88b}.
\end{remark} 

\begin{remark}[Kan extended theories]
In the same way as in topology, there is a cheaper way of producing equivariant invariants. Instead of using $\Perf(G \backslash X)$, one can Kan-extend from the non-equivariant case. The result then parallels the Borel-type variant in topology: it is much simpler and sees only the completion at the augmentation ideal. See \cite{Tho86}, or also \cite{TV24}. Again, we do not use these variants.    
\end{remark}

\begin{remark}[Motivic filtrations]\label{remark: motivic filtrations}
Algebraic $K$-theory and its variants of any $X \in \Sch_{k}$ carry the \textit{motivic filtration} \cite{EM23}. In characteristic zero and after taking rational coefficients, this filtration naturally splits. In particular, $K^0(X; \C)$ is naturally a graded ring. There is a compatible filtration on $HP(-/ \C)$. Interpreting the latter in terms of de Rham cohomology, this is precisely the de Rham filtration. Again, it naturally splits, making $HP^0(X/ \C)$ into a graded ring. The trace map is compatible with these filtrations.

Such filtrations also appear in the equivariant setup. For example, one can get a filtration on $K_G(X)$ by Kan extension from the motivic filtration on non-equivariant $K$-theory as in \cite{EKS25}. Also see \cite[(2.6)]{TV24}, \cite{KN25}, \cite{EKS25} for more discussion of Atyiah--Segal completion phenomena in this context. However, it is not well understood whether such a filtration exists genuinely \cite[Question 1.0.2]{EKS25}, in other words ``relatively over $BG$".
\end{remark}

\subsection{Fixed-point schemes}\label{section: fixed-point schemes}

Let $G$ be a group scheme over $k$ and denote $\Sch_k^G$ the category of $G$-equivariant $k$-schemes. We recall the notion of the associated fixed-point scheme from \cite[\S 1]{Low24}.

\subsubsection{Fixed point schemes.}
Suppose $G$ acts on $X \in \Sch^G_k$ via $m: G\times X \to X$.  We define the associated \textit{fixed-point scheme} as the fiber product of schemes
\begin{equation}\label{equation: definition of fixed-point schemes}
\begin{tikzcd}
\Fix_G(X) \arrow[r] \arrow[d] & G\times X \arrow[d, "m \times \pr_2"] \arrow[r, "\pr_1"] & G \\
X \arrow[r, "\Delta_X"] & X\times X
\end{tikzcd}
\end{equation}
Its functor of points is given by
\begin{equation*}
    \Fix_G(X): R \mapsto \{ (g, x) \in G(R) \times X(R) \mid gx = x \}.
\end{equation*}
The horizontal composition in \eqref{equation: definition of fixed-point schemes} gives a map $\pi: \Fix_G(X) \to G$. For any $g \in G(k)$, the fiber $\pi^{-1}(g)$ parametrizes those $x \in X(R)$ which are fixed by $g$. 
Given any $S \in \Sch_k$ with a map $S \to G$, we obtain
\begin{equation}
\Fix_S(X) := S {\times}_G \Fix_G     
\end{equation}
and view it as the fixed point family of the restriction of the $G$-action to $S$. In particular, given any closed point $g \in G$, we have its scheme-theoretic fixed-points $\Fix_g(X) = X^g$.

\subsubsection{Residual action.}\label{section: residual action on Fix}
The fixed-point scheme carries a natural action $G \acts \Fix_G(X)$ induced from the original action on $G \acts X$ and conjugation action $G \acts G$ on the defining diagram  \eqref{equation: definition of fixed-point schemes}. We denote the quotient by $\Fix_{\tfrac{G}{G}}(X)$.

In particular, suppose $h, g \in G$ and denote $g' = hgh^{-1}$. Then we have a natural identification
\begin{equation*}
    \Fix_{g}(X) = \Fix_{hgh^{-1}}(h\cdot X) \xrightarrow{=} \Fix_{g'}(X)
\end{equation*}
where the latter arrow is induced by the translation $h^{-1}: X \to X$.
For fixed $g \in G$, we thus obtain an action
\begin{equation*}
    \Cent_G(g) \acts \Fix_g(X).
\end{equation*}

\subsubsection{Fixed-point schemes preserve abstract blowups.}
We will need the following elementary observation about fixed-point schemes.
\begin{lemma}\label{lemma: fixed-point schemes of abstract blowup squares}
Let $G$ be a group over $k$, and $S \to G$ any map from some $S \in \Sch_k$. Then the classical fixed-point scheme functor
\begin{equation*}
    \Fix_S(-): \Sch_k^G \to \Sch_S
\end{equation*}
preserves abstract blowup squares. In particular, this applies to $\Fix_g(-)$ for any point $g \in G$.
\end{lemma}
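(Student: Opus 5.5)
The plan is to reduce to the fact that $\Fix_S(-)$ is a finite limit of schemes, and to compare it with the definition of an abstract blowup square. Recall that an abstract blowup square is a cartesian square
\begin{equation*}
\begin{tikzcd}
E \arrow[r] \arrow[d] & Y \arrow[d, "p"] \\
Z \arrow[r, "i"] & X
\end{tikzcd}
\end{equation*}
in $\Sch^G_k$. Here $i$ is a closed immersion and $p$ is proper, and $p$ restricts to an isomorphism $Y - E \to X - Z$. We need three things: that applying $\Fix_S$ gives a cartesian square, that $\Fix_S(i)$ is a closed immersion and $\Fix_S(p)$ is proper, and that $\Fix_S(p)$ is an isomorphism away from $\Fix_S(Z)$.

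\textbf{Step 1: $\Fix_S$ is a base change of the diagonal.} I first rewrite the definition \eqref{equation: definition of fixed-point schemes} so that $\Fix_S(X)$ appears as a base change of a diagonal. Consider $\Delta_X: X \to X \times X$ and the map $S \times X \to X \times X$, $(s,x) \mapsto (sx, x)$. By definition, $\Fix_S(X)$ is their fiber product. This description is functorial in equivariant maps $X \to X'$.

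From this and the functor of points $R \mapsto \{(s,x) \mid sx = x\}$, the functor $\Fix_S$ preserves all finite limits that exist in $\Sch^G_k$. The reason is that limits of functors of points are computed pointwise, and the condition $sx = x$ is compatible with them. In particular, the cartesian square above is sent to a cartesian square.

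\textbf{Step 2: closed immersions, properness, and the open complement.} For a closed immersion $i: Z \to X$, I identify $\Fix_S(Z) = \Fix_S(X) \times_X Z$. Indeed, a point $(s,x)$ with $x \in Z(R)$ and $sx = x$ is the same as a fixed point of $Z$, since $Z \to X$ is a monomorphism. Hence $\Fix_S(i)$ is a base change of $i$, and so is a closed immersion. Applying the same identification to the open immersion $X - Z \to X$, which is also a monomorphism, shows that $\Fix_S(X - Z)$ is the open complement of $\Fix_S(Z)$ in $\Fix_S(X)$.

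For $p$ proper, I factor $\Fix_S(p)$ as
\begin{equation*}
\Fix_S(Y) \to \Fix_S(X) \times_X Y \to \Fix_S(X).
\end{equation*}
The second map is a base change of $p$, hence proper. The first map is the locus where $sy = y$ inside the locus where $s\,p(y) = p(y)$. It is therefore a base change of the relative diagonal $Y \to Y \times_X Y$, pulled back along $(s,y) \mapsto (sy, y)$. This relative diagonal is a closed immersion as long as $Y \to X$ is separated, which holds because $p$ is proper. So the first map is a closed immersion and $\Fix_S(p)$ is proper.

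Finally, $p$ restricts to an isomorphism $Y - E \to X - Z$, so functoriality gives an isomorphism $\Fix_S(Y - E) \to \Fix_S(X - Z)$. These are the open complements of $\Fix_S(E)$ and $\Fix_S(Z)$ respectively, by the identification above.

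\textbf{Main obstacle.} The main point to check is the separatedness used in Step 2. Properness of $p$ includes separatedness in the Stacks project convention, so this needs care only if one works with non-separated schemes. All remaining steps are formal manipulations of fiber products. The special case $S = \{g\} \to G$ gives the statement for $\Fix_g(-)$.
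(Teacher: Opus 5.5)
Your proof is correct and follows the same route as the paper. The paper likewise argues that $\Fix_S(-)$ commutes with fiber products and preserves closed immersions, open immersions, closed-open decompositions and proper maps, and concludes that abstract blowup squares are preserved; it simply cites these properties from \cite[Propositions 1.2, 1.3 and Lemma 1.7]{Low24}. You instead verify them directly: closed and open immersions via base change along $\Fix_S(X) \to X$, and properness via the factorization of $\Fix_S(p)$ through the relative diagonal of $p$. This makes your argument self-contained.
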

\begin{proof}
The classical fixed-point scheme functor $\Fix_S(-)$ preserves isomorphisms, closed immersions, open immersions, closed-open decompositions, proper maps \cite[Propositions 1.2 and 1.3]{Low24}. It commutes with fiber products (in fact with all limits \cite[Lemma 1.7]{Low24}). Therefore, it also preserves abstract blowup squares.
\end{proof}

\subsubsection{Reduced and derived variants.}\label{subsubsection: reduced and derived variants}
If $\Fix_\frac{G}{G}(X)$ is non-reduced, we can take its reduction $\Fix^{\red}_{\frac{G}{G}}(X)$. On the other hand, we may take the defining fiber product \eqref{equation: definition of fixed-point schemes} in derived schemes, giving a derived enhancement $\Fix^{\lL}_{\frac{G}{G}}(X)$. We get closed immersions
\begin{equation}\label{equation: reduced, classical and derived fixed-point schemes}
    \Fix^{\red}_{\frac{G}{G}}(X) \hookrightarrow \Fix_{\frac{G}{G}}(X) \hookrightarrow \Fix^{\lL}_{\frac{G}{G}}(X).
\end{equation}
For the applications in this paper, the reduced version will be sufficient.

On the other hand, the derived variant has the following interpretation. 
For any derived stack $\eX$ over $k$, its derived loop space $\dL (\eX)$ is defined by the derived self-intersection of the diagonal $\dL (\eX) = \eX \times_{\eX \times_k \eX} \eX = \Map(S^1, \eX)$. Applied to $\eX = G \backslash \eX$, there is an equivalence
\begin{equation*}
    \dL (G \backslash X) \simeq \Fix^{\lL}_{\frac{G}{G}}(X).
\end{equation*}
This interpretation allows to compare to periodic cyclic homology, see Proposition \ref{proposition: periodic cyclic homology and derived fixed-point schemes} below. If $G$ is an affine group acting on a quasi-projective variety $X$, the derived structure on $\Fix^{\lL}_{\frac{G}{G}}(X)$ is trivial if and only if $G\acts X$ acts with finitely many orbits by the criterion \cite[Proposition 3.3]{BN13}.

\subsection{\texorpdfstring{$K$}{K}-theory, periodic cyclic homology, and cohomology of fixed points}\label{section: K-theory and HP and cohomology of fixed points}

\subsubsection{Localization theorem.}
We work over the base field $k=\C$. Let $T = \Gm^{n}$ be an algebraic torus over $k$. 
Throughout this section, we write $\Sch^T_{\C} := \Sch^{T, \qcqs}_{\C}$ for the category of $T$-equivariant, quasi-compact quasi-separated schemes over $\C$  -- we omit the superscript $\qcqs$ for readability.

Let $X \in \Sch^{T}_{\C}$ be any $T$-equivariant qcqs scheme over $\C$; it is in particular allowed to be singular. 
The purpose of this section is to recall the (maybe not so well) known relationship between its complexified equivariant topological $K$-theory $K^{\btop}_T(X; \C)$, equivariant periodic cyclic homology $HP^T(X / \C)$ and the singular cohomologies of the varying fixed-point subschemes $\Fix_t(X)$ for $t \in T$.

\begin{theorem}\label{theorem: fibers of topological K-theory, HP and fixed-points}
Let $X \in \Sch^{T}_{\C}$. Then we have equivalences
\begin{equation}\label{equation: K, HP, Fix}
    K^{\btop}_{T}(X; \C) \xrightarrow{\simeq} HP_T(X / \C) \xrightarrow{\simeq} \RG(\Fix^{\lL}_{\frac{T}{T}}(X), \O)^{tS^1}
\end{equation}
in $\CAlg(\O(T)[u^{\pm 1}])$. 
Taking fibers over a closed point $t \in T$, this equivalence specializes to
\begin{equation}\label{equation: fibers and singular cohomology of fixed-points}
    K^{\btop}_T(X; \C)_{\lL t} \xrightarrow{\simeq} H_{\sing}(\Fix_t(X); \C)\llp u \rrp
\end{equation}
in $\CAlg(D(\C))$.
\end{theorem}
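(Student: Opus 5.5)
The plan is to assemble the theorem from three known inputs, one for each arrow, and then to localize at $t$. Throughout, everything is viewed in $\CAlg(\O(T)[u^{\pm 1}])$. The $\O(T)$-structure comes from $K^{\btop}_T(\pt;\C)=R(T;\C)=\O(T)$, and the $u$-structure from the Bott element, which the trace sends to the periodicity operator.

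\emph{First arrow.} I take the topological Chern character $K^{\btop}(-;\C)\to HP(-/\C)$, which is a multiplicative map of localizing invariants. I apply it to $\Perf(X/T)$ and show it is an equivalence for all qcqs $T$-schemes. For this I use the equivariant version of Blanc's lattice conjecture (Halpern-Leistner--Pomerleano, or Chen's equivariant comparison). Both sides are localizing and, after complexification, satisfy Nisnevich/cdh descent on $\Sch^T_{\C}$: $K^{\btop}$ through its identification with $KU_T$ of the analytification, and $HP$ by Feigin--Tsygan/Cortiñas--Haesemeyer--Weibel type results. Using Sumihiro's theorem together with descent for abstract blowup squares, the comparison then reduces to smooth affine $T$-schemes, and from there to $T$-orbits $T/T'$. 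For these both sides equal $\O(T')[u^{\pm1}]\otimes\C$, and the map is the identity. I expect this reduction to be the main technical obstacle. The delicate points are that the relevant descent statements must hold genuinely equivariantly, not after Kan extension, and that the trace must be compatible with them. If the descent route fails, I would instead cite Chen's theorem directly.

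\emph{Second arrow.} I use the derived loop space description from \S\ref{subsubsection: reduced and derived variants}. By Ben-Zvi--Nadler and Chen, $HH(\Perf(\eX)/\C)\simeq \RG(\dL\eX,\O)$ with $S^1$ acting by loop rotation, and $HP$ is obtained by taking the Tate construction $(-)^{tS^1}$. Applying this to $\eX=T\backslash X$ gives $\dL(T\backslash X)\simeq\Fix^{\lL}_{\frac{T}{T}}(X)$. This equivalence is linear over $\O(T)=\O(T/T)$ because $T$ is commutative, so the adjoint quotient is $T\times BT$.

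\emph{Fibers.} Base change along $t:\O(T)\to\C$ commutes with the first two equivalences. I compute $\RG(\Fix^{\lL}_{\frac{T}{T}}(X),\O)^{tS^1}_{\lL t}$ by first completing formally at $t$. Translating by $t$ identifies the formal neighbourhood of $t$ with the formal neighbourhood of the identity acting on $\Fix_t(X)$; this is the Atiyah--Segal/Chen \lq\lq completion at $t$\rq\rq{} step. By Chen's localization, $HP_T(X)^{\wedge}_t\simeq HP_T(\Fix_t(X))^{\wedge}_1$, and the derived structure of $\Fix^{\lL}_t$ is killed after taking the Tate construction. Base change to the point then produces $HP(\Fix_t(X)/\C)$, because $T$ acts trivially on $\Fix_t(X)$ up to the completed equivariant parameters, which are set to zero. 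Finally, Feigin--Tsygan, extended to singular schemes via the infinitesimal/de Rham cohomology of Hartshorne, identifies $HP(Y/\C)\simeq H_{\sing}(Y;\C)\llp u\rrp$. The remaining care in this step is that the derived fiber agrees with the fiber of the completion; this holds since $\C_t$ is already complete.
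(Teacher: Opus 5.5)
Your two global arrows are fine in outline and match the paper, which cites Khan for $K^{\btop}_T(X;\C)\simeq HP_T(X/\C)$ and uses the derived loop space identification plus the Tate construction for the second equivalence. Your extra reduction from smooth affine $T$-schemes to orbits $T/T'$ is not a descent step, and should be replaced by citing the smooth case of the lattice conjecture.

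The genuine gap is the fiber identification for singular $X$, which the theorem explicitly includes. Your key step is ``by Chen's localization, $HP_T(X)^{\wedge}_t\simeq HP_T(\Fix_t(X))^{\wedge}_1$, and the derived structure of $\Fix^{\lL}_t(X)$ is killed after the Tate construction''. Chen proves this only for smooth quotient stacks, and for singular $X$ it is not a formal matter. Take the node $X=\{xy=0\}$ with weights $(1,-1)$ and $t\neq 1$. The classical fixed locus $\Fix_t(X)$ is a reduced point. But writing $X=\A^2\times^{\lL}_{\A^1}\{0\}$ and commuting derived fixed points with fiber products gives $\O(\Fix^{\lL}_t(X))\simeq \C\otimes^{\lL}_{\O(\dL(\A^1))}\C$, which has nonzero homotopy in every nonnegative degree. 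Feigin--Tsygan/Hartshorne only enters at your last step, after this localization has already been assumed, so it does not close the gap. Separately, $T$ does not act trivially on $\Fix_t(X)$ (take $t=1$). What you need at that point is the completion statement that the derived fiber at $1$ of completed equivariant $HP$ is non-equivariant $HP$, and that is again part of Chen's smooth theory.

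The missing idea is to make the fiber statement itself amenable to the descent argument you already used for the first arrow. This needs proper excision for the right-hand side $X\mapsto H_{\sing}(\Fix_t(X);\C)\llp u\rrp$. It holds because the classical functor $\Fix_t(-)$ preserves closed and open immersions, proper maps and fiber products. Hence it sends $T$-equivariant abstract blowup squares to abstract blowup squares (Lemma \ref{lemma: fixed-point schemes of abstract blowup squares}), and singular cohomology satisfies proper excision. On the left, $X\mapsto HP_T(X/\C)_{\lL t}$ satisfies proper excision because $HP$ is a truncating invariant and derived base change along $\O(T)\to\C_t$ preserves the resulting pullbacks. Equivariant resolution of singularities and induction on dimension then reduce the fiber identification to smooth $X$. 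There it is Chen's Corollary 1.0.4, together with the comparison of derived de Rham and singular cohomology. Composing with the $\O(T)$-linear equivalence $K^{\btop}_T(X;\C)\simeq HP_T(X/\C)$ gives the $K$-theoretic fiber statement. This is exactly the paper's proof, and it never has to analyze $\Fix^{\lL}_t(X)$ for singular $X$.
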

\begin{proof}
Proposition \ref{theorem: fibers of HP and cohomlogy of fixed points}, Proposition \ref{proposition: periodic cyclic homology and derived fixed-point schemes} and Proposition \ref{theorem: fibers of equivariant K-theory nd cohomology of fixed points} below.
\end{proof}

\begin{remark}
In plain english: the fibers of equivariant topological $K$-theory over various closed points $t$ of the equivariant base $T$ are isomorphic with the singular cohomologies $H_{\sing}(\Fix_t(X); \C)\llp u \rrp$ of the corresponding scheme-theoretical fixed-points $\Fix_t(X)$. Reformulating, $T$-equivariant topological $K$-theory of $X$ puts the singular cohomologies of the fixed-point varieties $\Fix_t(X)$ into an algebraic family.
This algebraic family is geometrically computed by 
$$\RG(\Fix^{\lL}_{\frac{T}{T}}(X), \O))^{tS^1},$$
the Tate fixed points of the circle action on global functions on the derived fixed-point stack $\Fix^{\lL}_{\frac{T}{T}}(X)$.
\end{remark}

We also have the following formally deformed variant on the final assertion of the above theorem.
\begin{proposition}\label{remark: the completed variant of equivariant localization}
Let $X \in \Sch^{T}_{\C}$. Taking completions $(-)^{\wedge}_{\lL t}$ along $t \in T$, we obtain
\begin{equation}\label{equation: completed version K-theory}
  K^{\btop}_T(X; \C)^{\wedge}_{\lL t} \xrightarrow{\simeq} H_{T}(\Fix_t(X); \C) \llp u \rrp^{\wedge} 
\end{equation}
Here, the completion of the right-hand side it with respect to the derived Hodge filtration in the Cartan model of equivariant cohomology. 
\end{proposition}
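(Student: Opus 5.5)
The plan is to deduce the completed statement from the global equivalence \eqref{equation: K, HP, Fix} of Theorem \ref{theorem: fibers of topological K-theory, HP and fixed-points}, by completing both sides along $t$ and then reading off the completed derived loop space. Since completion $(-)^{\wedge}_{\lL t}$ along a closed point of $T$ is an exact, lax symmetric monoidal endofunctor of $\O(T)$-modules, applying it to \eqref{equation: K, HP, Fix} yields
\begin{equation*}
K^{\btop}_T(X;\C)^{\wedge}_{\lL t} \xrightarrow{\simeq} \big(\RG(\Fix^{\lL}_{\frac{T}{T}}(X),\O)^{tS^1}\big)^{\wedge}_{\lL t}.
\end{equation*}
The statement is therefore about the right-hand side only.

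First step: rewrite the completion geometrically. The completion of $\RG(\Fix^{\lL}_{\frac{T}{T}}(X),\O)$ along $t$ is functions on the formal completion of the derived loop space $\dL(T\backslash X)$ along the fiber over $t\in T$. This is where we use that $T$ is abelian: translation by $t$ identifies $\dL(T\backslash X)^{\wedge}_t$ with the formal loop space of $T\backslash \Fix_t(X)$, i.e.\ the completion at the unit $1\in T$ of $\dL(T\backslash \Fix_t(X))$. This is the ``$t$-twisted'' form of the standard fact, as in \cite{Chen20}, that a neighbourhood of $t$ in the loop space only sees the $t$-fixed locus. To prove it I would show that the closed immersion $\Fix_t(X)\hookrightarrow X$ induces an equivalence on formal completions of derived fixed-point schemes along the fiber over $t$. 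Here Lemma \ref{lemma: fixed-point schemes of abstract blowup squares}, together with the fact that $\Fix$ commutes with limits, lets us reduce by equivariant affine covers (Sumihiro is not available for general qcqs $X$, so I would use Zariski descent for $HP$ and an affine $T$-stable hypercover) to affine $X=\Spec A$ with a $\Z^n$-grading. For affine $X$ the claim becomes an explicit Koszul computation: on the weight-$\chi$ part of $A$, the element $\chi(t)-1$ is invertible near $t$ unless $\chi(t)=1$.

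Second step: identify the formal loop space side. Formal loops on $T\backslash Y$ with $Y=\Fix_t(X)$ are the odd tangent stack $\mathbb{T}[-1](T\backslash Y)$ completed along the $\t$ direction, via the exponential map $\t^{\wedge}_0 \cong T^{\wedge}_1$ (characteristic zero). The $S^1$-Tate construction of its functions is the $2$-periodised derived de Rham complex of $T\backslash Y$. By the Cartan model, and Hartshorne/Bhatt comparison of derived de Rham with Betti cohomology for finite type $Y$ over $\C$ (qcqs reduces to finite type by noetherian approximation for $HP$), this is $H_T(Y;\C)\llp u\rrp$ completed with respect to the Hodge filtration. The completion along $t$ corresponds to the $\t$-adic, equivalently Hodge, completion, since $\O(\t)$ sits in Hodge degree one after the $u$-twist.

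The main obstacle is the first step, the localization equivalence on formal neighbourhoods for singular, non-affine $X$. It is also where completion and Tate construction must be commuted. I expect to handle this by working degreewise in the Hodge filtration, where each graded piece is a perfect $\O(T)$-module complex, so that the limit interchanges are justified.
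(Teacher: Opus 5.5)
Your Step 1 has a genuine gap. With $\Fix_t(X)$ the classical fixed-point scheme, the claim that $\Fix_t(X)\hookrightarrow X$ induces an equivalence on $t$-completions of derived loop spaces is false for singular $X$. The affine Koszul check cannot detect this, because it only tracks the weights of $A$, which govern the classical fixed locus. The derived fixed locus also sees the weights of the relations and higher syzygies of $A$.

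Here is a counterexample. Take $T=\Gm$ acting on $X=\{xy=0\}\subset\A^2$ with weights $(1,-1)$, and take $t\neq 1$.
\begin{itemize}
\item The classical fixed locus is $\Fix_t(X)=\pt$.
\item $X=\A^2\times_{\A^1}\{0\}$ via the $T$-invariant function $xy$, and $\Fix^{\lL}_t$ commutes with limits.
\item We have $\Fix^{\lL}_t(\A^2)=\pt$, and $\Fix^{\lL}_t(\A^1)=\dL\A^1$ since $T$ acts trivially on the target $\A^1$.
\item Hence $\Fix^{\lL}_t(X)=\pt\times_{\dL\A^1}\pt$. Its functions are $\Lambda[\zeta]\otimes\Gamma[\sigma]$, with $\zeta,\sigma$ in homological degrees $1,2$, all of $T$-weight $0$.
\end{itemize}
This ring is the derived fiber at $t$ of $\RG(\Fix^{\lL}_{\frac{T}{T}}(X),\O)$. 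For $\pt$ the corresponding derived fiber is $\C$. Derived completion at $t$ does not change derived fibers, so the two completions cannot agree. The extra classes only die after the Tate construction, by nil-invariance of $HP$ and of Hodge-completed derived de Rham cohomology. That is precisely the input your Step 1 tries to do without.

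You could repair this along your own route. Keep the derived fixed locus $\Fix^{\lL}_t(X)$ in Step 1, which is the form in which loop-space localization actually holds. Then in Step 2 use that Hodge-completed derived de Rham cohomology is insensitive to derived and nilpotent structure, so it computes $H_T(\Fix_t(X);\C)$.

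The paper avoids the singular loop-space analysis entirely. It works with $HP_T(X/\C)^{\wedge}_{\lL t}$ and observes that both sides satisfy proper excision:
\begin{itemize}
\item $HP$ is truncating, hence has cdh descent, and completion preserves this.
\item $\Fix_t(-)$ preserves abstract blowup squares (Lemma~\ref{lemma: fixed-point schemes of abstract blowup squares}).
\item Hodge-completed cohomology has proper excision.
\end{itemize}
Equivariant resolution and induction on dimension then reduce to smooth $X$. There Chen's completed localization theorem supplies your Steps 1 and 2 at once, including the interchange of completion with the Tate construction.

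Your proposed justification for that interchange also does not work as stated. The Hodge graded pieces are generally not perfect over $\O(T)$; already for $X=\A^1$ with trivial action one gets $\O(T)\otimes\C[x]$.
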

\begin{proof}
Take Proposition \ref{theorem: topological K-theory and HP}, then take the formal derived completion $(-)^{\wedge}_{\lL t}$ over $t \in T$, and finally apply Proposition \ref{proposition: fibers of HP and singular cohomology -- deformed}.
\end{proof}

\subsubsection{Case of reductive groups.}

\begin{remark}[Case of reductive groups.]
Let $G$ be any reductive group acting on a quasi-projective variety $X$ over $\C$. Then the equation \eqref{equation: K, HP, Fix} holds $G$-equivariantly:
\begin{equation*}
    K^{\btop}_{G}(X; \C) \xrightarrow{\simeq} HP^G(X / \C) \xrightarrow{\simeq} \RG(\Fix^{\lL}_{\frac{G}{G}}(X), \O)^{tS^1}.
\end{equation*}
Moreover, let $g \in G$ be an element with semisimple part $\overline{g} \in \Spec(R_{\C}(G))$. Then the identification \eqref{equation: fibers of HP and singular cohomology of fixed-points} goes through as:
\begin{equation*}\label{equation: fibers and singular cohomology of fixed-points semisimplification}
    K^{\btop}_T(X; \C)_{\lL \overline{g}} \xrightarrow{\simeq} H_{\sing}(\Fix_{\overline{g}}(X); \C)\llp u \rrp
\end{equation*}
Note that this does {not} work without semisimplicifation.
\end{remark}
\begin{proof}[Proof of Remark.]    
This version follows by the same arguments, combining results of \cite{Chen20} in the smooth case with the recent progress on equivariant cdh descent \cite{LS25} with respect to reductive groups $G$. Also see \cite[Appendix B]{Low25}.
\end{proof}

In particular, even if we consider $G$-equivariant $K$-theory instead of $T$-equivariant one, there is no extra information in its fibers.

\subsubsection{$K$-theory and periodic cyclic homology.}
Let us first recall the relationship between equivariant topological $K$-theory and equivariant periodic cyclic homology. 

\begin{proposition}\label{theorem: topological K-theory and HP}
For any $X \in \Sch^{T}_{\C}$ we have an equivalence
\begin{equation*}
K^{\btop}_T(X; \C) \xrightarrow{\simeq} HP_T(X / \C)    
\end{equation*}
as algebras over $K_T^{\btop}(\pt; \C) \simeq HP_T(\pt / \C)$.
\end{proposition}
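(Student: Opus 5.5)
The plan is to reduce the statement to the case of smooth (or even affine) $T$-schemes, using descent on both sides, and then to invoke the known comparison in the smooth case.

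\textbf{Smooth case.} Consider first the topological Chern character $K^{\btop}(-;\C) \to HP(-/\C)$. It is a natural transformation of localizing invariants of $\C$-linear categories. Applied to $\Perf(X/T)$, it gives a map of $\mathbb{E}_\infty$-algebras over $K^{\btop}_T(\pt;\C) \simeq R(T;\C)[u^{\pm1}] \simeq HP_T(\pt/\C)$. For smooth $X$ (quasi-projective, say), this map is an equivalence by the equivariant comparison of \cite{Chen20}, refined in \cite{HLP20}. Chen identifies $HP_T(X/\C)$ with equivariant de Rham cohomology of the derived loop stack. Halpern-Leistner--Pomerleano show that Blanc's $K^{\btop}$ of $\Perf(X/T)$ satisfies the lattice conjecture. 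Concretely, when $X$ has a $T$-stable affine cover or is projective, one can reduce further to $X$ smooth projective with a Białynicki-Birula decomposition. There, both sides are free over the base with matching ranks, and the comparison holds on the fixed points $X^T$ with trivial action, where it is the nonequivariant Chern character isomorphism tensored with $\O(T)$.

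\textbf{Singular case.} To pass to arbitrary qcqs $X$, we use that both $K^{\btop}_T(-;\C)$ and $HP_T(-/\C)$ satisfy equivariant cdh descent, i.e.\ they send $T$-equivariant abstract blowup squares and Nisnevich squares to pullbacks. For $HP$, this follows from the nonequivariant cdh descent of $HP$ in characteristic zero (Cortiñas--Haesemeyer--Schlichting--Weibel, Feigin--Tsygan), extended equivariantly via \cite{LS25}. For $K^{\btop}$, it follows from the fact that $K^{\btop}$ is truncating/$\A^1$-invariant, or alternatively from the topological description via $KU_T$ on analytifications, which satisfies excision for proper cdh squares. By equivariant resolution of singularities (e.g.\ via the functorial resolution of Bierstone--Milman, Temkin, which is $T$-equivariant), every $X$ is built from smooth $T$-schemes by finitely many abstract blowup squares, with noetherian induction on dimension. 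Since the transformation is natural and an equivalence on smooth schemes, the five lemma on the induced fiber sequences propagates the equivalence. The qcqs (non-finite-type) case then follows by Zariski/Nisnevich descent and by writing $X$ as a limit of finite type $T$-schemes, using continuity of both invariants.

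\textbf{Main obstacle.} I expect the hardest step to be establishing \emph{equivariant} cdh descent for $K^{\btop}_T$ and $HP_T$, together with the genuinely equivariant smooth comparison. The nonequivariant statements are standard, but the genuine $T$-equivariant ones need care. I would first try to deduce them from the nonequivariant versions by writing $\Perf(X/T)$ as a limit over the Čech nerve of $X \to X/T$. This fails naively, because it only produces the Borel/completed version. So instead I would cite \cite{LS25} for descent and \cite{HLP20} for the smooth comparison, where the genuine equivariance is handled via the Tate construction and the decomposition over closed points of $T$.
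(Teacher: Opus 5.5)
\textbf{The extension to arbitrary qcqs $X$ fails.} For $X$ of finite type your outline matches what the paper's proof relies on. The paper gives no argument beyond citing \cite{Kha23}, which builds on \cite{HLP20} for smooth quotient stacks and on \cite{Kon21}, \cite{ES21} for truncatedness and equivariant proper excision. Your reduction via resolution and induction on dimension is the same mechanism the paper uses in Proposition~\ref{theorem: fibers of HP and cohomlogy of fixed points}. The gap is your last step, where you reach arbitrary qcqs $X$ by ``continuity of both invariants''. $K^{\btop}$ commutes with filtered colimits, but $HP$ does not: it is a Tate construction, so it involves a limit, and neither $HC^-$ nor $HP$ is finitary.

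This cannot be repaired, because the equivalence itself fails beyond finite type. Work nonequivariantly; this is a retract of the case of trivial $T$-action. Take $A=\C(x_1,x_2,\dots)=\colim_i A_i$ with $A_i$ smooth of finite type.
\begin{itemize}
\item Since $A$ is ind-smooth, HKR gives $\pi_0 HP(A/\C)\cong\prod_{k\geq 0}H^{2k}_{\mathrm{dR}}(A)$.
\item Blanc's comparison and the classical Chern character give $\pi_0 K^{\btop}(A)\otimes\C\cong\colim_i\bigoplus_k H^{2k}_{\mathrm{dR}}(A_i)=\bigoplus_{k\geq 0}H^{2k}_{\mathrm{dR}}(A)$.
\item The Chern character is then the inclusion of the sum into the product.
\item The classes $\tfrac{dx_1}{x_1}\wedge\dots\wedge\tfrac{dx_{2k}}{x_{2k}}$ are nonzero for every $k$: integrate over a real $2k$-torus avoiding the poles of a putative primitive.
\end{itemize}
So the map is not surjective. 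The proposition must be read for $X$ of finite type. That is the generality in which resolution of singularities, Blanc's comparison and \cite{HLP20} operate, and it covers everything used later (affine Schubert varieties). Your argument should stop after the cdh step.

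Two smaller points. First, what gives proper excision for $K^{\btop}_T$ is truncatedness (Konovalov's nil-invariance of semi-topological $K$-theory, then \cite{ES21}), not $\A^1$-invariance. Second, your ``concretely'' reduction of the smooth case to smooth projective $X$ with a Bia\l{}ynicki-Birula decomposition is not justified. An equivariant compactification only produces localization sequences involving $\Perf_Z(X)$, which is not of a form you already control. Since you cite \cite{HLP20} for smooth quotient stacks anyway, you can simply drop that step.
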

\begin{proof}
For a sufficient reference covering (possibly singular) stacks with nice stabilizers, see \cite{Kha23}. This heavily builds on the previous works \cite{Bla16, Kon21, HLP20, ES21}. 
\end{proof}

\subsubsection{Periodic cyclic homology and fixed-points.}
Periodic cyclic homology $HP(-/\C)$ over $\C$ is further geometrically modeled by suitable functions on the derived fixed-point scheme.
\begin{proposition}\label{proposition: periodic cyclic homology and derived fixed-point schemes}
There is a natural equivalence
    \begin{equation*}
        HP_T(X/\C) \xrightarrow{\simeq} \RG(\Fix^{\lL}_{\frac{T}{T}}(X), \O)^{tS^1}.
    \end{equation*}
Here, $(-)^{tS^1}$ refers to the Tate construction with respect to the natural $S^1$-action on the derived fixed-point scheme.
\end{proposition}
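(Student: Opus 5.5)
The natural route is to pass through the derived loop stack of the quotient stack. Set $\eX := X/T$. Then $HP_T(X/\C) = HP(\Perf(\eX)/\C)$, and there are three steps.

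First, invoke the general identification of Hochschild homology of a perfect stack with functions on its derived loop space: $HH(\Perf(\eX)/\C) \simeq \RG(\dL(\eX), \O)$. For a quotient of a qcqs scheme by a torus (linearly reductive, affine diagonal), $\Perf(\eX)$ is compactly generated and rigid. In the quasi-projective or smooth case the statement is then Ben-Zvi--Francis--Nadler, and in the general qcqs case it is Chen's treatment \cite{Chen20}. This identification is compatible with the $S^1$-action given by loop rotation on $\dL(\eX)$. By \S\ref{subsubsection: reduced and derived variants}, $\dL(T\backslash X) \simeq \Fix^{\lL}_{\frac{T}{T}}(X)$. Since $T$ is abelian, the conjugation action is trivial, so this is simply the derived fixed-point scheme modulo $T$, fibered over $T$.

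Second, pass to periodic cyclic homology. By definition $HP = HH^{tS^1}$. Applying the Tate construction to the $S^1$-equivariant equivalence above gives
\begin{equation*}
HP_T(X/\C) \simeq \RG(\Fix^{\lL}_{\frac{T}{T}}(X), \O)^{tS^1}.
\end{equation*}
The $\O(T)$-linearity comes from functoriality in $X \to \pt$: for $X = \pt$ both sides are $\O(T)\llp u\rrp$, because $\dL(BT) = T \times BT$ with trivial rotation. Multiplicativity follows from the symmetric monoidal nature of the HKR-type equivalence.

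The main obstacle is making the comparison hold for singular, not necessarily quasi-projective, qcqs $X$. In that generality the relevant $S^1$-action on the derived loop space is not just the formal one, so one must check that the Tate construction of the naive global functions really computes $HP$; the concern is whether completion issues with $(-)^{tS^1}$ commute with $\RG$. My first attempt would be to reduce to the affine case by Zariski or Nisnevich descent, which both sides satisfy: $HH$ and $\RG(\dL(-),\O)$ satisfy it, and $(-)^{tS^1}$ preserves the finite limits arising from qcqs covers. For $T$-equivariant covers I would use Sumihiro-type or equivariant Nisnevich covers for torus actions. For affine $X = \Spec A$ with $T$-action, the claim is an explicit statement about the $T$-graded ring $A$, where $\RG(\Fix^{\lL}_{\frac{T}{T}}(X),\O)$ is the derived invariants of $A \otimes^{\lL}_{A\otimes A} (\O(T)\otimes A)$, and the statement reduces to Chen's result.
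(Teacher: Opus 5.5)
Your proposal is correct and follows the paper's argument: the paper proves this by identifying $HH_T(X/\C)$ with $\RG(\Fix^{\lL}_{\frac{T}{T}}(X),\O)$, using the derived loop stack of $X/T$ as the derived fixed-point scheme together with Chen's comparison of Hochschild homology with functions on derived loop spaces, and then applying $(-)^{tS^1}$. Your additional descent reduction for general singular qcqs $X$ is not carried out in the paper, which simply relies on the cited references for that generality.
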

\begin{proof}
Apply the Tate construction to \cite[Discussion 1.12]{Low24} and \cite[Example 2.2.20]{Chen20}
\end{proof}

\subsubsection{Periodic cyclic homology computes cohomology of fixed-points.}
Let us now explain how fibers of equivariant periodic cyclic homology relate to the singular cohomology of fixed-points.
\begin{proposition}\label{theorem: fibers of HP and cohomlogy of fixed points}
Let $t \in T$ be a closed point. We then have an equivalence
\begin{equation}\label{equation: fibers of HP and singular cohomology of fixed-points}
    HP_T(X/ \C)_{\lL t} \xrightarrow{\simeq} H_{\bet}(\Fix_t(X); \C)\llp u \rrp.
\end{equation}
\end{proposition}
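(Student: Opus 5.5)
The plan is to reduce to the smooth case by cdh descent, and to treat the smooth case by the equivariant localization theorem for periodic cyclic homology.

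\emph{Step 1: reduction to smooth $X$.} Both sides of \eqref{equation: fibers of HP and singular cohomology of fixed-points} are functors $\Sch^T_{\C} \to \CAlg(D(\C))$. The left side satisfies equivariant cdh descent: $HP(-/\C)$ is truncating, hence cdh-local, and equivariant cdh descent for torus actions is available (e.g.\ \cite{LS25}, \cite{Kha23}). Base change $(-)_{\lL t}$ is exact, so it preserves the resulting pullback squares. The right side also turns abstract blowup squares into pullbacks. Indeed, $\Fix_t(-)$ preserves abstract blowup squares by Lemma~\ref{lemma: fixed-point schemes of abstract blowup squares}, and singular cohomology with $\C$-coefficients satisfies cdh descent on finite type schemes, because abstract blowup squares give Mayer--Vietoris sequences on analytifications. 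Both sides are also nil-invariant, since $\Fix_t$ of a nilimmersion is a nilimmersion. By equivariant resolution of singularities and noetherian induction, it therefore suffices to construct a natural map and check that it is an equivalence for smooth $X$. Here I reduce to finite type, as in the intended applications, and treat general qcqs $X$ by approximation.

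\emph{Step 2: the smooth case.} For smooth quasi-projective $X$ I use the equivariant HKR/loop-space description. By Proposition~\ref{proposition: periodic cyclic homology and derived fixed-point schemes}, $HP_T(X/\C) \simeq \RG(\Fix^{\lL}_{\frac{T}{T}}(X),\O)^{tS^1}$. Following \cite[\S 2]{Chen20}, the formal completion of the derived loop space $\dL(X/T)$ along the fiber over $t$ is identified with the formal completion of $\dL(\Fix_t(X)/T)$ along unipotent loops. This is a derived version of the Luna slice/concentration theorem: near $t$, the normal directions to $\Fix_t(X)$ carry nontrivial $t$-eigenvalues, so the derived fiber is concentrated on $\Fix_t(X)$. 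Passing to the derived fiber at $t$ then kills the residual $\t$-directions. Combined with HKR, this identifies $HP_T(X)_{\lL t}$ with the $u$-periodized de Rham cohomology of the smooth scheme $\Fix_t(X)$. Grothendieck's comparison theorem then gives $H_{\sing}(\Fix_t(X);\C)\llp u\rrp$.

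\emph{Step 3: naturality.} The comparison map has to be defined globally rather than only on smooth pieces, so that Step 1 applies. I would define it as restriction along $\Fix_t(X) \hookrightarrow X$ followed by the identification of $HP$ of $\Fix_t(X)$ with trivial-in-$t$ action. Concretely, $HP_T(X)_{\lL t} \to HP_T(\Fix_t X)_{\lL t}$ composed with the canonical map to $HP(\Fix_t X)$ twisted by $t$, and then with de Rham $\simeq$ singular cohomology. The twist uses that $t$ acts trivially on $\Fix_t X$, so $HP_T(\Fix_t X) \simeq HP(\Fix_t X)\otimes \O(T)$ locally near $t$ after translation by $t$. The middle identification is made for the underlying reduced scheme via cdh descent, which replaces the derived de Rham complex by Hartshorne's algebraic de Rham cohomology.

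The main obstacle is Step 2, specifically the concentration statement that restriction to $\Fix_t(X)$ becomes an equivalence after $(-)_{\lL t}$, not merely after localization or completion. My first attempt would use the Tate construction: in $S^1$-Tate fixed points, the derived fiber is governed by the completion at $t$, via the de Rham-to-periodic degeneration. I would then apply the classical Atiyah--Segal/Thomason localization for torus actions on smooth varieties to see that the completed and uncompleted fibers agree.
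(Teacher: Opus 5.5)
Your proposal is correct and follows essentially the paper's proof. The paper reduces to smooth $X$ via proper excision for both sides ($HP$ truncating and $(-)_{\lL t}$ exact on the left; Lemma~\ref{lemma: fixed-point schemes of abstract blowup squares} plus descent for singular cohomology on the right) together with equivariant resolution, and it simply cites \cite[Corollary 1.0.4]{Chen20} for the smooth case, which is the localization you sketch in Step~2.

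Your Step~3 builds a global natural comparison map, a point the paper leaves implicit. Note that the splitting $HP_T(\Fix_t X)\simeq HP(\Fix_t X)\otimes\O(T)$ you invoke there is false whenever $T$ acts nontrivially on $\Fix_t X$ (e.g.\ $t=1$). You do not need it: after translating by $t$, the forgetful map $HP_T(\Fix_t X)_{\lL 1}\to HP(\Fix_t X)$ already supplies the comparison map.
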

\begin{proof}
First recall the identification of derived de Rham cohomology with singular cohomology of the analytification \cite[Theorem 4.2.9 and Corollary 4.2.11]{Chen20} and \cite[Theorem 4.10]{Bha12}, valid for any derived stack of finite type over $\C$ (the case of quasi-projective varieties is sufficient for us).

We first assume that the $T$-variety $X$ is smooth. Then \eqref{equation: fibers of HP and singular cohomology of fixed-points} holds by \cite[Corollary 1.0.4]{Chen20} (also see \cite[Theorem D]{Chen20} for a finer statement), using the above discussion to identify the right-hand side with singular cohomology. We use here the semisimplicity assumption on $t$.

Now, both sides of \eqref{equation: fibers of HP and singular cohomology of fixed-points} satisfy proper excision on $T$-equivariant abstract blowup squares.
Indeed, the left-hand side is given by the composition
\begin{equation*}
    \Sch^{T}_{\C} \xrightarrow{HP_T(-/ \C)} \D(HP_T(\pt / \C)) \xrightarrow{(-)_{\lL t}} \D(\C)
\end{equation*}
Here, $HP(-/\C)$ is a truncating invariant, so $HP^T(-/\C)$ has proper excision by \cite{ES21}. The same holds after composition with the derived functor $(-)_{\lL t} = (-)\otimes^{\lL}_{HP_T(\pt / \C)} \C_t$.
On the other hand, the right-hand side can be factored as
\begin{equation*}
    \Sch^{T}_{\C} \xrightarrow{\Fix_t(-)} \Sch_{\C} \xrightarrow{H_{\bet}(-; \C)\llp u \rrp} D(\C).
\end{equation*}
Here, the functor $\Fix_t(-)$ preserves abstract blowup squares by Lemma \ref{lemma: fixed-point schemes of abstract blowup squares}. 
Moreover, singular cohomology (with any coefficients) has proper excision (this being the case for any generalized cohomology theory with open-closed long exact sequence). Alternatively, this last claim can be seen on the level of derived de Rham cohomology, which is a canonical direct summand of $HP(-/\C)$ and hence has cdh descent \cite[Lemma 4.5]{EM23}.

Altogether, by equivariant resolution of singularities in characteristic zero and induction on dimension, the singular case of \eqref{equation: fibers of HP and singular cohomology of fixed-points} follows from the smooth one.
\end{proof}

\subsubsection{Completions of equivariant cohomology.}
The above proposition has the following formally deformed variant.
\begin{proposition}\label{proposition: fibers of HP and singular cohomology -- deformed}
Let $X \in \Sch^{T}_{\C}$. For any closed point $t \in T$, we have
\begin{equation}\label{equation: completed version HP}
  HP_T(X; \C)^{\wedge}_{\lL t} \xrightarrow{\simeq} H_{T, \sing}(\Fix_t(X); \C) \llp u \rrp^{\wedge}.
\end{equation}
\end{proposition}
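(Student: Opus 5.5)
The plan is to run the argument of Proposition \ref{theorem: fibers of HP and cohomlogy of fixed points} a second time, now in a formal neighbourhood of $t$ instead of at the single point $t$. Since the statement is formal in $t$, I will first translate by $t$ and work on $T^{\wedge}_t$. Because $T$ is a torus, $T^{\wedge}_t \cong t\cdot T^{\wedge}_1 \cong \t^{\wedge}_0$ via the exponential map. Under this identification, the completed equivariant base $HP_T(\pt/\C)^{\wedge}_{\lL t} = \O(T)^{\wedge}_t\llp u \rrp$ becomes $\O(\t)^{\wedge}_0\llp u \rrp$, completed along the weight/Hodge grading in which $\t^\vee$ has degree $2$. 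This is exactly the completed coefficient ring $H_T(\pt;\C)\llp u \rrp^{\wedge}$ of the Cartan model. In particular, the two sides of \eqref{equation: completed version HP} are modules over the same complete ring.

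\textbf{Smooth case.} Here I would invoke the finer form of Chen's localization theorem, \cite[Theorem D]{Chen20}. For smooth $X$ it identifies the completion of $HP_T(X/\C)$ at a semisimple $t$ with the $T$-equivariant periodic derived de Rham cohomology of $\Fix_t(X)$, taken over the formal neighbourhood of $0\in\t$. Concretely, this is the Cartan complex $(\Omega^\bullet_{\Fix_t(X)}\otimes \O(\t)^{\wedge}_0\llp u \rrp, d + u\iota)$, completed for the Hodge filtration. Geometrically, the formal neighbourhood of $\Fix^{\lL}_{t}(X)$ inside $\Fix^{\lL}_{\frac{T}{T}}(X)$ is the derived loop space of $\Fix_t(X)/T$ shifted by $t$. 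The $S^1$-Tate construction of its functions gives the periodic Cartan model, by the HKR-type identification used in Proposition \ref{proposition: periodic cyclic homology and derived fixed-point schemes}. Finally, the derived de Rham comparison \cite{Bha12}, \cite[Corollary 4.2.11]{Chen20} identifies this Cartan model with $H_{T,\sing}(\Fix_t(X);\C)\llp u \rrp^{\wedge}$. This comparison can be applied after passing to the Borel construction by finite-dimensional approximations of $ET$.

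\textbf{Singular case.} I would copy the descent argument from the proof of Proposition \ref{theorem: fibers of HP and cohomlogy of fixed points}. On the left, $HP_T(-/\C)$ satisfies proper excision for $T$-equivariant abstract blowup squares by \cite{ES21}. The functor $(-)^{\wedge}_{\lL t}$ is exact on $D(\O(T))$, so it preserves the resulting pullback squares. On the right, $\Fix_t(-)$ preserves abstract blowup squares by Lemma \ref{lemma: fixed-point schemes of abstract blowup squares}. Equivariant singular cohomology has proper excision, since it is the cohomology of the Borel construction and Borel constructions preserve the relevant homotopy pushouts. Both tensoring with $\llp u \rrp$ and Hodge completion are limits of exact functors, so they preserve pullbacks. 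Equivariant resolution of singularities and induction on dimension then reduce the general case to the smooth one.

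\textbf{Main obstacle.} I expect the hardest step to be the completion bookkeeping. One has to check that derived completion at $t$ on the $K$/$HP$ side matches the derived Hodge completion on the Cartan side, and that the smooth-case identification is natural enough, meaning functorial for equivariant proper maps, for the descent argument to run. I would handle this by working throughout with the filtered objects before completing: the $\t^\vee$-adic filtration on one side and the Hodge filtration on the other. I would verify the comparison on associated graded pieces, where it reduces to the uncompleted statement \eqref{equation: fibers of HP and singular cohomology of fixed-points} tensored with $\mathrm{Sym}(\t^\vee)$. The filtered statement then follows from completeness of both filtrations.
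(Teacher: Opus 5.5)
Your proposal follows the paper's proof. The smooth case is \cite[Theorem D, Theorem 4.3.2]{Chen20}. The general case follows from proper excision on both sides ($HP$ is truncating and $(-)^{\wedge}_{\lL t}$ is exact; $\Fix_t(-)$ preserves abstract blowup squares; the completed equivariant cohomology satisfies excision), together with equivariant resolution of singularities and induction on dimension.

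One caution on your completion bookkeeping, which the paper does not need: the filtered check only works with the $\t^\vee$-adic filtration on both sides, or equivalently derived Nakayama for an $\O(T)^{\wedge}_t$-linear map of derived complete modules. The graded pieces of the full derived Hodge filtration on the Cartan side are Hodge-type, built from $\wedge^p \mathbb{L}$ with the contraction differential, not $H_{\sing}(\Fix_t(X);\C)\llp u \rrp \otimes \mathrm{Sym}^n(\t^\vee)$.
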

\begin{remark}
Here, the completion of the left-hand side is the derived formal completion over $t \in T$. The completion of the right-hand side is with respect to the derived Hodge filtration in the Cartan model of equivariant cohomology -- see \cite[Example 2.1.10, Definition 4.2.3]{Chen20} for a discussion (using the cotangent complex in general).
\end{remark}
\begin{proof}
If $X$ is smooth, the statement is (a special case of) \cite[Theorem D, Theorem 4.3.2]{Chen20}. We again deduce the general case by proper excision. For the left-hand side, $HP(-/\C)^{\wedge}_{\lL t}$ satisfies proper excision -- it is given by composing the truncating invariant $HP(-/ \C)$ with $(-)^{\wedge}_{\lL t}$. For the right-hand side, we again use that $\Fix_t(-)$ preserves abstract blowups. Moreover, since the derived de Rham complex satisfies proper excision, the associated filtrations on $H_{\sing}(-; \C) \llp u \rrp$ are also compatible with proper excision -- so taking $H_{\sing}(-; \C)\llp u \rrp^{\wedge}$ satisfies proper excision. By existence of equivariant resolutions in characteristic zero, we thus reduce to the smooth case.
\end{proof}

\subsubsection{Fibers of equivariant $K$-theory and cohomology of fixed-points.}
Putting the above discussion together, we obtain the second part of the theorem.
\begin{proposition}\label{theorem: fibers of equivariant K-theory nd cohomology of fixed points}
    Let $X \in \Sch^T_{\C}$ and $t \in T$ be a closed point. Then we have an equivalence
\begin{equation*}
    K_T^{\btop}(X; \C)_{\lL t} \xrightarrow{\simeq} H_{\sing}(\Fix_t(X); \C)\llp u \rrp
\end{equation*}
in $\CAlg(D(\C))$.
\end{proposition}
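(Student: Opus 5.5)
The plan is to compose the two equivalences already established and check that the composite is compatible with derived base change along $t$. First, Proposition \ref{theorem: topological K-theory and HP} gives an equivalence $K^{\btop}_T(X; \C) \xrightarrow{\simeq} HP_T(X/\C)$. This is an equivalence of algebras over $K^{\btop}_T(\pt;\C) \simeq HP_T(\pt/\C) \simeq \O(T)[u^{\pm 1}]$, and the trace identifies the Bott element with the periodicity operator. Since the equivalence is linear over $\O(T)$, we may apply the derived fiber functor $(-)_{\lL t} = (-)\otimes^{\lL}_{\O(T)} \C_t$ to both sides. This yields an equivalence
\begin{equation*}
K^{\btop}_T(X;\C)_{\lL t} \xrightarrow{\simeq} HP_T(X/\C)_{\lL t}
\end{equation*}
in $\CAlg(D(\C))$. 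The functor $(-)_{\lL t}$ is symmetric monoidal on $\O(T)$-modules, so it carries commutative algebra objects to commutative algebra objects. Second, Proposition \ref{theorem: fibers of HP and cohomlogy of fixed points} identifies $HP_T(X/\C)_{\lL t}$ with $H_{\sing}(\Fix_t(X);\C)\llp u \rrp$. Composing the two equivalences gives the claimed one.

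The only real content is that the second identification is multiplicative, so that the composite lives in $\CAlg(D(\C))$ and not merely in $D(\C)$. I would check this in two cases. In the smooth case, multiplicativity is part of Chen's statement: the identification comes from the symmetric monoidal localization to the derived loop space and the Tate construction. In the singular case it is inherited from the proper excision argument, because both sides are functors to $\CAlg(D(\C))$ and the comparison map is a natural transformation of such functors. The limits over abstract blowup squares are computed underlying in $D(\C)$, since the forgetful functor from $\CAlg$ preserves limits. So the natural transformation, once known to be an equivalence on underlying objects, is an equivalence of commutative algebras.

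The main obstacle I anticipate is the naturality of the comparison map. For the excision induction to go through, it has to be a genuine natural transformation defined for all $X \in \Sch^T_{\C}$, and not merely a collection of separate identifications in the smooth case. I would try to obtain it globally from Proposition \ref{proposition: periodic cyclic homology and derived fixed-point schemes}. The plan is to restrict $\RG(\Fix^{\lL}_{\frac{T}{T}}(X),\O)^{tS^1}$ to the fiber over $t$, then pass to derived de Rham cohomology of $\Fix_t(X)$ via the Bhatt--Chen comparison, and finally use the de Rham–singular comparison on the analytification.
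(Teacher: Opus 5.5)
Your proposal is correct and follows the paper's proof. The paper likewise takes the derived fiber at $t$ of the $\O(T)$-linear equivalence $K^{\btop}_T(X;\C)\simeq HP_T(X/\C)$ and composes it with the identification $HP_T(X/\C)_{\lL t}\simeq H_{\sing}(\Fix_t(X);\C)\llp u\rrp$. Your remarks on multiplicativity and on the naturality of the comparison map spell out points the paper leaves implicit in the cited propositions, but they do not change the route.
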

\begin{proof}
Let $t: \C[t_1^{\pm 1}, \dots, t_n^{\pm 1}] \to \C$ be a classical point of the equivariant base. Taking the fiber $(-)_t$ of the equivalence from Proposition \ref{theorem: topological K-theory and HP} and composing with the equivalence from Proposition \ref{theorem: fibers of HP and cohomlogy of fixed points}, we deduce
\begin{equation*}
K_T^{\btop}(X; \C)_{\lL t} \xrightarrow{\simeq} HP_T(X / \C)_{\lL t} \xrightarrow{\simeq} H_{\bet}(\Fix_t(X); \C)\llp u \rrp.   
\end{equation*}
\end{proof}

\subsubsection{Equivariant formality.}
Under extra flatness assumptions, the above derived identifications simplify. In particular, this happens under suitable equivariant formality hypothesis. Let us hide the formal parameter $u$ by regarding $K_{T}^{\btop, *}(X)$ as graded by $* \in \Z/2$.

\begin{corollary}\label{corollary: K-theory in the flat situation}
Assume that $K_{T}^{\btop, *}(X)$ is a flat module over the equivariant base and let $t \in T$.
Then we have an isomorphism of $\Z/2$-graded-commutative rings
\begin{equation*}
    K_{T}^{\btop, *}(X; \C) \underset{\O(T)}{\otimes} \C_t = H^{\bullet}_{\bet}(\Fix_t(X); \C)
\end{equation*}
where $* \in \Z/2$ and $\bullet \in \N_{\geq 0}$. 
\end{corollary}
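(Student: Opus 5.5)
The plan is to start from the derived fiber identification of Proposition \ref{theorem: fibers of equivariant K-theory nd cohomology of fixed points} and use flatness to remove the derived structure. Write $M := K_T^{\btop}(X;\C)$, viewed as an $\O(T)[u^{\pm1}]$-algebra spectrum. Its homotopy groups $\pi_* M = K_T^{\btop,*}(X;\C)$ are $2$-periodic and $\Z/2$-graded. First I will check that flatness of $K_T^{\btop,*}(X)$ over $\O(T)$ gives flatness of the complexified version $K_T^{\btop,*}(X;\C)$ over $\O(T)_\C$. This is the base change $-\otimes_\Z\C$, which is flat and compatible with the $R(T)\to\O(T_\C)$ structure.

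Next I will apply the Künneth (Tor) spectral sequence for the derived tensor product $M\otimes^{\lL}_{\O(T)}\C_t$:
\begin{equation*}
E_2 = \mathrm{Tor}^{\O(T)}_{p}(\pi_* M, \C_t) \Rightarrow \pi_{*}(M_{\lL t}).
\end{equation*}
Because $\pi_*M$ is flat, all higher Tor groups vanish and the sequence collapses on the line $p=0$. This gives $\pi_*(M_{\lL t}) \cong \pi_*M\otimes_{\O(T)}\C_t$. No extension problems arise, since only one filtration step is present. The collapse is also multiplicative, because the edge map $\pi_*M\otimes\C_t\to\pi_*(M\otimes^{\lL}\C_t)$ is a ring map. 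Concretely, $\O(T)$ is a regular Laurent polynomial ring, so $\C_t$ has a finite Koszul resolution and the spectral sequence is bounded, which avoids convergence issues.

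On the other side, Proposition \ref{theorem: fibers of equivariant K-theory nd cohomology of fixed points} identifies $M_{\lL t}$, as an $\mathbb{E}_\infty$-algebra, with $H_{\sing}(\Fix_t(X);\C)\llp u\rrp$. Taking homotopy groups and folding the $u$-periodicity into a $\Z/2$-grading gives $\prod_{i}H^{i}(\Fix_t(X);\C)$, with even degrees in $*=0$ and odd degrees in $*=1$. Since $\Fix_t(X)$ is a finite-type $\C$-scheme, its cohomology is finite-dimensional and bounded, so product and sum agree. The result is the graded-commutative ring $H^\bullet_{\sing}(\Fix_t(X);\C)$ with its grading collapsed mod $2$. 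Composing the two isomorphisms yields the claim, and both are multiplicative because they come from maps in $\CAlg$.

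The main obstacle is bookkeeping rather than substance. One point is checking that the identification $\pi_*(A\llp u\rrp)\cong H^\bullet$ as $\Z/2$-graded rings is compatible with the conventions, so that $u$ has degree $2$ and cohomological degree matches the periodic folding. The other point is the flatness transfer from integral to complex coefficients. If the hypothesis is meant directly for $\C$-coefficients, that second step is vacuous.
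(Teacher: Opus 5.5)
Your proposal is correct and follows the paper's own argument. You take homotopy groups in the derived-fiber identification of Proposition~\ref{theorem: fibers of equivariant K-theory nd cohomology of fixed points}, use flatness to make the base change along $\O(T)\to\C_t$ underived, and fold the $u$-periodicity into the $\Z/2$-grading (giving $\bigoplus_i H^{2i}\cdot u^i$ and $\bigoplus_i H^{2i+1}\cdot u^i$ in degrees $0$ and $1$); your Tor spectral sequence, with the finite Koszul resolution of $\C_t$ guaranteeing convergence for the unbounded $2$-periodic module, just spells out the step the paper states in one line.
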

\begin{proof}
Take $\pi_{\bullet}(-)$ in Proposition \ref{theorem: fibers of equivariant K-theory nd cohomology of fixed points}.
By the flatness assumption, the base change along $\O(T) \to \C_t$ is underived, hence commutes with the passage to homotopy groups on the left-hand side. 

Both sides are $2$-periodic; the trace map is compatible with the natural filtrations; the element $u$ has degree $-2$. This explicitly identifies
\begin{equation*}
    K_{T}^{\btop, 0}(X; \C) \underset{\O(T)}{\otimes} \C_t = \bigoplus_{i \geq 0} H^{2i}_{\sing}(\Fix_t(X); \C) \cdot u^i
\end{equation*}
\begin{equation*}
    K_{T}^{\btop, 1}(X; \C) \underset{\O(T)}{\otimes} \C_t = \bigoplus_{i\geq 0} H^{2i+1}_{\sing}(\Fix_t(X); \C) \cdot u^i
\end{equation*}
\end{proof}

\begin{remark}\label{remark: K-theory in the superflat situaton}
Even more specifically, suppose that $K_{T}^{\btop, *}(X)$ is concentrated in the even part, where it is given by a flat module over the equivariant base. Then we have a ring isomorphism
$$K_{T}^{\btop, 0}(X; \C)\underset{\O(T)}{\otimes} \C_t =  H^{\bullet}_{\bet}(\Fix_t(X); \C),$$
the right-hand size is concentrated in even degrees, and the motivic grading matches the cohomological grading. This is the case relevant for our applications.
\end{remark}

\begin{aside}
An appropriate definition of the equivariant motivic filtration (Remark \ref{remark: motivic filtrations}) on the left-hand side would match the cohomological filtration on the right-hand side. For example, this is the case over $1 \in T$ via the Kan-extended motivic filtration.
\end{aside}

\subsubsection{Bibliographical remarks.}
The groundwork for the presented statements is contained in the literature. Moreover, weaker versions of some of these claims can be recovered from the classical literature on completion and localization techniques in equivariant topology, pioneered by Segal \cite{Seg68} and Goresky--Kottwitz--MacPherson \cite{GKM97}. These weaker statements would be also sufficient for our main applications. Nevertheless, we hope our exposition gives a conceptually clear and useful viewpoint. 

Our discussion above is based on the following two lines of development.
\begin{itemize}
    \item The relationship between topological $K$-theory $K^{\btop}$ and periodic cyclic homology $HP$ comes, in various degrees of generality, from \cite{Bla16}, \cite{HLP20}, \cite{Kon21}, \cite{ES21}, \cite{Kha23}, \cite{LS25}. 
    \item The relationship between $HH$ and functions on derived fixed-point schemes originates in \cite{BFN10}; the topological version with $HP$ follows by taking the Tate construction. We refer to \cite{Chen20} for a throughout and highly recommendable discussion of equivariant $HP$ with its relation to fixed-point schemes and ultimately to singular cohomology of fixed-points. Also see \cite{Bha12}, \cite{ACH19}, \cite{HSS17}, \cite{BN12, BN13}, \cite{Toe14} for more context.
\end{itemize}
An additive analogue of our fixed-point schemes together with its relation to equivariant cohomology was studied in \cite{HR23} on examples with trivial derived structure.

%% file: 3_cocharacter_graphs_and_resonance.tex
\section{Resonace, centralizers and cocharacter graphs}\label{section: cocharacter graphs and resonance}

We first introduce in \S \ref{section: resonance} the notion of \textit{resonance} inside $T \times \Gm^{\rot}$. Intuitively, two points in $T$ are resonant with respect to a scalar $\zeta \in \Gm^{\rot}$ if they multiplicatively differ by integer powers of $\zeta$. This gives rise to the \textit{resonant Weyl group} $W_{[x, \zeta]}$ and the associated \textit{resonant subgroup} $L_{[x, \zeta]}$. 
The notion of resonance appears when describing fixed-points of $(x, \zeta)$ in the affine Grassmannian and computing the corresponding fibers of its equivariant $K$-theory, as well as in the context of linkage principles in classical and quantum categories $\O$.

We then in \S \ref{subsection: centralizers in loop groups} describe the centralizer of $(x, \zeta)$ in $\lo G$ via its closed root subsystem, and discuss stabilizers of coordinate fixed points $t^{\lambda} \in \Gr_G$ inside it. We further consider the adjoint action of the resonant cocharacter $\omega$ on these objects, which we call \emph{slanting}. 
For simply connected $G$, we futher describe in \S \ref{subsection: weyl group stabilizers} the stabilizer of $(x, \zeta)$ inside the affine Weyl group.

We finally discuss in \S \ref{section: cocharacter graphs} the \textit{cocharacter graphs} $\Gamma$ inside $T \times \Gm^{\rot} \times T$ for a simply connected $G$. We explain how $\Gamma$ puts the resonant combinatorics into a family over $T \times \Gm^{\rot}$. Ultimately, $\Gamma$ turns out to be essential for our description of the whole $K$-theory ring of the affine Grassmannian and affine Schubert varieties inside it, because restriction to $\Gamma$ realizes equivariant localization to fixed points. We discuss the variant for reductive $G$ afterwards in \S \ref{section: cocharacter graphs for reductive groups}.

Altogether, the discussion in this section serves as a useful tool in the rest of the paper.

\subsection{Resonance and centralizers}\label{section: resonance}
To eventually describe fixed-points of elements $(x, \zeta) \in T \times \Gm^{\rot}$ on the affine Grassmannian, we introduce the notion of \emph{resonance}, \emph{resonant subgroups} and \emph{resonant cocharacter}.

\subsubsection{Resonance.}\label{subsubsection: resonance}
Let $(x, \zeta) \in T \times \Gm^{\rot}$. 
A point $x \in T$ is called \emph{$\zeta$-integral} if it is of the form $x=\lambda(\zeta)$ for some $\lambda \in \Z\Phi_{\bullet} \leq X_{\bullet}$. 
We say that $x, y \in T$ are \emph{$\zeta$-resonant} if there is some $\lambda \in \Z\Phi_{\bullet}$ such that $y = \lambda(\zeta) \cdot x$. In words, two points are $\zeta$-resonant if their ratio is $\zeta$-integral. 

We denote the equivalence relation given by $\zeta$-resonance as $x \sim_{\zeta} y$. For given $(x, \zeta) \in T \times \Gm^{\rot}$, we denote the equivalence class of $x$ under $\zeta$-resonance by $[x]_{\zeta} = [x, \zeta]$. We will use the latter notation to avoid too many subscripts.

The resonance relation is induced by the orbits of the \emph{$\zeta$-integral action} $\Z\Phi^{\bullet} \acts T$, $\lambda \cdot x = \lambda(\zeta) \cdot x$.

\subsubsection{Resonant subgroups.}
Recall the discussion of root subsystems from \S \ref{section: root subsystems and centralizers}.
Depending on $(x, \zeta) \in T \times \Gm^{\rot}$, consider the closed root subsystems
\begin{equation}
    \Phi^{\bullet}_x \subseteq \Phi^{\bullet}_{[x, \zeta]} \subseteq \Phi^{\bullet}_G
\end{equation}
given by
\begin{equation}\label{equation: resonant root system}
    \Phi^{\bullet}_x := \{\beta \in \Phi^{\bullet}_G \mid \beta(x)=1 \},
    \qquad \text{and} \qquad
    \Phi^{\bullet}_{[x, \zeta]} := \{\beta \in \Phi^{\bullet}_G \mid \exists m \in \Z, \ \beta(x)= \zeta^m \}.
\end{equation}
Indeed, both subsets $\Phi^{\bullet}_x$, $\Phi^{\bullet}_{[x, \zeta]}$, clearly satisfy (i), (ii) from Notation \ref{notation: closed root subsystems}.

We denote the corresponding regular subgroups by
\begin{equation}
L_x \leq L_{[x, \zeta]} \leq G.   
\end{equation}
We call them the \emph{stabilizer subgroup} and \emph{resonant subgroup}.
Their Weyl groups are denoted
\begin{equation}\label{equation: resonant weyl group}
    W_x \leq W_{[x, \zeta]} \leq W.
\end{equation}
We call them the \emph{stabilizer Weyl group} and \emph{resonant Weyl group}.
These are the reflection subgroups associated to the above closed root subsystems $\Phi^{\bullet}_{x}$ and $\Phi^{\bullet}_{[x, \zeta]}$.

The resonant root subsystem $\Phi^{\bullet}_{[x, \zeta]}$, resonant subgroup $L_{[x, \zeta]}$ and resonant Weyl group $W_{[x, \zeta]}$ clearly depend, up to canonical isomorphism, only on the resonance class $[x, \zeta]$ of the element $(x, \zeta)$. 
The notation is thus consistent.
When the point $(x, \zeta)$ is clear from the context, we abbreviate
\begin{equation*}
L := L_{[x, \zeta]} \leq G, \qquad \qquad \Phi^{\bullet}_L := \Phi^{\bullet}_{[x, \zeta]} \subseteq \Phi^{\bullet}_G, \qquad \qquad W_L := W_{[x, \zeta]} \leq W. 
\end{equation*}

\subsubsection{Resonant cocharacter.}
Consider $(x, \zeta)$ as above.
Let $\beta \in \Phi^{\bullet}_L$ be a resonant root. By definition of $L$, there exists an integer $m_{\beta} \in \Z$ such that $\beta(x) = \zeta^{m_{\beta}}$. Put differently, we have $m_{\beta} \in \log_{\zeta}(\beta(x))$. The integer $m_{\beta}$ is unique modulo $\ell = \ord(\zeta)$. We may package such integers as follows.

\begin{construction}\label{notation: additive function m}
We choose an additive function   
\begin{equation*}
   \mm : \Phi^{\bullet}_L \to \Z, \qquad \beta \mapsto m_{\beta}
\end{equation*}
such that $\beta(x) = \zeta^{m_{\beta}}$ for any $\beta \in \Phi^{\bullet}_L$.
\end{construction}

\begin{proof}
The above function is constructed as follows. 
When $\zeta$ is generic, meaning $\ell=0$, all its integer powers are distinct, so the condition $\beta(x) = \zeta^{m_{\beta}}$ determines $m_{\beta} \in \Z$ uniquely. On the other hand, when $\zeta$ is a root of unity of order $\ell \geq 1$, this condition is $\ell$-periodic, so $m_{\beta}$ is unique modulo $\ell$. In both cases, we obtain a map
\begin{equation*}
    \Phi^{\bullet}_L \to \Z / \ell \Z, \qquad \beta \mapsto [m_{\beta}].
\end{equation*}
This map is additive. 
Indeed, if $\alpha(x) = \zeta^{m_{\alpha}}$ and $\beta(x) = \zeta^{m_{\beta}}$, then $(\alpha + \beta)(x) = \alpha(x)\cdot \beta(x) = \zeta^{-(m_{\alpha}+m_{\beta})}$, so $[m_{\alpha + \beta}] \equiv [m_{\alpha}] + [m_{\beta}]$ modulo $\ell$ as desired.
We now choose an additive lift
\begin{equation*}
   \Phi^{\bullet}_L \to \Z, \qquad \beta \mapsto m_{\beta}. 
\end{equation*}
When $\zeta$ is generic, there is nothing to choose. When $\zeta$ is a root of unity of $\ord(\zeta) = \ell \geq 1$, we choose arbitrarily the lifts $m_{\alpha_i} \in \Z$ of $[m_{\alpha_i}]$ for all simple roots $\alpha_i \in \Phi^{\bullet, \sr}_L$.
Since simple roots are linearly independent and generate all roots, this uniquely extends the lift to $\Phi^{\bullet}_L$ by additivity.
\end{proof}

The above choice of $\mm$ may be equivalently packaged into a cocharacter $\omega$ of the adjoint form of $L$, which we call a resonant cocharacter at $(x, \zeta)$.
\begin{construction}
Let $T^{\ad}_L$ be the maximal torus of the adjoint form $L^{\ad}$ of the resonant subgroup $L$.
We consider a cocharacter
\begin{equation*}
 \omega = \omega_{(x, \zeta)} \in X_{\bullet}(T^{\ad}_L)   
\end{equation*}
with the property that $\beta(\omega(\zeta)) = \beta(x)$, $\forall \beta \in \Phi^{\bullet}_L$ and call it a \emph{resonant cocharacter} at $(x, \zeta)$.
\end{construction}
\begin{proof}
Such a cocharacter always exists. Indeed, for each $\beta \in \Phi^{\bullet}_L$ we have $\beta(x) = \zeta^{m_{\beta}}$ for the additive function $\mm: \Phi^{\bullet}_L \to \Z$. We thus need to construct a cocharacter $\omega$ such that $\beta(\omega(x)) = \zeta^{m_{\beta}}$ for any $\beta \in \Phi^{\bullet}_L$. By additivity of $m_{\beta}$, it is enough to check this property only on simple roots $\alpha_i \in \Phi^{\bullet, \sr}_L$. These form a free basis of the root lattice $\Z\Phi^{\bullet}_L$, which is also equal to the character lattice of the adjoint form $X^{\bullet, \ad}_{L}$. Hence we have the dual basis of cocharacters $\omega_i \in X^{{\ad}}_{\bullet, L}$. Setting $\omega := \sum_i m_{\alpha_i} \cdot \omega_i$ gives the desired cocharacter.
\end{proof}

\begin{remark}\label{remark: non-uniqueness of the resonant part}
Given the choice of $\mm$, we have defined $\omega$ by the property $\langle \beta, \omega \rangle = m_{\beta}$, $\forall \beta \in \Phi^{\bullet}_L$. The ambiguity in choosing $\omega$ is the same as the ambiguity in choosing the additive function $\mm$.
\begin{enumerate}
    \item The evaluation map $\ev_{\zeta}: X^{\bullet, \ad}_{L} \to k$, $\lambda \mapsto \lambda(\zeta)$ is not injective unless $\zeta$ is generic. For generic $\zeta$, such $\omega$ is unique. For $\zeta$ root of unity of $\ord(\zeta) = \ell$, it is determined up to $\ell X_{\bullet}^{{\ad}}$.
    \item To define $\omega$, we have to pass to the adjoint form $L^{\ad}$. The constructed cocharacter $\omega$ of $T^{\ad}_{L}$ may not lift to a cocharacter of a maximal torus $T_L$ of $L$. Even if it does, such a lift of $\omega$ in $T_L$ is not unique.
\end{enumerate}
\end{remark}

\subsubsection{Parabolic and Levi subgroups of $L$.}\label{section: parabolic and levi subgrops of L}
We consider $(x, \zeta)$ as above and work inside the resonant subgroup $L=L_{[x, \zeta]}$.
Consider the subgroup $P_x \leq L$ spanned by the same maximal torus and root spaces 
\begin{equation}\label{equation: roots of Px}
 \Phi^{\bullet}_{P_x} := \{\beta \in \Phi^{\bullet}_L \mid \beta(x) = \zeta^m \ \text{for some} \ m \leq 0 \}
\end{equation}
For each root $\beta \in \Phi^{\bullet}_L$, at least one of $\pm \beta$ lies inside $\Phi^{\bullet}_{P_x}$. Applying this observation to simple roots of $L$, we conclude that $P_x$ contains a Borel subgroup of $L$. Consequently, $P_x \leq L$ is a parabolic subgroup.
By the definition of $\omega$, we further have $\beta(x) = \beta(\omega(\zeta)) = \zeta^{\langle \beta, \omega \rangle}$. Therefore, $\Phi^{\bullet}_{P_x} = \{\beta \in \Phi^{\bullet}_L \mid \langle \beta, \omega \rangle \leq 0 \}$. These are precisely the roots of the opposite parabolic subgroup $P_{\omega} \leq L$, hence $P_x = P_{\omega}$.

Moreover, for any $\beta \in \Phi^{\bullet}_L$, both $\pm \beta$ lie in $\Phi^{\bullet}_{P_x}$ if and only if $\beta(x) = 1$. This condition cuts out the root system $ \Phi^{\bullet}_{x} = \{\beta \in \Phi^{\bullet}_L \mid \beta(x) = 1 \}$ of the stabilizer subgroup $L_x$. We deduce that $L_x$ is the Levi factor of $P_x$. As in the above paragraph, this may be rewritten as $\Phi^{\bullet}_{x} = \{\beta \in \Phi^{\bullet}_L \mid \langle \beta, \omega \rangle = 0 \}$, which is the root subsystem of $L_{\omega} \leq L$.

Altogether, we have proved the following.
\begin{lemma}\label{lemma: parabolic and levi subgrops of L}
The stabilizer $L_x \leq L$ is a Levi subgroup of $L$. It is the Levi factors of the parabolic subgroup $P_x \leq L$. Moreover, $L_x = L_{\omega}$ and $P_x = P_{\omega}$ are the Levi and opposite parabolic associated to the resonant cocharacter $\omega \in X_{\bullet, L}^{{\ad}}$. 
\end{lemma}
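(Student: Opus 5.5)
The plan is to reduce every assertion to a statement about root subsystems. The relevant groups are all regular subgroups of $L$ containing $T$, so by Proposition \ref{proposition: closed root subsystems and regular subalgebras} a connected regular subgroup is determined by its closed root subsystem. Concretely, I will identify $\Phi^{\bullet}_x$ and $\Phi^{\bullet}_{P_x}$, as subsets of $\Phi^{\bullet}_L$, with the root sets of $L_\omega$ and $P_\omega$. Here $L_\omega$ and $P_\omega$ are the Levi and opposite parabolic attached to $\omega \in X^{\ad}_{\bullet, L}$ as in \S\ref{section: levi subgroups}, applied with ambient group $L$ via the bijection between Levi/parabolic subgroups of $L$ and of $L^{\ad}$.

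First I fix $\omega$ with $\langle \beta, \omega\rangle = m_\beta$ for all $\beta \in \Phi^{\bullet}_L$, where $\mm$ is the additive function of Construction \ref{notation: additive function m}. Then $\beta(x) = \zeta^{\langle\beta,\omega\rangle}$ for every $\beta \in \Phi^{\bullet}_L$. Next I rewrite the defining conditions in terms of $\omega$.

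\textbf{Levi factor.} For $\beta \in \Phi^{\bullet}_L$ we have $\beta(x) = 1$ iff $\zeta^{\langle\beta,\omega\rangle} = 1$. When $\zeta$ is generic, this holds iff $\langle \beta,\omega\rangle = 0$. Hence $\Phi^{\bullet}_x = \{\beta \in \Phi^{\bullet}_L : \langle\beta,\omega\rangle = 0\}$, which is exactly the root system of the centralizer $L_\omega$ of $\omega$ in $L$. By Lemma \ref{lemma: centralizers give closed subroot systems}, applied to a point of the image of $\omega$, this gives $L_x = L_\omega$.

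\textbf{Parabolic.} Likewise $\Phi^{\bullet}_{P_x} = \{\beta : \langle\beta,\omega\rangle \le 0\}$. This is the root set of the parabolic $P_\omega$ taken with respect to $B'$, so $P_x = P_\omega$. In particular $P_x$ is parabolic, and the standard Levi decomposition $P_\omega = L_\omega \ltimes U$ exhibits $L_x$ as its Levi factor, i.e.\ $\Phi^{\bullet}_{P_x} \cap (-\Phi^{\bullet}_{P_x}) = \Phi^{\bullet}_x$.

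The main obstacle is the root-of-unity case $\ell = \ord(\zeta) \ge 1$. There $\beta(x) = 1$ only gives $\ell \mid \langle\beta,\omega\rangle$, and the set $\{m \le 0\}$ in \eqref{equation: roots of Px} is not well defined modulo $\ell$, so the equalities above can genuinely fail for an arbitrary lift $\mm$. I would handle this by reading the definitions relative to the chosen lift. That is, I would interpret $\Phi^{\bullet}_{P_x}$ and the relevant part of $\Phi^{\bullet}_x$ through $m_\beta$ itself, or else choose the lifts $m_{\alpha_i}$ of the simple roots small enough, say in a fundamental alcove for the $\ell$-dilated action, that $|\langle\beta,\omega\rangle| < \ell$ for all $\beta \in \Phi^{\bullet}_L$. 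With such a choice, divisibility by $\ell$ forces $\langle\beta,\omega\rangle = 0$, and the generic argument goes through verbatim. I expect the statement is only intended under such a normalization, and I would make that convention explicit before running the argument.
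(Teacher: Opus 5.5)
Your argument for generic $\zeta$ is correct and is essentially the paper's: substitute $\beta(x)=\zeta^{\langle\beta,\omega\rangle}$ and read off $\Phi^{\bullet}_x$ and $\Phi^{\bullet}_{P_x}$ as the root sets of $L_\omega$ and $P_\omega$. You are also right that this step uses uniqueness of the exponent, and the paper's proof relies on it tacitly without separating cases.

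The gap is in your repair at roots of unity, which fails in two ways. First, the normalization $|\langle\beta,\omega\rangle|<\ell$ does not let the generic argument go through for the parabolic. With the literal definition of $\Phi^{\bullet}_{P_x}$ one has $\beta(x)=\zeta^{m_\beta}=\zeta^{m_\beta-k\ell}$ for all $k$. Hence every $\beta\in\Phi^{\bullet}_L$ lies in $\Phi^{\bullet}_{P_x}$ and $P_x=L$, whatever lift you choose. Take $G=\mathrm{SL}_2$, $x=\mathrm{diag}(i,-i)$, $\zeta=-1$, where your normalization $m_\alpha=\pm1$ does exist. Then $P_x=L=\mathrm{SL}_2$, while $L_x=T=L_\omega$ and $P_\omega$ is a Borel. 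So ``$L_x$ is the Levi factor of $P_x$'' and ``$P_x=P_\omega$'' both fail, and can only be rescued by redefining $P_x:=P_\omega$.

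Second, the normalization need not exist, and then even the Levi assertion fails for every lift. Take $G=\mathrm{Sp}_4$ with torus $\mathrm{diag}(a,b,a^{-1},b^{-1})$, $x=\mathrm{diag}(-1,1,-1,1)$ and $\zeta=-1$. Every root takes the value $\pm1$ at $x$, so $L=G$. However $\Phi^{\bullet}_x=\{\pm2\epsilon_1,\pm2\epsilon_2\}$, so $L_x\cong\mathrm{SL}_2\times\mathrm{SL}_2$ has finite center. It is a pseudo-Levi of $L$ but not a Levi subgroup. Directly, $L_x=L_\omega$ would force $m_\beta=0$ on $\Phi^{\bullet}_x$, hence on its rational span, which is all of $\Phi^{\bullet}_L$. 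This contradicts $m_{\epsilon_1-\epsilon_2}$ being odd, since $(\epsilon_1-\epsilon_2)(x)=\zeta$.

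So the lemma as stated holds only for generic $\zeta$, and your proof should be restricted to that case. At a root of unity the correct statement is different: $\Phi^{\bullet}_x=\{\beta\in\Phi^{\bullet}_L : \ell\mid\langle\beta,\omega\rangle\}$ is the root system of the reductive quotient of the dilated parahoric $\eP^{\omega}_{\ell}$, a pseudo-Levi of $L$ but not in general a Levi. This matches Lemma~\ref{lemma: zeta root of unity -- positive centralizer}, where $J^+$ is identified with $\eP^{\omega}_{\ell}$ rather than with a parabolic of $L$.
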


\begin{remark}\label{remark: parabolic and levi subgrops of L}
Let $y = x \cdot \lambda(\zeta)$ for some $\lambda \in \Z\Phi^{\bullet}_G$ be an element in the $\zeta$-resonance class of $x$. Then the resonant cocharacter of $y$ is given by $\omega + \lambda^{\ad}$, where $\lambda^{\ad}$ is the projection of $\lambda$ to the cocharacter lattice of $L^{\ad}$ along $X_{\bullet, G} = X_{\bullet, L} \twoheadrightarrow X_{\bullet, L}^{\ad}$. Since $L = L_{[x, \zeta]} = L_{[y, \zeta]}$ depends only on the resonance class of $x$, Lemma \ref{lemma: parabolic and levi subgrops of L} follows for $y$.

Concretely, we deduce $L_y \leq L$ is a Levi subgroup of $L$. It is the Levi factor of the parabolic subgroup $P_{y}$. Moreover, $L_y = L_{\omega + \lambda^{\ad}}$ and $P_{\omega + \lambda^{\ad}}$ are the Levi and opposite parabolic associated to the resonant cocharacter $\omega + \lambda^{\ad} \in X_{\bullet, L}^{{\ad}}$.
\end{remark}

\subsubsection{Independence on Weyl group actions.}\label{subsubsection: independence on W-action}
Fix the point $\zeta \in \Gm^{\rot}$. Any closed point $x \in T$ projects to a closed point $\overline{x} = x \cdot W$ of $T\sslash W$.

The resonant subgroup $L_{[x, \zeta]}$ and its Weyl group $W_{[x, \zeta]}$ depend, up to canonical isomorphism given by conjugation, only on $\overline{x}$. 
Since they also depend only on the $\zeta$-resonance class of $x$ by definition, they are up to canonical isomorphism attached to the orbit of the $\zeta$-integral action of the affine Weyl group $W_{\aff} \acts T$, $(w, \lambda) \cdot x = \lambda(\zeta) \cdot w(x)$. Also see \S \ref{subsubsection: resonance} and \S \ref{section: actions I} below.

Similarly, the stabilizing Weyl group $W_x$, the stabilizing subgroup $L_x$ and the associated parabolic subgroup $P_x$ of $L_{[x, \zeta]}$ depend, up to canonical isomorphism given by conjugation, only on $\overline{x}$.

In particular, we have shown the following.
\begin{claim}
The pair $P_x \leq L_{[x, \zeta]}$ depends, up to isomorphism, only on $\overline{x} \in T \sslash W$ and $\zeta \in \Gm^{\rot}$.   
\end{claim}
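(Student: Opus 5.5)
The plan is to show that for every $w \in W$, acting by a lift $n_w \in N_G(T)$ carries the pair attached to $x$ isomorphically onto the pair attached to $w(x)$. Since $\overline{x} = \overline{x'}$ in $T \sslash W$ means exactly $x' = w(x)$ for some $w \in W$ (the fibres of $T \to T\sslash W$ over closed points are $W$-orbits, $W$ being finite), this gives the claim. Fix $\zeta$ throughout and write $x' = w(x)$. Everything is built from root subsystems determined by the values $\beta(x)$, so the key identity is
\begin{equation*}
    (w\beta)(w(x)) = \beta(x), \qquad \beta \in \Phi^{\bullet}_G,
\end{equation*}
which holds because $W$ acts on $X^{\bullet}$ by transport of structure.

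\emph{Root subsystems.} The identity gives $w(\Phi^{\bullet}_{[x,\zeta]}) = \Phi^{\bullet}_{[x',\zeta]}$ and $w(\Phi^{\bullet}_x) = \Phi^{\bullet}_{x'}$. For the parabolic, I would work from the description \eqref{equation: roots of Px}, namely $\beta(x) = \zeta^m$ for some $m \le 0$. This description is intrinsic in the values $\beta(x)$ and independent of the choice of $\mm$ or $\omega$, so it gives $w(\Phi^{\bullet}_{P_x}) = \Phi^{\bullet}_{P_{x'}}$ directly.

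\emph{Groups.} Conjugation $\mathrm{Ad}(n_w)$ preserves $T$ and maps each root subgroup $U_\beta$ onto $U_{w\beta}$. Each of $L_{[x,\zeta]}$, $L_x$, $P_x$ is the regular subgroup generated by $T$ and the root subgroups of its closed root subsystem, via Proposition \ref{proposition: closed root subsystems and regular subalgebras}. Hence $\mathrm{Ad}(n_w)$ maps $P_x \le L_{[x,\zeta]}$ isomorphically onto $P_{x'} \le L_{[x',\zeta]}$. The same argument identifies the Weyl groups, $wW_{[x,\zeta]}w^{-1} = W_{[x',\zeta]}$, since $ws_\beta w^{-1} = s_{w\beta}$.

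\emph{Main obstacle.} The only delicate point is that the subset \eqref{equation: roots of Px} is really a parabolic subsystem. For $\zeta$ a root of unity, the set of admissible $m$ is an entire coset of $\ell\Z$, which contains nonpositive integers, so I must make sure the definition is read consistently with the chosen $\mm$. Lemma \ref{lemma: parabolic and levi subgrops of L} states $P_x = P_\omega$. I will therefore transport the choice as well: if $\mm$ is chosen at $x$, take $\mm' := \mm\circ w^{-1}$ at $x'$, with resonant cocharacter $w(\omega)$. Then $\langle w\beta, w\omega\rangle = \langle \beta,\omega\rangle$, so $w$ maps $\{\langle\beta,\omega\rangle\le 0\}$ onto $\{\langle\beta',w\omega\rangle\le 0\}$, and the isomorphism respects the parabolics under compatible choices. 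The claim is only up to isomorphism, so this suffices.
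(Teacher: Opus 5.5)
Your proposal is correct and is essentially the paper's argument: the paper simply observes that $L_{[x,\zeta]}$, $W_{[x,\zeta]}$, $L_x$, $W_x$ and $P_x$ are transported along $W$ by conjugation with Weyl group representatives, so they depend on $x$ only through $\overline{x}$, up to the canonical isomorphism given by that conjugation. Your extra care at roots of unity is a genuine improvement on a point the paper glosses over: you transport the choice of $\mm$ and $\omega$ along $w$, because there the literal condition ``$\beta(x)=\zeta^m$ for some $m\le 0$'' is satisfied by every $\beta\in\Phi^{\bullet}_L$. One small correction: for $P_x$, appeal to its definition as the subgroup generated by $T$ and the $U_\beta$ with $\beta\in\Phi^{\bullet}_{P_x}$, not to Proposition~\ref{proposition: closed root subsystems and regular subalgebras}, since $\Phi^{\bullet}_{P_x}$ is closed but not a root subsystem.
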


\subsubsection{Vocabulary to Humphrey's notation.}
When $G$ is of adjoint type and $\zeta$ is generic, the above constructions appear in the study of the classical category $\O$, see \cite{Hum08, Jan79}. More precisely, \cite{Hum08} works (under Langlands dual notation) with the BGG category $\O$ of the semisimple Lie algebra $\widecheck{\g}$. The genericity of $\zeta$ corresponds to the classical version of the category $\O$ (as opposed to its quantum version at a root of unity). In this setup, the (co)character $\omega$ is unique (as we saw in Remark \ref{remark: non-uniqueness of the resonant part}). For convenience, we provide a short vocabulary to the notation from \cite{Hum08} in this special case -- compare:
    \begin{itemize}
        \item  the definition of $W_L = W_{[x, \zeta]}$ to $W_{[c]}$ of \cite[\S 3.4]{Hum08},
        \item the definition of $\omega = \omega_{[x, \zeta]}$ to the definition of $c^{\natural}$ of \cite[\S 7.4]{Hum08},
        \item the definition of $W_x$ to the definition of $W^{\circ}_c$ of \cite[\S 7.4]{Hum08}.
    \end{itemize}

\subsection{Centralizers in loop groups}\label{subsection: centralizers in loop groups}

\subsubsection{Centralizers in loop groups.}\label{section: centralizers}

Let $(x, \zeta) \in T \times \Gm^{\rot}$. Consider its centralizer $\Cent_{\lo G}(x, \zeta) \leq \lo G$ in the loop group $\lo G$. It might be not connected; we denote the identity component by
\begin{equation}
J := J_{(x, \zeta)} := \Cent^{\circ}_{\lo G}(x, \zeta) \leq \lo G.
\end{equation}
Furthermore, consider the positive part the connected centralizer $J$, given by
\begin{equation}
J^+ 
:= J \cap \lop G \leq \lop G   
\end{equation}
Equivalently, $J^+ = \Stab_{J}(t^{0})$ is the stabilizer inside $J$ of the base point $t^{0} \in \Gr_G$. 

More generally, for any cocharacter $\lambda \in X_{\bullet}$, we consider
\begin{equation}\label{equation: positive stabilizer}
    J^{+}_{\lambda} := J \cap t^{\lambda} \cdot \lop G \cdot t^{-\lambda},
\end{equation}
the stabilizer $\Stab_{J}(t^{\lambda})$ inside $J$ of the point $t^{\lambda} \in \Gr_G$.

\begin{remark}
It will become apparent later -- see Lemmata \ref{lemma: positive stabilizers as parabolics -- generic zeta} and \ref{lemma: zeta root of unity -- positive stabilizers} -- that all $J^{+}_{\lambda}$ are already connected. For example, this a posteriori means $J^{+}$ is given by the connected centralizer $J^+ = \Cent^{\circ}_{\lop G}(x, \zeta)$ of $(x, \zeta)$ inside the positive loop group.   
\end{remark}

\subsubsection{Root system of $J$.}
We can conveniently describe the root system of $J$ as follows.
\begin{lemma}\label{lemma: root spaces of J}
Let $(x, \zeta) \in T \times \Gm^{\rot}$ arbitrary and denote $\ell= \ord(\zeta) \in \Z_{\geq 0}$. Then the root subsystem $\Phi^{\bullet}_J \subseteq \Phi^{\bullet}_{G, \aff}$ of $J = J_{(x, \zeta)}$ has the form    
\begin{align*}
\Phi^{\bullet}_J 
&= \{ \beta + m\hbar \in \Phi^{\bullet}_{G, \aff} \mid \beta(x) \cdot \zeta^{m} = 1 \} \\
&= \{\beta - (m_{\beta} + \ell \Z)\hbar \in \Phi^{\bullet}_{G, \aff} \mid \beta \in \Phi^{\bullet}_{L}\}.
\end{align*}
for the additive function $\mm: \Phi^{\bullet}_L \to \Z$, $\beta \mapsto m_{\beta}$ from Notation \ref{notation: additive function m}.
\end{lemma}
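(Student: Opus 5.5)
The first equality is exactly Lemma \ref{proposition: affine centralizer as regular subgroup}. So the plan is to quote it and then prove the second equality as an equality of subsets of $\Phi^{\bullet}_{G, \aff}$, checking both inclusions. Recall from that lemma that $J = \Cent^{\circ}_{\lo G}(x, \zeta)$ is the regular subgroup attached to the closed root subsystem cut out by $\beta(x)\zeta^m = 1$. This uses Lemma \ref{lemma: weights of extended torus on affine root groups}: conjugation by $(x,\zeta)$ acts on $U_{\beta + m\hbar}$ by the scalar $\beta(x)\zeta^m$. Nothing further is needed for the first line.

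For the inclusion of the first set into the second, take $\beta + m\hbar$ with $\beta(x)\zeta^m = 1$. Then $\beta(x) = \zeta^{-m}$, so by the definition \eqref{equation: resonant root system} we have $\beta \in \Phi^{\bullet}_{L}$ with $L = L_{[x,\zeta]}$. By Construction \ref{notation: additive function m} we also have $\beta(x) = \zeta^{m_\beta}$. Hence $\zeta^{m + m_\beta} = 1$, which says $m + m_\beta \in \ell\Z$.

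This last step is where the two regimes for $\zeta$ must be distinguished. If $\zeta$ is generic ($\ell = 0$), then $\ell\Z = 0$ and $m = -m_\beta$ exactly. If $\zeta$ has primitive order $\ell \ge 1$, then $\zeta^k = 1$ if and only if $\ell \mid k$. Either way we get $m \in -(m_\beta + \ell\Z)$, so $\beta + m\hbar$ lies in the second set.

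For the reverse inclusion, take $\beta \in \Phi^{\bullet}_L$ and $m = -(m_\beta + \ell k)$. Then $\beta(x)\zeta^m = \zeta^{m_\beta}\zeta^{-m_\beta}\zeta^{-\ell k} = 1$, since $\zeta^\ell = 1$. When $\ell = 0$ this factor is simply absent. So this element lies in the first set.

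There is no genuine obstacle. The one point needing care is the sign convention: the second description writes $\beta - (m_\beta + \ell\Z)\hbar$. I also note that the construction of $\mm$ in the paper contains an apparent sign typo, $\zeta^{-(m_\alpha + m_\beta)}$. My argument should rely only on the defining property $\beta(x) = \zeta^{m_\beta}$, and not on that line. Finally, the resulting set does not depend on the chosen additive lift $\mm$, since $m_\beta$ is determined modulo $\ell$.
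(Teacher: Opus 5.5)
Your proposal is correct and follows the paper's own proof. In both, the first equality is quoted from Lemma \ref{proposition: affine centralizer as regular subgroup}, and the second follows because any $\beta + m\hbar$ in the first set has $\beta \in \Phi^{\bullet}_L$, and the integers $k$ with $\beta(x) = \zeta^{k}$ are exactly $m_\beta + \ell\mathbb{Z}$. Your double inclusion only makes explicit the sign bookkeeping ($\beta(x)=\zeta^{-m}$) that the paper leaves implicit. You are also right that $\zeta^{-(m_\alpha+m_\beta)}$ in Construction \ref{notation: additive function m} is a typo, and your argument does not depend on it.
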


\begin{proof} 
The centralizer $J$ is determined by the closed root subsystem $\Phi^{\bullet}_J$ of the affine real root system $\Phi^{\bullet}_{G, \aff}$ of $G$ by Lemma \ref{lemma: centralizers give closed subroot systems}. Since the adjoint action of $(x, \zeta)$ on a root space $U_{\beta + m\hbar}$ is given by $\beta(x)\cdot \zeta^m$, this closed root system is given by $\Phi^{\bullet}_J = \{ \beta + m\hbar \in \Phi^{\bullet}_{G, \aff} \mid \beta(x) \cdot \zeta^{m} = 1 \}$ as in Lemma \ref{proposition: affine centralizer as regular subgroup}. The first equality is thus clear.

For the second equality, consider the resonant subgroup $L=L_{[x, \zeta]} \leq G$ with root system $\Phi^{\bullet}_L \subseteq \Phi^{\bullet}_G$. Note that $\Phi^{\bullet}_{L}$ is given by the image of $\Phi^{\bullet}_J$ under the projection \eqref{equation: projection of root systems}. Hence any root $\beta +m\hbar \in \Phi^{\bullet}_J$ has $\beta\in \Phi^{\bullet}_L$.
By Construction \ref{notation: additive function m} of the additive function $\mm$, the subset $m_{\beta} + \ell \Z \subseteq \Z$ contains precisely those integers $m$ satisfying $\beta(x) = \zeta^m$. The second equality follows.
\end{proof}

\begin{notation}\label{notation: WJ}
We denote by $W_J := W_{\Phi^{\bullet}_J} \leq W_{G, \aff}$ the reflection subgroup generated by the affine reflections along roots from $\Phi^{\bullet}_J \leq \Phi^{\bullet}_{G, \aff}$.    
\end{notation}

\begin{remark}\label{remark: root system of J project on root system of L}
Note that the resonant root system $\Phi^{\bullet}_{[x, \zeta]} = \Phi^{\bullet}_L$ from \eqref{equation: resonant root system} is the finite root subsystem given by the projection of the affine root system $\Phi^{\bullet}_J$. In terms of \eqref{equation: projection of root systems}, this reads as
\begin{equation*}
   \Phi^{\bullet}_{[x, \zeta]} = \overline{\Phi}^{\bullet}_J.
\end{equation*}
\end{remark}

\begin{remark}
Since $J$ is connected, it is determined by the root subsystem $\Phi^{\bullet}_J$. Also note that $J \leq \lo L^{\sc}$. Indeed, it is connected and contains only root spaces from $\Phi^{\bullet}_{L, \aff}$.    
\end{remark}

\subsubsection{Slanting.}\label{section: slanting}
We can get rid of the integers $m_{\beta}$ by a suitable automorphism of $\lo L$, which we call \emph{slanting}. Geometrically, slanting is given by conjugation with $t^{\omega}$, where $\omega$ is the resonant cocharacter. 

\begin{discussion}\label{discussion: slanting}
The adjoint action of a reductive group on itself factors through its adjoint form. Since $L^{\ad} \acts L$ by conjugation, we have an induced action $\lo L^{\ad} \acts \lo L$. For any cocharacter $\lambda \in X^{{\ad}}_{L, \bullet}$, conjugation by $t^{\lambda}$ gives an automorphism of $\lo L$. We call this map \emph{slanting by $\lambda$} and denote it
\begin{equation}
    \slant_{\lambda}: \lo L \to \lo L, \qquad g \mapsto t^{\lambda}\cdot g \cdot t^{-\lambda}
\end{equation}
By the same argument, slanting restricts to an automorphism of the identity component $\slant_{\lambda}: \lo L^{\sc} \to \lo L^{\sc}$. On the level of its root system, slanting is induced by
\begin{equation}\label{equation: slanting on root data}
    \slant_{\lambda}: \Phi^{\bullet}_{L, \aff} \to \Phi^{\bullet}_{L, \aff}, 
    \qquad
    \beta + m\hbar \mapsto \beta + (m + \langle \beta, \lambda \rangle)\hbar.
\end{equation}
Clearly, $\slant_{\lambda}$ and $\slant_{-\lambda}$ are inverse to each other.
Using this notation, any subgroup $H \leq \lo L$ can be slanted to an isomorphic copy $\slant_{\lambda}(H) := t^{\lambda} \cdot H \cdot t^{-\lambda} \leq \lo L$.

Furthermore, slanting on root systems \eqref{equation: slanting on root data} induced a map
\begin{equation}\label{equation: slanting on affine weyl groups}
    \slant_{\lambda}: W_{L, \aff} \to W_{L, \aff}.
\end{equation}
Any subgroup of $W_{L, \aff}$ may be again slanted by the automorphism $\slant_{\omega}$. Under this automorphism, the natural action $W_{L, \aff} \acts X_{\bullet}$ identifies with the $\lambda$-shifted action $(\slant_{\lambda}(W_{L, \aff}, \bullet_{\lambda}) \acts X_{\bullet}$.
Indeed, this is clear from the defining formulas \S \ref{section: shifted actions}, \S \ref{section: shifted affine weyl group actions} and \eqref{equation: slanting on root data}.
\end{discussion}

In particular, we may slant by the resonant cocharacter $\omega$. 
Slanting $J \leq \lo L^{\sc}$, we obtain the embedding $\slant_{\omega}(J) = t^{\omega} \cdot J \cdot t^{-\omega} \leq \lo L^{\sc}$.
As we saw, this embedding can be defined purely on the level of associated root systems.
\begin{lemma}\label{discussion: slanting J in general}
The closed root subsystem of $\slant_{\omega}(J)$ is given by
\begin{equation*}
    \Phi^{\bullet}_{\slant_{\omega}(J)} = \{ \beta + \ell \Z \mid \beta \in \Phi^{\bullet}_L \}.
\end{equation*} 
\end{lemma}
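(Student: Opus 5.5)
The plan is to compute directly on the level of root systems, combining Lemma \ref{lemma: root spaces of J} with the formula \eqref{equation: slanting on root data} for slanting. Since $J$ is connected, it is determined by its closed root subsystem (Lemma \ref{lemma: centralizers give closed subroot systems} and Proposition \ref{proposition: closed root subsystems and regular subalgebras}). Conjugation by $t^{\omega}$ sends the affine root space $U_{\beta+m\hbar}$ to another affine root space; this is Lemma \ref{lemma: conjugation of affine root spaces}, adapted to the adjoint cocharacter $\omega$ by working in $\lo L^{\ad}\acts \lo L$ as in Discussion \ref{discussion: slanting}. Consequently $\slant_{\omega}(J)$ is again the connected regular subgroup, generated by $T$ and root spaces, whose root system is the image $\slant_{\omega}(\Phi^{\bullet}_J)$. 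So it suffices to compute this image.

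Step one is to recall from Lemma \ref{lemma: root spaces of J} that
\begin{equation*}
\Phi^{\bullet}_J=\{\beta-(m_\beta+\ell k)\hbar \mid \beta\in\Phi^{\bullet}_L,\ k\in\Z\}.
\end{equation*}
Step two is to apply \eqref{equation: slanting on root data} with $\lambda=\omega$. This sends $\beta-(m_\beta+\ell k)\hbar$ to $\beta+(-m_\beta-\ell k+\langle\beta,\omega\rangle)\hbar$. Step three uses the defining property of the resonant cocharacter, $\langle\beta,\omega\rangle=m_\beta$ for all $\beta\in\Phi^{\bullet}_L$ (Remark \ref{remark: non-uniqueness of the resonant part}). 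The image is therefore $\beta-\ell k\hbar$, and as $k$ ranges over $\Z$ these are exactly the roots $\beta+\ell\Z\hbar$. Since slanting is a bijection on $\Phi^{\bullet}_{L,\aff}$, the resulting set is exactly $\{\beta+\ell\Z\hbar\mid\beta\in\Phi^{\bullet}_L\}$, as claimed.

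The main point to get right is the sign convention. Lemma \ref{lemma: conjugation of affine root spaces} is stated for conjugation $t^{-\lambda}(\cdot)t^{\lambda}$ and shifts $m$ by $-\langle\lambda,\beta\rangle$, whereas $\slant_\lambda$ is $t^{\lambda}(\cdot)t^{-\lambda}$, which shifts by $+\langle\beta,\lambda\rangle$. This agrees with \eqref{equation: slanting on root data}, and we will check it explicitly using $t^{\lambda}u_\beta(zt^m)t^{-\lambda}=u_\beta(t^{\langle\beta,\lambda\rangle}zt^m)$. A second point is that $\omega$ only lives in $X^{\ad}_{\bullet,L}$. This is harmless because the formula involves only the pairings $\langle\beta,\omega\rangle$ with roots, which make sense on the adjoint torus. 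For $\zeta$ generic ($\ell=0$) the statement specializes to $\Phi^{\bullet}_L$ itself, i.e.\ $\slant_\omega(J)=L$ embedded as constant loops.
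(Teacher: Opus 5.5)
Your proposal is correct and is essentially the paper's proof: start from $\Phi^{\bullet}_J = \{\beta - (m_\beta + \ell\Z)\hbar \mid \beta \in \Phi^{\bullet}_L\}$, apply the slanting map $\beta + m\hbar \mapsto \beta + (m + \langle \beta, \omega \rangle)\hbar$ induced by conjugating affine root spaces with $t^{\omega}$, and cancel the shift using $\langle \beta, \omega\rangle = m_\beta$. Your explicit sign check, comparing $t^{\lambda}(\cdot)t^{-\lambda}$ with the conjugation lemma's $t^{-\lambda}(\cdot)t^{\lambda}$, is a worthwhile addition that the paper leaves implicit.
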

\begin{proof}
Since conjugation by $t^{\omega}$ sends the root space $U_{\beta + m\hbar}$ to $U_{\beta + (m + \langle \omega, \beta \rangle \hbar )} = U_{\beta + (m + m_{\beta}) \hbar}$ by Lemma \ref{lemma: conjugation of affine root spaces}, the slanting map on root systems identifies
\begin{align*}
   \Phi^{\bullet}_J &\xrightarrow{=} \Phi^{\bullet}_{\slant_{\omega}(J)} \\
   \beta + m\hbar &\mapsto \beta + (m + m_{\beta})\hbar.
\end{align*}
The result then follows from the description of $\Phi^{\bullet}_J$ in Lemma \ref{lemma: root spaces of J}.
\end{proof}

The slanted subgroup $t^{\omega} \cdot J \cdot t^{-\omega}$ is simpler to manipulate -- as we will record in Lemmata \ref{lemma: centralizer as resonant subgroup -- generic zeta}, \ref{lemma: zeta root of unity -- computation of centralizer}, it is given by the reductive group $L$ resp. the identity component of its dilated loop group, embedded in the naive way. Vice versa, the original centralizer $J$ is then given by slanting these naive embeddings via $\slant_{-\omega}$.

\begin{remark}\label{remark: slanting on weyl groups general case}
The slanting map on root data $\slant_{\omega}: \Phi^{\bullet}_J \xrightarrow{=} \Phi^{\bullet}_{\slant_{\omega}(J)}$ induced a map on the corresponding reflection subgroups
\begin{equation*}
    \slant_{\omega}: W_J \xrightarrow{=} W_{\slant_{\omega}(J)}
\end{equation*}
of $W_{L, \aff}$. By Discussion \ref{discussion: slanting}, $\slant_{\omega}$ intertwines the naive action and the $\omega$-shifted action on $X_{\bullet}$.
\end{remark}

\subsection{Weyl group stabilizers for simply connected \texorpdfstring{$G$}{G}}\label{subsection: weyl group stabilizers}
Let $k$ be algebraically closed of characteristic zero.
Assuming $G = G^{\sc}$ is simply connected, we discuss stabilizers of finite and affine Weyl group actions on its maximal torus $T = T^{\sc}$. The simply connected assumption is needed for technical reasons -- even in the reductive situation, one has to consider stabilizers for $T^{\sc}$.

\subsubsection{Affine Weyl group action on extended torus.}\label{section: actions I}
We have the defining action of the finite Weyl group $W \acts T$. It lifts to a natural action of the affine Weyl group on the extended torus
\begin{align}\label{equation: affine weyl group action I}
\begin{split}
W_{\aff} &\acts \Gm^{\rot} \times T \\
(\lambda, w) \cdot (\zeta, x) &= (\zeta, \lambda(\zeta) \cdot w(x)).
\end{split}
\end{align}
It is useful to view this as an algebraic family over $\Gm^{\rot}$ of actions of $W_{\aff} = \Z\Phi^{\bullet} \rtimes \Gm^{\rot}$ on the torus $T$. Namely, for fixed $\zeta \in \Gm^{\rot}$, we have the \emph{$\zeta$-integral action}
\begin{equation}\label{equation: zeta integral action of affine Weyl group}
W_{\aff} \acts T, \qquad (\lambda, w) \cdot x = \lambda(\zeta) \cdot w(x).    
\end{equation}

\subsubsection{Weyl group stabilizers as reflection subgroups.}
Consider a closed point $x \in T=T^{\sc}$.
\begin{lemma}\label{lemma: stabilizer in finite weyl group}
The stabilizer of $x$ under the action of $W \acts T$ is given by
\begin{equation*}
    \Stab_W(x) = W_x.
\end{equation*}   
\end{lemma}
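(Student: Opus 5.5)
The plan is to prove the two inclusions separately, using Steinberg's fixed point theorem (Proposition \ref{proposition: steinberg fixed point theorem}) to reduce the nontrivial inclusion to a statement about reflections.

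\emph{Easy inclusion $W_x \subseteq \Stab_W(x)$.} By definition, $W_x$ is the reflection subgroup generated by $s_\beta$ for $\beta \in \Phi^{\bullet}_x$, i.e. for roots with $\beta(x) = 1$. It therefore suffices to check that each such generator fixes $x$. This follows from the multiplicative formula $s_\beta(x) = x \cdot \widecheck{\beta}(\beta(x))^{-1}$: if $\beta(x) = 1$, then $\widecheck{\beta}(1) = 1$ and so $s_\beta(x) = x$.

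\emph{Hard inclusion $\Stab_W(x) \subseteq W_x$.} Since $G = G^{\sc}$, Proposition \ref{proposition: steinberg fixed point theorem} in the finite case of Setup \ref{setup: finite and affine root systems} tells us that $\Stab_W(x)$ is a reflection subgroup of $W$. It is therefore generated by the reflections $s_\beta$ it contains. Hence it is enough to show the following: if a reflection $s_\beta$ fixes $x$, then $\beta \in \Phi^{\bullet}_x$. Once this is known, every generator of $\Stab_W(x)$ lies in $W_x$.

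\emph{Key step.} Suppose $s_\beta(x) = x$. Then $\widecheck{\beta}(\beta(x)) = 1$, so $c := \beta(x)$ lies in the kernel of $\widecheck{\beta}: \Gm \to T$. For simply connected $G$, the coroot $\widecheck{\beta}$ is primitive in $X_{\bullet} = \Z\Phi_{\bullet}$. Indeed, it is part of a basis of simple coroots after a $W$-conjugation, since every coroot is $W$-conjugate to a simple coroot and the simple coroots form a $\Z$-basis of $X^{\sc}_{\bullet}$. A primitive cocharacter is injective as a map $\Gm \to T$, because $T \cong \Gm^{n}$ with $\widecheck{\beta}$ as one coordinate cocharacter. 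Hence $c = 1$, that is $\beta(x) = 1$, and $\beta \in \Phi^{\bullet}_x$.

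I expect this key step to be the main point requiring care. It is exactly where simple connectedness enters: for $PGL_2$ the kernel of the coroot is $\mu_2$, and $\beta(x) = -1$ gives a counterexample. As an alternative route I would mention lifting a stabilizing element to $N_G(T)$ and invoking Proposition \ref{proposition: pseudo-levi subgroups}(ii),(iv). In the simply connected case $\Cent_G(x)$ is connected, so its Weyl group $N_{\Cent(x)}(T)/T$ equals $W_x$. Moreover, $\Stab_W(x)$ is precisely the image of $N_G(T) \cap \Cent_G(x)$ in $W$. This would give the equality directly, without the reflection-generation argument.
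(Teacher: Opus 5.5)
Your proof is correct and follows the paper's argument: Steinberg's theorem makes $\Stab_W(x)$ a reflection subgroup, and the formula $s_\beta(x) = x \cdot \widecheck{\beta}(\beta(x))^{-1}$ then determines which reflections lie in it. Your primitivity argument for the injectivity of $\widecheck{\beta}$ supplies the ``only if'' direction, which the paper states without justification. That is exactly the point where simple connectedness is needed, as your $PGL_2$ example shows.
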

\begin{proof}
The stabilizer $\Stab_W(x) \leq W$ is a reflection subgroup by \S \ref{section: stabilizers in weyl groups as reflection subgroups}, Proposition \ref{proposition: steinberg fixed point theorem}.
Hence we only have to describe which reflections $s_{\beta}$, $\beta \in \Phi^{\bullet}_G$ lie inside it. Since $s_{\beta}(x) = x \cdot \widecheck{\beta}(\beta(x))^{-1}$, we see that $s_{\beta}$ fixes $x$ if and only if $\beta(x)=1$. Consequently, $\Stab_W(x)$ is the reflection subgroup of the root subsystem $\{ \beta \in \Phi^{\bullet}_G \mid \beta(x)=1 \}$. By definition, this is the root subsystem $\Phi^{\bullet}_x$ defining $W_x \leq W$. Also see Proposition \ref{proposition: pseudo-levi subgroups}, (iii).
\end{proof}

Taking a closed point $(x, \zeta) \in T \times \Gm^{\rot} = T^{\sc} \times \Gm^{\rot}$, we have the following affine analogue. To state it, recall from Notation \ref{notation: WJ} that $W_J = W_{\Phi^{\bullet}_J}$ stands for the reflection subgroup of $W_{\aff}$ associated to the root subsystem $\Phi^{\bullet}_J$ of $J$. This root subsystem was described in Lemma \ref{lemma: root spaces of J}.

\begin{lemma}\label{lemma: affine stabilizer in weyl group}
The stabilizer of $(x, \zeta)$ under the action $W_{\aff} \acts T \times \Gm^{\rot}$ is given by
\begin{equation*}
    \Stab_{W_{\aff}}(x, \zeta) = W_J.
\end{equation*}
In other words, the stabilizer of $x \in T$ under the $\zeta$-integral action of $W_{\aff}$ is given by $W_J$.
\end{lemma}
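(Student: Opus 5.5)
We follow the argument of Lemma \ref{lemma: stabilizer in finite weyl group} verbatim in the affine setting. First, since $G = G^{\sc}$, Proposition \ref{proposition: steinberg fixed point theorem} applied in case (ii) of Setup \ref{setup: finite and affine root systems} shows that $\Stab_{W_{\aff}}(x, \zeta)$ is a reflection subgroup of $W_{\aff}$. Because reflection subgroups are in bijection with root subsystems and are generated by the reflections they contain (Remark in \S \ref{section: stabilizers in weyl groups as reflection subgroups}), it suffices to determine the set of affine roots $\beta + m\hbar \in \Phi^{\bullet}_{G, \aff}$ whose reflections fix $(x, \zeta)$. We then check that this set equals $\Phi^{\bullet}_J$ as described in Lemma \ref{lemma: root spaces of J}.

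Next we compute the affine reflections explicitly under the action \eqref{equation: affine weyl group action I}. The reflection along $\beta + m\hbar$ is $s_{\beta, m} = (\tau_{m\widecheck{\beta}}, s_{\beta})$, possibly up to a sign convention on $m$ which we fix consistently with Lemma \ref{lemma: weights of extended torus on affine root groups}. It sends $x$ to
\begin{equation*}
(m\widecheck{\beta})(\zeta) \cdot s_{\beta}(x) = x \cdot \widecheck{\beta}\big(\zeta^{m} \beta(x)^{-1}\big).
\end{equation*}
This is fixed if and only if $\widecheck{\beta}(\zeta^m\beta(x)^{-1}) = 1$. It remains to see that this forces $\beta(x)\zeta^{-m} = 1$, matching $\beta(x)\zeta^{m'} = 1$ with $m' = -m$ after the sign convention.

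This last deduction is the only delicate point. The cocharacter $\widecheck{\beta} \colon \Gm \to T$ has trivial kernel because $G$ is simply connected. Indeed, $\widecheck{\beta}$ is primitive in $X_{\bullet} = \Z\Phi_{\bullet}$, since coroots of simply connected groups can be extended to a basis (every coroot is $W$-conjugate to a simple coroot). Alternatively, we can pair with $\beta$ to get $\beta(\widecheck{\beta}(c)) = c^2$, which only shows $c = \pm 1$. Hence we rely on primitivity, which gives injectivity of $\widecheck{\beta}$ and therefore $\zeta^m = \beta(x)$.

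Combining these steps, the reflections in the stabilizer are exactly those along roots $\beta + m\hbar$ with $\beta(x)\zeta^m = 1$ (after adjusting the sign of $m$). This set is $\Phi^{\bullet}_J$ by Lemma \ref{lemma: root spaces of J}, so the stabilizer is the reflection subgroup $W_J$ of Notation \ref{notation: WJ}. The reformulation in terms of the $\zeta$-integral action \eqref{equation: zeta integral action of affine Weyl group} is immediate, since $W_{\aff}$ acts trivially on the $\Gm^{\rot}$ coordinate.
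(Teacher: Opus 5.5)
Your proposal is correct and follows the same route as the paper: you invoke Steinberg's theorem to reduce to determining which affine reflections fix $x$ under the $\zeta$-integral action, and then identify that set of roots with $\Phi^{\bullet}_J$ from Lemma \ref{lemma: root spaces of J}. Two points the paper's proof passes over silently are made explicit in your version: the injectivity of $\widecheck{\beta}\colon \Gm \to T$, which you justify via primitivity of coroots in $X_{\bullet} = \Z\Phi_{\bullet}$, and the sign convention for $s_{\beta, m}$.
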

\begin{proof}
Fix $\zeta \in \Gm^{\rot}$. Then $\Stab_{W_{\aff}}(x, \zeta)$ is given by the stabilizer $\Stab_{W_{\aff}}(x)$ under the $\zeta$-integral action \eqref{equation: zeta integral action of affine Weyl group}. 
As in the finite case, the stabilizer $\Stab_{W_{\aff}}(x)$ is a reflection subgroup by Proposition \ref{proposition: steinberg fixed point theorem}. 
Hence it is enough to determine which affine reflections $s_{\beta + m\hbar} \in \Phi^{\bullet}_{G, \aff}$ lie inside it. Since $s_{\beta + m\hbar}(x) = x \cdot \widecheck{\beta}(\beta(x) \cdot \zeta^m)^{-1}$, the reflection $s_{\beta + m\hbar}$ fixes $x$ if and only if $\beta(x) \cdot \zeta^m =1$. Hence $\Stab_{W_{\aff}}(x)$ is the reflection subgroup associated to the root system $\{\beta + m\hbar \in \Phi^{\bullet}_{G, \aff} \mid \beta(x) \cdot \zeta^m =1 \}$. Since this is the root system $\Phi^{\bullet}_J$ of $J$, the result follows.
\end{proof}

As a corollary, we deduce that the orbit of $x \in T$ under the $\zeta$-integral action $W_{\aff} \acts T$ is given by $W_{\aff} / W_J$. These orbits will be discussed further in \S \ref{section: some useful notation} and prominently appear as the labeling set for connected componets of fixed points on $\Gr_G$.

\subsubsection{Image of stabilizer in finite Weyl group.}

We may easily describe the image of the stabilizer under the projection to the finite Weyl group.

\begin{lemma}
Let $(x, \zeta) \in T \times \Gm^{\rot}$.
Under the projection map $W_{\aff} \twoheadrightarrow W$, the image of $\Stab_{W_{\aff}}(x, \zeta)$ is given by the resonant Weyl group $W_{[x, \zeta]} \leq W$.
\end{lemma}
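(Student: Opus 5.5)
By Lemma \ref{lemma: affine stabilizer in weyl group}, $\Stab_{W_{\aff}}(x, \zeta) = W_J$ is the reflection subgroup of $W_{\aff}$ generated by the affine reflections $s_{\beta + m\hbar}$ with $\beta + m\hbar \in \Phi^{\bullet}_J$. The plan is to push these generators down along $W_{\aff} = \Z\Phi_{\bullet} \rtimes W \twoheadrightarrow W$.

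First, since the projection is a group homomorphism, the image of $W_J$ is the subgroup of $W$ generated by the images of the generating reflections. Next we compute these images. The affine reflection $s_{\beta + m\hbar}$ is the element $(\tau_{m\widecheck{\beta}}, s_\beta)$ up to sign conventions, so it maps to the finite reflection $s_\beta \in W$. To double-check this against the action, note that on $T$ it acts by $x \mapsto x\cdot \widecheck{\beta}(\beta(x)\zeta^m)^{-1}$. This is a translation by $(-m\widecheck{\beta})(\zeta)$ composed with $s_\beta$, and its linear part is $s_\beta$. Hence the image of $W_J$ is generated by $\{s_\beta \mid \beta + m\hbar \in \Phi^{\bullet}_J \text{ for some } m\}$.

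Then we identify this index set using the projection of root systems. By Lemma \ref{lemma: root spaces of J} and Remark \ref{remark: root system of J project on root system of L}, the set of $\beta$ occurring is exactly $\overline{\Phi}^{\bullet}_J = \Phi^{\bullet}_{[x,\zeta]}$. Directly: $\beta(x)\zeta^m = 1$ for some $m$ holds iff $\beta(x) = \zeta^{-m}$, which is the defining condition of \eqref{equation: resonant root system}. So the image is the subgroup generated by $s_\beta$ for $\beta \in \Phi^{\bullet}_{[x,\zeta]}$. By definition \eqref{equation: resonant weyl group}, this is the resonant Weyl group $W_{[x,\zeta]}$.

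There is no real obstacle here. The only point needing care is the first step, namely that the image of a subgroup generated by a set is generated by the image of that set. This is immediate for homomorphisms, and it is the reason we use the reflection-subgroup description of the stabilizer (Steinberg, via Lemma \ref{lemma: affine stabilizer in weyl group}) rather than working with the stabilizer elementwise. The standing assumption $G = G^{\sc}$ of this subsection is used only through that lemma.
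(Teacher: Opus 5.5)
Your proposal is correct and is the same argument as the paper's: you identify the stabilizer with the reflection subgroup $W_J$ via Lemma~\ref{lemma: affine stabilizer in weyl group}, then observe that its image in $W$ is the reflection subgroup attached to the projected root system $\overline{\Phi}^{\bullet}_J = \Phi^{\bullet}_{[x,\zeta]}$. You make explicit the two steps the paper leaves implicit: that a homomorphic image of a generated subgroup is generated by the images of the generators, and that $s_{\beta+m\hbar}$ maps to $s_\beta$.
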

\begin{proof}
The projection in question is induced by the projection of root systems $\Phi^{\bullet}_{G, \aff} \twoheadrightarrow \Phi^{\bullet}_{G}$. By Lemma \ref{lemma: affine stabilizer in weyl group}, $\Stab_{W_{\aff}}(x, \zeta) = W_{J}$ is the reflection subgroup associated to the subsystem $\Phi^{\bullet}_J$, whose projection is precisely the root subsystem $\Phi^{\bullet}_{[x, \zeta]} $ of $W_{[x, \zeta]}$ by Remark \ref{remark: root system of J project on root system of L}. The result follows.
\end{proof}

If $\zeta \in \Gm - \mu_{\infty}$ is generic, the above stabilizer $\Stab_{W_{\aff}}(x, \zeta) = W_J$ thus fits in the following diagram whose rows are short exact sequences.
\begin{equation}\label{equation: stabilizer exact sequences for zeta generic}
    \begin{tikzcd}
        1 \arrow[r] & \Z\Phi_{\bullet} \arrow[r] & W_{\aff} \arrow[r] & W \arrow[r] & 1 \\
        1 \arrow[r] & 1 \arrow[r] \arrow[u, hook] & \Stab_{W_{\aff}}(x, \zeta) \arrow[r , "="] \arrow[u, hook] & W_{[x, \zeta]} \arrow[r] \arrow[u, hook] & 1
    \end{tikzcd}
\end{equation}

On the other hand, assume now $\zeta \in \mu_{\infty}$ is primitive root of unity of $\ord(\zeta) = \ell$. Then $\Stab_{W_{\aff}}(x, \zeta) = W_J$ fits in the diagram of horizontal short exact sequences
\begin{equation}\label{equation: stabilizer exact sequences for zeta root of unity}
    \begin{tikzcd}
        1 \arrow[r] & \Z\Phi_{\bullet} \arrow[r] & W_{\aff} \arrow[r] & W \arrow[r] & 1 \\
        1 \arrow[r]& \ell \Z\Phi_{\bullet} \arrow[r] \arrow[u, hook] & \Stab_{W_{\aff}}(x, \zeta) \arrow[r] \arrow[u, hook] & W_{[x, \zeta]} \arrow[r] \arrow[u, hook] & 1 \\
        1 \arrow[r]  & \ell \Z\Phi_{\bullet} \arrow[r] \arrow[u, hook] & \ell \Z\Phi_{\bullet} \rtimes W_{x} \arrow[r] \arrow[u, hook] & W_{x} \arrow[r] \arrow[u, hook] & 1 
    \end{tikzcd}
\end{equation}
Here, the bottom row is given by the separate stabilizers of $x$ inside $\Z\Phi_{\bullet}$ and $W$. Note that the actual stabilizer in the middle row is bigger that their semidirect product.

To wrap up, if $G$ is simply connected, the reflection subgroups \eqref{equation: resonant weyl group} are equivalently given by
\begin{equation}\label{eq: weyl group and resonant weyl group}
   W_x = \{w \in W \mid wx = x\} 
   \qquad \text{and} \qquad  
   W_{[x, \zeta]} = \{w \in W \mid wx \sim_{\zeta} x\}.
\end{equation}
In words, $W_x$ is the stabilizer of $x \in T$, while $W_{[x, \zeta]}$ is the stabilizer of its resonance class $[x, \zeta]$.

\subsection{Cocharacter graphs for simply connected \texorpdfstring{$G$}{G}}\label{section: cocharacter graphs}
The above combinatorics is conveniently encoded by the \textit{cocharacter graphs} $\overline{\Gamma} \subseteq T \times \Gm^{\rot} \times T \sslash W$ introduced in this section. 

To simplify the exposition, we will again assume $G =G^{\sc}$ to be simply connected throughout \S \ref{section: cocharacter graphs}. With extra bookkeeping, the same discussion works for any reductive group; the necessary modifications are described afterwards in \S \ref{section: cocharacter graphs for reductive groups}.

\subsubsection{Cocharacter graphs for simply connected groups.}\label{subsubsection: cocharacter graphs for simply connected groups}
Assume $G$ is simply connected. Then $\Z\Phi^{\bullet}_G = X_{\bullet}$ and $W_{\aff} = W_{\ext}$. Recall the action \eqref{equation: affine weyl group action I}. For any element $(\lambda, w) \in W_{\aff}$ with $\lambda \in X_{\bullet}$ and $w \in W$, consider the map
\begin{align*}
    (\lambda, w): T \times \Gm^{\rot} &\to T \\
    (x, \zeta) &\mapsto  \lambda(\zeta) \cdot w(x).
\end{align*}
and the closed subscheme $\Gamma_{\lambda, w} \hookrightarrow T \times \Gm^{\rot} \times T$ given by its graph
\begin{align*}
    \Gamma_{\lambda, w} 
    := &\{ (x, \zeta, \lambda(\zeta) \cdot w(x)) \in T \times \Gm^{\rot} \times T \} \\
    = &\{ (x, \zeta, y) \in T \times \Gm^{\rot} \times T \mid y = \lambda(\zeta) \cdot w(x) \}.
\end{align*}
Taking the union of these closed subschemes over varying $\lambda$, we obtain the closed ind-subscheme
\begin{equation*}
    \Gamma := \{ (x, \zeta, y) \in T \times \Gm^{\rot} \times T \mid \exists \lambda \in X_{\bullet}, \ y \cdot w(x)^{-1} = \lambda(\zeta) \} \hookrightarrow T \times \Gm^{\rot} \times T.
\end{equation*}
The ind-scheme $\Gamma$ is reduced. Its irreducible componets $\Gamma_{\lambda, w}$ are copies of $T \times \Gm^{\rot}$ labeled by $W_{\aff}$, intersecting in various ways. In particular, we have the distinguished component $\Gamma_{1, 1}$, parametrizing the points $\{ (x, \zeta, x) \in T \times \Gm^{\rot} \times T\}$.

\subsubsection{Action of affine Weyl group.}\label{section: actions}
Consider the action of $W_{\aff}$ on $T \times \Gm^{\rot} \times T$ given by the trivial action on the first copy of $T$ and the natural action \eqref{equation: affine weyl group action I} on $\Gm^{\rot} \times T$, namely
\begin{equation*}
   W_{\aff} \acts T \times \Gm^{\rot} \times T, \qquad (\lambda, w) \cdot (x, \zeta, y) = (x, \zeta, \lambda(\zeta) \cdot w(y)). 
\end{equation*}
This action restricts to an action
\begin{equation}\label{equation: affine weyl group action on cocharacter graphs}
W_{\aff} \acts \Gamma    
\end{equation}
on the closed sub-ind scheme $\Gamma \hookrightarrow T \times \Gm^{\rot} \times T$. It acts freely and transitively on the set of irreducible components $(\Gamma_{\lambda, w} \mid (\lambda, w) \in W_{\aff})$ of $\Gamma$.

\subsubsection{Variants.}\label{section: variants}
Given any pair of subgroups $W_1, W_2 \leq W$, consider their action on $T \times T$. The ind-subscheme $\Gamma \hookrightarrow T \times \Gm^{\rot} \times T$ is clearly $(W_1 \times W_2)$-equivariant. Taking the quotient, we get the sub-ind scheme
\begin{equation*}
    \Gamma^{T \sslash W_1,  T \sslash W_2} \hookrightarrow T\sslash W_1 \times \Gm^{\rot} \times T \sslash W_2.
\end{equation*}
We will be particularly interested in the case $W_1= 1$ and $W_2 = W$. We then write
\begin{equation*}
    \overline{\Gamma} := \Gamma^{T,  T \sslash W} \hookrightarrow T \times \Gm^{\rot} \times T \sslash W.
\end{equation*}

The action of $W_2=W$ on the right copy of $T$ permutes the irreducible components of $\Gamma$ according to the left translation on the labeling set $W \acts (X_{\bullet} \rtimes W)$. In other words, $\forall w \in W$, we have $w \cdot \Gamma_{\lambda, v} = \Gamma_{w(\lambda), wv}$.
The irreducible components of $\overline{\Gamma}$ are thus copies of $T \times \Gm^{\rot}$, labeled by the quotient set $X_{\bullet} = W \backslash (X_{\bullet} \rtimes W)$. They correspond to $W$-orbits of irreducible components of $\Gamma$. 
For each $\lambda \in X_{\bullet}$, we denote by $\overline{\Gamma}_{\lambda}$ the component corresponding to the $W$-orbit of $\Gamma_{\lambda, 1}$.

\subsubsection{Fibers.}\label{section: fibers}
Given $\Gamma$, $\overline{\Gamma}$ and the other variants, we may consider their fibers over points in the base $T \times \Gm^{\rot}$. 
Given any $(x, \zeta) \in T \times \Gm^{\rot}$, we denote these fibers as
\begin{equation*}
    \Gamma_{(x, \zeta)} \hookrightarrow T 
    \qquad
    \text{and}
    \qquad
    \overline{\Gamma}_{(x, \zeta)} \hookrightarrow T\sslash W.
\end{equation*}
These are closed-ind schemes. Their underlying reduced schemes are given by, possibly infinite, union of points.
Similarly, we denote the fibers over some $\zeta \in \Gm^{\rot}$ by
\begin{equation*}
    \Gamma_{\zeta} \hookrightarrow  T \times T 
    \qquad
    \text{and}
    \qquad
    \overline{\Gamma}_{\zeta} \hookrightarrow T \times T\sslash W.
\end{equation*}
These are again closed sub-ind-schemes. The irreducible components of the reduces scheme underlying $\Gamma_{\zeta}$ are given by copies of $T$.

\subsubsection{Fibers of $\Gamma$ as orbits on cocharacter lattice.}\label{section: specializations of the lattice}
For any $(x, \zeta) \in T \times \Gm^{\rot}$, the action $W_{\aff} \acts \Gamma$ from \eqref{equation: affine weyl group action on cocharacter graphs} restricts to a transitive action
\begin{equation}\label{equation: transitive action of Waff}
  W_{\aff} \acts |\Gamma_{(x, \zeta)}|
\end{equation}
on the discrete set of closed points of the fiber $\Gamma_{(x, \zeta)}$. Under the embedding to $|T|$, the set $|\Gamma_{(x, \zeta)}|$ is thus given by the $W_{\aff}$-orbit of $x$ under the $\zeta$-integral action \eqref{equation: zeta integral action of affine Weyl group}. 

\begin{lemma}\label{lemma: identification of fibers of Gamma}
We have a natural identification  
\begin{equation*}
 |\Gamma_{(x, \zeta)}| = (X_{\bullet} \rtimes W) /  W_J.
\end{equation*}
\end{lemma}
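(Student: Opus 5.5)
The identification will follow from the orbit--stabilizer theorem applied to the transitive action \eqref{equation: transitive action of Waff}. We are in the simply connected setting of \S\ref{section: cocharacter graphs}, so $X_{\bullet} = \Z\Phi_{\bullet}$ and $W_{\aff} = X_{\bullet} \rtimes W$.

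First, we check that the fiber is an orbit. By definition, $\Gamma_{(x,\zeta)}$ is the set of $y \in T$ such that $y = \lambda(\zeta)\cdot w(x)$ for some $(\lambda, w) \in W_{\aff}$. Since $\Gamma$ is a union of graphs $\Gamma_{\lambda,w}$, each of which meets the fiber over $(x,\zeta)$ in exactly one closed point, we have
\begin{equation*}
|\Gamma_{(x,\zeta)}| = \{ (\lambda,w)\cdot x \mid (\lambda,w) \in W_{\aff}\} \subseteq |T|.
\end{equation*}
This is the orbit of $x$ under the $\zeta$-integral action \eqref{equation: zeta integral action of affine Weyl group}. The action of $W_{\aff}$ on $\Gamma$ from \eqref{equation: affine weyl group action on cocharacter graphs} preserves fibers over $T \times \Gm^{\rot}$, and on the fiber over $(x,\zeta)$ it is precisely this $\zeta$-integral action on the $y$-coordinate. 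We must check that the $\zeta$-integral action is a genuine group action, namely that
\begin{equation*}
(\lambda,w)\cdot\big((\mu,v)\cdot x\big) = \lambda(\zeta)\, w(\mu)(\zeta)\, wv(x),
\end{equation*}
which matches the semidirect product law on $X_{\bullet}\rtimes W$.

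Second, we apply orbit--stabilizer. The map $W_{\aff} \to |\Gamma_{(x,\zeta)}|$, $g \mapsto g\cdot x$, is surjective. It therefore induces a bijection $W_{\aff}/\Stab_{W_{\aff}}(x) \xrightarrow{\cong} |\Gamma_{(x,\zeta)}|$ onto the orbit. The stabilizer is computed by Lemma \ref{lemma: affine stabilizer in weyl group}, which gives $\Stab_{W_{\aff}}(x,\zeta) = W_J$. Substituting $W_{\aff} = X_{\bullet} \rtimes W$ yields the stated identification $(X_{\bullet}\rtimes W)/W_J$. The bijection is natural in that it is given by $gW_J \mapsto g\cdot x$, with the coset of the identity corresponding to the point $x$, that is, the point on the distinguished component $\Gamma_{1,1}$.

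There is essentially no obstacle here: the hard input is Steinberg's theorem, already packaged in Lemma \ref{lemma: affine stabilizer in weyl group}. The only points needing care are two conventions. We must use left cosets, which are compatible with a left action. We must also keep the $\zeta$-integral action, rather than the ordinary action of $W_{\aff}$ on $X_{\bullet}$, when identifying the stabilizer.
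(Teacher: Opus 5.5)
Your proposal is correct and matches the paper's proof: the paper also combines the transitivity of $W_{\aff}$ on $|\Gamma_{(x,\zeta)}|$ from \eqref{equation: transitive action of Waff} with $\Stab_{W_{\aff}}(x,\zeta,x) = W_J$ from Lemma \ref{lemma: affine stabilizer in weyl group}, and concludes by orbit--stabilizer. Your extra checks, that the fiber is the $\zeta$-integral orbit of $x$ and that this action respects the semidirect product law, only spell out what the paper takes as given.
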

\begin{proof}
The stabilizer in $W_{\aff}$ of the base point $(x, \zeta, x) \in |\Gamma_{(x, \zeta)}| \subseteq |T \times \Gm^{\rot} \times T|$ is given as
\begin{equation*}
    \Stab_{W_{\aff}}(x, \zeta, x) = W_J.
\end{equation*}
by Lemma \ref{lemma: affine stabilizer in weyl group}. Considering the transitive action \eqref{equation: transitive action of Waff}, we conclude.
\end{proof}

\begin{remark}\label{remark: colisions of components of Gamma}
In other words: the irreducible components of $\Gamma$ are copies of $T \times \Gm^{\rot}$ labeled by $W_{\aff}$, whose collisions over the point $(x, \zeta)$ are described by the quotient map
\begin{equation*}
    W_{\aff} \twoheadrightarrow W_{\aff} / W_{J}.
\end{equation*}
Similarly, the irreducible components of the reduced ind-scheme underlying $\Gamma_{\zeta}$ are copies of $T$ labeled by $W_{\aff}/ \ell X_{\bullet}$. whose collisions over the point $x \in T$ are given by
$W_{\aff}/ \ell X_{\bullet} \twoheadrightarrow  (W_{\aff}/ \ell X_{\bullet})/ W_{L}$.
\end{remark}

We may also describe the fiber $|\overline{\Gamma}_{(x, \zeta)}| \hookrightarrow |T\sslash W|$ similarly to Lemma \ref{lemma: identification of fibers of Gamma}. The group $W_{\aff}$ acts on itself by left and right translations. In particular, we get the commuting actions
\begin{equation*}
    W \acts W_{\aff} \actsr W_J
\end{equation*}
Here, the left action is induced by the given embedding of $W$ into the semidirect product defining $W_{\aff}$. The quotient by this $W$ naturally identifies as $X_{\bullet} = W\backslash W_{\aff}$. This identification is compatible with the right actions of the stabilizer $W_J \leq W_{\aff}$.

\begin{lemma}\label{lemma: fiber of cocharacter graphs as lattice quotient}
We have a natural identification    
\begin{equation*}
 |\overline{\Gamma}_{(x, \zeta)}| = X_{\bullet} /  W_J.
\end{equation*}
\end{lemma}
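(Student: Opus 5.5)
The plan is to deduce the statement from Lemma \ref{lemma: identification of fibers of Gamma} by passing to the quotient by the left $W$-action. By definition $\overline{\Gamma}$ is the image of $\Gamma$ under the quotient map $T \times \Gm^{\rot} \times T \to T \times \Gm^{\rot} \times T\sslash W$ along the right copy of $T$. Since $W$ is finite, the map $T \to T\sslash W$ is finite, and on closed points it identifies $|T\sslash W|$ with $W$-orbits in $|T|$. The fiber $|\overline{\Gamma}_{(x, \zeta)}|$ is therefore the image of $|\Gamma_{(x, \zeta)}|$ in $|T|/W$. Equivalently, it is the set of $W$-orbits on $|\Gamma_{(x, \zeta)}|$, where $W$ acts through the right copy of $T$. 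Here we use that the fiber of $\Gamma$ over $(x, \zeta)$ is $W$-stable, since $W$ acts trivially on the base $T \times \Gm^{\rot}$.

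Next I would identify this $W$-action under Lemma \ref{lemma: identification of fibers of Gamma}. That lemma gives $|\Gamma_{(x, \zeta)}| = W_{\aff}/W_J$ via $g \mapsto g \cdot x$ for the $\zeta$-integral action \eqref{equation: zeta integral action of affine Weyl group}. The $W$-action on the right copy of $T$ is the restriction of the $\zeta$-integral action to $W \leq W_{\aff}$, since $(0, w)$ acts by $y \mapsto w(y)$. So for $w \in W$ and $g \in W_{\aff}$ we have $w \cdot (g \cdot x) = (wg) \cdot x$. Under the bijection, the $W$-action is therefore left translation on $W_{\aff}/W_J$. Taking orbits gives
\begin{equation*}
|\overline{\Gamma}_{(x, \zeta)}| = W \backslash W_{\aff} / W_J.
\end{equation*}

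Finally, since $G$ is simply connected, $W_{\aff} = X_{\bullet} \rtimes W$. The map $(\lambda, w) \mapsto \lambda$ does not directly give a bijection of $W\backslash W_{\aff}$ with $X_{\bullet}$, because left multiplication by $(0, v)$ sends $(\lambda, w)$ to $(v(\lambda), vw)$. Instead I would use the bijection $W \backslash W_{\aff} \to X_{\bullet}$ given by $W g \mapsto g^{-1}\cdot 0$, or equivalently the naturally chosen transversal. This matches the convention in \S\ref{section: variants}, where components $\overline{\Gamma}_\lambda$ are labeled by $W\backslash(X_\bullet \rtimes W) = X_\bullet$. The paper asserts that this identification is compatible with right multiplication by $W_J$, and I would verify this directly. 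Then $W\backslash W_{\aff}/W_J = X_{\bullet}/W_J$, where $W_J$ acts on $X_{\bullet}$ through the induced (shifted, $\zeta$-dependent) affine action.

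The only delicate point is this last compatibility: making precise which action of $W_J$ on $X_{\bullet}$ is meant, so that the identification $W\backslash W_{\aff} \cong X_{\bullet}$ is right $W_J$-equivariant. The remaining steps are formal consequences of Lemma \ref{lemma: identification of fibers of Gamma} and of finiteness of $W$.
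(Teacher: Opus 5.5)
Your proposal is correct and is the paper's argument: identify $|\Gamma_{(x,\zeta)}|$ with $W_{\aff}/W_J$ via Lemma~\ref{lemma: identification of fibers of Gamma}, observe that the $W$-action on the right copy of $T$ becomes left translation, and pass to $W\backslash W_{\aff}/W_J\cong X_{\bullet}/W_J$. Your explicit bijection $Wg\mapsto g^{-1}\cdot 0$ makes precise the right-$W_J$-compatibility that the paper only asserts; however, with the group law coming from the $\zeta$-integral action and the affine action in which $(\mu,1)$ translates by $+\mu$, it sends the coset $W\cdot(\lambda,1)$ of $\overline{\Gamma}_\lambda$ to $-\lambda$, so it agrees with the labeling of \S\ref{section: variants} only up to $\lambda\mapsto-\lambda$.
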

\begin{proof}
With the above left $W$-action, the embedding
$W_{\aff} / \Stab_{W_{\aff}}((x, \zeta, x)) = |\Gamma_{(x, \zeta)}| \hookrightarrow T$
becomes $W$-equivariant. The claim follows from Lemma \ref{lemma: identification of fibers of Gamma} by taking quotient.    
\end{proof}

We make this quotient more explicit in Lemmata \ref{lemma: labeling set for components -- zeta generic}, \ref{lemma: labeling set for components -- zeta root of unity}, depending on the genericity of $\zeta$.

\subsubsection{Labeling set for components.}\label{section: some useful notation}
\begin{notation}\label{notation: Theta}
For notational convenience, we will denote the discrete set from Lemma \ref{lemma: fiber of cocharacter graphs as lattice quotient} by
\begin{equation*}
    \Theta := |\overline{\Gamma}_{(x, \zeta)}| = X_{\bullet} /  W_J.
\end{equation*}
Note that $\Theta$ implicitly depends on $(x, \zeta)$. An element $\vartheta \in \Theta$ may be equivalently regarded as:
\begin{enumerate}
    \item[(i)] a closed point of the fiber of $\overline{\Gamma}$ over $(x, \zeta)$ (by definition);
    \item[(ii)] a closed point of $|T\sslash W|$ which is $\zeta$-resonant with $x$ modulo $W$ (under $ |\overline{\Gamma}_{(x, \zeta)}| \subseteq |T \sslash W|$);
    \item[(iii)] an element in the quotient $X_{\bullet} /  W_J$ (under the orbit description of Lemma \ref{lemma: fiber of cocharacter graphs as lattice quotient})
    \item[(iv)] a closed point of the quasi-diagonal $Z_{\ell} \subseteq T \times T \sslash W$ (as in \S \ref{section: quasi-diagonals at roots of unity} by Lemma \ref{lemma: Z is Gamma} below).
\end{enumerate} 
\end{notation}

The set $\Theta$ will play an essential role in \S \ref{section: fixed points on affine grassmannians} as the labeling set for components of fixed points on the affine Grassmannian. To each point of $\Theta$, we can associate the stabilizing subgroups as follows.

\begin{notation}\label{notation: levi and paraolic subgroups labeled by elements of Theta}
Given $x\in T$, recall from \S \ref{subsubsection: independence on W-action} that the Weyl group $W_x$, the stabilizing subgroup $L_x$ and the associated parabolic subgroup $P_x$ of $L_{[x, \zeta]}$ depend, up to canonical isomorphism, only on its image $\overline{x}$ in $T \sslash W$.
Interpreting $\vartheta \in \Theta$ as a closed point of $T \sslash W$ with a lift $y \in T$, it is thus consistent to denote
\begin{equation*}
    W_{\vartheta} := W_{y} , \qquad
    L_{\vartheta} := L_{y} , \qquad
    P_{\vartheta} := P_{y} 
\end{equation*}
We have $W_{\vartheta} \leq W_{[x, \zeta]}$ and $L_{\vartheta} \leq P_{\vartheta}  \leq L_{[x, \zeta]}$. The isomorphism class of the subgroups $W_{\vartheta}$, $L_{\vartheta}$, $P_{\vartheta}$ varies as $\vartheta$ goes through $\Theta$.

We would like to stress that $W_{\vartheta}$, $L_{\vartheta}$, $P_{\vartheta}$ are \textbf{not} the subgroups associated to a cocharacter $\lambda \in X_{\bullet}$ lifting $\vartheta \in X_{\bullet}/W_J$. Rather, these subgroups are associated to the point $x \cdot \lambda(\zeta)$, or alternatively to the adjoint cocharacter $\omega + \lambda^{\ad} \in X^{\ad}_{\bullet, L}$. There is an extra shift by $\omega$.
\end{notation}

\subsection{Cocharacter graphs for reductive groups}\label{section: cocharacter graphs for reductive groups}

\subsubsection{Comments.}
Assume now $G$ to be reductive. For technical reasons, the above discussion needs to be modified. Firstly, we need to replace $W_{\aff}$ by $W_{\ext} = X_{\bullet} \rtimes W$. Furthermore, we need to replace the second copy of $T$ by $T^{\sc} \times \pi_1(G)$, the product of its simply connected cover and the fundamental group of $G$. At first in \S \ref{subsubsection: cocharacter graphs for reductive groups}, \S \ref{subsubsection: variants and stabilizers for reductive G}, we will assume $G$ is semisimple and also replace the first copy of $T$ by $T^{\sc}$; we show how to descend back to $T$ in \S \ref{section: action of the fundamental group}.

\subsubsection{Cocharacter graphs for reductive groups.}\label{subsubsection: cocharacter graphs for reductive groups}
We start by exteding the definitions from \S \ref{subsubsection: cocharacter graphs for simply connected groups}.
First, note that a character $\lambda: \Gm^{\rot} \to T$ lifts to a character $\Gm^{\rot} \to T^{\sc}$ if and only if its class in the fundamental group $\overline{\lambda} \in \pi_1(G)$ is trivial. As in \S \ref{section: borovoi fundamental group}, we denote $\chi(\overline{\lambda}) \in X_{\bullet}(T)$ a chosen representative of $\overline{\lambda}$ under the surjection $X_{\bullet}(T) \twoheadrightarrow \pi_1(G)$. Consequently, $\underline{\lambda} := \lambda \cdot \chi(\overline{\lambda})^{-1}$ lies in $X_{\bullet}(T^{\sc}) \leq X_{\bullet}(T)$.
For any $(\lambda, w) \in W_{\ext}$ with $\lambda \in X_{\bullet}(T)$ and $w \in W$, we thus have the map
\begin{align*}
    (\lambda, w): T^{\sc} \times \Gm^{\rot} &\to T^{\sc} \times \pi_1(G) \\
    (x, \zeta) &\mapsto (\underline{\lambda}(\zeta) \cdot w(x), \overline{\lambda}).
\end{align*}
In words, by taking $\underline{\lambda}$, we consider the cocharacter $\lambda$ as a map into the copy of $T^{\sc}$ labeled by $\overline{\lambda} \in \pi_1(G)$. We then consider the closed subscheme given by its graph
\begin{equation*}
    \Gamma_{\lambda, w} := \{ (x, \zeta, y, \sigma) \in T^{\sc} \times \Gm^{\rot} \times T^{\sc} \times \pi_1(G) \mid y = \underline{\lambda}(q) \cdot w(x), \ \sigma = \overline{\lambda} \} \hookrightarrow T^{\sc} \times \Gm^{\rot} \times T^{\sc} \times \pi_1(G).
\end{equation*}
Taking the union of these closed subschemes over all $\lambda \in X_{\bullet}$, $w \in W$, we obtain the closed ind-subscheme
\begin{equation*}
    \Gamma \! := \! \{ (x, \zeta, y, \sigma) \in T^{\sc} \times \Gm^{\rot} \times T^{\sc} \times \pi_1(G) \mid \exists \lambda \in X_{\bullet}, \ y = \underline{\lambda}(\zeta)\cdot w(x), \ \sigma = \overline{\lambda} \} \! \hookrightarrow \! T^{\sc} \times \Gm^{\rot} \times T^{\sc} \times \pi_1(G).
\end{equation*}
In words, the ind-subscheme $\Gamma$ now lives inside a disjoint union of copies of $T^{\sc} \times \Gm^{\rot} \times T^{\sc}$ labeled by the discrete set $\pi_1(G)$. For each $\sigma \in \pi_1(G)$, we denote $\Gamma^{\sigma} \hookrightarrow T^{\sc} \times \Gm^{\rot} \times T^{\sc} \times \{\sigma\}$ the corresponding component. While all these copies are abstractly isomorphic via the choice of lifts $\chi(\sigma)$, these identifications are not canonical.

The irreducible components $\Gamma_{\lambda, w}$ of $\Gamma$ are copies of $T^{\sc} \times \Gm^{\rot}$ labeled by $W_{\ext}$, intersecting in various ways. In particular, we have the distinguished component $\Gamma_{1, 1}$, parametrizing the quadruples $\{ (x, \zeta, x, 1) \in T^{\sc} \times \Gm^{\rot} \times T^{\sc} \times \pi_1(G)\}$.

There is again a natural action 
\begin{equation*}
W_{\ext} = X_{\bullet} \rtimes W \acts \Gamma, \qquad  (\lambda, w) \cdot (x, \zeta, y, \sigma) = (x, \zeta, \underline{\lambda}(\zeta) \cdot w(y), \overline{\lambda} \cdot \sigma ).   
\end{equation*}
In particular, it acts freely and transitively on the set of irreducible components of $\Gamma$.

\subsubsection{Variants and stabilizers.}\label{subsubsection: variants and stabilizers for reductive G}
We repeat the discussion \S \ref{section: actions}, \S \ref{section: variants}, \S \ref{section: fibers}: after taking the quotient, we get the variants $\Gamma^{T^{\sc} \sslash W_1, T^{\sc} \sslash W_2} \hookrightarrow T^{\sc} \sslash W_1 \times \Gm \times T^{\sc} \sslash W_2 \times \pi_1(G)$. We in particular denote $\overline{\Gamma} := \Gamma^{T^{\sc}, T^{\sc} \sslash W} \hookrightarrow T^{\sc} \times \Gm^{\rot} \times T^{\sc} \sslash W \times \pi_1(G)$. We can further take fibers, consider stabilizers, etc.

Replacing all occurrences of $W_{\aff}$ by $W_{\ext}$, the results of \S \ref{section: specializations of the lattice} go through with the following extra argument. We again consider the stabilizer of the base point $\Stab_{W_{\ext}}((x, 1, x, 1))$. The extended affine Weyl group $W_{\ext}$ acts on the components of $T^{\sc} \times \Gm^{\rot} \times T^{\sc} \times \pi_1(G)$ labeled by $\pi_1(G)$ through the surjection $W_{\ext} \twoheadrightarrow W_{\ext} / W_{\aff} = \pi_1(G)$. After fixing $\sigma = 1$, we are thus left with the previous action of $W_{\aff} = \Z\Phi^{\bullet}_G \rtimes W$ (which is the extended affine Weyl group of $G^{\sc}$).
In particular, Lemma \ref{lemma: fiber of cocharacter graphs as lattice quotient} holds; we again write
\begin{equation*}
 \Theta = |\overline{\Gamma}_{(x, \zeta)}| = X_{\bullet} /  W_J
\end{equation*}
and use the further notation from \S \ref{section: some useful notation}, with respect to the points of the (appropriate copy of) $T^{\sc}$.

\subsubsection{Action of the fundamental group.}\label{section: action of the fundamental group}
The fundamental group $\pi_1(G)$ acts on $T^{\sc}$ through the central deck transformations
\begin{equation*}
    \act: \pi_1(G) \to \pi_1(G^{\ad}) = Z(G^{\sc}) \leq T^{\sc} \acts T^{\sc}.
\end{equation*}
Consequently, $\pi_1(G) \acts T^{\sc} \times \Gm^{\rot} \times T^{\sc} \times \pi_1(G)$, using the diagonal deck transformation $\act \times \act$ on $T^{\sc} \times T^{\sc}$ and the trivial action on $\Gm^{\rot} \times \pi_1(G)$. 

This action preserves each of the closed subschemes $\Gamma_{\lambda, w}$ for $(\lambda, w) \in W_{\ext}$. Indeed, $\Gamma_{\lambda, w}$ parametrizes $(x, \zeta, y, \sigma)$ satisfying $y = \lambda(\zeta) \cdot w(x)$ and $\sigma = \overline{\lambda}$. An element $\tau \in \pi_1(G)$ acts by $\tau \cdot (x, \zeta, y, \sigma) = (\tau x, \zeta, \tau y, \sigma)$. The latter condition is preserved trivially; for the former one note that
\begin{equation*}
    y = \lambda(\zeta) \cdot w(x) 
    \iff \tau \cdot y = \tau \cdot \lambda(\zeta) \cdot w(x) 
    \iff  \tau \cdot y = \lambda(\zeta) \cdot w(\tau \cdot x)
\end{equation*}
since the action of $W$ is trivial on $Z(G^{\sc})$ and compatible with the product on $T^{\sc}$. 

Altogether, $\pi_1(G)$ acts on $\Gamma \hookrightarrow T^{\sc} \times T^{\sc} \times \Gm^{\rot} \times \pi_1(G)$. This action is clearly free. We obtain the quotient
\begin{equation}\label{equation: descended Gamma eq 1}
    \Gamma / \pi_1(G) \hookrightarrow \tfrac{T^{\sc} \times T^{\sc}}{\pi_1(G)} \times \Gm^{\rot} \times \pi_1(G). 
\end{equation}

\begin{remark}\label{remark: identification for Gamma in semisimple case}
We may further use the following identification. Consider the group isomorphism $T^{\sc} \times T^{\sc} \xrightarrow{\cong} T^{\sc} \times T^{\sc}$ given by $(x, y) \mapsto (x, x^{-1}y)$. Under this isomorphism, the diagonal action $\act \times \act$ of $\pi_1(G)$ identifies with its action on the first factor $\act \times \id$. Taking the quotient identifies $\tfrac{T^{\sc} \times T^{\sc}}{\pi_1(G)} = \tfrac{T^{\sc}}{\pi_1(G)} \times T^{\sc} = T^{\der} \times T^{\sc}$. We rewrite \eqref{equation: descended Gamma eq 1} as
\begin{equation}\label{equation: descended Gamma eq 2}
    \Gamma^{T^{\der}, T^{\sc}} \hookrightarrow T^{\der} \times \Gm^{\rot} \times T^{\sc} \times \pi_1(G). 
\end{equation}
where the sub-ind scheme $\Gamma^{T^{\der}, T^{\sc}}$ is given by transporting $\Gamma$ through the above identifications and taking the quotient by $\pi_1(G)$.
\end{remark}

\begin{remark}\label{remark: identification for Gamma in general reductive case}
For semisimple $G$, we have $T=T^{\der}$ and Remark \ref{remark: identification for Gamma in semisimple case} already describes the desired cocharacter graphs.

For reductive $G$, we consider \eqref{equation: descended Gamma eq 2} for $G^{\ad}$, giving $\Gamma^{T^{\ad}, T^{\sc}} \hookrightarrow T^{\ad} \times \Gm^{\rot} \times T^{\sc} \times \pi_1(G)$. By base change along the map $T \to T^{\ad}$ to the first factor we obtain
\begin{equation}\label{equation: descended Gamma eq 3}
    \Gamma^{T, T^{\sc}} \hookrightarrow T \times \Gm^{\rot} \times T^{\sc} \times \pi_1(G),
\end{equation}
the appropriate generalization of the cocharacter graphs for the reductive group $G$.
In particular, we obtain the variant $\overline{\Gamma}^{T, T^{\sc}\sslash W} \hookrightarrow T \times \Gm^{\rot} \times T^{\sc}\slash W \times \pi_1(G)$. 
\end{remark}

%% file: 4_fixed_points.tex
\section{Fixed points on affine Grassmannians}\label{section: fixed points on affine grassmannians}

Let $G$ be a reductive group and $\Gr_G = \lo G / \lop G$ its affine Grassmannian over $k$. We assume that $k$ is algebraically closed field. For ease of discussion, we further assume $k$ is of characteristic zero; this is not essential and can be largely avoided.

Consider the fixed-point family $\Fix_{\lo G \rtimes \Gm^{\rot}}(\Gr_G)$. Since $\Gr$ is an ind-scheme, so is $\Fix_{\lo G \rtimes \Gm^{\rot}}(\Gr)$.
Its natural derived structure is trivial by the criterion from \S \ref{subsubsection: reduced and derived variants} applied to the ind-presentation by affine Schubert varieties.

Our main interest lies in the classical reduced restriction of this family to the extended torus
\begin{equation*}
    \Fix^{\red}_{T \times \Gm^{\rot}}(\Gr_G).
\end{equation*}
In the rest of this section, we explain how to effectively describe the fibers of this family as the point $(x, \zeta) \in T \times \Gm^{\rot}$ varies, for both $\Gr_G$ as well as for affine Schubert varieties $\Gr_{\leq \mu}$ inside it.
We separate the section into a uniform discussion in \S \ref{section: general structure of the fixed points}, followed by specific descriptions in the case of $\zeta$ generic \S \ref{section: zeta generic} and $\zeta$ root of unity \ref{section: zeta root of unity}.

\subsection{General structure of the fixed points}\label{section: general structure of the fixed points}

\subsubsection{Centralizer action on fixed-points.}\label{section: the subgroup H^+}
With the subgroups $J \leq \lo G$ and $J^+ \leq \lop G$ from \S \ref{section: centralizers}, the following statements are obvious.
\begin{lemma}\label{lemma: H acts on fixed points general case}
The subgroup $J$ acts on $\Fix_{(x, \zeta)}(\Gr_G)$.
\end{lemma}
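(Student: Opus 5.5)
The statement is a direct instance of the residual action from \S\ref{section: residual action on Fix}, so the plan is to specialize that construction rather than argue from scratch. Regard $(x,\zeta)$ as an element $g$ of the group $\lo G \rtimes \Gm^{\rot}$, which acts on $\Gr_G$ on the left. For any $h$ in this group we have the natural identification $\Fix_g(\Gr_G) \xrightarrow{\simeq} \Fix_{hgh^{-1}}(\Gr_G)$ induced by translation by $h$. Restricted to $h \in \Cent_{\lo G \rtimes \Gm^{\rot}}(g)$, these give an action of the centralizer on $\Fix_g(\Gr_G)$. Since $J = \Cent^{\circ}_{\lo G}(x,\zeta)$ is a subgroup of this centralizer, restricting the action gives the claim.

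Concretely, I would first check the statement on functors of points. Let $R$ be a $k$-algebra, let $j \in J(R)$, and let $p \in \Gr_G(R)$ satisfy $(x,\zeta)\cdot p = p$. Since $j$ commutes with $(x,\zeta)$ in $\lo G \rtimes \Gm^{\rot}$, we get
\begin{equation*}
(x,\zeta)\cdot (j p) = j\cdot (x,\zeta)\cdot p = j p .
\end{equation*}
So $j p$ is again a fixed point. This shows that the action map $J \times \Gr_G \to \Gr_G$ restricts to a map $J \times \Fix_{(x,\zeta)}(\Gr_G) \to \Fix_{(x,\zeta)}(\Gr_G)$. The group-action axioms are inherited from the ambient action.

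The second step is to make sure this works at the level of (ind-)schemes. Here one uses that $\Fix_{(x,\zeta)}(\Gr_G)$ is defined by the fiber product \eqref{equation: definition of fixed-point schemes}, and that the computation above is natural in $R$. Therefore the restricted action is a morphism of ind-schemes, and one can check it stage-wise on an ind-presentation. The only mild point is the semidirect product structure: the rotation factor $\Gm^{\rot}$ acts on $\lo G$ by loop rotation. However, the commutation $j(x,\zeta) = (x,\zeta)j$ is exactly the definition of $J$ as a centralizer inside $\lo G \rtimes \Gm^{\rot}$, as noted in the proof of Lemma \ref{proposition: affine centralizer as regular subgroup}. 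So no further obstacle is expected.

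The same reasoning applies verbatim to the reduced fixed points $\Fix^{\red}_{(x,\zeta)}(\Gr_G)$. Indeed, a group action on an ind-scheme preserves its reduction, because $J \times (-)^{\red}$ maps into the reduced locus (as $J$ is reduced in characteristic zero).
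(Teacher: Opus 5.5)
Your argument is correct and is essentially the paper's. Since $J \leq \Cent_{\lo G \rtimes \Gm^{\rot}}(x,\zeta)$, the residual centralizer action on the fiber $\Fix_{(x,\zeta)}(\Gr_G)$ from the fixed-point-scheme discussion restricts to $J$, and your functor-of-points computation $(x,\zeta)\cdot(jp) = j\cdot(x,\zeta)\cdot p = jp$ simply spells this out.

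Your closing remark about $\Fix^{\red}_{(x,\zeta)}(\Gr_G)$ is not needed for the statement. Its justification would also need more care: ind-subgroups of loop groups need not be reduced (e.g.\ $\lo \Gm$), so ``$J$ is reduced in characteristic zero'' does not follow from Cartier's theorem alone.
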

\begin{proof}
Since $J \leq \Cent_{\lo G \rtimes \Gm^{\rot}}(x, \zeta)$ is pointwise fixed under the adjoint action of $(x, \zeta)$ on $\lo G \rtimes \Gm^{\rot}$, it acts on the corresponding fiber of the fixed-point scheme by \S \ref{section: residual action on Fix}.
\end{proof}
\begin{lemma}\label{lemma: H^+ acts on fixed points general case}
The subgroup $J^{+}$ acts on $\Fix_{(x, \zeta)}(\Gr_{\leq \mu})$. 
\end{lemma}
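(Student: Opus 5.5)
We combine the argument of Lemma \ref{lemma: H acts on fixed points general case} with the fact that $\Gr_{\leq \mu}$ is stable under the positive loop group. Specifically, by \S\ref{section: residual action on Fix}, any element $h$ of $\lop G \rtimes \Gm^{\rot}$ commuting with $(x,\zeta)$ induces an identification $\Fix_{(x,\zeta)}(X) \to \Fix_{h(x,\zeta)h^{-1}}(hX) = \Fix_{(x,\zeta)}(hX)$. We apply this to an arbitrary $\lop G \rtimes \Gm^{\rot}$-stable closed sub-ind-scheme $X \subseteq \Gr_G$. For such $X$ we have $hX = X$, so we obtain an action of $\Cent_{\lop G \rtimes \Gm^{\rot}}(x,\zeta)$ on $\Fix_{(x,\zeta)}(X)$.

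The key steps, in order, are as follows.

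\begin{enumerate}
\item[(1)] Recall that $\Gr_{\leq \mu}$ is a closed reduced $\lop G \rtimes \Gm^{\rot}$-stable subvariety of $\Gr_G$. It is the closure of the orbit $\Gr_\mu$, and orbit closures of a connected group action are stable under that group.
\item[(2)] Observe that $J^+ = J \cap \lop G$ lies in $\lop G$ and centralizes $(x,\zeta)$, because $J = \Cent^{\circ}_{\lo G}(x,\zeta)$. Hence $J^+ \leq \Cent_{\lop G \rtimes \Gm^{\rot}}(x,\zeta)$.
\item[(3)] Conclude that the residual action of Lemma \ref{lemma: H acts on fixed points general case}, restricted to $J^+$, preserves the closed subscheme $\Fix_{(x,\zeta)}(\Gr_{\leq \mu}) \subseteq \Fix_{(x,\zeta)}(\Gr_G)$. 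Functorially, this is the statement that for $R$-points $g \in J^+(R)$ and $y \in \Fix_{(x,\zeta)}(\Gr_{\leq\mu})(R)$ the following hold: $(x,\zeta)\, g y = g\, (x,\zeta) y = g y$, and $gy \in \Gr_{\leq \mu}(R)$ by (1).
\end{enumerate}

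Step (3) should be phrased at the level of functors of points, since $\Fix$ is defined by the fiber product \eqref{equation: definition of fixed-point schemes} and is compatible with closed immersions \cite[Propositions 1.2 and 1.3]{Low24}.

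The only potentially delicate point is step (1) at the scheme-theoretic level. We need the action map $\lop G \times \Gr_{\leq\mu} \to \Gr_G$ to factor through $\Gr_{\leq\mu}$, and not merely on $k$-points. This holds because $\Gr_{\leq \mu}$ is reduced and $\lop G \times \Gr_{\leq \mu}$ is reduced, as $\lop G$ is a limit of smooth connected groups. Therefore it suffices to check the factorization on closed points, where it is the orbit-closure statement.
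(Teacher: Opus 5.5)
Your proposal is correct and is essentially the paper's argument: the paper's proof restricts the residual action of $J$ from Lemma~\ref{lemma: H acts on fixed points general case} to $J^+ = J \cap \lop G$ and uses that $\lop G \rtimes \Gm^{\rot}$ preserves $\Gr_{\leq \mu}$, a fact it takes as standard. Your additional justification of that stability is fine. Note that checking on closed points of the non-finite-type reduced scheme $\lop G \times \Gr_{\leq \mu}$ is cleanest after observing that $\lop G$ acts on $\Gr_{\leq \mu}$ through a finite-dimensional jet quotient $G(k[t]/t^N)$.
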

\begin{proof} 
Since $\lop G\rtimes \Gm^{\rot}$ acts on $\Gr_{\leq \mu}$ and $J^+ = J \cap \lop G$, this follows by restricting the action from Lemma \ref{lemma: H acts on fixed points general case}.
\end{proof}

\begin{remark}
In the above statements, note that the whole centralizer $\Cent_{\lo G \rtimes \Gm^{\rot}}(x, \zeta)$ and its positive part $\Cent_{\lop G \rtimes \Gm^{\rot}}(x, \zeta)$ act on the respective fixed points. This amounts to an extra action by $\Gm^{\rot}$ as well as by the group of connected components. This is a natural piece of structure, but we do not need it for our purposes.
\end{remark}

\subsubsection{Fixed points on affine Grassmannian.}
We first describe the fixed points on $\Gr_G$ in terms of the connected centralizer $J$. 

To this end, recall the reflection subgroup $W_J \leq W_{\aff}$ from Notation \ref{notation: WJ}. We then have the labeling set $\Theta = X_{\bullet}/W_J$ from Notation \ref{notation: Theta}. 
For any $\vartheta \in \Theta$, we may choose a lift $\lambda \in X_{\bullet}$ and put $J^+_{\vartheta} := J^+_{\lambda}$ in the notation \eqref{equation: positive stabilizer}. This is independent of the choice of $\lambda$ up to isomorphism by Remark \ref{remark: independece of J vartheta on the lift} below.

\begin{lemma}\label{lemma: fixed points on affine Grassmannian}
 Let $(x, \zeta) \in T \times \Gm^{\rot}$. Then
 \begin{equation}\label{equation: fixed points on Gr}
     \Fix^{\red}_{(x, \zeta)}(\Gr_G) = \coprod_{\vartheta \in \Theta} J/J^{+}_{\vartheta}. 
 \end{equation}
\end{lemma}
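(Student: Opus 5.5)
We want to show that every connected component of $\Fix^{\red}_{(x,\zeta)}(\Gr_G)$ is a single $J$-orbit through a coordinate fixed point $t^{\lambda}$, and that these orbits are labeled by $\Theta = X_{\bullet}/W_J$. The argument reduces everything to torus fixed points, using the action of $J$ from Lemma \ref{lemma: H acts on fixed points general case}. It proceeds in three steps.

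\textbf{Step 1: $T\times\Gm^{\rot}$-fixed points.} The fixed-point locus $F := \Fix^{\red}_{(x, \zeta)}(\Gr_G)$ is a reduced closed ind-subscheme of $\Gr_G$. It is stable under the whole torus $T \times \Gm^{\rot}$, since that torus commutes with $(x,\zeta)$. Intersecting with the ind-presentation by affine Schubert varieties, each $F \cap \Gr_{\leq\mu}$ is projective. The $T\times\Gm^{\rot}$-fixed points of $F$ are exactly the coordinate points $t^{\lambda}$, $\lambda\in X_{\bullet}$, because these are already all the $T\times\Gm^{\rot}$-fixed points of $\Gr_G$.

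\textbf{Step 2: every point lies in a $J$-orbit of some $t^{\lambda}$.} First, the orbit map $J \to F$, $g \mapsto g t^{\lambda}$, has stabilizer $J^+_{\lambda}$ by \eqref{equation: positive stabilizer}, so each orbit is $J\cdot t^{\lambda} \cong J/J^+_{\lambda}$.

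To see that the orbits exhaust $F$, I would argue locally at each $t^{\lambda}$. The tangent space of $\Gr_G$ at $t^{\lambda}$ is spanned by the affine root spaces $U_{\beta+m\hbar}$ with $m<\langle\lambda,\beta\rangle$, translated by $t^{\lambda}$ using Lemma \ref{lemma: conjugation of affine root spaces}. The element $(x,\zeta)$ acts on these root spaces by $\beta(x)\zeta^m$, by Lemma \ref{lemma: weights of extended torus on affine root groups}. Its fixed directions are therefore exactly the root spaces in $\Phi^{\bullet}_J$, and these are tangent to $J\cdot t^{\lambda}$. Hence $T_{t^\lambda}F = T_{t^\lambda}(J t^\lambda)$, which gives two consequences:
\begin{enumerate}
\item[(a)] the orbit $J t^{\lambda}$ is open in $F$, since it is reduced and smooth at $t^{\lambda}$;
\item[(b)] by the same computation at every point of the orbit, $J t^{\lambda}$ is a smooth connected component of $F$.
\end{enumerate}

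To reach an arbitrary point $p \in F$, I would use a generic cocharacter of $T\times\Gm^{\rot}$ (Białynicki-Birula within a Schubert variety). The closure of its orbit through $p$ contains a fixed point $t^{\lambda}$ in the same connected component as $p$. That component is the open and closed orbit $J t^{\lambda}$.

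Making this rigorous is where I expect the main obstacle. One needs smoothness of $F$ at the fixed points, or more precisely an honest identification of a neighbourhood rather than only of tangent spaces. I would first try the big cell chart $t^{\lambda}\cdot \lo^{-}G$, which is $T\times\Gm^{\rot}$-stable. Fixed points of a semisimple element on this pro-unipotent chart are the subgroup generated by the fixed root groups. This is exactly the negative part of $J$, and it can be argued via the Lie-algebra level and characteristic zero.

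\textbf{Step 3: identify the labeling set.} It remains to show that $J t^{\lambda} = J t^{\lambda'}$ if and only if $\lambda$ and $\lambda'$ lie in the same $W_J$-orbit.

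For the forward direction, $N_J(T)$ maps onto the affine Weyl group $W_J$ of $J$, which acts on the $T$-fixed points $t^{\lambda}$ through the natural action of $W_{\aff}$ on $X_{\bullet}$. This is the level-$1$ action on $\Gr_G$, restricted to $W_J$, and it gives "if".

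For the converse, note that the $T\times\Gm^{\rot}$-fixed points in $J/J^+_{\lambda}$ are indexed by $W_J/\Stab$, by the Bruhat decomposition for the (affine) flag variety of $J$. So the fixed points in one orbit form exactly one $W_J$-orbit.

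Finally, $J^+_{\lambda}$ depends only on the class of $\lambda$ up to conjugation, by the remark cited in the statement. The components are therefore labeled by $X_{\bullet}/W_J = \Theta$, as in Notation \ref{notation: Theta}. This yields \eqref{equation: fixed points on Gr}.
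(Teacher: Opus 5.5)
Your Step~2 has a genuine gap, and it is the step everything else depends on. The tangent space $T_{t^\lambda}\Gr_G=\mathfrak{g}\llp t\rrp/\mathrm{Ad}_{t^\lambda}\mathfrak{g}\llb t\rrb$ is not spanned by real affine root spaces alone. It also contains the imaginary directions $\mathfrak{t}\otimes t^{-k}$, $k>0$, on which $(x,\zeta)$ acts by $\zeta^{-k}$.

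\begin{itemize}
\item For generic $\zeta$ these directions are not fixed, and your computation is correct.
\item If $\ord(\zeta)=\ell\geq 1$, every direction $\mathfrak{t}\otimes t^{-c\ell}$ is fixed. Only those whose $\mathfrak{t}$-component lies in the span of the coroots of $L$ are tangent to $Jt^{\lambda}$.
\end{itemize}

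For a concrete counterexample, take $G=SL_2$, $\zeta=-1$ and $x$ with $\beta(x)\neq\pm 1$. Then $\Phi^{\bullet}_J=\emptyset$ and $t^0$ is an isolated point of $\Fix^{\red}_{(x,\zeta)}(\Gr_G)$. Yet the $k[\epsilon]$-point $\mathrm{diag}(1+\epsilon t^{-2},1-\epsilon t^{-2})\cdot t^0$ is a nonzero $(x,\zeta)$-fixed tangent vector at $t^0$, and it factors through some $\Gr_{\leq\mu}$. So the scheme-theoretic fixed locus is genuinely non-reduced here.

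Consequently, the fixed part of $T_{t^\lambda}\Gr_G$ only bounds $T_{t^\lambda}F$ from above. Your equality $T_{t^\lambda}F=T_{t^\lambda}(Jt^\lambda)$ is therefore not proved, and neither are (a), (b) or the Bia\l{}ynicki-Birula conclusion that rest on it. No weight computation can distinguish $\Fix_{(x,\zeta)}$ from its reduction.

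Your big-cell fallback has the same defect. The fixed part of the Lie algebra of the negative loop group again contains $\mathfrak{t}\otimes t^{-c\ell}$, which does not integrate to any reduced subgroup, since the negative loop group of a torus is a formal group. Cartier-type smoothness is also not available for ind-groups.

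There is a further, independent problem at roots of unity: there $Jt^\lambda$ is infinite-dimensional. Even correct ind-level tangent spaces would not give openness without controlling the finite pieces $F\cap\Gr_{\leq\mu}$, and neither side need be smooth there.

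What you need instead is a statement about points, which is how the paper argues.

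\begin{itemize}
\item Every point of the Iwahori orbit of $t^\lambda$ is expressed uniquely through $t^\lambda$ and nontrivial elements of finitely many pairwise distinct affine root groups $U_{\beta_i+m_i\hbar}$.
\item The element $(x,\zeta)$ acts on these coordinates diagonally, by the scalars $\beta_i(x)\zeta^{m_i}$ (Lemma~\ref{lemma: weights of extended torus on affine root groups}).
\item Hence the fixed points in that cell are exactly the coordinate subspace of factors with $\beta_i+m_i\hbar\in\Phi^{\bullet}_J$, which lies in $Jt^\lambda$.
\item Since these cells pave $\Gr_G$, this gives $\Fix^{\red}_{(x,\zeta)}(\Gr_G)=\bigcup_\lambda Jt^\lambda$ directly, with no tangent spaces, smoothness or limits.
\end{itemize}

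With this replacement for Step~2, your Steps~1 and~3 complete the argument.
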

\begin{proof}
Any point of $x \in \Gr_G$ can be written as a product
\begin{equation}\label{equation: root decomposition in affine grassmannian}
t^{\lambda} \cdot u_1 \cdots u_d
\end{equation}
with $\lambda \in X_{\bullet}$ and $u_i \in U_{\beta_i + m_i\hbar}$ nontrivial points in finitely many pairwise distinct affine root groups; see \cite[proof of Lemma 4.4]{RW22} for this well-known fact (\cite{RW22} discusses a more refined statement for a given Iwahori orbit -- our variant follows since the Iwahori orbits of $t^{\lambda}, \lambda \in X_{\bullet}$ are affine spaces paving the affine Grassmannian). 

The action of $(x, \zeta)$ fixes each $t^{\lambda}$. We now restrict attention to the fixed points in \eqref{equation: root decomposition in affine grassmannian} for one chosen $t^{\lambda}$. The expression \eqref{equation: root decomposition in affine grassmannian} is unique. On each root subgroup $U_{\beta + m\hbar}$, the element $(x, \zeta)$ acts via $\beta(x) \cdot \zeta^m$. Comparing to Lemma \ref{lemma: root spaces of J}, we deduce that the fixed points of this form are given by the orbit of $J$ through $t^{\lambda}$. Since $J^+_{\lambda}$ is the stabilizer of $t^{\lambda}$, this orbit is given by $J/J^+_{\lambda}$.

Ranging over all $\lambda \in X_{\bullet}$, we see that the fixed points decompose into a disjoint union of the orbits $J/J^+_{\lambda}$ of $t^{\lambda}$, $\lambda \in X_{\bullet}$. Moreover, two points $t^{\lambda_1}, t^{\lambda_2}$ lie in the same orbit of $J$ if and only if $\lambda_1, \lambda_2$ can be connected by affine reflections from $\Phi^{\bullet}_J$, in other words if and only if $\lambda_1, \lambda_2$ give the same point $\vartheta \in \Theta = X_{\bullet} / W_{J}$.
The lemma follows.
\end{proof}

\begin{remark}\label{remark: independece of J vartheta on the lift}
The final paragraph of the above proof shows that $t^{\lambda_1}$, $t^{\lambda_2}$ lie in the same orbit of $J$ if and only if $\lambda_1, \lambda_2$ give the same point $\vartheta \in \Theta = X_{\bullet} / W_{J}$. In particular, the stabilizer $J^{+}_{\lambda}$ depends, up to isomorphism given by conjugation inside $J$, only on the coset $\vartheta \in \Theta$ of $\lambda$. It is thus fair to denote it by $J^+_{\vartheta}$.
\end{remark}

\begin{notation}
We denote the connected components of \eqref{equation: fixed points on Gr} by
\begin{equation*}
    \Fix^{\red, \vartheta}_{(x, \zeta)}(\Gr_G) = J/J^{+}_{\vartheta}, \qquad \vartheta \in \Theta.
\end{equation*}
\end{notation}

\subsubsection{Fixed points on $\Gr_G$ and $\Gr_L$.}\label{section: fixed points on GrG and GrL}
We also have the following alternative approach for describing the fixed points $\Fix^{\red}_{(x, \zeta)}(\Gr_G)$.
Consider the defining inclusion of the resonant subgroup $L \leq G$. It induces a map on loop groups and on affine Grassmannians $\Gr_L \subseteq \Gr_G$. Using this embedding, we obtain the following identification of fixed points.

\begin{lemma}\label{lemma: replacing x by the resonant part}
We have identifications
\begin{equation*}
    \Fix^{\red}_{(x, \zeta)}(\Gr_G) = \Fix^{\red}_{(\omega(\zeta), \zeta)}(\Gr_L).
\end{equation*}
\end{lemma}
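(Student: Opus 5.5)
The idea is to compare both sides through the orbit description of Lemma \ref{lemma: fixed points on affine Grassmannian}, applied once to $G$ and once to $L$. First we fix how $(\omega(\zeta), \zeta)$ acts on $\Gr_L$. Since $\omega$ is a cocharacter of $T^{\ad}_L$, the point $\omega(\zeta)$ lies in $T^{\ad}_L$, not in $T$. It acts on $\lo L$, and hence on $\Gr_L$, through the conjugation action $\lo L^{\ad} \acts \lo L$ of Discussion \ref{discussion: slanting}. This action still fixes every coordinate point $t^{\lambda}$, $\lambda \in X_{\bullet}$. On an affine root group $U_{\beta + m\hbar}$ with $\beta \in \Phi^{\bullet}_L$, the pair $(\omega(\zeta), \zeta)$ acts by the scalar $\beta(\omega(\zeta))\zeta^m = \zeta^{m_\beta + m}$. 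This is the analogue of Lemma \ref{lemma: weights of extended torus on affine root groups}, and the proof of Lemma \ref{lemma: fixed points on affine Grassmannian} uses only these two inputs. Hence the same argument gives
\begin{equation*}
\Fix^{\red}_{(\omega(\zeta), \zeta)}(\Gr_L) = \coprod_{\lambda} J'\cdot t^{\lambda},
\end{equation*}
where $J'$ is the connected centralizer of $(\omega(\zeta),\zeta)$ in $\lo L$ and $\lambda$ runs over representatives of $J'$-orbits in $X_{\bullet}$.

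The second step is to identify $J'$ with $J = J_{(x,\zeta)}$ as subgroups of $\lo G$, via $\lo L \leq \lo G$. By Lemma \ref{lemma: root spaces of J}, the root system of $J$ is $\{\beta + m\hbar \mid \beta \in \Phi^{\bullet}_L,\ \beta(x)\zeta^m = 1\}$; in particular $J \leq \lo L$. By the computation above, the root system of $J'$ is $\{\beta + m\hbar \mid \beta\in\Phi^{\bullet}_L,\ \zeta^{m_\beta+m}=1\}$. These coincide, because $\beta(x) = \zeta^{m_\beta} = \beta(\omega(\zeta))$ by the defining property of the resonant cocharacter. Both groups are connected regular subgroups with respect to the same torus $T$, so $J = J'$ by Proposition \ref{proposition: closed root subsystems and regular subalgebras}. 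Moreover $X_{\bullet}(T) = X_{\bullet, L}$, so both sides have the same set of coordinate points $t^{\lambda}$, and the two orbit decompositions are indexed by the same set $X_{\bullet}/W_J$.

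It remains to match each orbit $J\cdot t^\lambda \subseteq \Gr_G$ with the corresponding orbit $J\cdot t^\lambda \subseteq \Gr_L$ under the map $\Gr_L \to \Gr_G$. This reduces to comparing stabilizers. On the $G$ side the stabilizer is $J\cap t^\lambda \lop G\, t^{-\lambda}$, and on the $L$ side it is $J \cap t^\lambda \lop L\, t^{-\lambda}$. They agree because $J \leq \lo L$ and $\lo L \cap t^{\lambda}\lop G\, t^{-\lambda} = t^{\lambda}(\lo L\cap \lop G)t^{-\lambda} = t^\lambda \lop L\, t^{-\lambda}$. Thus each component $J/J^+_\vartheta$ maps isomorphically onto the corresponding component on the other side, and distinct components go to distinct components. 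Since both fixed-point loci are reduced unions of these orbits, the identification follows.

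I expect the main obstacle to be making rigorous the claim that the argument of Lemma \ref{lemma: fixed points on affine Grassmannian} goes through for an element of $T^{\ad}_L$ acting by conjugation. If this causes trouble, the fallback is to replace $\omega$ by a rational lift in $X_{\bullet,\Q}$, passing to a finite cover of $T$, as in \S \ref{section: shifted actions}. A secondary point is that $\Gr_L \to \Gr_G$ need not be a closed immersion when $L$ is only pseudo-Levi. This is harmless here, since we only compare reduced orbits and their stabilizers.
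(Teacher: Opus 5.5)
Your proposal is correct and is essentially the paper's argument. Both use the root-group decomposition behind Lemma \ref{lemma: fixed points on affine Grassmannian}, together with the observation that $U_{\beta+m\hbar}$ is fixed by $(x,\zeta)$ if and only if $\beta\in\Phi^{\bullet}_L$ and $\beta(\omega(\zeta))\zeta^m=1$, which is exactly the fixedness condition for $(\omega(\zeta),\zeta)$ on $\lo L$. Your identification $J=J'$, the stabilizer comparison via $\lo L\cap\lop G=\lop L$, and the care about $\omega(\zeta)\in T^{\ad}_L$ acting by conjugation simply spell out what the paper leaves implicit. Your closing worry is unfounded but harmless: $L$ is reductive, so $G/L$ is affine and $\Gr_L\to\Gr_G$ is a closed immersion even when $L$ is only pseudo-Levi.
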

\begin{proof}
Consider the natural inclusion $\Gr_L \subseteq \Gr_G$ induced from $L \leq G$. Both $L$ and $G$ have the common maximal torus $T$. Both affine Grassmannians thus have the same coordinate fixed points $t^{\lambda}$, $\lambda \in X_{\bullet}$. Fixing one such $t^{\lambda}$ and using the decomposition \eqref{equation: root decomposition in affine grassmannian}, we are left to compare which root spaces are fixed under the adjoint action of $(x, \zeta)$. A root space $U_{\beta + m\hbar}$ of $\lo G$ for $\beta + m\hbar \in \Phi^{\bullet}_{G, \aff}$ is fixed if and only if $\beta(x)\cdot \zeta^m=1$. For this to be true, it is necessary that $\beta \in \Phi^{\bullet}_L$. For such $\beta \in \Phi^{\bullet}_L$, we have $\beta(x)=\beta(\omega(\zeta))$ by construction of $\omega$. Hence the above condition is satisfied if and only if $\beta \in \Phi^{\bullet}_L$ and $\beta(\omega(\zeta))\cdot \zeta^m=1$. This precisely gives the root spaces of $\lo L$ fixed under the adjoint action of $(\omega(\zeta), \zeta)$ as desired.
\end{proof}

Moreover, we can get rid of the element $\omega(\zeta)$ at the cost of a shift. 
To this end, take the adjoint form $L^{\ad}$ of $L$. Since any connected component of $\Gr_L$ is equivariantly isomorphic to the corresponding connected component of $\Gr_{L^{\ad}}$ via the map $\pi_1(L) \to \pi_1(L^{\ad})$ of their labeling sets, we do not lose anything by working with $L^{\ad}$. Here $\omega$ is a genuine cocharacter, so multiplication by $t^{\omega} \in \lo L^{\ad}$ induces a shift map on the corresponding affine Grassmannian $\Gr_{L^{\ad}}$.

\begin{lemma}\label{lemma: fixed points and the omega shift general case}
Translation by $t^{\omega}$ induces an eqivariant isomorphsim 
\begin{equation}
t^{\omega} \cdot \Fix_{(\omega(\zeta), \zeta)}(\Gr_{L^{\ad}}) = \Fix_{(1, \zeta)}(t^{\omega} \cdot \Gr_{L^{\ad}}).   
\end{equation}
\end{lemma}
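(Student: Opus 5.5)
The plan is to reduce everything to the general fact that translation by a group element intertwines fixed points of conjugate elements, i.e.\ the residual-action identification of \S\ref{section: residual action on Fix}, applied inside the extended loop group $\lo L^{\ad} \rtimes \Gm^{\rot}$ acting on $\Gr_{L^{\ad}}$. Concretely, for $h = t^{\omega} \in \lo L^{\ad}$ and $g = (\omega(\zeta), \zeta) \in T^{\ad}_L \times \Gm^{\rot}$, that paragraph gives $h \cdot \Fix_g(\Gr_{L^{\ad}}) = \Fix_{hgh^{-1}}(h\cdot \Gr_{L^{\ad}})$. Here $h\cdot\Gr_{L^{\ad}}$ is literally $\Gr_{L^{\ad}}$, but I keep the notation to record that the translation moves connected components by $\overline{\omega} \in \pi_1(L^{\ad})$. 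So the whole statement reduces to computing the conjugate $t^{\omega}\,(\omega(\zeta),\zeta)\,t^{-\omega}$ in $\lo L^{\ad} \rtimes \Gm^{\rot}$ and showing it equals $(1,\zeta)$.

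For this computation I use the semidirect product rule. Since $\omega(\zeta)$ is a constant loop in the torus, it commutes with $t^{\omega}$. The loop rotation gives $(1,\zeta)\, g(t)\, (1,\zeta)^{-1} = g(\zeta t)$, so $(1,\zeta)\, t^{-\omega}\, (1,\zeta)^{-1} = (\zeta t)^{-\omega} = \omega(\zeta)^{-1} t^{-\omega}$, where I use that $\omega$ is a homomorphism, so $(\zeta t)^{\omega} = \omega(\zeta)\, t^{\omega}$ in $\lo T^{\ad}_L$. Putting these together,
\begin{equation*}
t^{\omega}\,\omega(\zeta)\,(1,\zeta)\,t^{-\omega} = t^{\omega}\,\omega(\zeta)\,\omega(\zeta)^{-1}\,t^{-\omega}\,(1,\zeta) = (1,\zeta).
\end{equation*}
The only point to double-check is the sign convention: whether rotation acts by $t\mapsto \zeta t$ or by its inverse, and correspondingly whether the right normalization is $\omega(\zeta)$ or $\omega(\zeta)^{-1}$. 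With the paper's conventions (Lemma \ref{lemma: weights of extended torus on affine root groups} and $\beta(\omega(\zeta)) = \beta(x)$), the factor cancels as shown. I expect this bookkeeping to be the only real obstacle.

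Equivariance comes for free. Conjugation by $t^{\omega}$ carries $\Cent_{\lo L^{\ad}}(\omega(\zeta),\zeta)$ isomorphically onto $\Cent_{\lo L^{\ad}}(1,\zeta)$, which is exactly the slanting $\slant_{\omega}$ of Discussion \ref{discussion: slanting}. The translation map is equivariant with respect to the residual centralizer actions of \S\ref{section: residual action on Fix} along this isomorphism. As a sanity check, this is consistent with Lemma \ref{discussion: slanting J in general}, where slanting by $\omega$ removes the integers $m_\beta$. Finally, since translation is an automorphism of the ind-scheme, the identification holds scheme-theoretically, and a fortiori after passing to reduced fixed points.
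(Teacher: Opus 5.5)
Your proposal is correct and follows the paper's proof. Both reduce, via the residual-action identification of \S\ref{section: residual action on Fix}, to the conjugation identity $t^{\omega}\,(\omega(\zeta),\zeta)\,t^{-\omega} = (1,\zeta)$ in $\lo L^{\ad} \rtimes \Gm^{\rot}$, which the paper writes equivalently as $(\omega(\zeta),\zeta) = \omega^{-1}(t)\cdot(1,\zeta)\cdot\omega(t)$, and both verify it by the same semidirect-product computation using $\omega(\zeta t) = \omega(\zeta)\,\omega(t)$; your remark that the equivariance is along slanting by $\omega$ spells out what the paper leaves implicit.
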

\begin{proof}
Let $p \in \Gr_{L^{\ad}}$. We need to see that $(\omega(\zeta), \zeta)\cdot p = p$ holds if and only if $(1, \zeta) \cdot \omega(t) \cdot p = \omega(t) \cdot p$. By the general structure of fixed-point schemes \S \ref{section: residual action on Fix}, it is enough to check that inside $\lo L^{\ad} \rtimes \Gm^{\rot}$ we have
\begin{equation}
 (\omega(\zeta), \zeta) = \omega^{-1}(t) \cdot (1, \zeta) \cdot \omega(t).
\end{equation}
This is the case by direct computation within $\lo L^{\ad} \rtimes \Gm^{\rot}$:
\begin{align*}
(\omega(\zeta), \zeta)
&= (\omega(t^{-1}) \cdot 1 \cdot \omega(\zeta t), 1 \cdot \zeta \cdot 1)\\
&= \cdot (\omega(t^{-1}), 1) \cdot (1, \zeta) \cdot (\omega(t), 1)\\
&= \omega^{-1}(t) \cdot (1, \zeta) \cdot \omega(t).
\end{align*}
where the second equality comes from the semidirect product $\lo L^{\ad} \rtimes \Gm^{\rot}$ since conjugation by the loop rotation factor scales $t$ with weight one.
\end{proof}

Putting Lemmata \ref{lemma: replacing x by the resonant part}, \ref{lemma: fixed points and the omega shift general case} together, we have identified $\Fix^{\red}_{(x, \zeta)}(\Gr_G)$ as a disjoint union of fixed points of the loop rotation by $\zeta$ on a disjoint union of connected components $\Gr_{L^{\ad}}^{\sigma}$ of the affine Grassmannian of $L^{\ad}$

\begin{equation}\label{equation: fixed points on Gr_G and loop rotation on Gr_L}
    \Fix^{\red}_{(x, \zeta)}(\Gr_G) 
    \cong \coprod_{\pi_1(L)} \Fix^{\red}_{(1, \zeta)}(t^{\omega} \cdot \Gr^{{\sigma}}_{L^{\ad}})
    = \coprod_{\pi_1(L)} \Fix^{\red}_{(1, \zeta)}(\Gr^{{\underline{\omega} + \sigma}}_{L^{\ad}})
\end{equation}
The disjoint union runs over the set $\pi_1(L)$ of connected components of $\Gr_L$, whose elements determine $\sigma \in \pi_1(L^{\ad})$ thought the natural map $\pi_1(L) \to \pi_1(L^{\ad})$.

The fixed points of the loop rotation by $\zeta$ on affine Grassmannians -- such as $\Gr_{L^{\ad}}$ -- are well known. The identification \eqref{equation: fixed points on Gr_G and loop rotation on Gr_L} thus reduces the computation of $\Fix^{\red}_{(x, \zeta)}(\Gr_G)$ to this known special case.

\subsubsection{Fixed-points on affine Schubert varieties.}
We can further refine the computation from Lemma \ref{lemma: fixed points on affine Grassmannian} to describe fixed points on each affine Schubert variety $\Gr_{\leq \mu}$ in $\Gr_G$.
From \S \ref{section: the subgroup H^+} we get the following diagram
\begin{equation}\label{equation: fixed points on affine schubert varieties 1}
J^+ \acts
\begin{tikzcd}
    \Gr_{\leq \mu}  \arrow[r, hookrightarrow] & \Gr_G \arrow[r, equal] & \lo G / \lop G \\
    \Fix_{(x, \zeta)}(\Gr_{\leq \mu}) \arrow[u, hookrightarrow] \arrow[r, hookrightarrow] & \Fix_{(x, \zeta)}(\Gr_G) \arrow[u, hookrightarrow]  \arrow[r, equal] & \coprod_{\vartheta \in \Theta} J/J^{+}_{\vartheta}. \arrow[u, hookrightarrow]
\end{tikzcd}
\end{equation}
where all maps are equivariant closed immersions.

\begin{lemma}\label{lemma: upshot of the general case}
The scheme $\Fix^{\red}_{(x, \zeta)}(\Gr_{\leq \mu})$ identifies with a disjoint union of closed $J^+$-equivariant subschemes
\begin{equation*}\label{equation: B-equivariant closed immersion into classical flag varieties general case}
J^+ \acts \Fix^{\red}_{(x, \zeta)}(\Gr_{\leq \mu}) \hookrightarrow \coprod_{\vartheta \in \Theta} J/J^+_{\vartheta}.
\end{equation*}
\end{lemma}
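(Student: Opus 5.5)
The plan is to read the statement off the diagram \eqref{equation: fixed points on affine schubert varieties 1}, using three inputs: fixed-point schemes preserve closed immersions, they are compatible with passing to reduced structures, and Lemma \ref{lemma: fixed points on affine Grassmannian} describes the reduced fixed points on $\Gr_G$.

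\textbf{Step 1: the closed immersion.} Start from the equivariant closed immersion $\Gr_{\leq \mu} \hookrightarrow \Gr_G$. By \cite[Propositions 1.2 and 1.3]{Low24}, as used in Lemma \ref{lemma: fixed-point schemes of abstract blowup squares}, the functor $\Fix_{(x,\zeta)}(-)$ preserves closed immersions. The ind-scheme version follows by writing $\Gr_G$ as the colimit of the $\Gr_{\leq \mu'}$; note that $\Fix$ commutes with limits, and the relevant fiber products are computed stagewise. This gives a closed immersion
\[
\Fix_{(x,\zeta)}(\Gr_{\leq\mu}) \hookrightarrow \Fix_{(x,\zeta)}(\Gr_G).
\]

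\textbf{Step 2: pass to reductions.} Taking underlying reduced schemes preserves closed immersions, so we obtain $\Fix^{\red}_{(x,\zeta)}(\Gr_{\leq\mu}) \hookrightarrow \Fix^{\red}_{(x,\zeta)}(\Gr_G)$. By Lemma \ref{lemma: fixed points on affine Grassmannian}, the target is $\coprod_{\vartheta\in\Theta} J/J^+_\vartheta$.

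\textbf{Step 3: the disjoint-union decomposition.} A closed subscheme of a coproduct is the coproduct of its intersections with the summands. This gives the decomposition of $\Fix^{\red}_{(x,\zeta)}(\Gr_{\leq\mu})$ into disjoint closed pieces $\Fix^{\red}_{(x,\zeta)}(\Gr_{\leq\mu})\cap J/J^+_\vartheta$. Since $\Gr_{\leq\mu}$ is a projective scheme, only finitely many of these pieces are nonempty. The finiteness is not needed for the statement, but it is worth remarking.

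\textbf{Step 4: equivariance.} By Lemma \ref{lemma: H^+ acts on fixed points general case}, $J^+$ acts on $\Fix_{(x,\zeta)}(\Gr_{\leq\mu})$. By Lemma \ref{lemma: H acts on fixed points general case}, $J\supseteq J^+$ acts on $\Fix_{(x,\zeta)}(\Gr_G)$. Both actions are restrictions of the action of $\lo G\rtimes\Gm^{\rot}$ on $\Gr_G$, so the immersion of Step 1 is $J^+$-equivariant. To pass this to reductions, I would use that $J^+$ is reduced, as we are in characteristic zero. Then $J^+\times X^{\red}$ is reduced, so the action map on $X$ restricts to an action on $X^{\red}$.

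The $J^+$-action does not permute the summands $J/J^+_\vartheta$. Indeed $J^+$ is connected: it is the stabilizer of $t^0$ in the connected group $J$ and contains the torus $T$, and connectedness is also confirmed a posteriori by the later lemmata. A connected group preserves each connected component. Even without connectedness, each summand is a $J$-orbit and hence $J^+$-stable. So each piece is a closed $J^+$-stable subscheme.

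The main point requiring care is Step 1 in the ind-setting, namely that $\Fix$ of an ind-scheme behaves as a colimit of fixed points along the stages. Once this is in place, the rest is formal.
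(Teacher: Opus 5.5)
Your proposal is correct and follows the paper's own one-line proof, which simply takes the reduced bottom row of diagram \eqref{equation: fixed points on affine schubert varieties 1}; you spell out the routine details (closed immersions, reductions, equivariance). One small point: your first justification that $J^+$ is connected (a stabilizer in a connected group that contains $T$) does not by itself imply connectedness, but this is harmless, since your second argument already shows each summand is $J^+$-stable (each summand is a $J$-orbit and $J^+ \leq J$).
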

\begin{proof}
Take the reduced bottom row of diagram \eqref{equation: fixed points on affine schubert varieties 1}. 
\end{proof}

Furthermore, it is useful to understand the embedding from Lemma \ref{lemma: upshot of the general case} in terms of the coordinate fixed points $t^{\lambda}$. Given any $\mu \in X^{+}_{\bullet}$, recall \eqref{notation: weights in mu} that 
$ \wt(\mu) = W \cdot \{ \lambda \in X^{+}_{\bullet} \mid \lambda \leq \mu \} \subseteq X_{\bullet}$
stands for the union of the Weyl orbits of the dominant weights $\lambda$ below $\mu$.
Under the geometric Satake equivalence, $\wt(\mu)$ labels the fixed points of $T \times \Gm^{\rot}$ on $\Gr_{\leq \mu}$.
Consequently, the fixed point scheme $\Fix_{(x, \zeta)}(\Gr_{\leq \mu})$ contains the discrete set $t^{\lambda}$, $\lambda \in \wt(\mu)$.

\begin{lemma}\label{lemma: connected components of fixed points n affine schubert variety}
The $T\times \Gm^{\rot}$-fixed-points on $\Fix_{(x, \zeta)}(\Gr_{\leq \mu})$ are given by $\{ t^{\lambda} \mid \lambda \in \wt(\mu) \}$. Two such fixed-points $t^{\lambda_1}$, $t^{\lambda_2}$ lie in the same connected component of $\Fix_{(x, \zeta)}(\Gr_{\leq \mu})$ if and only if $\lambda_1$, $\lambda_2$ lie in the same orbit of $W_J \acts X_{\bullet}$.   
\end{lemma}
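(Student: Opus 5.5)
The proof splits into two steps. The first step identifies the $T \times \Gm^{\rot}$-fixed points of $\Fix_{(x, \zeta)}(\Gr_{\leq \mu})$; the second proves the connected-component statement, with one direction inherited from $\Gr_G$ and the other requiring a connecting curve.

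For the first assertion, note that since $T \times \Gm^{\rot}$ is commutative and contains $(x, \zeta)$, it acts on $\Fix_{(x, \zeta)}(\Gr_{\leq \mu})$, and its fixed points there are exactly the $T \times \Gm^{\rot}$-fixed points of $\Gr_{\leq \mu}$. Every $T\times\Gm^{\rot}$-fixed point of $\Gr_{\leq\mu}$ is automatically fixed by $(x,\zeta)$. The $T\times\Gm^{\rot}$-fixed points of $\Gr_G$ are the coordinate points $t^\lambda$. Such a point lies in $\Gr_{\leq \mu}$ precisely when $t^\lambda \in \Gr_{\mu'}$ with $\mu' \leq \mu$, that is, when the dominant representative of $W\lambda$ is $\leq \mu$, i.e. $\lambda \in \wt(\mu)$. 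So the first claim follows from the Schubert stratification $\Gr_{\leq\mu} = \bigsqcup_{\mu'\leq\mu}\Gr_{\mu'}$ and $\Gr_{\mu'}\cap\{t^\lambda\} = \{t^{w\mu'}\}$.

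For the "only if" direction, use the closed immersion of Lemma \ref{lemma: upshot of the general case} into $\coprod_{\vartheta} J/J^+_\vartheta$. A connected component of $\Fix^{\red}_{(x,\zeta)}(\Gr_{\leq\mu})$ lands inside a single $J/J^+_\vartheta$. By the proof of Lemma \ref{lemma: fixed points on affine Grassmannian} and Remark \ref{remark: independece of J vartheta on the lift}, $t^{\lambda_1}$ and $t^{\lambda_2}$ lie in the same $J/J^+_\vartheta$ only if $\lambda_1 \equiv \lambda_2$ modulo $W_J$. Connectivity is insensitive to the reduced structure.

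The "if" direction is the main point. Since $W_J$ is generated by reflections $s_{\beta+m\hbar}$ with $\beta+m\hbar\in\Phi^\bullet_J$, and $\wt(\mu)$ is stable under affine reflections preserving $\Gr_{\leq\mu}$ only in a restricted sense, I would argue as follows. Suppose $\lambda_1,\lambda_2 \in \wt(\mu)$ with $\lambda_2 = w\lambda_1$, $w\in W_J$. Write $w$ as a product of reflections. First reduce to the case $\lambda_2 = s_{\beta+m\hbar}(\lambda_1)$ for a single $\beta+m\hbar\in\Phi^\bullet_J$. In that case, the rank-one subgroup generated by $U_{\pm(\beta+m\hbar)}$ is an $SL_2$ inside $J$. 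Its orbit through $t^{\lambda_1}$ is a $\P^1$ (or a point) containing $t^{\lambda_1}$ and $t^{s(\lambda_1)}$. This $\P^1$ is fixed by $(x,\zeta)$, and it lies in $\Gr_{\leq \mu}$: it is the closure of a one-dimensional $U_{\pm(\beta+m\hbar)}$-orbit, so it lies in the closure of an $\Iw$- or $\lop G$-orbit attached to the larger of $t^{\lambda_1}$ and $t^{s\lambda_1}$ in the Bruhat order, and both endpoints lie in $\wt(\mu)$. This gives a connecting curve inside $\Fix_{(x,\zeta)}(\Gr_{\leq\mu})$.

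The obstacle is the reduction to single reflections: the intermediate weights $s_k\cdots s_1\lambda_1$ must remain in $\wt(\mu)$. To handle this, I would choose the reflection sequence cleverly, namely one that moves monotonically toward the dominant chamber. Concretely, I would first bring $\lambda_1$ and $\lambda_2$ to the $W_J$-"antidominant" element $\lambda_0$ of their common orbit, i.e. the one of minimal length/norm. Each step is a reflection $s_{\beta+m\hbar}$ that shortens the element, meaning that $s(\lambda)$ lies on the segment between $\lambda$ and $s_\beta\lambda$ (convexity of $W$-orbits). Since $\wt(\mu)$ is saturated—the convex hull of $W\mu$ intersected with $\mu+\Z\Phi_\bullet$—every intermediate point stays in $\wt(\mu)$. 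This shows both $t^{\lambda_1}$ and $t^{\lambda_2}$ are connected to $t^{\lambda_0}$, completing the proof.
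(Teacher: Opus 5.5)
Your proof has the same skeleton as the paper's. The converse comes from the decomposition $\coprod_{\vartheta\in\Theta} J/J^+_\vartheta$. The forward direction uses chains of $\P^1$'s swept out by the rank-one subgroups generated by $U_{\pm(\beta+m\hbar)}\subseteq J$, each lying in the closure of the Iwahori orbit of one of its endpoints.

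What you add is a treatment of the intermediate weights. The paper's proof only says to choose some sequence of reflections in $W_J$, although its single-reflection step needs both endpoints in $\wt(\mu)$. Your convexity argument supplies exactly this missing step.

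It works once $\lambda_0$ is pinned down properly. Do not define $\lambda_0$ as the orbit element of minimal norm. When $0$ lies on walls of $W_J$ (e.g.\ $x=1$ with $\zeta$ generic, so $W_J=W$ acts linearly), all orbit elements have the same norm, so $\lambda_0$ is not unique and reflections need not shorten anything. Instead, fix a chamber $C_0$ of $W_J$ with $0\in\overline{C_0}$ and a point $z\in C_0$. Then repeatedly reflect in a wall $\{\langle\beta,-\rangle=c\}$ that strictly separates the current point $p$ from $z$. The distance to $z$ drops strictly, so on the discrete orbit this terminates at the unique orbit point of $\overline{C_0}$. Since such a wall weakly separates $p$ from $0$, the new point is $p-k\widecheck{\beta}$ with $k\in\Z$ between $0$ and $\langle\beta,p\rangle$. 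It therefore lies in $\wt(\mu)$ by the $\beta$-string property, as you intended.

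For comparison, the paper also sketches, in the remark after the generic-$\zeta$ combinatorial description, a chain-free route. Each $F^\vartheta$ is a closed $J^+$-stable subset of $J/J^+_\vartheta$, hence a union of orbit closures all containing the minimal point, hence connected. That route avoids the bookkeeping of intermediate weights altogether. Your route instead exhibits explicit connecting curves inside $\Gr_{\leq\mu}$.
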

\begin{proof}
We may work with reduced fixed point schemes.
The description of coordinate fixed points is clear from the discussion above. We are left to describe the connected components.

For one direction, assume that $\lambda_1, \lambda_2$ lie in the same orbit of $W_J$. Then they can be connected by a sequence of affine reflections $s_{\beta + m\hbar}$ for $\beta+m\hbar \in \Phi^{\bullet}_J \subseteq \Phi^{\bullet}_{G, \aff}$. To see this, first assume that $\lambda_2 = s_{\beta + m\hbar} \cdot \lambda_1$ for a single reflection in $W_J$. We claim that the affine root $\beta + m\hbar$ induces a $\P^1$ which connects the points $t^{\lambda_1}$, $t^{\lambda_2}$ inside $\Fix^{\red}_{(x, \zeta)}(\Gr_{\leq \mu})$.
Indeed, whenever two coweights $\lambda_1$, $\lambda_2$ are connected by an affine reflection $s_{\beta+m\hbar} \in W_{G, \aff}$, then the associated root space $U_{\beta + m\hbar}$ extends to a $\P^1$ connecting the corresponding points $t^{\lambda_1}$, $t^{\lambda_2}$ inside $\Gr_G$. Moreover, if $\lambda_1, \lambda_2 \in \wt(\mu)$, this $\P^1$ lies inside $\Gr_{\leq \mu}$. Since we also have ${\beta + m\hbar} \in \Phi^{\bullet}_J$, the root space $U_{\beta + m\hbar}$ lies inside $J$, hence the induced $\P^1$ actually lies inside $\Fix^{\red}_{(x, \zeta)}(\Gr_G)$. Altogether, it lies inside 
\begin{equation*}
    \Fix^{\red}_{(x, \zeta)}(\Gr_{\leq \mu}) = \Fix^{\red}_{(x, \zeta)}(\Gr_G) \cap \Gr_{\leq \mu},
\end{equation*}
proving the claim. 
In general, choosing a sequence of affine reflection of $W_J$ connecting $\lambda_1$, $\lambda_2$, we have thus constructed a chain of $\P^1$'s connecting $t^{\lambda_1}$ and $t^{\lambda_2}$ inside $\Fix^{\red}_{(x, \zeta)}(\Gr_{\leq \mu})$. Hence they lie in the same connected component.

For the converse, note that when $\lambda_1$ and $\lambda_2$ lie in different orbits of $W_J$, the fixed-points  $t^{\lambda_1}$, $t^{\lambda_2}$ end up in different terms of the disjoint union $\coprod_{\vartheta \in \Theta} J/J^+_{\vartheta}$, hence lie in different connected components of its subscheme $\Fix^{\red}_{(x, \zeta)}(\Gr_{\leq \mu})$.
\end{proof}

To conclude our computation of the fixed points, it remains to explicitly describe the players in Lemmata \ref{lemma: fixed points on affine Grassmannian}, \ref{lemma: upshot of the general case}. We do so in the next two subsections, distinguishing whether $\zeta$ is generic or a root of unity.

\subsection{\texorpdfstring{$\zeta$}{Zeta} generic}\label{section: zeta generic}
\subsubsection{Setup.}
Let $(x, \zeta) \in T \times \Gm^{\rot}$ be a closed point with $\zeta$ generic. The purpose of this subsection is to describe the fibers of the fixed-point scheme $\Fix^{\red}_{(x, \zeta)}(\Gr_G)$ as infinite disjoint unions of partial flag varieties for the resonant subgroup $L = L_{[x, \zeta]}$, as well as $\Fix^{\red}_{(x, \zeta)}(\Gr_{\leq \mu})$ as a union of finite Schubert varieties inside them.

\subsubsection{Centralizers over generic $\zeta$.}
When $\zeta$ is generic, the connected centralizer $J = J_{(x, \zeta)}$ is given by a slanted copy of the resonant subgroup $L = L_{[x, \zeta]}$.
\begin{lemma}\label{lemma: centralizer as resonant subgroup -- generic zeta}
Let $(x, \zeta) \in T \times \Gm^{\rot}$ with $\zeta$ generic. We have a preferred identification
\begin{equation*}
    \slant_{\omega}: J \xrightarrow{=} L,
\end{equation*}
describing the connected centralizer $J$ as a slanted copy of the resonant subgroup $L$. On root systems, this is induced by
$\slant_{\omega}: \Phi^{\bullet}_J \to \Phi^{\bullet}_L $, $\beta \mapsto \beta + m_{\beta}\hbar$.
\end{lemma}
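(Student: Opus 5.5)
The plan is to read the statement off Lemma \ref{discussion: slanting J in general} with $\ell = \ord(\zeta) = 0$, and then identify the resulting connected group with the constant-loop copy of $L$.

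First I specialize the root data. By Lemma \ref{lemma: root spaces of J} with $\ell=0$, the root system of $J$ is
\begin{equation*}
\Phi^{\bullet}_J = \{\beta - m_{\beta}\hbar \mid \beta \in \Phi^{\bullet}_L\}.
\end{equation*}
For generic $\zeta$ the integer $m_\beta$ is uniquely determined, so there is exactly one affine root of $J$ over each $\beta\in\Phi^\bullet_L$. Applying Lemma \ref{discussion: slanting J in general} with $\ell=0$, the slanted group $\slant_{\omega}(J) = t^{\omega} J t^{-\omega}$ has root system $\{\beta + 0\cdot\hbar \mid \beta \in \Phi^{\bullet}_L\}$. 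This is precisely the set of affine roots of the constant loops $L \leq \lo L \leq \lo G$. On root spaces, the slanting map is $\beta - m_\beta\hbar \mapsto \beta$, using $\langle\beta,\omega\rangle = m_\beta$. This is the displayed formula, with the sign convention as stated; I would double-check that convention against Lemma \ref{lemma: conjugation of affine root spaces}.

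Second, I would argue that both $\slant_\omega(J)$ and $L$ are connected regular subgroups of $\lo L^{\sc}\rtimes$-type ambient groups containing $T$, with the same closed root subsystem. Here $J$ is connected by definition, and slanting is an automorphism, so $\slant_\omega(J)$ is connected too. Proposition \ref{proposition: closed root subsystems and regular subalgebras} then says the corresponding regular subalgebras of $\g_{\aff}$ coincide. In characteristic zero, connected subgroups are determined by their Lie algebras, so $\slant_\omega(J) = L$ as subgroups of $\lo G$.

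The main subtlety is that $\omega$ is only a cocharacter of $T_L^{\ad}$, not of $T$. So $t^\omega$ does not live in $\lo G$, and the slanting must be interpreted through the adjoint action $\lo L^{\ad}\acts \lo L$ of Discussion \ref{discussion: slanting}. I would handle this by noting $J\le \lo L^{\sc}$ (the remark after Remark \ref{remark: root system of J project on root system of L}), or more precisely the image of $\lo L^{\sc}$, on which $\lo L^{\ad}$ acts. The identification then takes place inside $\lo L$.

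I also need the maximal torus part to match, not just the root spaces. The torus $T$ is fixed by $(x,\zeta)$ and by conjugation with $t^\omega$, since $T$ is abelian and the adjoint action of $T^{\ad}$ on $T$ is trivial. So both groups contain $T$, and the Lie algebra comparison includes the Cartan $\t$. The identification is preferred because $\omega$ is unique for generic $\zeta$ (Remark \ref{remark: non-uniqueness of the resonant part}).
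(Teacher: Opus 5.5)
Your proposal is correct and is essentially the paper's proof. You specialize Lemma~\ref{lemma: root spaces of J} to $\ell=0$, slant by $\omega$ via Lemma~\ref{discussion: slanting J in general} to obtain the root system $\Phi^{\bullet}_L$ of the constant-loop copy of $L$, and conclude by connectedness; your remarks on the torus and on $\omega$ being only an adjoint cocharacter fill in details the paper leaves implicit. Your sign $\Phi^{\bullet}_J=\{\beta-m_\beta\hbar\}$ is the one consistent with Lemmas~\ref{lemma: root spaces of J} and~\ref{lemma: conjugation of affine root spaces}, whereas the paper's proof writes $\beta+m_\beta\hbar$. Accordingly, the displayed root-system formula in the statement is best read as the inverse map $\Phi^{\bullet}_L\to\Phi^{\bullet}_J$, $\beta\mapsto\beta-m_\beta\hbar$.
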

\begin{proof}
We have described the root system of $J$ in Lemma \ref{lemma: root spaces of J}. Since $\ell = \ord(\zeta)=0$, it is given by $ \Phi^{\bullet}_J = \{ \beta + m_{\beta} \cdot \hbar \mid \beta \in \Phi^{\bullet}_L \}$ for the (uniquely determined) additive function $\mm: \beta \mapsto m_{\beta}$ from Notation \ref{notation: additive function m}.
Slanting by $\omega$ as in Lemma \ref{discussion: slanting J in general}, we get
$\Phi^{\bullet}_{\slant_{\omega}(J)} = \{ \beta \mid \beta \in \Phi^{\bullet}_L \}$, which is the root system of the connected reductive group $L$. The discussion in the same lemma implies the formula on root systems.
\end{proof}

\subsubsection{Labeling set for components.}
Under slanting on root systems $\slant_{\omega}: \Phi^{\bullet}_J \xrightarrow{=} \Phi^{\bullet}_L$ from Remark \ref{remark: slanting on weyl groups general case}, the group $W_J$ identifies as follows.
\begin{lemma}\label{lemma: resonant subgroup -- zeta generic}
Slanting identifies
\begin{equation*}
    \slant_{\omega}: W_J \xrightarrow{=} W_L.
\end{equation*}
\end{lemma}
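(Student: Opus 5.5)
The plan is to deduce the statement from Remark \ref{remark: slanting on weyl groups general case}, which gives an identification $\slant_{\omega}: W_J \xrightarrow{=} W_{\slant_{\omega}(J)}$ of reflection subgroups of $W_{L, \aff}$. This identification is induced by the bijection of root systems $\Phi^{\bullet}_J \to \Phi^{\bullet}_{\slant_{\omega}(J)}$, $\beta + m\hbar \mapsto \beta + (m + m_\beta)\hbar$. It therefore suffices to identify the target reflection group $W_{\slant_{\omega}(J)}$ with $W_L$, viewed inside $W_{L, \aff}$ via the preferred splitting $W_L \leq \Z\Phi_{\bullet, L} \rtimes W_L$.

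First, specialize Lemma \ref{discussion: slanting J in general} to $\ell = \ord(\zeta) = 0$. The closed root subsystem of $\slant_{\omega}(J)$ becomes $\{\beta + 0\cdot\hbar \mid \beta \in \Phi^{\bullet}_L\}$, which is exactly what Lemma \ref{lemma: centralizer as resonant subgroup -- generic zeta} records. Thus $W_{\slant_{\omega}(J)}$ is the subgroup of $W_{L,\aff}$ generated by the affine reflections $s_{\beta + 0\hbar}$ for $\beta \in \Phi^{\bullet}_L$. These are precisely the finite reflections $s_\beta \in W_L \leq W_{L,\aff}$, since the translation part $\tau_{m\widecheck\beta}$ is trivial for $m=0$. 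The finite reflections $s_\beta$, $\beta\in\Phi^\bullet_L$, generate $W_L$ by the definition of $W_L$ as the reflection subgroup attached to $\Phi^\bullet_L$. Hence $W_{\slant_\omega(J)} = W_L$.

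Composing the two identifications gives $\slant_\omega: W_J \xrightarrow{=} W_L$. Explicitly, the map sends the generator $s_{\beta - m_\beta\hbar}$, which by Lemma \ref{lemma: root spaces of J} runs over all reflections of $W_J$ when $\ell=0$, to $s_\beta$. It is an isomorphism because slanting is an automorphism of $W_{L,\aff}$ with inverse $\slant_{-\omega}$.

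The only delicate point is that $\omega$ is merely a cocharacter of $T^{\ad}_L$, so slanting is a priori only defined through the adjoint action, and its effect on $W_{L,\aff}$ has to be made precise. I would handle this as in Discussion \ref{discussion: slanting}: define $\slant_\omega$ on $W_{L,\aff}$ via its action on affine roots, and use that $\langle\beta,\omega\rangle = m_\beta \in \Z$ for every $\beta \in \Phi^\bullet_L$, which makes the root-level map well defined. As a consistency check, I would note that under this identification the action of $W_J$ on $X_\bullet$ corresponds to the shifted action $(W_L, \bullet_\omega)$, in line with Remark \ref{remark: slanting on weyl groups general case}. This is what will be used afterwards to rewrite $\Theta = X_\bullet/W_J$ as $X_\bullet/(W_L,\bullet_\omega)$.
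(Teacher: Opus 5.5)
Your proposal is correct and follows the paper's route. The paper also argues that slanting carries $\Phi^{\bullet}_J$ bijectively onto $\Phi^{\bullet}_L$ (Lemma \ref{lemma: centralizer as resonant subgroup -- generic zeta}) and that $W_J$ and $W_L$ are the reflection subgroups attached to these root systems. Your sign convention $\Phi^{\bullet}_J = \{\beta - m_\beta\hbar\}$ is the correct one; the paper's statement of that lemma writes $\beta + m_\beta\hbar$, which is a harmless typo.
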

\begin{proof}
Clear from Lemma \ref{lemma: centralizer as resonant subgroup -- generic zeta} since $W_J = W_{\Phi^{\bullet}_J}$ and $W_L = W_{\Phi^{\bullet}_L}$.   
\end{proof}

We can now describe the labeling set $\Theta$ from Notation \ref{notation: Theta} more explicitly.
To this end, recall the $\omega$-shifted action from \S \ref{section: shifted actions}.
\begin{lemma}\label{lemma: labeling set for components -- zeta generic}
For $(x, \zeta)$ with $\zeta$ generic, we have
\begin{equation*}
 \Theta = |\overline{\Gamma}_{(x, \zeta)}|  = X_{\bullet} / (W_{L}, \bullet_{\omega}).
\end{equation*}
\end{lemma}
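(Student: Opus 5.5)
The plan is to start from Lemma \ref{lemma: fiber of cocharacter graphs as lattice quotient}, which already identifies $\Theta = |\overline{\Gamma}_{(x, \zeta)}| = X_{\bullet}/W_J$. Here $W_J \leq W_{\aff}$ acts on $X_{\bullet} = W \backslash W_{\aff}$ through the natural (unshifted) action of $W_{\aff}$ on $X_{\bullet}$, namely $(\lambda, w)\cdot \mu = w(\mu) + \lambda$ up to the sign conventions of \S \ref{section: shifted affine weyl group actions} with $\ell = 1$ and $\alpha = 0$. It therefore suffices to transport this $W_J$-action along the slanting isomorphism $\slant_{\omega}: W_J \xrightarrow{=} W_L$ of Lemma \ref{lemma: resonant subgroup -- zeta generic}, and to check that it becomes the $\omega$-shifted action $(W_L, \bullet_{\omega})$.

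The first step is to compute concretely. By Lemma \ref{lemma: root spaces of J} with $\ell = 0$, $W_J$ is generated by the affine reflections $s_{\beta - m_{\beta}\hbar}$ for $\beta \in \Phi^{\bullet}_L$. Under $\slant_{\omega}$ these map to the finite reflections $s_{\beta}$, which generate $W_L$. The affine reflection $s_{\beta + m\hbar}$ acts on $X_{\bullet}$ by $\mu \mapsto \mu - (\langle \beta, \mu\rangle + m)\widecheck{\beta}$; I would confirm this from the formula $s_{\beta+m\hbar}(x) = x\cdot\widecheck{\beta}(\beta(x)\zeta^m)^{-1}$ used in Lemma \ref{lemma: affine stabilizer in weyl group}, applied to $x = \mu(\zeta)$, with $\zeta$ generic so that $\mu \mapsto \mu(\zeta)$ is injective. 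Taking $m = -m_{\beta} = -\langle \beta, \omega\rangle$ gives $\mu \mapsto \mu - \langle\beta, \mu - \omega\rangle\widecheck{\beta} = s_{\beta}(\mu - \omega) + \omega = s_{\beta}\bullet_{\omega}\mu$. The choice of lift of $\omega$ to $X_{\bullet,\Q}$ does not matter, by \S \ref{section: shifted actions}. Since both actions are group actions and agree on generators matched by the group isomorphism $\slant_{\omega}$, they agree everywhere; this is exactly Remark \ref{remark: slanting on weyl groups general case}. Hence the orbit sets coincide, $X_{\bullet}/W_J = X_{\bullet}/(W_L, \bullet_{\omega})$.

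The main obstacle is bookkeeping of signs and conventions, not any conceptual difficulty. Two points need care: the identification $X_{\bullet} = W\backslash W_{\aff}$ must be compatible with the right $W_J$-action, and it must match the action of reflections on the $\zeta$-integral orbit of $x$, which introduces the base point $x$ rather than $\omega(\zeta)$. To handle the base point, I would invoke the dictionary of Notation \ref{notation: Theta}(ii). A coset of $\lambda$ corresponds to the point $x\cdot\lambda(\zeta)$, and since $\beta(x) = \beta(\omega(\zeta))$ for $\beta \in \Phi^{\bullet}_L$, the stabilizer computation from Lemma \ref{lemma: affine stabilizer in weyl group} transported to $\lambda$ produces precisely the shift by $\omega$ noted in Notation \ref{notation: levi and paraolic subgroups labeled by elements of Theta}. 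If a sign discrepancy appears, I would absorb it by replacing $\omega$ with its negative in the convention of Construction \ref{notation: additive function m}, and check consistency against $\langle\beta,\omega\rangle = m_{\beta}$.
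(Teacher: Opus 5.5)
Your main line is the paper's proof. It takes $\Theta = X_{\bullet}/W_J$ from Lemma \ref{lemma: fiber of cocharacter graphs as lattice quotient} and transports it along $\slant_{\omega}\colon W_J \to W_L$ via Remark \ref{remark: slanting on weyl groups general case} and Lemma \ref{lemma: resonant subgroup -- zeta generic}; your reflection computation just unpacks that remark. However, the sign discrepancy you anticipate in your last paragraph does occur, and the remedy you propose is not available.

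Your formula $\mu \mapsto \mu - (\langle \beta, \mu\rangle + m)\widecheck{\beta}$ describes how $s_{\beta+m\hbar}$ moves the points $\mu(\zeta)$ of the orbit of $1 \in T$. But $\Theta = W\backslash W_{\aff}/W_J$ is built from the orbit of $x$, with $W_J = \Stab_{W_{\aff}}(x,\zeta)$ acting on the right. Turning this into a left action on $X_{\bullet} = W_{\aff}/W$ requires $g \mapsto g^{-1}$, which negates $\mu$. Concretely, use the dictionary $\lambda \leftrightarrow [x\cdot\lambda(\zeta)]$ of Notation \ref{notation: Theta}(ii), which you invoke. For $\beta \in \Phi^{\bullet}_L$ one gets
\[
s_{\beta}\bigl(x\cdot\lambda(\zeta)\bigr) = x\cdot\bigl(\lambda - (\langle\beta,\lambda\rangle + m_{\beta})\widecheck{\beta}\bigr)(\zeta) = x\cdot\bigl(s_{\beta}(\lambda+\omega)-\omega\bigr)(\zeta).
\]
This is the convention $\lambda' = s_{\beta}(\lambda) + m\widecheck{\beta}$ of \eqref{equation: combinatorial verification eq 0} with $m = -m_{\beta}$. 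So in this labeling $W_J$ acts by $\mu \mapsto w(\mu+\omega)-\omega$, not by $w\bullet_{\omega}\mu = w(\mu-\omega)+\omega$ as defined in \S\ref{section: shifted actions}. Your second and third paragraphs therefore assert two different partitions of $X_{\bullet}$, exchanged by $\mu \mapsto -\mu$.

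Replacing $\omega$ by $-\omega$ does not fix this. For generic $\zeta$, $\omega$ is uniquely determined by $\langle\beta,\omega\rangle = m_{\beta}$ with $\beta(x) = \zeta^{m_{\beta}}$. What has to be fixed is the dictionary, and there are two options.
\begin{enumerate}
\item State the bijection as $\lambda \mapsto [x\cdot\lambda(\zeta)^{-1}]$. Then your reflection computation proves the lemma as written.
\item Keep $\lambda \mapsto [x\cdot\lambda(\zeta)]$ and read the shift as $w(\mu+\omega)-\omega$.
\end{enumerate}
Only the second matches how the lemma is used later for components through the points $t^{\lambda}$ (Lemma \ref{lemma: combinatorial description -- zeta generic}). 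Since $J = t^{-\omega}Lt^{\omega}$, it contains $t^{-\omega}\dot{w}\,t^{\omega}$ for lifts $\dot{w}$ of $w \in W_L$. Hence the $J$-orbit of $t^{\lambda}$ contains $t^{w(\lambda+\omega)-\omega}$.

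For example, take $G = SL_2$ and $x = \widecheck{\alpha}(\zeta)$, so $\omega = \widecheck{\alpha}$. The closure of the $U_{\alpha-2\hbar}$-orbit joins $t^{0}$ to $t^{-2\widecheck{\alpha}}$, and $t^{0}$ lies in a different connected component from $t^{2\widecheck{\alpha}}$. To see the latter, compare the dimensions of the $(x,\zeta)$-eigencomponents of the two lattices; these are locally constant on the fixed locus.

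The paper's short proof cites Remark \ref{remark: slanting on weyl groups general case} and passes over this sign in the same way. As an abstract equality of orbit sets the lemma is fine, but a complete proof must make the identification explicit.
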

\begin{proof}
 By Notation \ref{notation: Theta} and Lemma \ref{lemma: fiber of cocharacter graphs as lattice quotient}, we have $\Theta = |\overline{\Gamma}_{(x, \zeta)}|  = X_{\bullet}/W_J$ with respect to the natural action of $W_J$ on $X_{\bullet}$. Since the natural action of $W_J$ identifies with the $\omega$-shifted action of $\slant_{\omega}(W_J)$ by Remark \ref{remark: slanting on weyl groups general case}, we identify the quotient as
 \begin{equation*}
     X_{\bullet}/W_J = X_{\bullet}/(W_{\slant_{\omega}(J)}, \bullet_{\omega}).
 \end{equation*}
Since $W_{\slant_{\omega}(J)} = W_L$ is the resonant Weyl group by Lemma \ref{lemma: resonant subgroup -- zeta generic}, the result follows.
\end{proof}

\begin{remark}
Similar description works for other variants of cocharacter graphs -- for example,  
\begin{equation*}
   |{\Gamma}_{(x, \zeta)}|  = W_{\ext} / (W_{L}, \bullet_{\omega}). 
\end{equation*}
\end{remark}

\subsubsection{Positive stabilizers as parabolic subgroups.}
In terms of the slanted identification, the subgroup $J^+$ is given as $P_x$, the parabolic subgroup from \S \ref{section: parabolic and levi subgrops of L}. More generally, stabilizers of each $t^{\lambda} \in \Gr_G$ are described as follows.
\begin{lemma}\label{lemma: positive stabilizers as parabolics -- generic zeta}
For $(x, \zeta)$ with $\zeta$ generic and arbitrary $\lambda \in X^{\bullet}$, slanting identifies
\begin{equation*}
    \slant_{\omega}: J^{+}_{\lambda} \xrightarrow{=}  P_{x \cdot \lambda(\zeta)},
\end{equation*}   
where $P_{x \cdot \lambda(\zeta)} \leq L$ is a parabolic subgroup whose Levi factor $L_{x \cdot \lambda(\zeta)} \leq L$ is the connected stabilizer of $x \cdot \lambda(\zeta)$ inside $G$. 
\end{lemma}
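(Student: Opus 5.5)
Since $J$ and $J^+_\lambda = J \cap t^{\lambda}\lop G\, t^{-\lambda}$ are generated by torus and root subgroups, I would compute everything on root systems. The plan is to find the affine roots of $J^+_\lambda$, slant them by $\omega$, and recognize the result as the root set of $P_{x\cdot\lambda(\zeta)}$ from Remark \ref{remark: parabolic and levi subgrops of L}.

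\emph{Roots of $J^+_\lambda$.} By Lemma \ref{lemma: conjugation of affine root spaces}, the root subgroups of $t^{\lambda}\lop G\, t^{-\lambda}$ are the $U_{\beta+m\hbar}$ with $m + \langle\lambda,\beta\rangle \geq 0$; the sign should be checked against the convention there. By Lemma \ref{lemma: root spaces of J} with $\ell=0$, the roots of $J$ are $\beta - m_\beta\hbar$ for $\beta\in\Phi^{\bullet}_L$. Hence $J^+_\lambda$ contains $T$ together with the root groups $U_{\beta-m_\beta\hbar}$ for those $\beta\in\Phi^{\bullet}_L$ with $\langle\lambda,\beta\rangle - m_\beta \geq 0$.

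I still need that $J^+_\lambda$ is generated by these, i.e. that it is connected. It is the intersection of the connected group $J$ with a group normalized by $T$. Under $\slant_\omega$, the group $J$ becomes the finite-dimensional reductive group $L$ (Lemma \ref{lemma: centralizer as resonant subgroup -- generic zeta}), and $\slant_\omega(t^{\lambda}\lop G\, t^{-\lambda})\cap L$ is a $T$-stable closed subgroup of $L$. By the big cell decomposition $U^-\times T\times U^+$ of $L$, any such subgroup containing $T$ is generated by $T$ and the root groups it contains, provided it contains enough roots to be parabolic. That proviso is supplied by the next step. A useful cross-check is that $J^+_\lambda$ is the stabilizer of the point $t^{\lambda}$ in $J\cong L$, and the $J$-orbit of $t^\lambda$ is projective, being closed in the ind-projective $\Gr_G$ by Lemma \ref{lemma: fixed points on affine Grassmannian}. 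So the stabilizer is parabolic, and a parabolic subgroup containing $T$ is determined by its roots.

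\emph{Slanting.} By Lemma \ref{lemma: centralizer as resonant subgroup -- generic zeta}, $\slant_\omega$ sends $\beta - m_\beta\hbar \mapsto \beta$. So $\slant_\omega(J^+_\lambda)$ is generated by $T$ and the $U_\beta$ with $\beta\in\Phi^{\bullet}_L$ and $\langle\beta,\lambda\rangle \geq m_\beta = \langle\beta,\omega\rangle$ (up to the overall sign fixed above). Comparing with \S\ref{section: parabolic and levi subgrops of L} and Remark \ref{remark: parabolic and levi subgrops of L}: $\beta(x\lambda(\zeta)) = \zeta^{\langle\beta,\omega+\lambda^{\ad}\rangle}$, and $P_{x\lambda(\zeta)}$ has roots $\{\beta\in\Phi^{\bullet}_L \mid \langle\beta,\omega+\lambda^{\ad}\rangle\leq 0\}$. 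This set contains one of $\pm\beta$ for every $\beta$, which gives parabolicity. Its Levi part is $\{\langle\beta,\omega+\lambda^{\ad}\rangle=0\} = \{\beta \mid \beta(x\lambda(\zeta))=1\}$, which is the stabilizer $L_{x\lambda(\zeta)}$, i.e. the connected centralizer in $G$ by Proposition \ref{proposition: pseudo-levi subgroups}.

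\emph{Main obstacle.} The one genuinely delicate point is the sign bookkeeping. Conventions for $t^\lambda$, for conjugation in Lemma \ref{lemma: conjugation of affine root spaces}, for slanting, and for the $B'$-parabolic must combine to give exactly $\langle\beta,\omega+\lambda\rangle\leq 0$, rather than $\langle\beta,\omega-\lambda\rangle$ or $\geq 0$. I would fix this by first testing $\lambda=0$, where the statement must reduce to $J^+\cong P_x$ as asserted before the lemma. If a sign mismatch remains, it can be absorbed by replacing $\lambda$ with the convention of Remark \ref{remark: parabolic and levi subgrops of L}.
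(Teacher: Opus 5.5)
Your route is the paper's: read off the root subgroups of $t^{\lambda}\lop G\,t^{-\lambda}$ from Lemma \ref{lemma: conjugation of affine root spaces}, intersect with $\Phi^{\bullet}_J$, slant by $\omega$, and recognize the root set of $P_{x\cdot\lambda(\zeta)}$. However, the first step has the wrong sign, and your plan for repairing it would not work.

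\textbf{The sign error.} From $t^{-\lambda}U_{\beta+m\hbar}t^{\lambda}=U_{\beta+(m-\langle\lambda,\beta\rangle)\hbar}$ one gets $U_{\beta+m\hbar}\subseteq t^{\lambda}\lop G\,t^{-\lambda}$ if and only if $m-\langle\lambda,\beta\rangle\geq 0$, not $m+\langle\lambda,\beta\rangle\geq 0$. With your sign, the slanted root set is $\{\beta\in\Phi^{\bullet}_L \mid \langle\beta,\omega-\lambda\rangle\leq 0\}$. That is the parabolic attached to $x\cdot\lambda(\zeta)^{-1}$, not to $x\cdot\lambda(\zeta)$, and the two genuinely differ. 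For example, take $x=1$, so that $\omega=0$ and $L=G$, and let $\lambda$ be dominant regular: you would obtain $B$ instead of the correct $B'$.

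\textbf{Why your fixes fail.} Testing $\lambda=0$ cannot detect this error. Both expressions reduce to $\langle\beta,\omega\rangle\leq 0$ there, so the test only fixes the sign in front of $\omega$, never the one in front of $\lambda$. Absorbing the mismatch by replacing $\lambda$ with $-\lambda$ also does not help. It proves a statement about $J^+_{-\lambda}$, the stabilizer of $t^{-\lambda}$, which is not the lemma, since both sides of the claimed identification are pinned to the same $\lambda$.

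\textbf{The repair.} Apply the conjugation lemma with the correct sign. For the roots $\beta-m_\beta\hbar$ of $J$, the condition $-m_\beta-\langle\lambda,\beta\rangle\geq 0$ reads $\langle\beta,\omega+\lambda\rangle\leq 0$. Equivalently, $\beta(x\cdot\lambda(\zeta))=\zeta^{m}$ with $m\leq 0$. After slanting, this is exactly $\Phi^{\bullet}_{P_{x\cdot\lambda(\zeta)}}$, as in the paper's proof. With this correction the rest of your argument goes through.

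Your additional discussion of connectedness is sound and covers a point the paper passes over by identifying $J^+_\lambda$ with the span of its root spaces. The slanted stabilizer is a closed subgroup of $L$ containing $T$ and the root groups $U_\beta$ with $\langle\beta,\omega+\lambda\rangle\leq 0$. Hence it contains a Borel subgroup, so it is a connected parabolic determined by its roots.
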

\begin{proof}
Note that $U_{\beta + m\hbar} \subseteq t^{\lambda} \cdot \lop G \cdot t^{-\lambda}$ if and only if $ t^{-\lambda} \cdot U_{\beta + m\hbar} \cdot t^{\lambda} \subseteq \lop G$. Since $ t^{-\lambda} \cdot U_{\beta + m\hbar} \cdot t^{\lambda} = U_{\beta + (m - \langle \beta, \lambda \rangle)\hbar}$ by Lemma \ref{lemma: conjugation of affine root spaces}, the root spaces of $t^{\lambda} \cdot \lop G \cdot t^{-\lambda}$ are given by
$\{ \beta + m\hbar \mid m - \langle \beta, \lambda \rangle \geq 0 \}$.
To describe the roots spaces of $J^{+}_{\lambda}$, we intersect with the root system of $J$. This yields
\begin{equation*}
    \Phi^{\bullet}_{J^{+}_{\lambda}} = \{ \beta - m_{\beta}\hbar \mid \beta \in \Phi^{\bullet}_L, \  m_{\beta} + \langle \beta, \lambda \rangle \leq 0 \}
\end{equation*}
After slanting as in \S \ref{section: slanting}, this is the subgroup of $\slant_{\omega}(J^{+}_{\lambda}) \leq L$ spanned by the root spaces from
\begin{equation*}
    \Phi^{\bullet}_{\slant_{\omega}(J^{+}_{\lambda})} = \{ \beta \in \Phi^{\bullet}_L \mid  m_{\beta} + \langle \beta, \lambda \rangle \leq 0 \} = \{ \beta \mid \beta(x \cdot \lambda(\zeta) = \zeta^m \ \text{for some} \ m \leq 0 \}.
\end{equation*}
Here, the second equality follows since $\beta(x \cdot \lambda(\zeta)) = \beta(x) \cdot \beta(\lambda(\zeta)) = \zeta^{m_{\beta}} \cdot \zeta^{\langle \beta, \lambda \rangle} = \zeta^{m_{\beta} + \langle \beta, \lambda \rangle}$; this uniquely determines the exponent by genericity of $\zeta$.

These are the root spaces of $P_{x \cdot \lambda(\zeta)}$ from \eqref{equation: roots of Px}, a parabolic subgroup with Levi factor $L_{x \cdot \lambda(\zeta)}$ by Lemma \ref{lemma: parabolic and levi subgrops of L} and Remark \ref{remark: parabolic and levi subgrops of L}.
\end{proof}

\begin{remark}\label{remark: diferent labelings of centralizers -- generic zeta}
We may label the above subgroups of $L$ differently. Let $\vartheta \in \Theta = X_{\bullet}/(W_{\slant_{\omega}(J)}, \bullet_{\omega})$ be the orbit of $\lambda$. Under the interpretation, $\Theta = |\overline{\Gamma}_{(x, \zeta)}| \subseteq |T \sslash W|$, the element $\vartheta$ is given by the equivalence class of the point $x \cdot \lambda(\zeta)$. In Notation \ref{notation: levi and paraolic subgroups labeled by elements of Theta}, we get
\begin{equation*}
  P_{\vartheta} = P_{x \cdot \lambda(\zeta)} = P_{\omega+\lambda^{\ad}} 
  \qquad \text{and} \qquad
  L_{\vartheta} = L_{x \cdot \lambda(\zeta)} = L_{\omega+\lambda^{\ad}},
\end{equation*}
where the latter equalities realize our subgroups as the parabolic and Levi subgroup of $L$ attached to the cocharacter $\omega + \lambda^{\ad} \in X^{\ad}_{\bullet, L}$ by Remark \ref{remark: parabolic and levi subgrops of L}.
\end{remark}

\begin{lemma}\label{lemma: positive centralizer as parabolic -- gneric zeta}
Inside $\lop G \rtimes \Gm^{\rot}$,  
\begin{equation*}
\slant_{\omega}: J^{+} \xrightarrow{=} P_x 
\end{equation*}
where $P_x = P_{\omega} \leq L$ is the opposite parabolic subgroup with Levi factor $L_x = L_{\omega} \leq L$.
\end{lemma}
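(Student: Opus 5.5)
This is the special case $\lambda = 0$ of Lemma \ref{lemma: positive stabilizers as parabolics -- generic zeta}, so the plan is to specialize that lemma and then identify the resulting parabolic with $P_\omega$ using Lemma \ref{lemma: parabolic and levi subgrops of L}.

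First, $J^+ = J \cap \lop G = J \cap t^{0} \cdot \lop G \cdot t^{0}$ coincides with $J^+_{0}$ in the notation \eqref{equation: positive stabilizer}. Applying Lemma \ref{lemma: positive stabilizers as parabolics -- generic zeta} with $\lambda = 0$ gives $\slant_{\omega}: J^+ \xrightarrow{=} P_{x \cdot 0(\zeta)} = P_x$.

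To make the root bookkeeping explicit, I would recompute directly. The root spaces of $\lop G$ are $U_{\beta + m\hbar}$ with $m \geq 0$. By Lemma \ref{lemma: root spaces of J} with $\ell = 0$, the root system of $J$ is $\{\beta - m_\beta\hbar \mid \beta \in \Phi^{\bullet}_L\}$. Intersecting the two gives the roots $\beta - m_\beta \hbar$ with $m_\beta \leq 0$. Slanting as in Lemma \ref{discussion: slanting J in general} sends these to the finite roots $\{\beta \in \Phi^{\bullet}_L \mid m_\beta \leq 0\}$. Since $m_\beta = \langle \beta, \omega\rangle$, this set equals $\{\beta \mid \langle\beta,\omega\rangle \leq 0\} = \Phi^{\bullet}_{P_x}$ from \eqref{equation: roots of Px}. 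The torus $T$ lies in both sides, so the two connected groups have the same Lie algebra and are therefore equal.

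Finally, Lemma \ref{lemma: parabolic and levi subgrops of L} gives $P_x = P_\omega$ with Levi factor $L_x = L_\omega$, which is the remaining assertion.

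The only point needing care is connectedness: $J^+$ is defined as an intersection rather than as a connected group, so one must check it is determined by its root spaces and $T$. I would handle this as in Lemma \ref{lemma: fixed points on affine Grassmannian}. There $J^+$ appears as the stabilizer of $t^0$ in the connected group $J$, and $J/J^+$ is the connected component $J \cdot t^0$ of the fixed points. Under slanting this becomes $L/\slant_\omega(J^+)$. Since $\slant_\omega(J^+)$ contains the parabolic $P_x$, it is parabolic, and a subgroup of $L$ containing a parabolic is connected. Hence $\slant_\omega(J^+)$ equals $P_x$, which settles the issue.
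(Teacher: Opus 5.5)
Your proposal is correct and follows the paper's proof: specialize Lemma \ref{lemma: positive stabilizers as parabolics -- generic zeta} to $\lambda = 0$, and read off $P_x = P_\omega$ and $L_x = L_\omega$ from Remark \ref{remark: diferent labelings of centralizers -- generic zeta} (equivalently, Lemma \ref{lemma: parabolic and levi subgrops of L}). Your extra connectedness check is also sound: a closed subgroup of $L$ containing $P_x$ is connected because parabolics are self-normalizing, and your root computation then forces it to equal $P_x$; the detour through Lemma \ref{lemma: fixed points on affine Grassmannian} is not needed for this.
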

\begin{proof}
Special case of Lemma \ref{lemma: positive stabilizers as parabolics -- generic zeta} and Remark \ref{remark: diferent labelings of centralizers -- generic zeta} for $\lambda$ trivial.    
\end{proof}

\subsubsection{Fixed points on affine Grassmannian at \texorpdfstring{$\zeta$}{zeta} generic.}
We now describe the fixed points on the whole affine Grassmannian.
\begin{theorem}\label{lemma: fixed points of (c, zeta) on Gr_G -- zeta generic}
For $(x, \zeta) \in T \times \Gm^{\rot}$ with $\zeta$ generic, we have
\begin{equation}\label{equation: fixed points of (x, zeta) on Gr_G -- zeta generic}
    \Fix^{\red}_{(x, \zeta)}(\Gr_G) = \coprod_{\vartheta \in X_{\bullet}/(W_{L}, \bullet_{\omega})} L/P_{\vartheta}
\end{equation}
where $L = L_{[x, \zeta]}$ is the resonant subgroup, $P_{\vartheta}$ is the parabolic subgroup labeled by $\vartheta \equiv x \cdot \lambda(\zeta)$, and $W_L$ acts on $X_{\bullet}$ by the $\omega$-shifted action.
\end{theorem}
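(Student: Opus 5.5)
The plan is to assemble the theorem directly from the general structure result, Lemma \ref{lemma: fixed points on affine Grassmannian}, together with the explicit descriptions for generic $\zeta$. Lemma \ref{lemma: fixed points on affine Grassmannian} gives
\begin{equation*}
\Fix^{\red}_{(x, \zeta)}(\Gr_G) = \coprod_{\vartheta \in \Theta} J/J^{+}_{\vartheta},
\qquad \Theta = X_{\bullet}/W_J .
\end{equation*}
It therefore remains to identify three things: the labeling set $\Theta$, the group $J$, and the stabilizers $J^+_{\vartheta}$. We do this via the slanting isomorphism $\slant_{\omega}$, which is conjugation by $t^{\omega}$ inside $\lo L^{\ad} \rtimes \Gm^{\rot}$ acting on $\lo L$.

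First, by Lemma \ref{lemma: labeling set for components -- zeta generic}, the labeling set is $\Theta = X_{\bullet}/(W_L, \bullet_{\omega})$. Second, Lemma \ref{lemma: centralizer as resonant subgroup -- generic zeta} gives $\slant_{\omega}: J \xrightarrow{=} L$. Third, for a lift $\lambda$ of $\vartheta$, Lemma \ref{lemma: positive stabilizers as parabolics -- generic zeta} gives $\slant_{\omega}(J^+_{\lambda}) = P_{x\cdot\lambda(\zeta)}$, and Remark \ref{remark: diferent labelings of centralizers -- generic zeta} identifies this with $P_{\vartheta}$.

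Since $\slant_{\omega}$ is a single group automorphism that carries the pair $(J, J^+_{\lambda})$ to $(L, P_{\vartheta})$, it induces an isomorphism of homogeneous spaces $J/J^+_{\lambda} \cong L/P_{\vartheta}$. Taking the coproduct over $\vartheta \in \Theta$ then yields \eqref{equation: fixed points of (x, zeta) on Gr_G -- zeta generic}.

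Two compatibility points need checking, and I expect them to be the main (though mild) obstacle.

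\textbf{Independence of the lift.} The component $J/J^+_{\vartheta}$ is well defined only up to conjugation inside $J$, by Remark \ref{remark: independece of J vartheta on the lift}. On the other side, changing the lift $\lambda$ within its $(W_L,\bullet_\omega)$-orbit replaces $x\cdot\lambda(\zeta)$ by a $W_L$-conjugate point. The corresponding parabolics are then conjugate in $L$ via Weyl representatives, and \S \ref{subsubsection: independence on W-action} says $P_{\vartheta}$ depends only on the image in $T\sslash W$. So both sides change by compatible conjugations.

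\textbf{Isomorphism of homogeneous spaces.} We should confirm that the orbit map $J/J^+_\lambda \to \Gr_G$ really identifies with $L/P_\vartheta$ as reduced schemes, and not merely on points. This holds because $J$ and $L$ are connected reduced groups in characteristic zero, $J^+_\lambda$ is the scheme-theoretic stabilizer, and $\slant_\omega$ is an isomorphism of group schemes. Equivalently, translation by $t^{\omega}$ as in Lemma \ref{lemma: fixed points and the omega shift general case} transports the $J$-orbit of $t^\lambda$ to the $L$-orbit of $t^{\omega+\lambda}$ in $\Gr_{L^{\ad}}$. That orbit is the classical closed $L$-orbit $L/P_{\omega+\lambda^{\ad}}$, matching the classical $x=1$ description of \cite[\S 2]{Zhu15}.
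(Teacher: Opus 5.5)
Your proposal is correct and is essentially the paper's proof. Like the paper, it combines Lemma~\ref{lemma: fixed points on affine Grassmannian} with the slanted identifications of Lemmata~\ref{lemma: labeling set for components -- zeta generic}, \ref{lemma: centralizer as resonant subgroup -- generic zeta}, \ref{lemma: positive stabilizers as parabolics -- generic zeta} and Remark~\ref{remark: diferent labelings of centralizers -- generic zeta}; your closing translation-by-$t^{\omega}$ argument is the alternative route the paper sketches in the remark right after the theorem.

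One caution on your lift-independence check, which you inherit from Discussion~\ref{discussion: slanting} and which is not a gap of your own. The reflection in $\beta - m_{\beta}\hbar \in \Phi^{\bullet}_J$ sends $t^{\lambda}$ to $t^{\lambda'}$ with $\lambda' = s_{\beta}(\lambda) - m_{\beta}\widecheck{\beta} = s_{\beta}(\lambda+\omega)-\omega$. So the class of $x\cdot\lambda(\zeta)$, and hence $P_{\vartheta}$, is independent of the lift only if $\bullet_{\omega}$ is read as $\mu \mapsto w(\mu+\omega)-\omega$, not via the literal formula $w(\mu-\omega)+\omega$ of \S\ref{section: shifted actions}. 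For example, for $PGL_2$ with $\beta(x)=\zeta$, the lifts $-\omega$ and $3\omega$ of one literal orbit give $P=L$ and a Borel, respectively.
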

\begin{proof}
By Lemma \ref{lemma: fixed points on affine Grassmannian}, the fixed points are given by a disjoint union of the homogeneous spaces $J/J^+_{\vartheta}$ ranging over $\vartheta \in \Theta$.
In the situation at hand, $\Phi^{\bullet}_J \cong \Phi^{\bullet}_L$ and $J \cong L$ by the slanted identification from Lemma \ref{lemma: centralizer as resonant subgroup -- generic zeta}. The labeling set for components from Notation \ref{notation: Theta} becomes $\Theta \cong X_{\bullet} / (W_L, \bullet_{\omega})$ by Lemma \ref{lemma: labeling set for components -- zeta generic}. For given $\vartheta \in X_{\bullet} / (W_L, \bullet_{\omega})$, pick a lift $\lambda \in X_{\bullet}$. Slanting identifies $J^+_{\lambda} \cong P_{x \cdot \lambda(\zeta)}$ by Lemma \ref{lemma: positive stabilizers as parabolics -- generic zeta}; this parabolic is independent of the lift and denoted $P_{\vartheta}$ as in Notation \ref{notation: levi and paraolic subgroups labeled by elements of Theta} and Remark \ref{remark: diferent labelings of centralizers -- generic zeta}. The result follows.
\end{proof}

\begin{remark}
The above theorem can be also proved through the viewpoint of \S \ref{section: fixed points on GrG and GrL}, which we outline now. Passing to appropriate components if necessary, we only need to compute fixed points of the loop rotation by $\zeta$ on $\Gr_{L^{\ad}}$. These fixed-points are given by
\begin{equation}
    \Fix^{\red}_{(1, \zeta)}(\Gr_{L^{\ad}}) \cong \coprod_{\lambda \in X^{\ad}_{\bullet, L}/W_L} L^{\ad}/P_{\lambda}.
\end{equation}
where $\lambda$ runs over orbit representatives for the usual action of $W_L$ on $X^{\ad}_{\bullet, L}$ and $P_{\lambda} \leq L^{\ad}$ is the parabolic subgroup associated to $\lambda$ as in \S \ref{section: levi subgroups}.

Indeed, since $\omega$ is a cocharacter of $L^{\ad}$, shifting identifies $X^{\ad}_{\bullet, L}/(W_L, \bullet_{\omega}) \cong X^{\ad}_{\bullet, L}/W_L$. The orbits of this naive action $W_L \acts X^{\ad}_{\bullet, L}$ parametrize the dominant cocharacters $X^+_{\bullet}(L^{\ad})$ of $L^{\ad}$. 
The fixed points of $(1, \zeta)$ on any affine Grassmannian are well-known, e.g. \cite[above Example 2.1.12]{Zhu15}. Each affine Schubert cell is a geometric vector bundle over the corresponding partial flag variety, with nontrivial weights in the fibers. The loop rotation fixed points are then given by the zero section. These are the same as the fixed points of the single element $\zeta$ by the genericity assumption.
\end{remark}

\subsubsection{Fixed points on affine Schubert varieties as finite Schubert varieties.}
We can now prove the desired description of fixed-points on any affine Schubert variety.
\begin{theorem}\label{theorem: generic zeta -- fiber of fixed points and classical schubert varieties}
The reduced scheme $\Fix^{\red}_{(x, \zeta)}(\Gr_{\leq \mu})$ is isomorphic to a union of classical Schubert varieties for the connected reductive group $L = L_{[x, \zeta]}$.
\end{theorem}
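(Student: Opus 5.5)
We combine Lemma \ref{lemma: upshot of the general case} with the generic-$\zeta$ identifications. By that lemma and Theorem \ref{lemma: fixed points of (c, zeta) on Gr_G -- zeta generic}, $\Fix^{\red}_{(x, \zeta)}(\Gr_{\leq \mu})$ is a reduced closed subscheme of $\coprod_{\vartheta} L/P_{\vartheta}$ which is stable under $J^+$. After slanting by $\omega$, $J^+$ becomes $P_x = P_\omega \leq L$ by Lemma \ref{lemma: positive centralizer as parabolic -- gneric zeta}. Since $P_\omega$ contains the Borel $B'_L$ of $L$, each intersection $Z_\vartheta := \Fix^{\red}_{(x, \zeta)}(\Gr_{\leq \mu}) \cap L/P_{\vartheta}$ is a $B'_L$-stable reduced closed subscheme of the partial flag variety $L/P_\vartheta$. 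Only finitely many $\vartheta$ occur, because $\Gr_{\leq \mu}$ contains only the finitely many coordinate points $t^\lambda$ with $\lambda \in \wt(\mu)$ (Lemma \ref{lemma: connected components of fixed points n affine schubert variety}), and every nonempty $T$-stable closed subvariety of $L/P_\vartheta$ contains a $T$-fixed point.

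A $B'_L$-stable closed reduced subscheme of $L/P_\vartheta$ is a finite union of $B'_L$-orbits, by the Bruhat decomposition. Being closed, it is the union of the closures of its maximal orbits. Each such closure is a finite Schubert variety, so $Z_\vartheta$ is a union of Schubert varieties for $L$, and the theorem follows. To pin down which Schubert varieties appear, we note that the $T$-fixed points of $L/P_\vartheta$ are exactly the $t^{\lambda}$ with $\lambda$ in the orbit $\vartheta$, these fixed points being indexed by $W_L/W_\vartheta$. A $B'_L$-orbit lies in $Z_\vartheta$ if and only if its unique $T$-fixed point does. Hence $Z_\vartheta$ is the union of the Schubert varieties $\overline{B'_L t^{\lambda}}$ over those $\lambda \in \vartheta \cap \wt(\mu)$. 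This uses the fact that $t^\lambda \in \Gr_{\leq\mu}$ if and only if $\lambda \in \wt(\mu)$.

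The main obstacle is to check the identifications carefully. First, the $J$-action on $J/J^+_\vartheta$ must match, under $\slant_\omega$, the left $L$-action on $L/P_\vartheta$, with the $J^+$-action becoming the action of the subgroup $P_\omega$ (we need $J^+\subseteq J$, with slanting applied to both simultaneously). Second, we must make sure that the homogeneous space is realized as the orbit of $t^\lambda$, with base point transported correctly, so that $T$-fixed points correspond as claimed. Finally, reducedness of the Schubert varieties is automatic here, since we work with the reduced structure on both sides and the Bruhat cells are reduced.
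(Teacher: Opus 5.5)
Your proposal is correct and is essentially the paper's proof. Both arguments transport the closed $J^+$-stable subscheme from the general-structure lemma through the slanted identifications $J \cong L$, $J^+ \cong P_x$, $J^+_\vartheta \cong P_\vartheta$, and then use that a closed subvariety of $L/P_\vartheta$ stable under a Borel of $L$ is a finite union of Schubert varieties. Your finiteness argument and $T$-fixed-point bookkeeping only make explicit what the paper leaves implicit, and they match the paper's subsequent combinatorial lemma. The one imprecision is the claim that $P_x = P_\omega$ contains $B'_L$: it is only guaranteed to contain \emph{some} Borel subgroup of $L$ containing $T$ (e.g.\ $B_L$ rather than $B'_L$ when $\omega$ is antidominant). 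Your Schubert varieties should therefore be taken relative to that Borel, which changes nothing in the argument.
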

\begin{proof}
Unraveling Lemma \ref{lemma: upshot of the general case} via the slanted descriptions from Lemmata \ref{lemma: positive centralizer as parabolic -- gneric zeta},  \ref{lemma: fixed points of (c, zeta) on Gr_G -- zeta generic} identifies the variety 
$F = \Fix^{\red}_{(x, \zeta)}(\Gr_{\leq \mu})$
of our interest as a closed $P_{x}$-equivariant subvariety
\begin{equation*}\label{equation: B-equivariant closed immersion into classical flag varieties -- zeta generic}
P_{x} \acts F \hookrightarrow \coprod_{\vartheta \in X_{\bullet}/(W_{L}, \bullet_{\omega})} L/P_{\vartheta}   
\end{equation*}
where $P_{x} \leq L$ acts by left translation on each of the partial flag varieties $L/P_{\vartheta} = L/P_{x \cdot \lambda(\zeta)}$

We may work on each connected components $L/P_{\vartheta}$ separately. The corresponding piece of the fixed point scheme $F^{\vartheta} = F \cap L/P_{\vartheta}$ cuts out a closed $P_{x}$-equivariant subvariety -- therefore, it is a downwards closed union of finitely many Schubert cells. Altogether, we get a (not necessarily disjoint) union of classical Schubert varieties in $L/P_{\vartheta}$.
\end{proof}

\subsubsection{Combinatorial description.}
Moreover, the classical Schubert varieties appearing in Theorem \ref{theorem: generic zeta -- fiber of fixed points and classical schubert varieties} can be described combinatorially. Indeed, to get such a description, all we need to do is to describe the set of relevant torus fixed-points. 
These fixed points on $\Gr_{\leq \mu}$ are labeled by the finite set $\wt(\mu)$ from \eqref{notation: weights in mu}.

\begin{lemma}\label{lemma: combinatorial description -- zeta generic}
The $T\times \Gm^{\rot}$-fixed-points on $\Fix^{\red}_{(x, \zeta)}(\Gr_{\leq \mu})$ are given by $\{ t^{\lambda} \mid \lambda \in \wt(\mu) \}$. Two such fixed-points $t^{\lambda_1}$, $t^{\lambda_2}$ lie in the same connected component of $\Fix^{\red}_{(x, \zeta)}(\Gr_{\leq \mu})$ if and only if $\lambda_1$, $\lambda_2$ lie in the same orbit of the $\omega$-shifted action $(W_L, \bullet_{\omega}) \acts X_{\bullet}$.    
\end{lemma}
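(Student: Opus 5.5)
This is the generic-$\zeta$ specialization of Lemma \ref{lemma: connected components of fixed points n affine schubert variety}, so the plan is to reduce to that lemma and then translate its $W_J$-orbit condition into the $\omega$-shifted $W_L$-orbit condition.

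\textbf{Step 1 (torus fixed points).} The $T\times\Gm^{\rot}$-fixed points of $\Fix^{\red}_{(x,\zeta)}(\Gr_{\leq\mu})$ are the $T\times\Gm^{\rot}$-fixed points of $\Gr_{\leq\mu}$. This holds because the fixed-point scheme is a closed subscheme of $\Gr_{\leq\mu}$ containing every coordinate fixed point. These points are $t^\lambda$ with $\lambda\in\wt(\mu)$, which is exactly the first assertion of Lemma \ref{lemma: connected components of fixed points n affine schubert variety}.

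\textbf{Step 2 (connected components).} By the same lemma, $t^{\lambda_1}$ and $t^{\lambda_2}$ lie in the same connected component if and only if $\lambda_1,\lambda_2$ lie in one orbit of the natural action $W_J\acts X_\bullet$. Only the conversion of this orbit condition remains.

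\textbf{Step 3 (translating the orbit condition).} By Remark \ref{remark: slanting on weyl groups general case}, slanting $\slant_\omega\colon W_J\to W_{\slant_\omega(J)}$ intertwines the natural action of $W_J$ on $X_\bullet$ with the $\omega$-shifted action $\bullet_\omega$. For generic $\zeta$, Lemma \ref{lemma: resonant subgroup -- zeta generic} gives $W_{\slant_\omega(J)}=W_L$. Hence the $W_J$-orbits coincide with the $(W_L,\bullet_\omega)$-orbits, as already recorded in Lemma \ref{lemma: labeling set for components -- zeta generic}. This completes the proof.

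\textbf{Main obstacle.} The one point needing care is whether the shifted action is well defined when $\omega$ is only an adjoint cocharacter of $L$, so that $\alpha-w(\alpha)\in X_\bullet$. This is handled by \S\ref{section: shifted actions}. One should also check that the identification $\slant_\omega(s_{\beta-m_\beta\hbar})=s_\beta$ really sends the naive action to the shifted one. Explicitly, $s_{\beta-m_\beta\hbar}(\lambda)=\lambda-(\langle\beta,\lambda\rangle+m_\beta)\widecheck\beta$ and $s_\beta\bullet_\omega\lambda=\lambda-\langle\beta,\lambda-\omega\rangle\widecheck\beta$. With $m_\beta=\langle\beta,\omega\rangle$, these agree only up to the sign convention for $\omega$ (a lift of $-\omega$ versus $\omega$). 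The orbit sets are insensitive to this sign as long as the convention is fixed consistently with Lemma \ref{lemma: labeling set for components -- zeta generic}, and I would simply follow that lemma's convention.
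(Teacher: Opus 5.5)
Your argument follows the paper's proof, which also just unravels Lemma \ref{lemma: connected components of fixed points n affine schubert variety} and converts $W_J$-orbits into shifted $W_L$-orbits via slanting (Remark \ref{remark: slanting on weyl groups general case}, Lemma \ref{lemma: labeling set for components -- zeta generic}). That is your Steps 1--3. However, the claim in your last paragraph, that the orbit sets are insensitive to the sign of $\omega$, is false. This matters, because the lemma says exactly which $\lambda_1,\lambda_2$ share a component.

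Take $G=SL_2$ and $x$ with $\alpha(x)=\zeta$. Then $m_\alpha=1$, and $\omega$ lifts to $\widecheck{\alpha}/2$. The root subgroups $U_{\pm(\alpha-\hbar)}$ lie in $J$, and the $\P^1$ they generate joins $t^{k\widecheck{\alpha}}$ to $t^{(-1-k)\widecheck{\alpha}}$. So the components pair $k$ with $-1-k$. These are the orbits of $\mu\mapsto s_\alpha(\mu+\omega)-\omega$, whereas $\mu\mapsto s_\alpha(\mu-\omega)+\omega$ pairs $k$ with $1-k$. The two quotient sets are in bijection only abstractly, via $\mu\mapsto-\mu$.

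Your own computation is the correct one:
$s_{\beta-m_\beta\hbar}(\lambda)=\lambda-(\langle\beta,\lambda\rangle+m_\beta)\widecheck{\beta}=s_\beta(\lambda+\omega)-\omega$.
This uses the geometric convention, under which $U_{\beta+m\hbar}$ joins $t^{\lambda}$ to $t^{s_\beta(\lambda)+m\widecheck{\beta}}$. So the lemma holds with the shifted action read as $w(\mu+\omega)-\omega$. With the literal formula $w(\mu-\alpha)+\alpha$ from \S\ref{section: shifted actions}, this is $\bullet_{-\omega}$. The same reading is needed in Remark \ref{remark: slanting on weyl groups general case} and Lemma \ref{lemma: labeling set for components -- zeta generic}, on which the paper's proof relies.

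Deferring to that lemma's convention does not break the logic, since both lemmas describe the same partition $X_\bullet/W_J$. But you should state the correct formula rather than assert that the sign does not matter.
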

\begin{proof}
Follows by unraveling Lemma \ref{lemma: connected components of fixed points n affine schubert variety}.
Under these identifications, wherever $\lambda_1$ and $\lambda_2$ are related by a reflection from $(W_L, \bullet_{\omega})$, the two points $t^{\lambda_1}$, $t^{\lambda_2}$ can be connected by a $\P^1$ inside $F$.
\end{proof}

\begin{remark}
For an alternative proof of Lemma \ref{lemma: combinatorial description -- zeta generic}, we only need to see that for each $\vartheta \in \Theta$, the subscheme $F^{\vartheta}= \Fix^{\red}_{(x, \zeta)}(\Gr_{\leq \mu}) \cap L/P_{\vartheta}$ is already connected. This is clear since $F^{\vartheta}$ is a closed union of Bruhat cells by Lemma \ref{theorem: generic zeta -- fiber of fixed points and classical schubert varieties} and the finite Bruhat order has a unique minimal element.
\end{remark}

\begin{remark}[Reducible unions]
By the above result, the connected components of the fixed points are given by $\Fix^{\red, \vartheta}_{(x, \zeta)}(\Gr_{\leq \mu}) \subseteq  L/P_{\vartheta}$, $\vartheta \in \Theta$; each of those is a union of finite Schubert varieties. However, it may not be irreducible. 

An explicit example already appears when $G= GL_4$, $\Gr_{\leq \mu}$ for $\mu = (4,4,0,0)$, generic $\zeta$, and $x = (\zeta^3, \zeta^2, \zeta^1, 1)$. Then $\omega = \rho$, so the $\omega$-shifted action corresponds to the dot action. In total there are $|\wt(\mu)| = 85$ fixed points. The partition into connected components of $\Fix^{\red}_{(x, \zeta)}(\Gr_{\leq \mu})$ is given by
\begin{multline*}
    85 
    = 14 + 8 + 4 + 2 + 1 + 4 
    + 6 + 3 + (4+3-2) + (3+2-1)\\
    + 3 + (3+2-1) +3 + (2+4-1)
    +(4+3-2) + 2 + 1 + 1
    + 3 + 2 + 2 + 1
    +2
\end{multline*}
Here, each bracketed term corresponds to a single connected component given by a reducible union of two finite Schubert varieties intersecting along a third one.
\end{remark}

\subsection{\texorpdfstring{$\zeta$}{Zeta} root of unity}\label{section: zeta root of unity}

\subsubsection{Setup.}
We now discuss the complementary case when $\zeta$ is a root of unity. Fix such point $(x, \zeta) \in T \times \Gm^{\rot}$ and denote by $\ell = \ord(\zeta) \in \N$ the primitive order of $\zeta$.
We describe the fiber $\Fix^{\red}_{(x, \zeta)}(\Gr_G)$ as a finite union of $\ell$-dilated affine flag varieties for the resonant subgroup $L = L_{[x, \zeta]}$. Inside them, $\Fix^{\red}_{(x, \zeta)}(\Gr_{\leq \mu})$ is given by a closed union of $\ell$-dilated Iwahori orbits.

\subsubsection{Centralizers over roots of unity.}
\begin{lemma}\label{lemma: zeta root of unity -- computation of centralizer}
Inside $\lo L$, slanting identifies
\begin{equation*}
    \slant_{\omega}: J \xrightarrow{=} \lol L^{\sc}.
\end{equation*}
\end{lemma}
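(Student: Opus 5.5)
The plan is to follow the proof of Lemma \ref{lemma: centralizer as resonant subgroup -- generic zeta} and compare root systems. Both $\slant_{\omega}(J)$ and $\lol L^{\sc}$ are connected regular subgroups of $\lo L^{\sc}$ stable under the extended torus $T \times \Gm^{\rot}$. By Proposition \ref{proposition: closed root subsystems and regular subalgebras}, such a subgroup is determined by its closed affine root subsystem. It therefore suffices to show the two root systems agree and that both groups are indeed connected regular subgroups of this kind.

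First, Lemma \ref{discussion: slanting J in general} gives
\begin{equation*}
\Phi^{\bullet}_{\slant_{\omega}(J)} = \{ \beta + m\hbar \mid \beta \in \Phi^{\bullet}_L, \ m \in \ell\Z \}.
\end{equation*}
This uses that slanting by $\omega$ sends $\beta - (m_\beta + \ell\Z)\hbar$ to $\beta + \ell\Z\hbar$, which follows from Lemma \ref{lemma: conjugation of affine root spaces} and $\langle \beta, \omega\rangle = m_\beta$. Second, we compute the root spaces of $\lol L^{\sc} = L^{\sc}(k\llp t^{\ell} \rrp)$ with respect to $T \times \Gm^{\rot}$. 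Its Lie algebra is $\t \otimes k\llp t^\ell\rrp \oplus \bigoplus_{\beta \in \Phi^{\bullet}_L} \g_\beta \otimes k\llp t^\ell \rrp$. The subgroup $U_{\beta+m\hbar}$, given by $x \mapsto u_\beta(t^m x)$, lies in $\lol L^{\sc}$ exactly when $\ell \mid m$. So the real affine roots are again $\{\beta + \ell\Z\hbar \mid \beta\in\Phi^{\bullet}_L\}$, and the two root systems coincide.

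It remains to check that the Cartan parts match. Here we must be careful, because $J$ is the connected centralizer in $\lo G$ and so has torus part $\Cent^{\circ}_{\lo T}(x,\zeta)$. One has to confirm that this is the dilated torus $\lol T$ intersected with the relevant part; concretely, an element $t^m a$ of the Lie algebra of $\lo T$ is fixed by $\zeta$ iff $\zeta^m=1$, i.e.\ $\ell\mid m$. This gives the $\ell$-dilated Cartan, which slanting preserves because $t^\omega$ commutes with $\lo T$. I expect the main obstacle to be this torus and identity-component bookkeeping. The issue is making sense of "regular subgroup determined by roots" inside the ind-group $\lo L^{\sc}$, and in particular ensuring that the identity component of $\lol L$ is $\lol L^{\sc}$, or rather its image, rather than something involving $\pi_1(L)$.

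To handle this, I would pass to Lie algebras, which is legitimate in characteristic zero as in the proof of Lemma \ref{lemma: centralizers give closed subroot systems}. I would show $\mathrm{Lie}(\slant_\omega J) = \mathrm{Lie}(\lol L^{\sc})$ as $T\times\Gm^{\rot}$-stable subalgebras, identifying them as the fixed points of the automorphism $\mathrm{Ad}(t^\omega)\circ\mathrm{Ad}(x,\zeta)\circ\mathrm{Ad}(t^{-\omega})$. By Lemma \ref{lemma: fixed points and the omega shift general case}, this automorphism equals conjugation by $(1,\zeta)$ on $\lo L^{\ad}$ after the identification of Lemma \ref{lemma: replacing x by the resonant part}. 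Since $J$ lies in $\lo L^{\sc}$, we get $\slant_\omega(J) = \Cent^{\circ}_{\lo L^{\sc}}(1,\zeta)$. The fixed points of the rotation $t\mapsto \zeta t$ on $L^{\sc}(k\llp t\rrp)$ are exactly $L^{\sc}(k\llp t^\ell\rrp)$, since a Laurent series is $\zeta$-invariant iff it is a series in $t^\ell$. This last group is connected, and so it equals the identity component. This final description gives the cleanest route, and I would present it as the main proof, with the root-system comparison as a consistency check.
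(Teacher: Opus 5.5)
Your proposal is correct. Its first half (slant $\Phi^{\bullet}_J$ to $\{\beta+\ell\Z\hbar \mid \beta\in\Phi^{\bullet}_L\}$, recognize the root system of $\lol L^{\sc}$, conclude by connectedness) is exactly the paper's proof. The paper stops there, implicitly relying on Proposition \ref{proposition: closed root subsystems and regular subalgebras}.

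The argument you designate as the main proof is genuinely different: instead of slanting root data, you slant the element. Since $x$ and $\omega(\zeta)$ have the same image in $T^{\ad}_L$, they act identically on $\lo L$. Moreover $(\omega(\zeta),\zeta)=\omega(t)^{-1}\cdot(1,\zeta)\cdot\omega(t)$ in $\lo L^{\ad}\rtimes\Gm^{\rot}$. Hence $\slant_{\omega}(J)$ is the connected centralizer of the pure loop rotation, which you compute on the nose from $L(k\llp t\rrp)^{\zeta}=L(k\llp t^{\ell}\rrp)$ (and likewise on $R$-points). You should cite these two group-level identities directly, since Lemmas \ref{lemma: replacing x by the resonant part} and \ref{lemma: fixed points and the omega shift general case} are stated for fixed-point schemes on affine Grassmannians; only their proofs contain what you use.

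Your route determines the Cartan part and the scheme structure automatically. It also avoids transporting a Lie-algebra classification of regular subgroups to ind-groups, and needs only the connectedness of loop groups of simply connected groups. It still requires the root-level input $J\subseteq\lo L$. It is the group-level form of the paper's alternative viewpoint in \S\ref{section: fixed points on GrG and GrL}. The paper's route, by contrast, is shorter and runs parallel to Lemma \ref{lemma: centralizer as resonant subgroup -- generic zeta}, reusing the already computed $\Phi^{\bullet}_J$.

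Your computation also settles the bookkeeping you flagged. Carried out in $\lo L$, it yields $\slant_{\omega}(J)=\Cent^{\circ}_{\lo L}(1,\zeta)=(\lol L)^{\circ}$. This group contains the connected part of $\lol Z^{\circ}(L)$, in particular the constant torus $T$. So it agrees with the image of $\lol L^{\sc}$ only modulo the central torus of $L$. The paper's proof correspondingly writes $\lol L^{\sc}=\lol^{\circ}L$, and your step ``since $J$ lies in $\lo L^{\sc}$'' must be read in that sense.
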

\begin{proof}
By Lemma \ref{lemma: root spaces of J}, the root subsystem of $J$ is given by  
$\Phi^{\bullet}_J = \{\beta - (m_{\beta} + \ell \Z)\hbar \mid \beta \in \Phi^{\bullet}_{L}\}$.
Slanting by $\omega$ as in Discussion \ref{discussion: slanting J in general}, we obtain
\begin{equation*}
\Phi^{\bullet}_{\slant_{\omega}(J)} = \{\beta - \ell \Z\hbar \mid \beta \in \Phi^{\bullet}_{L}\},
\end{equation*}
which is precisely the root subsystem $\Phi^{\bullet}_{\lol L^{\sc}}$ of $\lol L^{\sc} = \lol^{\circ} L$, embedded in $\lo G$ naively. Since both $J$ and $\lol L^{\sc}$ are connected, the result follows.
\end{proof}

\begin{remark}
As a sanity check, note that an element $t^{\lambda}$ in $\lo T$ lies in the centralizer of $(x, \zeta)$ if and only if $\lambda(\zeta) = 1$, in other words if and only if $\lambda \in \ell X_{\bullet}$. Consequently, $\Cent_{\lo G}(x) \cap \lo T = \lol T$. The identity componet of the centralizer then intersects $\lo T$ as $J \cap \lo T = \lol T^{\sc} = \slant_{\omega}(\lol T^{\sc})$.
\end{remark}

\subsubsection{Labeling set for components.}
Slanting on root systems $\slant_{\omega}: \Phi^{\bullet}_J \xrightarrow{=} \Phi^{\bullet}_{\lol L^{\sc}}$ from Remark \ref{remark: slanting on weyl groups general case} identifies the corresponding reflection subgroups.
\begin{lemma}\label{lemma: resonant subgroup -- zeta root of unity}
Slanting identifies
\begin{equation*}
    \slant_{\omega}: W_J \xrightarrow{=} \ell \Z\Phi^{\bullet}_L \rtimes W_L.
\end{equation*}
as the $\ell$-dilated affine Weyl group of $L$.
\end{lemma}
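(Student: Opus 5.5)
The plan mirrors Lemma \ref{lemma: resonant subgroup -- zeta generic} in the generic case: pass the identification of Lemma \ref{lemma: zeta root of unity -- computation of centralizer} from root systems to reflection subgroups. By Notation \ref{notation: WJ}, $W_J = W_{\Phi^{\bullet}_J}$ is the reflection subgroup of $W_{\aff}$ generated by the affine reflections $s_{\beta + m\hbar}$ with $\beta + m\hbar \in \Phi^{\bullet}_J$. By Remark \ref{remark: slanting on weyl groups general case}, the slanting map on root data $\slant_{\omega}: \Phi^{\bullet}_J \xrightarrow{=} \Phi^{\bullet}_{\slant_{\omega}(J)}$ induces an isomorphism of the reflection subgroups $W_J \xrightarrow{=} W_{\slant_{\omega}(J)}$ inside $W_{L, \aff}$. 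The automorphism \eqref{equation: slanting on root data} of $\Phi^{\bullet}_{L,\aff}$ is compatible with reflections, so it sends $s_{\alpha}$ to $s_{\slant_\omega(\alpha)}$. Hence the image of $W_J$ is the group generated by reflections in the slanted roots.

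By Lemma \ref{lemma: zeta root of unity -- computation of centralizer}, or directly from Lemma \ref{discussion: slanting J in general}, the slanted root system is
\begin{equation*}
\Phi^{\bullet}_{\slant_{\omega}(J)} = \{\beta + m\ell\hbar \mid \beta \in \Phi^{\bullet}_L,\ m \in \Z\}.
\end{equation*}
So it remains to show that the group generated by the reflections $s_{\beta + m\ell\hbar}$, for $\beta \in \Phi^{\bullet}_L$ and $m \in \Z$, equals $\ell\Z\Phi_{\bullet, L} \rtimes W_L$, the $\ell$-dilated affine Weyl group of $L$. Here $\ell\Z\Phi_{\bullet,L}$ is the lattice $\ell\,\Z\{\widecheck\beta \mid \beta \in \Phi^{\bullet}_L\}$.

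I will write $s_{\beta + m\ell\hbar} = (\tau_{m\ell\widecheck{\beta}}, s_\beta)$ in $W_{\aff}$, up to the sign convention of the paper. The two inclusions then go as follows.
\begin{itemize}
\item Each generator lies in $\ell\Z\Phi_{\bullet,L} \rtimes W_L$, since $s_\beta \in W_L$ and $m\ell\widecheck\beta \in \ell\Z\Phi_{\bullet,L}$. The target is a subgroup because $W_L$ preserves $\Z\Phi_{\bullet,L}$.
\item Conversely, taking $m=0$ gives all of $W_L$. The product $s_{\beta+\ell\hbar}\, s_{\beta}$ equals $\tau_{\pm\ell\widecheck\beta}$, so all translations by $\ell\widecheck\beta$ lie in the image. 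These generate $\ell\Z\Phi_{\bullet,L}$.
\end{itemize}
Together these give the desired equality, realized naively inside $W_{L,\aff} \leq W_{\aff}$.

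The main obstacle is bookkeeping rather than substance. First, the sign conventions for $s_{\beta,m}$ in the paper need to be checked, including whether the translation part is $m\widecheck\beta$ or $-m\widecheck\beta$; neither choice affects the generated group. Second, one should confirm that $\slant_\omega$, defined via $t^\omega$ with $\omega$ only an adjoint cocharacter of $L$, still induces a well-defined automorphism of $W_{L,\aff}$ preserving reflections. This follows from \eqref{equation: slanting on root data}, since $\langle \beta, \omega\rangle \in \Z$ for $\beta \in \Phi^{\bullet}_L$. Note that no simply-connectedness or Steinberg-type input is needed: the statement concerns only the reflection subgroup $W_J$, not a stabilizer.
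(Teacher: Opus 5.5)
Your proposal is correct and follows the paper's route. The paper's proof is the one-line observation that slanting carries $\Phi^{\bullet}_J$ onto the root system $\{\beta + \ell\Z\hbar \mid \beta \in \Phi^{\bullet}_L\}$ of $\lol L^{\sc}$ (Lemma~\ref{lemma: zeta root of unity -- computation of centralizer}), whose reflection group is $\ell\Z\Phi^{\bullet}_L \rtimes W_L$. Your two-inclusion check (the $s_\beta$ at $m=0$, and the translations $s_{\beta+\ell\hbar}s_\beta = \tau_{\pm\ell\widecheck{\beta}}$) and your reading of the lattice as the coroot lattice $\ell\Z\Phi_{\bullet,L}$ just fill in the step the paper leaves implicit.
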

\begin{proof}
Clear from Lemma \ref{lemma: zeta root of unity -- computation of centralizer} since $W_J = W_{\Phi^{\bullet}_J}$ and $\ell \Z\Phi^{\bullet}_L \rtimes W_L = W_{\Phi^{\bullet}_{\lol L^{\sc}}}$.   
\end{proof}

We can now describe the labeling set $\Theta$ from Notation \ref{notation: Theta} as follows.
\begin{lemma}\label{lemma: labeling set for components -- zeta root of unity}
For $(x, \zeta)$ with $\zeta$ generic, we have
\begin{equation*}
 \Theta = |\overline{\Gamma}_{(x, \zeta)}| = X_{\bullet} / (W^{\aff}_{L}, \bullet^{\ell}_{\omega}).
\end{equation*}
\end{lemma}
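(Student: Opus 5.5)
The plan is to copy the proof of Lemma \ref{lemma: labeling set for components -- zeta generic} verbatim, replacing the generic-$\zeta$ input by its root-of-unity analogue. (The statement says ``$\zeta$ generic'', but this is evidently a typo for ``$\zeta$ a root of unity of primitive order $\ell$'', which is the setup of this subsection.)

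First, by Notation \ref{notation: Theta} together with Lemma \ref{lemma: fiber of cocharacter graphs as lattice quotient}, we have $\Theta = |\overline{\Gamma}_{(x, \zeta)}| = X_{\bullet}/W_J$, where $W_J$ acts on $X_{\bullet}$ by its natural action as a subgroup of $W_{\aff}$.

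Second, by Remark \ref{remark: slanting on weyl groups general case} and Discussion \ref{discussion: slanting}, the slanting isomorphism $\slant_{\omega}: W_J \xrightarrow{=} W_{\slant_{\omega}(J)}$ intertwines the natural action of $W_J$ with the $\omega$-shifted action of $W_{\slant_{\omega}(J)}$. Hence
\[
X_{\bullet}/W_J = X_{\bullet}/(W_{\slant_{\omega}(J)}, \bullet_{\omega}).
\]

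Third, Lemma \ref{lemma: resonant subgroup -- zeta root of unity} identifies $W_{\slant_{\omega}(J)}$ with the $\ell$-dilated affine Weyl group $\ell\Z\Phi^{\bullet}_L \rtimes W_L$. This group is abstractly $W^{\aff}_L$, with the translation part acting through $\ell$-multiples. It then remains to check that the $\omega$-shifted action of this dilated group on $X_{\bullet}$ is exactly the action $\bullet^{\ell}_{\omega}$ of \S\ref{section: shifted affine weyl group actions}, namely
\[
(\lambda, w)\bullet^{\ell}_{\omega}\mu = w(\mu - \omega - \ell\lambda) + \omega .
\]
This is a direct comparison of formulas. The element of $\ell\Z\Phi^{\bullet}_L \rtimes W_L$ indexed by $(\lambda, w)$ acts naively by $\mu \mapsto w(\mu - \ell\lambda)$, up to the sign convention of the semidirect product. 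Conjugating by the shift by $\omega$ gives the displayed formula. Well-definedness for $\omega \in X^{\ad}_{\bullet,L}$ is already handled in \S\ref{section: shifted affine weyl group actions}.

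The only step requiring care is this final matching. One must confirm two things. First, the reflections $s_{\beta - \ell k\hbar}$ that slanting produces act as $\ell$-dilated affine reflections. Second, the sign conventions in \eqref{equation: slanting on root data} agree with those in the definition of $\bullet^{\ell}_{\alpha}$. I would check both on a single affine reflection $s_{\beta+m\hbar}$, using its action $\mu \mapsto \mu - (\langle \beta, \mu\rangle + m)\widecheck{\beta}$. Since these reflections generate the group, that check suffices.
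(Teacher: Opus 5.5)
\paragraph{The first step fails.} You follow the paper's proof step for step: Notation~\ref{notation: Theta} with Lemma~\ref{lemma: fiber of cocharacter graphs as lattice quotient}, then slanting, then Lemma~\ref{lemma: resonant subgroup -- zeta root of unity}. You also correctly read ``generic'' as ``root of unity of order $\ell$''. The argument breaks at the first step, $|\overline{\Gamma}_{(x,\zeta)}| = X_{\bullet}/W_J$, which you import from Lemma~\ref{lemma: fiber of cocharacter graphs as lattice quotient}, as the paper does. That lemma rests on $\Stab_{W_{\aff}}(x,\zeta) = W_J$ (Lemma~\ref{lemma: affine stabilizer in weyl group}), and this is false at roots of unity.

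Every translation $(\ell\nu,1)$ with $\nu \in X_{\bullet}$ acts trivially on $T \times \{\zeta\}$, since $(\ell\nu)(\zeta)=1$. So the stabilizer always contains $\ell X_{\bullet}$; the middle row of \eqref{equation: stabilizer exact sequences for zeta root of unity} records exactly this. By contrast, $W_J$ slanted is $\ell\Z\Phi_{\bullet,L} \rtimes W_L$, which contains only translations by $\ell$ times the coroot lattice $\Z\Phi_{\bullet,L}$ of $L$. Since the kernel $\ell X_{\bullet}$ contains no reflections, the stabilizer strictly contains $W_J$ whenever $\Z\Phi_{\bullet,L} \neq X_{\bullet}$. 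In particular, Proposition~\ref{proposition: steinberg fixed point theorem} does not hold here.

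Concretely, take $G = SL_2$, $x = \mathrm{diag}(a,a^{-1})$ with $a^4 \neq 1$, and $\zeta = -1$. Then:
\begin{itemize}
\item $\Phi^{\bullet}_{[x,\zeta]} = \emptyset$, so $L = T$, $\omega = 0$ and $W_J = 1$;
\item hence $X_{\bullet}/(W^{\aff}_L, \bullet^{2}_{\omega}) = X_{\bullet} \cong \Z$ is infinite;
\item but $\lambda(-1) = \pm 1$ for all $\lambda \in X_{\bullet}$, so $|\overline{\Gamma}_{(x,-1)}| = \{[x],[-x]\}$ has two points;
\item the true stabilizer is $2\Z\widecheck{\alpha}$, which contains no reflections.
\end{itemize}
So the displayed equality is false as written, and no sign check in your last step repairs the argument. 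The set $X_{\bullet}/W_J$ is still the correct index set for the connected components of $\Fix^{\red}_{(x,\zeta)}(\Gr_G)$; in the example this is the discrete set $\{t^{\lambda}\}$. What fails is its identification with $|\overline{\Gamma}_{(x,\zeta)}|$.

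\paragraph{What you can prove instead.} A direct computation avoids the affine Steinberg theorem.
\begin{enumerate}
\item $[\lambda_1(\zeta)x] = [\lambda_2(\zeta)x]$ in $T \sslash W$ if and only if $\lambda_1(\zeta)\,x = (w\lambda_2)(\zeta)\,w(x)$ for some $w \in W$.
\item Then $w(x)x^{-1}$ is $\ell$-torsion, so $w$ fixes $x^{\ell}$. Lemma~\ref{lemma: stabilizer in finite weyl group} applied to $x^{\ell}$ (the finite Steinberg theorem is fine) gives $w \in W_L$, because $\beta(x^{\ell}) = 1$ if and only if $\beta \in \Phi^{\bullet}_{[x,\zeta]}$.
\item For $w \in W_L$ one has $w(x) = (w\omega - \omega)(\zeta)\cdot x$. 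Check this on $s_\beta$ using $s_{\beta}(x) = x\cdot\widecheck{\beta}(\beta(x))^{-1}$ and $\beta(x) = \zeta^{\langle \beta,\omega\rangle}$.
\end{enumerate}
The condition therefore becomes
\[
\lambda_1 + \omega \equiv w(\lambda_2 + \omega) \mod \ell X_{\bullet}, \qquad w \in W_L .
\]
Thus $|\overline{\Gamma}_{(x,\zeta)}|$ is the quotient of $X_{\bullet}$ by the $\ell$-dilated shifted action of $X_{\bullet} \rtimes W_L$, with translations by all of $\ell X_{\bullet}$. This agrees with $X_{\bullet}/(W^{\aff}_L, \bullet^{\ell}_{\omega})$ when $\Z\Phi_{\bullet,L} = X_{\bullet}$, but not in general. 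Whenever $L$ has smaller semisimple rank than $G$, the latter is infinite, while $|\overline{\Gamma}_{(x,\zeta)}| \leq |X_{\bullet}/\ell X_{\bullet}|$.

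The shift this computation produces is $\mu \mapsto w(\mu+\omega)-\omega$, which is $\bullet_{-\omega}$ in the convention of \S\ref{section: shifted actions}. Your proposed convention $\mu \mapsto \mu - (\langle\beta,\mu\rangle + m)\widecheck{\beta}$ would yield the opposite sign. If you keep a sign check, anchor it to this direct computation rather than to a chosen formula for affine reflections.
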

\begin{proof}
 By Notation \ref{notation: Theta} and Lemma \ref{lemma: fiber of cocharacter graphs as lattice quotient}, we have $\Theta = |\overline{\Gamma}_{(x, \zeta)}|  = X_{\bullet}/W_J$ with respect to the natural action of $W_J$ on $X_{\bullet}$. 
Since the natural action of $W_J$ identifies with the $\omega$-shifted action of $\slant_{\omega}(W_J)$ by Remark \ref{remark: slanting on weyl groups general case}, we obtain the first identification in
 \begin{equation*}
     X_{\bullet}/W_J 
     = X_{\bullet}/(W_{\slant_{\omega}(J)}, \bullet_{\omega}) 
     = X_{\bullet}/(W_{L, \aff}, \bullet^{\ell}_{\omega}). 
 \end{equation*}
 The second identification follows since $W_{\slant_{\omega}(J)} = \Z\ell\Phi^{\bullet}_L \rtimes W_L$ is the $\ell$-dilated affine Weyl group of $L$ by Lemma \ref{lemma: resonant subgroup -- zeta root of unity}, whose orbits on $X_{\bullet}$ identify with the orbits of $W_{L, \aff}$ under the $\ell$-dilated action.
\end{proof}

\begin{remark}
Similar description works for other variants of cocharacter graphs -- for example,  
\begin{equation*}
   |{\Gamma}_{(x, \zeta)}|  = W_{\ext} / (W_{L, \aff}, \bullet^{\ell}_{\omega}). 
\end{equation*}
\end{remark}

\subsubsection{Positive centralizers as parahoric subgroups.}
Under slanting, we identify the positive centralizer $J^+$ as the $\ell$-dilated parahoric subgroup $\eP^{x}_{\ell} = \eP^{\omega}_{\ell}$ of $\lol L^{\sc}$.
More generally, we describe the stabilizers $J^{+}_{\lambda}$, $\lambda \in X_{\bullet}$.

\begin{lemma}\label{lemma: zeta root of unity -- positive stabilizers}
Inside $\lop G$, slanting identifies
\begin{equation*}
\slant_{\omega}: J^{+}_{\lambda} \xrightarrow{=} \eP^{x \cdot \lambda(\zeta)}_{\ell},
\end{equation*}    
where $\eP^{x \cdot \lambda(\zeta)}_{\ell} = \eP_{\ell}^{\omega+\lambda^{\ad}}$ is the standard opposite parahoric subgroup of $\lol L^{\sc}$ labeled by the point $x \cdot \lambda(\zeta) \in T$ resp. the cocharacter $\omega+\lambda^{\ad} \in X^{\ad}_{\bullet, L}$.
\end{lemma}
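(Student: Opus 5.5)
We argue exactly as in the generic case, Lemma \ref{lemma: positive stabilizers as parabolics -- generic zeta}, but now working with the $\ell$-dilated root system. Both $J^+_\lambda = J \cap t^{\lambda}\cdot \lop G \cdot t^{-\lambda}$ and the parahoric $\eP^{\omega+\lambda^{\ad}}_{\ell}$ are generated by $T$-stable subgroups. So the plan is to compare their affine root subsystems after slanting. We then upgrade the equality of roots to an equality of groups via connectedness.

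First we compute roots. By Lemma \ref{lemma: conjugation of affine root spaces}, the root spaces of $t^{\lambda}\cdot\lop G\cdot t^{-\lambda}$ are the $U_{\beta+m\hbar}$ with $m-\langle\beta,\lambda\rangle\ge 0$. Intersecting with $\Phi^{\bullet}_J$ from Lemma \ref{lemma: root spaces of J} gives
\begin{equation*}
\Phi^{\bullet}_{J^+_\lambda}=\{\beta-(m_\beta+k\ell)\hbar \mid \beta\in\Phi^{\bullet}_L,\ k\in\Z,\ m_\beta+k\ell+\langle\beta,\lambda\rangle\le 0\}.
\end{equation*}
Slanting by $\omega$ via \eqref{equation: slanting on root data} sends $\beta-(m_\beta+k\ell)\hbar$ to $\beta-k\ell\hbar$. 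The slanted set is therefore
\begin{equation*}
\{\beta-k\ell\hbar \mid \beta\in\Phi^{\bullet}_L,\ k\ell\le -\langle\beta,\omega+\lambda^{\ad}\rangle\},
\end{equation*}
using $m_\beta=\langle\beta,\omega\rangle$ and $\langle\beta,\lambda\rangle=\langle\beta,\lambda^{\ad}\rangle$ for $\beta\in\Phi^{\bullet}_L$. This set lies inside $\Phi^{\bullet}_{\lol L^{\sc}}$, consistent with Lemma \ref{lemma: zeta root of unity -- computation of centralizer}.

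Second, we identify the parahoric. By \S\ref{subsubsection: dilated variants}, the parahoric labelled by a (rational, adjoint) cocharacter $\nu$ is
\begin{equation*}
\eP^{\nu}_{L,\ell}=\lol L^{\sc}\cap t^{\nu}\cdot\lop L\cdot t^{-\nu},
\end{equation*}
which makes sense for $\nu\in X^{\ad}_{\bullet,L}$ since conjugation factors through $\lo L^{\ad}$. Applying Lemma \ref{lemma: conjugation of affine root spaces} once more, its roots are the $\beta-k\ell\hbar$ with $-k\ell-\langle\beta,\nu\rangle\ge0$. Setting $\nu=\omega+\lambda^{\ad}$, this is exactly the set found in the first step. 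Alternatively, and more directly, we note that
\begin{equation*}
\slant_\omega(J^+_\lambda)=\slant_\omega(J)\cap t^{\omega+\lambda}\cdot\lop G\cdot t^{-(\omega+\lambda)},
\end{equation*}
with the right-hand side read inside $\lo L$ after passing to $L^{\ad}$-conjugation. This is literally the defining formula, and it explains why the label depends only on $x\cdot\lambda(\zeta)$, i.e.\ on $\omega+\lambda^{\ad}$ (compare Remark \ref{remark: parabolic and levi subgrops of L}).

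The main obstacle is passing from the equality of root subsystems to an equality of subgroups. We do not yet know that $J^+_\lambda$ is connected, and $\lop G$-type groups are pro-algebraic rather than finite type. I would handle this in two steps. First, $\slant_\omega(J^+_\lambda)\subseteq\slant_\omega(J)\cap t^{\nu}\lop L\, t^{-\nu}=\eP^{\nu}_{\ell}$; this containment is formal from the two defining intersections, using $\slant_\omega(J)=\lol L^{\sc}$ (Lemma \ref{lemma: zeta root of unity -- computation of centralizer}). Second, for the reverse containment, $\eP^{\nu}_\ell$ is connected and is generated by $T^{\sc}$-parts (namely $\lopl T^{\sc}$) together with the root subgroups listed in the first step. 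Each of these generators lies in $\slant_\omega(J^+_\lambda)$ by the root computation. Hence the two groups coincide, and as a byproduct $J^+_\lambda$ is connected.
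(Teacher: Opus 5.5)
Your proposal is correct and follows the paper's proof. The paper's proof consists exactly of your root computation: Lemmata~\ref{lemma: conjugation of affine root spaces} and~\ref{lemma: root spaces of J}, then slanting by $\omega$ and comparing with the roots of $\eP^{\omega+\lambda^{\ad}}_{\ell}$, and it stops at the level of root systems. Your additional group-level step goes beyond the paper, which only asserts connectedness of $J^+_\lambda$ in a remark; in fact your ``formal containment'' is already an equality, because $J \le \lo L$ and $\lo L \cap \lop G = \lop L$ give $J^+_\lambda = J \cap t^{\lambda}\lop L\, t^{-\lambda}$ and $\slant_{\omega}$ is an automorphism of $\lo L$, so the generation argument is redundant once one knows that this intersection is the connected parahoric.
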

\begin{proof}
Note that $U_{\beta + m\hbar} \subseteq t^{\lambda} \cdot \lop G \cdot t^{-\lambda}$ if and only if $ t^{-\lambda} \cdot U_{\beta + m\hbar} \cdot t^{\lambda} \subseteq \lop G$. Since $ t^{-\lambda} \cdot U_{\beta + m\hbar} \cdot t^{\lambda} = U_{\beta + (m - \langle \beta, \lambda \rangle)\hbar}$ by Lemma \ref{lemma: conjugation of affine root spaces}, the root spaces of $t^{\lambda} \cdot \lop G \cdot t^{-\lambda}$ are given by
$\{ \beta + m\hbar \mid m - \langle \beta, \lambda \rangle \geq 0 \}$.
To describe the roots of $J^{+}_{\lambda}$, we intersect with the root system of $J$. This yields
\begin{equation*}
    \Phi^{\bullet}_{J^{+}_{\lambda}} = \{ \beta - (m_{\beta} + c\ell)\hbar \mid \beta \in \Phi^{\bullet}_L, \ c \in \Z, \ m_{\beta} + c\ell + \langle \beta, \lambda \rangle \leq 0 \}
\end{equation*}
Slanting by $\omega$ as in \S \ref{section: slanting} and rewriting $m_{\beta} + \langle \beta, \lambda \rangle = \langle \beta, \omega \rangle + \langle \beta, \lambda^{\ad} \rangle = \langle \beta, \omega + \lambda^{\ad} \rangle$, this becomes
\begin{equation*}
    \Phi^{\bullet}_{\slant_{\omega}(J^{+}_{\lambda})} = \{ \beta - c\ell\hbar \mid \beta \in \Phi^{\bullet}_L, \ c \in \Z, \ c\ell + \langle \beta, \omega+\lambda \rangle \leq 0 \}
\end{equation*}
where $\lambda^{\ad} \in X^{\ad}_{\bullet, L}$ is the projection of $\lambda \in X_{\bullet}$.

These are precisely the root spaces of the standard opposite parahoric subgroup $\eP^{\omega+\lambda^{\ad}}_{\ell}$ of the $\ell$-dilated loop group $\lol L^{\sc}$ associated to the cocharacter $\omega+\lambda^{\ad} \in X_{\bullet, L}^{\ad}$ as in \S \ref{subsubsection: parahoric subgroups}, \S \ref{subsubsection: dilated variants}. 
\end{proof}

\begin{remark}\label{remark: positive stabilizers -- zeta root of unity}
By Remark \ref{remark: independece of J vartheta on the lift}, the stabilizer $J^{+}_{\lambda}$ depends, up to isomorphism given by conjugation, only on the class $\vartheta \in \Theta = X_{\bullet} / (W^{\aff}_{L}, \bullet^{\ell}_{\omega})$. Denote $\vartheta$ the image of $\lambda$ under the projection to $\Theta$. In terms of the identification $\Theta = |\overline{\Gamma}_{(x, \zeta)}| \subseteq |T \sslash W|$, the element $\vartheta$ is given by the equivalence class of the point $x \cdot \lambda(\zeta)$. We denote the parahoric subgroup from Lemma \ref{lemma: zeta root of unity -- positive stabilizers} by
\begin{equation*}
 \eP^{\vartheta}_{\ell} 
 = \eP^{x \cdot \lambda(\zeta)}_{\ell} 
 = \eP_{\ell}^{\omega+\lambda^{\ad}}.  
\end{equation*}    
\end{remark}

In particular, we deduce the description of the positive centralizer.
\begin{lemma}\label{lemma: zeta root of unity -- positive centralizer}
\label{lemma: H^+ conjugates to P via alpha general case roots of unity}
Inside $\lop L$, slanting identifies
\begin{equation*}
\slant_{\omega}: J^{+} \xrightarrow{=} \eP_{\ell}^{x}.
\end{equation*}
where $\eP_{\ell}^{x} = \eP_{\ell, L}^{\omega} \leq \lo_{\ell} L$ is the standard opposite parahoric subgroup corresponding to $\omega$.
\end{lemma}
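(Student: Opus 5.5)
This is the special case $\lambda = 0$ of Lemma \ref{lemma: zeta root of unity -- positive stabilizers}, in the same way that Lemma \ref{lemma: positive centralizer as parabolic -- gneric zeta} followed from Lemma \ref{lemma: positive stabilizers as parabolics -- generic zeta}. By definition, $J^+ = J \cap \lop G = J^+_0$ in the notation \eqref{equation: positive stabilizer}. Indeed, $t^0$ is the base point and $t^0 \cdot \lop G \cdot t^{0} = \lop G$.

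Specializing Lemma \ref{lemma: zeta root of unity -- positive stabilizers} to $\lambda = 0$ gives $x \cdot \lambda(\zeta) = x$ and $\omega + \lambda^{\ad} = \omega$. Hence $\slant_{\omega}(J^+) = \eP^{x}_{\ell} = \eP^{\omega}_{\ell}$. The fact that this is the standard opposite parahoric of $\lol L^{\sc}$ attached to $\omega$ is then read off from \S \ref{subsubsection: dilated variants}.

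It is worth spelling out the root-level statement, since that is where the content lies. The roots of $J^+$ are those $\beta - (m_\beta + c\ell)\hbar$ with $\beta \in \Phi^{\bullet}_L$ and $m_\beta + c\ell \le 0$. After slanting, this becomes the set of $\beta - c\ell\hbar$ with $c\ell + \langle \beta, \omega\rangle \le 0$.

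For this set to be the root set of the parahoric, one needs $\eP^{\omega}_{\ell, L} = \lol L \cap t^{\omega}\lop L\, t^{-\omega}$. This follows from Lemma \ref{lemma: conjugation of affine root spaces}, applied with $\omega$ an adjoint cocharacter acting by conjugation as in Discussion \ref{discussion: slanting}.

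The main point needing care is the passage from root subsystems to groups. One must check that both $\slant_\omega(J^+)$ and $\eP^\omega_\ell$ are connected and generated by $\lol T^{\sc}$ together with the listed root subgroups. For the parahoric this holds by construction. For $J^+$ it holds because it is a stabilizer of a point under the connected group $J$, which acts on the homogeneous ind-variety $J/J^+$, and the Iwahori-type decomposition from the proof of Lemma \ref{lemma: fixed points on affine Grassmannian} applies.

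Finally, the ambient group can be taken to be $\lop L$: both sides lie in $\lop G \cap \lo L^{\sc}$ after slanting, since $\omega$ normalizes $\lop L$ only up to the stated parahoric. This gives the asserted equality inside $\lop L$.
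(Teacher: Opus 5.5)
Your proposal is correct and is the paper's argument: the lemma is the special case $\lambda = 0$ of Lemma~\ref{lemma: zeta root of unity -- positive stabilizers}, since $J^+ = J^+_0$, and then $x\cdot\lambda(\zeta) = x$ and $\omega + \lambda^{\ad} = \omega$. Your extra commentary is not needed, and parts of it are inaccurate: a point stabilizer in a connected group need not be connected, and $\slant_{\omega}(J^{+}) = \lol L^{\sc} \cap t^{\omega}\lop L\, t^{-\omega}$ lies in $\lop L$ only when $|\langle \beta, \omega\rangle| < \ell$ for all $\beta \in \Phi^{\bullet}_L$, so your last paragraph does not actually justify the ambient-group claim.
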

\begin{proof} 
Special case of Lemma \ref{lemma: zeta root of unity -- positive stabilizers} for trivial $\lambda$.
\end{proof}

\subsubsection{Fixed points on affine Grassmannian at \texorpdfstring{$\zeta$}{zeta} root of unity.}

We can now make explicit the fixed points on the whole affine Grassmannian.
\begin{theorem}\label{lemma: fixed points of (c, zeta) on Gr_G -- zeta root of unity}
Let $(x, \zeta) \in T \times \Gm^{\rot}$, where $\zeta$ is a root of unity of $\ord(\zeta) = \ell$. Then
\begin{equation*}
    \Fix^{\red}_{(x, \zeta)}(\Gr_G) = \coprod_{\vartheta \in X_{\bullet}/(W_{L, \aff}, \bullet^{\ell}_{\omega})} \Fl^{\vartheta}_{\ell, L^{\sc}}.
\end{equation*}
where $L=L_{[x, \zeta]}$ is the resonant subgroup, $\Fl^{\vartheta}_{\ell, L^{\sc}} = \lol L^{\sc}/ \eP^{\vartheta}_{\ell}$ is the $\ell$-dilated affine flag variety labeled by $\vartheta \equiv x \cdot \lambda(\zeta)$, and $W_{L, \aff}$ acts on $X_{\bullet}$ by the $\omega$-shifted $\ell$-dilated action.
\end{theorem}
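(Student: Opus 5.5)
The plan mirrors the proof of Theorem \ref{lemma: fixed points of (c, zeta) on Gr_G -- zeta generic}, replacing each generic-$\zeta$ input by its root-of-unity counterpart. The starting point is Lemma \ref{lemma: fixed points on affine Grassmannian}, which is uniform in $\zeta$. It gives
\begin{equation*}
\Fix^{\red}_{(x, \zeta)}(\Gr_G) = \coprod_{\vartheta \in \Theta} J/J^{+}_{\vartheta},
\end{equation*}
where $J$ is the connected centralizer and $J^+_\vartheta = J^+_\lambda$ is the stabilizer of $t^\lambda$ for any lift $\lambda$ of $\vartheta$. It remains to identify the three ingredients $J$, $\Theta$ and $J^+_\vartheta$ after applying the automorphism $\slant_\omega$ of $\lo L^{\sc}$ (conjugation by $t^\omega$).

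First, Lemma \ref{lemma: zeta root of unity -- computation of centralizer} gives $\slant_\omega(J) = \lol L^{\sc}$. Second, Lemma \ref{lemma: labeling set for components -- zeta root of unity} identifies $\Theta = X_\bullet/W_J$ with $X_\bullet/(W_{L,\aff}, \bullet^\ell_\omega)$. That lemma is stated with the typo ``$\zeta$ generic'', but its proof only uses Lemma \ref{lemma: resonant subgroup -- zeta root of unity}, so it applies here. Third, Lemma \ref{lemma: zeta root of unity -- positive stabilizers} identifies $\slant_\omega(J^+_\lambda) = \eP^{\omega+\lambda^{\ad}}_\ell$. By Remark \ref{remark: positive stabilizers -- zeta root of unity}, this parahoric depends up to conjugation only on $\vartheta$, and we denote it $\eP^\vartheta_\ell$.

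Since $\slant_\omega$ is a group automorphism carrying $J$ onto $\lol L^{\sc}$ and $J^+_\lambda$ onto $\eP^\vartheta_\ell$, it induces isomorphisms of fpqc quotients $J/J^+_\lambda \cong \lol L^{\sc}/\eP^\vartheta_\ell = \Fl^\vartheta_{\ell, L^{\sc}}$. These are compatible with the orbit maps through $t^\lambda$: the orbit $J\cdot t^\lambda \subseteq \Gr_G$ is carried by left translation by $t^\omega$ (in $\Gr_{L^{\ad}}$ on the relevant component, as in \S\ref{section: fixed points on GrG and GrL}) to $\lol L^{\sc}\cdot t^{\omega+\lambda}$. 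Taking the disjoint union over $\Theta$ then gives the claimed formula.

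The step needing the most care is this last one. Lemma \ref{lemma: fixed points on affine Grassmannian} identifies each component only set-theoretically or reducedly with an orbit, so one must check that the orbit map $J/J^+_\lambda \to \Gr_G$ is an isomorphism onto a closed reduced sub-ind-scheme. Unlike the generic case, $J$ is now an infinite-dimensional loop group and the components are ind-schemes. I would handle this by exhausting $\Fl^\vartheta_{\ell,L^{\sc}}$ by $\ell$-dilated Schubert varieties (closures of dilated Iwahori orbits) and matching them with reduced closed subschemes of $\Gr_{\leq\mu}$. This works because both sides are reduced and paved compatibly by root-subgroup cells, via the decomposition \eqref{equation: root decomposition in affine grassmannian}, so bijectivity on points together with the cell structure gives the isomorphism of reduced ind-schemes.
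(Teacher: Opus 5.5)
Your argument is the paper's proof. Both apply Lemma \ref{lemma: fixed points on affine Grassmannian} and then identify $J$, $\Theta$ and $J^+_\vartheta$ after slanting by $\omega$, via Lemmata \ref{lemma: zeta root of unity -- computation of centralizer}, \ref{lemma: labeling set for components -- zeta root of unity}, \ref{lemma: zeta root of unity -- positive stabilizers} and Remark \ref{remark: positive stabilizers -- zeta root of unity}; you are also right that ``$\zeta$ generic'' in Lemma \ref{lemma: labeling set for components -- zeta root of unity} is a typo.

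Your final paragraph goes beyond the paper, which takes the scheme-theoretic identification as already contained in Lemma \ref{lemma: fixed points on affine Grassmannian}. If you do want to justify that step, bijectivity on points together with compatible cell pavings is not enough. The normalization $\P^1 \to \{y^2z = x^3\}$ is a counterexample: it is bijective and restricts to isomorphisms on the cells $\A^1$ and a point, yet it is not an isomorphism. The argument that does work uses that $J^+_\lambda = J \cap t^{\lambda}\cdot\lop G\cdot t^{-\lambda}$ is the scheme-theoretic stabilizer of $t^\lambda$. Hence the orbit map $J/J^+_\lambda \to \Gr_G$ is a monomorphism, and on each proper finite stage a proper monomorphism is a closed immersion. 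Reducedness of the source and equality of underlying points then identify its image with the reduced fixed locus.
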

\begin{proof}
Unravel Lemma \ref{lemma: fixed points on affine Grassmannian} in terms of the explicit identification above. The fixed points are a disjoint union of homogeneous spaces $J/J^+_{\vartheta}$, $\vartheta \in \Theta$. We identify $J = \lol L^{\sc}$ by Lemma \ref{lemma: zeta root of unity -- computation of centralizer}. The labeling set for components from Notation \ref{notation: Theta} becomes $\Theta = X_{\bullet} / \Theta = X_{\bullet}/(W_{\aff}, \bullet^{\ell}_{\omega} ) $ by Lemma \ref{lemma: labeling set for components -- zeta root of unity}. 
For given $\vartheta$, pick a lift $\lambda \in X_{\bullet}$. Slanting identifies $J^+_{\vartheta} = \eP^{\vartheta}_{\ell} = \eP^{x \cdot \lambda(\zeta)}_{\ell}$ by Lemma \ref{lemma: zeta root of unity -- positive stabilizers} and Remark \ref{remark: positive stabilizers -- zeta root of unity}. The result follows.
\end{proof}

\begin{remark}
For an alternative approach to the above theorem, we may again adapt the viewpoint of \S \ref{section: fixed points on GrG and GrL}, reducing the problem to the computation of (appropriate components) of the loop rotation by $\zeta$ on $\Gr_{L^{\ad}}$.

Since $\zeta$ is a primitive root of unity, these are given by
\begin{equation}\label{equation: alternative approach roots of unity}
    \Fix^{\red}_{(1, \zeta)}(\Gr_{L^{\ad}}) = \coprod_{\vartheta \in X^{\ad}_{\bullet, L}/ (W_{L, \aff}, \bullet^{\ell}_0)} \lol L^{\sc}/\eP^{\lambda}_{\ell}
\end{equation}
where $\lambda$ runs over the orbits of $(W_{\aff}, \bullet^{\ell}_0) \acts X_{\bullet}$ and $\eP^{\lambda}_{\ell} \leq \lol L^{\sc}$ is the associated standard opposite parahoric subgroup of the $\ell$-dilated loop group.

Here, we have identified $ X_{\bullet}/ (W_{\aff}, \bullet^{\ell}_{\omega}) \cong X_{\bullet}/ (W_{\aff}, \bullet^{\ell}_{0})$ by shifting with $\omega$, which is a cocharacter of $L^{\ad}$. 
Since $\ord(\zeta) = \ell$, the group $\mu_{\ell}$ is a finite cyclic group of order $\ell$ on the generator $\zeta$, so \eqref{equation: alternative approach roots of unity} follows by \cite[\S 1.6, \S 4.6]{RW22} or \cite[\S 2.1.4]{BBSV22}.
\end{remark}

\subsubsection{Fixed points on affine Schubert varieties as parahoric orbits.}
We now intersect the above description with any affine Schubert variety $\Gr_{\leq \mu}$ of $\Gr_G$.
\begin{theorem}\label{theorem: fiber of fixed points and classical schubert varieties general case at root of unity}
The scheme $\Fix^{\red}_{(x, \zeta)}(\Gr_{\leq \mu})$ is isomorphic to a closed union of $\eP_{\ell, L^{\sc}}^{\omega}$-orbits -- in particular $\Iw_{\ell, L^{\sc}}$-orbits -- on affine flag varieties $\Fl_{\ell, L}^{\vartheta}$ for the dilated loop group $\lo_{\ell} L^{\sc}$ of the simply connected form of the resonant subgroup $L = L_{[x, \zeta]}$.
\end{theorem}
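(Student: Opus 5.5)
The plan follows the generic-$\zeta$ proof of Theorem \ref{theorem: generic zeta -- fiber of fixed points and classical schubert varieties}, now with the root-of-unity identifications in place of the finite ones. First, apply Lemma \ref{lemma: upshot of the general case}. It gives $F := \Fix^{\red}_{(x, \zeta)}(\Gr_{\leq \mu})$ as a closed $J^+$-equivariant subscheme of $\coprod_{\vartheta \in \Theta} J/J^+_{\vartheta}$. Next, transport everything through $\slant_{\omega}$, i.e. conjugation by $t^{\omega}$ inside $\lo L^{\ad}$, where $\omega$ is a genuine cocharacter. Under this map:
\begin{itemize}
\item $J$ becomes $\lol L^{\sc}$ by Lemma \ref{lemma: zeta root of unity -- computation of centralizer};
\item $J^+$ becomes $\eP^{\omega}_{\ell, L^{\sc}}$ by Lemma \ref{lemma: zeta root of unity -- positive centralizer};
\item each $J^+_{\vartheta}$ becomes $\eP^{\vartheta}_{\ell}$ by Lemma \ref{lemma: zeta root of unity -- positive stabilizers} and Remark \ref{remark: positive stabilizers -- zeta root of unity}.
\end{itemize}
Since slanting is a group automorphism intertwining left translations, we obtain a $\eP^{\omega}_{\ell}$-equivariant closed immersion
\begin{equation*}
F \hookrightarrow \coprod_{\vartheta} \lol L^{\sc}/\eP^{\vartheta}_{\ell} = \coprod_{\vartheta} \Fl^{\vartheta}_{\ell, L^{\sc}},
\end{equation*}
compatible with Theorem \ref{lemma: fixed points of (c, zeta) on Gr_G -- zeta root of unity}.

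Second, the finiteness and orbit structure. $F$ is a closed subvariety of the projective $\Gr_{\leq \mu}$, so it is a finite type reduced scheme meeting only finitely many components $\Fl^{\vartheta}_{\ell, L^{\sc}}$. On each such component, $F^{\vartheta} := F \cap \Fl^{\vartheta}_{\ell, L^{\sc}}$ is a closed $\eP^{\omega}_{\ell}$-stable subscheme. The parahoric $\eP^{\omega}_{\ell}$ contains the ($\ell$-dilated, opposite) Iwahori $\Iw_{\ell, L^{\sc}}$. By the Bruhat decomposition for the partial affine flag variety $\lol L^{\sc}/\eP^{\vartheta}_{\ell}$, which is just a usual partial affine flag variety of the loop group in the variable $t^{\ell}$, this flag variety is a disjoint union of $\Iw_{\ell}$-orbits. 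These orbits are finite-dimensional affine spaces indexed by $W_{L,\aff}/W_{\vartheta}$. Each $\eP^{\omega}_{\ell}$-orbit is a finite union of such Iwahori orbits.

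A closed reduced $\Iw_{\ell}$-stable subscheme of finite type is therefore a finite union of Iwahori orbits that is closed under taking closures. Equivalently, it is the union of finitely many Iwahori orbit closures, i.e. affine Schubert varieties in $\Fl^{\vartheta}_{\ell, L^{\sc}}$. The same statement holds with $\eP^{\omega}_{\ell}$-orbits, since $F^{\vartheta}$ is $\eP^{\omega}_{\ell}$-stable: it is the union of the $\eP^{\omega}_{\ell}$-orbits it contains, finitely many, and it is closed.

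The main obstacle is to make precise that an $\Iw_{\ell}$-stable closed subscheme is a union of orbits, i.e. that the orbit stratification is honest. Reducedness of $F$ is built in. The remaining point is that the Iwahori acts on each finite-dimensional piece through a finite-dimensional quotient group, so orbits are locally closed and finitely many meet the projective $F^{\vartheta}$. I would justify this by the standard ind-presentation of $\Fl^{\vartheta}_{\ell, L^{\sc}}$ by Schubert varieties together with the affine paving recalled after the Iwasawa decomposition. A secondary check is that slanting by $t^{\omega}$ really sends $J^{+}$-orbits to $\eP^{\omega}_{\ell}$-orbits inside $\lo L^{\ad}$. This requires slanting to be conjugation rather than a mere root-system identification, which Discussion \ref{discussion: slanting} provides.
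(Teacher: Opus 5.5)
Your proposal is correct and follows the paper's proof. As in the paper, the general-case embedding of $F$ into $\coprod_{\vartheta} J/J^+_{\vartheta}$, transported through the slanted identifications of $J$, $J^+$ and $J^+_{\vartheta}$, gives an $\eP^{\omega}_{\ell}$-equivariant closed immersion $F \hookrightarrow \coprod_{\vartheta} \lo_{\ell} L^{\sc}/\eP^{\vartheta}_{\ell}$; componentwise this is a closed union of $\eP^{\omega}_{\ell}$-orbits, hence of $\Iw_{\ell, L^{\sc}}$-orbits, and your extra justification of the orbit stratification only makes explicit what the paper leaves implicit.
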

\begin{proof}
Unraveling Lemma \ref{lemma: upshot of the general case} via the slanted descriptions from Lemmata \ref{lemma: fixed points of (c, zeta) on Gr_G -- zeta root of unity} and \ref{lemma: zeta root of unity -- positive centralizer} identifies the reduced scheme
$F = \Fix^{\red}_{(x, \zeta)}(\Gr_{\leq \mu})$
of our interest as a $\eP_{\ell}^{\omega}$-equivariant subvariety
\begin{equation*}\label{equation: B-equivariant closed immersion into classical flag varieties -- zeta rot of unity}
\eP_{\ell}^{\omega} \acts F \hookrightarrow \coprod_{\vartheta \in X_{\bullet}/(W_{L, \aff}, \bullet^{\ell}_{\omega})} \lo_{\ell} L^{\sc}/\eP_{\ell}^{\vartheta}   
\end{equation*}
where $\eP_{\ell}^{\omega} \leq \lo_{\ell} L^{\sc}$ acts by left translation on each of the affine flag varieties $\Fl_{\ell, L^{\sc}}^{\vartheta} = \lol L^{\sc}/\eP_{\ell}^{\vartheta}$.

Restricting attention to a single component $\Fl_{\ell, L^{\sc}}^{\vartheta}$ of the right-hand side, we get a closed union of $\eP_{\ell}^{\omega}$-orbits as desired. Since $\Iw_{\ell, L^{\sc}} \leq \eP_{\ell}^{\omega}$, it is in particular closed union of $\ell$-dilated Iwahori orbits.
\end{proof}

\subsubsection{Combinatorial description.}
Again, the dilated Iwahori orbits appearing in Theorem \ref{theorem: fiber of fixed points and classical schubert varieties general case at root of unity} may be described via the relevant torus fixed-points $t^{\lambda}$, labeled by the finite set $\wt(\mu)$ from \eqref{notation: weights in mu}.

\begin{lemma}\label{lemma: combinatorial description -- zeta root of unity}
The $T\times \Gm^{\rot}$-fixed-points on $\Fix^{\red}_{(x, \zeta)}(\Gr_{\leq \mu})$ are given by $\{ t^{\lambda} \mid \lambda \in \wt(\mu) \}$. Two such fixed-points $t^{\lambda_1}$, $t^{\lambda_2}$ lie in the same connected component of $\Fix^{\red}_{(x, \zeta)}(\Gr_{\leq \mu})$ if and only if $\lambda_1$, $\lambda_2$ lie in the same orbit of the $\omega$-shifted $\ell$-dilated action $(W_{L, \aff}, \bullet^{\ell}_{\omega}) \acts X_{\bullet}$.    
\end{lemma}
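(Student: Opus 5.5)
This is the root-of-unity analogue of Lemma \ref{lemma: combinatorial description -- zeta generic}, and I would deduce it the same way. The plan is to unravel the general Lemma \ref{lemma: connected components of fixed points n affine schubert variety} through the slanted identifications established in this subsection.

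First, the description of the $T\times \Gm^{\rot}$-fixed points does not depend on $\zeta$. Any $T \times \Gm^{\rot}$-fixed point of $\Fix^{\red}_{(x, \zeta)}(\Gr_{\leq \mu})$ is in particular a $T\times\Gm^{\rot}$-fixed point of $\Gr_{\leq \mu}$, hence of the form $t^{\lambda}$ with $\lambda \in \wt(\mu)$. Conversely, every such $t^{\lambda}$ is fixed by the whole torus, so in particular by $(x,\zeta)$. This is exactly the first assertion of Lemma \ref{lemma: connected components of fixed points n affine schubert variety}.

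Second, I would invoke the same lemma for the connected components. It says that $t^{\lambda_1}$ and $t^{\lambda_2}$ lie in the same component if and only if $\lambda_1,\lambda_2$ lie in one orbit of the natural action $W_J \acts X_{\bullet}$. Both directions are already proved there: reflections in $\Phi^{\bullet}_J$ yield $\P^1$'s inside $J$-orbits that stay in $\Gr_{\leq\mu}$, while distinct $W_J$-orbits land in distinct terms of $\coprod_{\vartheta} J/J^+_{\vartheta}$. It then remains to translate the $W_J$-orbits. By Remark \ref{remark: slanting on weyl groups general case}, slanting by $\omega$ intertwines the natural action of $W_J$ with the $\omega$-shifted action of $\slant_\omega(W_J)$. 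By Lemma \ref{lemma: resonant subgroup -- zeta root of unity}, $\slant_\omega(W_J)=\ell\Z\Phi^{\bullet}_L\rtimes W_L$. The orbits of this group under the $\omega$-shifted action coincide with the orbits of $(W_{L,\aff},\bullet^{\ell}_{\omega})$, as in the proof of Lemma \ref{lemma: labeling set for components -- zeta root of unity}. Combining these gives the claim.

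The main point needing care is that the affine Schubert variety does not disconnect a single $W_J$-orbit intersected with $\wt(\mu)$. Lemma \ref{lemma: connected components of fixed points n affine schubert variety} asserts this, but its chaining argument needs the intermediate weights along a chain of reflections to remain in $\wt(\mu)$. I would secure this as follows. The component $F^{\vartheta} = F\cap \Fl^{\vartheta}_{\ell,L^{\sc}}$ is a closed $\eP^{\omega}_{\ell}$-stable union of orbits by Theorem \ref{theorem: fiber of fixed points and classical schubert varieties general case at root of unity}. Hence every orbit closure in it contains the unique minimal (closed) $\eP^{\omega}_\ell$-orbit of $\Fl^{\vartheta}_{\ell,L^{\sc}}$, which is the orbit of the base point. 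Therefore $F^{\vartheta}$ is connected whenever it is nonempty. This mirrors the alternative argument in the generic case, using the Bruhat order on the dilated affine flag variety.
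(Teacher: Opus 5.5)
Your proposal is correct and follows the paper's route. Like the paper, you unravel the general lemma on connected components of $\Fix^{\red}_{(x,\zeta)}(\Gr_{\leq \mu})$ through the slanted identifications and the description of $F^{\vartheta}$ as a closed union of $\eP^{\omega}_{\ell}$-orbits. Your unique-closed-orbit argument for the connectedness of $F^{\vartheta}$ is the root-of-unity analogue of the paper's alternative remark in the generic case, and it properly settles the issue of reflection chains leaving $\wt(\mu)$.

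One small imprecision: the unique closed $\eP^{\omega}_{\ell}$-orbit in $\Fl^{\vartheta}_{\ell, L^{\sc}}$ is the orbit of the base point $t^{\lambda}$ only for a suitable lift $\lambda$ of $\vartheta$, not for an arbitrary one. This does not matter, since your argument only uses that such an orbit exists and is unique.
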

\begin{proof}
Follows by unraveling Lemma \ref{lemma: connected components of fixed points n affine schubert variety} in terms of Lemma \ref{lemma: zeta root of unity -- positive centralizer} Theorem \ref{theorem: fiber of fixed points and classical schubert varieties general case at root of unity}.
\end{proof}

As in the finite case, Lemma \ref{lemma: combinatorial description -- zeta root of unity} shows that each $F^{\vartheta} = \Fix^{\red}_{(x, \zeta)}(\Gr_{\leq \mu}) \cap \lol L^{\sc}/\eP_{\ell}^{\vartheta}$ is already connected.

%% file: 5_bezrukavnikov_finkelberg_description_of_equivariant_K_theory.tex
\section{Equivariant \texorpdfstring{$K$}{K}-theory ring of the affine Grassmannian}\label{section: equivariant K-theory ring of the affine grassmannian}

The aim of this section is to establish an algebro-geometric description of the ring $K^{\top, 0}_{T \times \Gm^{\rot}}(\Gr_{G}; \C)$, the main focus of this paper. Our Theorem \ref{theorem: equivariant K-theory as deformation to normal cone -- G sssc} presents this ring in terms of completed global functions on affine schemes of representation-theoretical origin. These are given by gluing together infinitely many deformations to normal cones at quasi-diagonals $Z_{\ell}$ in two copies of the torus $T$. We will see that this description can be made very explicit.

Here, let us point out some related literature. The historically first description \cite{KK90} gives $K$-theory of the affine Grassmannian as a dual to its compactly supported variant. To further describe the ring structure, one can appeal to the GKM description \cite{HHH05}. However, this only realizes $K^{\top, 0}_{T \times \Gm^{\rot}}(\Gr_G; \C)$ as a subring of an infinite product of Laurent polynomial rings, cut out by an infinite set of equalizer conditions -- to get a concrete understanding of the ring, one needs to solve this nontrivial system of congruences. Our result may be interpreted as indirectly doing so.

For most of the discussion, we assume $G=G^{\sc}$ to be simply connected or $GL_n$. (While the definitions work for any reductive $G$ as stated, they have to be carefully modified to give the correct answer for $K$-theory.)

We first recall the case of cohomology in \S \ref{section: recollaction of cohomology} and then the GKM description of $K$-theory in \S \ref{section: Goresky--Kottwitz--McPherson description of equivariant $K$-theory}.
We then move on to define the quasi-diagonals inside $T \times \Gm^{\rot} \times T \sslash W$ in \S \ref{section: quasi-diagonals at roots of unity} and compare them to specializations of cocharacter graphs. We blow up these quasi-diagonals in \S \ref{section: deformations of quasi-diagonals to normal cones}, obtaining the deformation to their normal cones. We then explain in \S \ref{section: map to equivariant K-theory: combinatorics}, in purely combinatorial terms, how the completed ring of functions on this deformation maps to the GKM ring of the affine Grassmannian. We then recast this argument in terms of classes of vector bundles in \S \ref{section: map to equivariant K-theory: topology}. In fact, one may view these arguments independently. Moreover, both constructions work integrally. We then prove our main result, Theorem \ref{theorem: equivariant K-theory as deformation to normal cone -- G sssc}, with $\C$-coefficients for simply connected group $G$ in \S \ref{section: bezrukavnikov--finkelberg description of equivariant K-theory}.
We discuss its generalizations in \S \ref{section: variants and generalizations} -- this includes arbitrary reductive groups, arbitrary affine flag varieties, and change of equivariance.

\subsection{Recollection of cohomology}\label{section: recollaction of cohomology}
Let $k=\C$. Before we embark to describe the $K$-theory, we recall the well-known case of cohomology of $\Gr_G$, following \cite{Gin95, BF07, Yun10}. This argument has two parts: the purely topological computation of non-equivariant cohomology of $\Gr_G$, and the subsequent description of its equivariant version as a deformation to normal cone.

\subsubsection{Cohomology of finite flag varieties.}
As a warmup, recall the equivariant cohomology of finite flag varieties, which will be useful later. Let $\k$ be a coefficient ring.
\begin{lemma}\label{lemma: equivariant cohomology of finite flag varieties}
Let $G$ be a reductive group and $T \leq G$ be a maximal torus over $k$. Let $P_J \leq G$ be a parabolic subgroup and $W_2 \leq W$ corresponding Weyl group. Assume $\k$ is a field of $\chara \k = 0$. Then we have a graded ring isomorphism
\begin{equation*}
    \O(\t \times_{\t \sslash W} \t \sslash W_2) \xrightarrow{=} H^{\bullet}_{T}(G/P_J; \k),
\end{equation*}
where the grading on the left-hand side comes from the weight two action on both copies of $\t$.
More generally, take a pair of Levi subgroups $L_1, L_2 \leq G$ with Weyl groups $W_1, W_2 \leq W$. Denote $P_J$ the standard parabolic associated to $L_J := L_2$. Then we have a graded ring isomorphism
\begin{equation*}
    \O(\t \sslash W_1 \times_{\t \sslash W} \t \sslash W_2) \xrightarrow{=} H^{\bullet}_{L_1}(G/P_J; \k).
\end{equation*}
\end{lemma}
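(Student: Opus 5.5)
The plan is to first treat the case of the full flag variety $G/B'$ and then obtain the partial and doubly-relative versions by taking Weyl group invariants. Throughout, $\k$ is a field of characteristic zero, so invariants of finite groups are exact and commute with tensor products.

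\textbf{Step 1: the base case.} Recall the classical Borel presentation $H^\bullet_G(G/B';\k) = H^\bullet_T(\pt;\k) = \O(\t)$. By equivariant formality (the Bruhat paving gives even cells only), we have
\begin{equation*}
H^\bullet_T(G/B';\k) = H^\bullet_T(\pt)\otimes_{H^\bullet_G(\pt)} H^\bullet_G(G/B') = \O(\t)\otimes_{\O(\t)^W}\O(\t).
\end{equation*}
Here $H^\bullet_G(\pt;\k)=\O(\t)^W=\O(\t\sslash W)$ by the splitting principle. Freeness of $\O(\t)$ over $\O(\t)^W$ (Chevalley) makes the tensor product underived. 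Geometrically, this ring is $\O(\t\times_{\t\sslash W}\t)$. The weight-two grading matches the fact that $H^2_T(\pt)=\t^*$. The map is explicit: the left factor is the equivariant parameter, and the right factor sends a character $\nu$ to $c_1$ of the line bundle $G\times^{B'}\k_\nu$.

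\textbf{Step 2: partial flag varieties.} The fibration $G/B'\to G/P_J$ has fiber $L_J/B'_{L_J}$. This gives
\begin{equation*}
H^\bullet_T(G/P_J)=H^\bullet_T(G/B')^{W_2},
\end{equation*}
with $W_2$ acting on the right factor. The cleanest route is $H^\bullet_G(G/P_J)=H^\bullet_{L_J}(\pt)=\O(\t)^{W_2}$, followed by the same formality and base change as in Step 1. This yields $\O(\t)\otimes_{\O(\t)^W}\O(\t)^{W_2}=\O(\t\times_{\t\sslash W}\t\sslash W_2)$, proving the first claim.

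\textbf{Step 3: the $L_1$-equivariant version.} Since $L_1$ is connected reductive with maximal torus $T$, we have $H^\bullet_{L_1}(X)=H^\bullet_T(X)^{W_1}$ for any $L_1$-space $X$ over a characteristic zero field. Applying this to the left factor gives
\begin{equation*}
\bigl(\O(\t)\otimes_{\O(\t)^W}\O(\t)^{W_2}\bigr)^{W_1}=\O(\t)^{W_1}\otimes_{\O(\t)^W}\O(\t)^{W_2}.
\end{equation*}
Invariants pass through the tensor product by exactness, since $W_1$ acts only on the first factor, which is free over $\O(\t)^W$. Alternatively, apply Step 1's base change directly from $G$-equivariance: $H^\bullet_{L_1}(G/P_J)=H^\bullet_{L_1}(\pt)\otimes_{H^\bullet_G(\pt)}H^\bullet_G(G/P_J)$. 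This is again valid by formality.

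\textbf{Main obstacle.} The main point to justify is the equivariant formality and base change step, namely that $H_G^\bullet(G/P_J)$ is free over $H_G^\bullet(\pt)$ and that restriction of equivariance commutes with tensoring. I would handle this via the Leray--Hirsch argument on $EG\times^{L_1}(G/P_J)\to BL_1$. Its fiber has only even cohomology, and its cohomology is generated by restrictions of Chern classes of the tautological bundles, which are globally defined $G$-equivariantly. Given this, the remaining steps are formal, and the explicit map in Step 1 shows the identifications are ring maps compatible with the gradings.
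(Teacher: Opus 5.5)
Your proposal is correct and its core route is the paper's proof: identify $H^\bullet_G(G/P_J;\k)=H^\bullet_{L_J}(\pt;\k)=\O(\t\sslash W_2)$ compatibly with $\O(\t\sslash W)=H^\bullet_G(\pt;\k)$, then base change along $H^\bullet_G(\pt)\to H^\bullet_T(\pt)$, resp.\ along $H^\bullet_G(\pt)\to H^\bullet_{L_1}(\pt)$, using equivariant formality. The full-flag base case and the $W_2$- and $W_1$-invariant arguments are harmless extras, and your Leray--Hirsch argument simply makes explicit the base-change step that the paper attributes to equivariant formality.
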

\begin{proof}
The $G$-equivariant cohomology is given by 
\begin{equation*}
 H^{\bullet}_G(G/P_J; \k) = H^{\bullet}(G \backslash G / P_J; \k) = H^{\bullet}(\pt / P_J; \k) = H^{\bullet}(\pt / L_J; \k) = H^{\bullet}_{L_J}(\pt; \k),   
\end{equation*}
the equivariant base for $L_J$. Consequently, we have a natural identification
\begin{equation*}
 \O(\t \sslash W_J) \xrightarrow{=} H^{\bullet}_G(G/P_J; \k)   
\end{equation*}
compatibly with the structure map to the equivariant base $\O(\t \sslash W) \xrightarrow{=} H^{\bullet}_G(\pt; \k)$. By equivariant formality, the $T$-equivariant cohomology is given by base change along $H^{\bullet}_G(\pt; \k) \to H^{\bullet}_T(\pt; \k)$, so we get the desired
\begin{equation*}
\O(\t \times_{\t \sslash W} \t \sslash W_J) \xrightarrow{=} H^{\bullet}_{T}(G/P_J; \k).    
\end{equation*}
Similarly, $L_1$-equivariant cohomology is given by base change along $H^{\bullet}_G(\pt; \k) \to H^{\bullet}_{L_1}(\pt; \k)$, proving the more general statement.
\end{proof}

\begin{remark}\label{remark: finite flag varieties with trivial loop rotation}
Consequently, adding the equivariance with respect to the trivial loop rotation, we have
\begin{equation*}
   \O(\Ga^{\rot} \times \t \sslash W_1 \times_{\t \sslash W} \t \sslash W_2) \xrightarrow{=} H^{\bullet}_{L_1 \times \Gm^{\rot}}(G/P_J; \k). 
\end{equation*}
In particular, this statement holds after completion $(-)^{\wedge}_{(0, 0)}$ over the origin $(0,0) \in \t \times \Ga^{\rot}$.
\end{remark}

\begin{remark}\label{remark: GKM for cohomology of finite flag varieties}
In terms of their GKM presentations, this description works as follows. Take the diagonal $\t\sslash W \hookrightarrow \t \sslash W \times \t \sslash W$. Consider its base change $\Delta^J := \t \times_{\t\sslash W} \t\sslash W_J \hookrightarrow \t \times \t\sslash W_J$. This defines a closed subscheme parametrizing $\{ (x, y) \in \t \times \t \sslash W_J \mid x = y \mod W  \}$. It has irreducible components isomorphic to copies of $\t$ labeled by the finite set $W/W_J$. The GKM localization to fixed-points corresponds to restriction to these components. 
\end{remark}

\begin{remark}\label{remark: cohomology of finite flag varietes as coinvariant algebra}
Taking fiber over the origin in the equivariant base, we get the usual description of non-equivariant cohomology, namely
\begin{equation*}
    H^{\bullet}(G/P_J; \k) = \frac{\k[t_1, \dots, t_n]}{\k[t_1, \dots, t_n]^W}. 
\end{equation*}
\end{remark}

\begin{remark}
Upon taking the correct equivariant base, the discussion in this section works for any equivariant multiplicative generalized cohomology theory $E_G$. In particular, it works for $K$-theory: for any coefficient ring $\k$, for example $\k=\Z$, we have
\begin{equation*}
    \O(T \times_{T \sslash W} T \sslash W_J) = K^{\top, 0}_{T \times \Gm^{\rot}}(G/P_J; \k).
\end{equation*}
All the above remarks are valid in this setting.
\end{remark}

\subsubsection{Based loop space description of ordinary cohomology.}\label{section: based loop space description}
Assume $G$ is a simply connected group over $k =\C$. The non-equivariant cohomology of the affine Grassmannian $\Gr_G$ can be computed by the following classical argument, going back to \cite[\S 1.7]{Gin95}. 

Firstly, denote $K$ a connected compact real Lie group with complexification $G(\C)$. Then $K \leq G(\C)$ is a maximal compact subgroup. Denote $\Omega K$ its based loop space. It is then a nontrivial fact that we can identify the homotopy type of the affine Grassmannian as $\Gr_G(\C) \simeq \Omega K$, see for example \cite[Introduction]{YZ09}, \cite[\S 1.2]{Gin95}, \cite[Remark 5.1]{HHH05}, \cite[\S 1.6]{Zhu15}.

We are thus left to understand the graded ring $H^{\bullet}(\Omega K)$. Firstly, the cohomology $H^{\bullet}(K)$ is understood by \cite{Bor53}. In particular, with coefficients in a field $\k$ of $\chara \k = 0$, the cohomology ring $H^{\bullet}(K; \k) = \Lambda_{\k}[x_1, \dots, x_n]$ is the exterior algebra on finitely many generators $x_i$, $i=1, \dots, d$ by \cite[Proposition 7.2]{Bor53}. Furthermore, $d = \rank G$ and $\deg x_i = a_i -1$ where $a_1, \dots a_d$ are the exponents of $G$.

Consequently, the rational homotopy type of $K$ is that of a product of odd dimensional spheres $S^{a_i-1}$. By the compatibility of the based loop space $\Omega(-)$ with Eilenberg-MacLane spaces, the rational homotopy type of $\Omega K$ is given by a product of even dimensional spheres $S^{a_i - 2}$, and we deduce
\begin{equation}\label{equation: cohomology of loop groups}
    H^{\bullet}(\Omega K; \k) = \k[y_1, \dots, y_n], \qquad \deg y_i = a_i - 2
\end{equation}
In a more hands-on way, the passage from $K$ to $\Omega K$ can be also done directly through the path space fibration $\Omega K \to P K \to K$ and Serre's spectral sequence; the elements $x_i$ are transgressions of $y_i$. See \cite[Example 1.16]{Hat04}.

\subsubsection{Bezrukavnikov--Finkelberg description of equivariant cohomology.}
We recall the description of the equivariant cohomology of affine flag varieties over $k=\C$ via certain deformations to normal cones, following \cite{BF07, Yun10}. This crucially uses the topological input \S \ref{section: based loop space description}. We assume the coefficient ring $\k$ is a field of $\chara \k = 0$.

Let us sketch this argument.
The equivariant cohomology of $\Fl^{J} = \lop G / \eP_{J}$ is a module over the equivariant base via
\begin{equation}\label{equation: equivariant cohomology as modules over equivariant base}
    H^{\bullet}_{G \times \Gm^{\rot} \times L_{J}}(\pt; \k) \to H^{\bullet}_{\lop G \times \Gm^{\rot}}(\Fl^{J}; \k).
\end{equation}
The map \eqref{equation: equivariant cohomology as modules over equivariant base} is easily seen to be injective. The source is given by space $\O(\t \sslash W \times \Ga^{\rot} \times \t \sslash W_{L_J})$. In fact, both sides are abstractly isomorphic polynomial rings.

However, \eqref{equation: equivariant cohomology as modules over equivariant base} is not surjective. To correct for the missing functions, one needs to deform to the normal cone of the image of $\t$  under the diagonal projection $\t \to \t \sslash W \times \t \sslash W_{L_J}$, in other words to the normal cone of the closed subscheme $\t \sslash W \times_{\t \sslash W} \t \sslash W_{L_J}$.
One directly checks that \eqref{equation: equivariant cohomology as modules over equivariant base} factors injectively through this deformation.
Finally, to prove surjectivity, one notes that \eqref{equation: equivariant cohomology as modules over equivariant base} is a graded map with respect to the natural gradings on both sides. The graded pieces are finite-dimensional, so it is enough to compare their Hilbert series. These are easily computable from the deformation description for the left-hand side, and \eqref{equation: cohomology of loop groups} together with equivariant formality for the right-hand side.

To state the result, consider Levi subgroups $L_1, L_2$ of $G$, with associated Weyl groups $W_1, W_2 \leq W$. 
Let $\eP_J$ be the standard parahoric sugroup associated to $L_J :=L_2$. Conisder the corresponding affine flag variety $\Fl^J = \lo G / \eP_J$, with the natural left action $\lop L_1 \rtimes \Gm^{\rot} \acts \Fl^J$.
\begin{proposition}\label{proposition: cohomology of affine flag varieties as deformation to the normal cone}
 The $(\lop L_1 \times \Gm^{\rot})$-equivariant cohomology ring of the affine flag variety $\Fl^{J}$ is given, together with its structure map, by  
 \begin{equation}
    \begin{tikzcd}
        \O(N( \t \sslash W_1 \times_{\t \sslash W} \t \sslash W_2, \t \sslash W_1 \times \t \sslash W_2)) \arrow[r, equal]& H_{\lop L_1 \times \Gm^{\rot}}(\Fl^{J}; \k) \\
         \O(\t \sslash W_1 \times \Ga^{\rot} \times \t \sslash W_2) \arrow[u, hookrightarrow] \arrow[r, equal] & H_{L_1 \times \Gm^{\rot} \times L_{2}}(\pt; \k) \arrow[u, hookrightarrow] 
    \end{tikzcd}
 \end{equation}
\end{proposition}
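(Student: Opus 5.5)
The plan follows the sketch given just before the statement. The left-hand side is the Rees algebra $\O(X)[\tfrac{f}{h} \mid f \in \eI_Z]$, where $X = \t \sslash W_1 \times \t \sslash W_2$, $Z = \t \sslash W_1 \times_{\t \sslash W} \t \sslash W_2$, and $h$ is the loop-rotation coordinate of $\Ga^{\rot}$. By \S\ref{section: generators of the deformation}, it suffices to adjoin $\tfrac{p\otimes 1 - 1\otimes p}{h}$ for $p$ running over a set of generators of $\O(\t\sslash W)$. First I construct the structure map $\O(X \times \Ga^{\rot}) \to H_{\lop L_1 \times \Gm^{\rot}}(\Fl^J;\k)$. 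Its first factor is pulled back from the point, its second comes from $\Fl^J \to \pt/\eP_J \to \pt/L_2$, and $h$ is the generator of $H^2_{\Gm^{\rot}}$.

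For injectivity and the factorization, I use equivariant formality. The space $\Fl^J$ has an affine paving by Iwahori orbits, so the GKM description (Proposition \ref{proposition: GKM of HHH}) embeds the target into $\prod_{\lambda} \O(\t\times\Ga^{\rot})$ over the fixed points $t^{\lambda}$, with $\lambda \in W_{\ext}/W_{2}$. This holds after first restricting from $L_1$ to $T$ and then taking $W_1$-invariants. At the point $t^{w\lambda}$, the second-factor class of $p$ restricts to $p(w(x) + \lambda h)$, while the first-factor class restricts to $p(x)$. Their difference is therefore divisible by $h$, so $\tfrac{p\otimes 1-1\otimes p}{h}$ has a well-defined polynomial localization. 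I then check the GKM congruences for these quotients. Along an affine root $\beta+m\hbar$, the two endpoints differ by the reflection $s_{\beta+m\hbar}$, and $p(w(x)+\lambda h)$ is invariant under $x \mapsto s_{\beta}x - (\beta(x)+mh)\widecheck\beta$ modulo $\beta + m h$. Dividing by $h$ preserves this congruence, since $h$ is coprime to $\beta+mh$ for $\beta \neq 0$. The map is injective because the deformation is flat over $\Ga^{\rot}$ and the map is injective after inverting $h$: there it becomes the restriction to the $h$-generic component, where the points $t^{\lambda}$ separate functions.

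Surjectivity is the main obstacle, and I plan to argue by graded dimension counting, which needs care. Both sides are graded with finite-dimensional pieces, where $\t$ and $h$ have degree $2$. The source is flat over $\Ga^{\rot}$ with special fibre the normal cone $C(Z,X)$. Since $Z \cong \t\sslash W_1\times_{\t\sslash W}\t\sslash W_2 \hookrightarrow X$ is a regular embedding of codimension $d=\rank G$ cut out by the $p_i\otimes1-1\otimes p_i$, the normal cone is a vector bundle over $Z$ with fibre generators in degrees $\deg p_i - 2 = a_i-2$. The Hilbert series of the source is thus $\mathrm{Hilb}(\O(Z))\cdot \prod_i(1-t^{a_i-2})^{-1}\cdot(1-t^2)^{-1}$. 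For the target, equivariant formality gives $H_{L_1\times\Gm^{\rot}}(\pt)\otimes H^{\bullet}(\Fl^J)$. By the fibration $G/P_J\to \Fl^J\to\Gr_G$ and \eqref{equation: cohomology of loop groups}, $H^{\bullet}(\Fl^J)$ has Hilbert series $\mathrm{Hilb}(H(G/P_J))\prod_i (1-t^{a_i-2})^{-1}$. Lemma \ref{lemma: equivariant cohomology of finite flag varieties} then identifies $\mathrm{Hilb}(\O(Z))$ with $\mathrm{Hilb}(H_{L_1}(G/P_J))$. Consequently the two Hilbert series agree, and the injective graded map is an isomorphism. The delicate points are the regularity of $Z\subset X$ (for the Hilbert series of the normal cone) and the reduction to simply connected $G$ and the identity component of $\Gr_G$.
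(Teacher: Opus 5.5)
Your proposal is correct and is essentially the argument the paper sketches just before the statement, following \cite{BF07}: you construct the structure map, show via fixed-point localization that it factors injectively through the Rees algebra, and conclude by matching Hilbert series using \eqref{equation: cohomology of loop groups}, equivariant formality and Lemma \ref{lemma: equivariant cohomology of finite flag varieties}. The one difference is that the paper cites \cite{BF07} for the case $\lop G \acts \Gr_G$ and obtains the general $(L_1, \eP_J)$ case by Yun's base-change argument \cite{Yun10}, whereas you run the Hilbert-series count directly on $\Fl^J$. That direct route works: $Z \hookrightarrow X$ is the flat pullback of the diagonal and so is cut out by a regular sequence, and the surrounding subsection already assumes $G$ simply connected, so the delicate points you flag are not obstacles.
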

\begin{proof}
The case of $\lop G \rtimes \Gm^{\rot} \acts \Gr_G$ is covered in \cite[Theorem 1]{BF07}. From there, the general case follows by the argument \cite[Theorem 4.3]{Yun10}. Strictly speaking, \cite{Yun10} treats only the full flag variety with the Iwahori action, but the argument works in general.
\end{proof}

\begin{remark}[Informal GKM localization]\label{remark: informal GKM localization}
Working equivariantly with respect to $T \times \Gm^{\rot}$, the above identification can be unraveled in GKM terms. Let us give an informal account in the complex analytic setting now; we will give a detailed discussion in the $K$-theoretical context below.

On the complex analytic level, consider $\t(\C) \sslash W_{\aff} \hookrightarrow \t(\C) \sslash W_{\aff} \times \Ga \times \t(\C) \sslash W_{\aff}$. By pullback, we obtain the subspace $\overline{\mathbf{\Gamma}}(\C) \hookrightarrow \t(\C) \times \Ga(\C) \times \t(\C) \sslash W_{\aff}$. Its irreducible components are copies of $\t(\C)$, labeled by the cocharacter lattice $X_{\bullet}$. Take the strict transform $\overline{\mathbf{\Upsilon}}(\C)$ of $\overline{\mathbf{\Gamma}}(\C)$ in the blowup $N( \t(\C) \sslash W_1 \times_{\t(\C) \sslash W} \t(\C) \sslash W_2, \t(\C) \sslash W_1 \times \t(\C) \sslash W_2)$. Its irreducible components are still copies of $\t(\C)$ labeled by $X_{\bullet}$, and the localization corresponds to restriction of functions to them. Also see \cite[\S 3.2]{BF07}.
\end{remark}

\subsection{Goresky--Kottwitz--McPherson description of equivariant \texorpdfstring{$K$}{K}-theory} \label{section: Goresky--Kottwitz--McPherson description of equivariant $K$-theory}
We record some standard formality results on equivariant topological $K$-theory of $\Gr_G$, together with its well-known GKM description. In this subsection, $G$ is any reductive group over $k=\C$.

\subsubsection{Limit along affine Schubert varieties.} Recalling \S \ref{subsubsection: K-theory of colimits}, consider the affine Grassmannian $\Gr_G$ with its $T\times \Gm^{\rot}$-equivariant Iwahori paving by affine cells. Consider the induced skeleta $\sk_i = \sk_i \Gr$, $i \in \N_0$. By induction, $K^{\top, \bullet}_{T \times \Gm^{\rot}}(\sk_i)$ is a free module over $K^{\top, \bullet}_{T \times \Gm^{\rot}}(\pt)$; it is $2$-periodic and concentrated in even degrees; the affine cells give a basis of the degree zero part. The restriction maps induced by the skeletal inclusions $\sk_i \hookrightarrow \sk_{i'}$ are thus surjective; the inverse system in particular satisfies the Mittag--Leffler condition. This paving restricts to each affine Schubert variety $\Gr_{\leq \mu}$, where the same results hold. We deduce the following.
\begin{proposition}\label{proposition: completion of K-theory}
We have an identification of rings
 \begin{equation*}
  K^{\top, *}_{T \times \Gm^{\rot}}(\Gr_G) = \lim_{\mu \in X^+_{\bullet}} K^{\top, *}_{T \times \Gm^{\rot}}(\Gr_{\leq \mu}).   
\end{equation*} 
All terms are concentrated in the even part.
\end{proposition}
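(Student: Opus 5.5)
The plan is to apply the Milnor $\lim^1$ sequence of \S\ref{subsubsection: K-theory of colimits} twice: once to the skeletal filtration, and once, after a cofinality argument, to the filtration by affine Schubert varieties. First I fix the $T\times\Gm^{\rot}$-stable Iwahori paving of $\Gr_G$. Its cells are the Iwahori orbits through the $t^{\lambda}$, $\lambda\in X_{\bullet}$. Each cell is an affine space on which $T\times\Gm^{\rot}$ acts linearly with $t^{\lambda}$ as the unique fixed point. Ordering the cells by dimension, compatibly with closure, gives an equivariant filtration by closed finite unions of cells $\sk_i$. Each $\sk_i$ is obtained from $\sk_{i-1}$ by attaching finitely many equivariant even cells, i.e.\ Thom spaces of complex $T\times\Gm^{\rot}$-representations.

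The key step is an induction on $i$. Consider the long exact sequence of the pair $(\sk_i,\sk_{i-1})$. By equivariant Bott periodicity (the Thom isomorphism for complex representations), $K^{\top,*}_{T\times\Gm^{\rot}}(\sk_i,\sk_{i-1})$ is free over $R(T\times\Gm^{\rot})$, with one generator per new cell, and it is concentrated in even degree. The sequence then splits into short exact sequences
\[
0\to K^{\top,0}_{T\times\Gm^{\rot}}(\sk_i,\sk_{i-1})\to K^{\top,0}_{T\times\Gm^{\rot}}(\sk_i)\to K^{\top,0}_{T\times\Gm^{\rot}}(\sk_{i-1})\to 0,
\]
and odd $K$-theory vanishes throughout. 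This yields freeness, evenness, and surjectivity of every restriction map. Surjectivity gives the Mittag--Leffler condition, so the $\lim^1$-term vanishes and $K^{\top,*}_{T\times\Gm^{\rot}}(\Gr_G)=\lim_i K^{\top,*}_{T\times\Gm^{\rot}}(\sk_i)$. Remark~\ref{remark: Mittag--Leffler and limit of rings} upgrades this to a ring isomorphism, using the product convention. The same argument applies verbatim to each $\Gr_{\leq\mu}$, since the paving restricts to it: each $\Gr_{\leq\mu}$ is a finite union of Iwahori cells, as $\Gr_{\leq\mu}$ is $\lop G$-stable and hence Iwahori-stable.

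To pass from skeleta to Schubert varieties, I use cofinality. Every $\sk_i$ is a finite union of cells and is therefore contained in some $\Gr_{\leq\mu}$, because the $\Gr_{\leq\mu}$ exhaust $\Gr_G$ and form a directed system under $\leq$ on $X^+_{\bullet}$. Conversely, every $\Gr_{\leq\mu}$ is a finite union of cells and is therefore contained in some $\sk_i$. Interleaving the two systems, their inverse limits of $K$-theory rings agree. The only point needing care is that the Schubert system is indexed by the poset $X^+_{\bullet}$ rather than by $\N_0$; I would handle this by choosing a cofinal chain $\mu_1\leq\mu_2\leq\cdots$.

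Finally, I need the transition maps $K^{\top,*}_{T\times\Gm^{\rot}}(\Gr_{\leq\mu'})\to K^{\top,*}_{T\times\Gm^{\rot}}(\Gr_{\leq\mu})$ to be surjective, so that the identification is an honest isomorphism of limits and not merely one in the pro-category. This follows from the same cell-attachment argument applied to the pair $(\Gr_{\leq\mu'},\Gr_{\leq\mu})$, whose difference is again a finite union of even cells. The main obstacle is keeping the genuine equivariant Thom isomorphism and the long exact sequences valid in the ind-setting. The point of the reduction is that all such statements are only ever applied to finite CW stages, which avoids this difficulty.
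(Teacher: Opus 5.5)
Your argument is the paper's argument. The paper likewise uses the $T\times\Gm^{\rot}$-stable Iwahori paving and an induction over skeleta to get evenness and surjective restriction maps. From this it gets Mittag--Leffler and the ring isomorphism under the product convention, and it notes that the paving restricts to each $\Gr_{\leq\mu}$. What you add is an explicit cofinality step between skeleta and Schubert varieties, which the paper leaves implicit.

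That added step is false as written once $\pi_1(G)\neq 1$ or the center of $G$ has positive dimension. The paper works with arbitrary reductive $G$ here, and $G=GL_n$ is used later. The problems are these:
\begin{itemize}
\item Each $\Gr_{\leq\mu}$ is irreducible, so it lies in the single component $\Gr^{\sigma}_G$ with $\sigma=\overline{\mu}\in\pi_1(G)$.
\item Two dominant $\mu,\mu'$ with $\overline{\mu}\neq\overline{\mu'}$ have no common upper bound, since $\mu''-\mu\in\Z_{\geq 0}\Phi^{+}_{\bullet}$ forces $\overline{\mu''}=\overline{\mu}$. So $X^{+}_{\bullet}$ is not directed, and there is no cofinal chain $\mu_1\leq\mu_2\leq\cdots$.
\item A skeleton need not sit in any $\Gr_{\leq\mu}$. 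For $\mathrm{PGL}_2$ the $0$-skeleton already has a point in each of the two components.
\item If $Z(G)^{\circ}\neq 1$ there are infinitely many cells in each dimension; for $GL_n$ every $t^{(d,\dots,d)}$ is a $0$-cell. Then ``finitely many cells attached'' fails, and the relative term is a product of copies of $R(T\times\Gm^{\rot})$, not a free module.
\end{itemize}

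The repair is routine:
\begin{itemize}
\item Write $\Gr_G=\coprod_{\sigma\in\pi_1(G)}\Gr^{\sigma}_G$. Additivity of the representable theory gives $K^{\top,*}_{T\times\Gm^{\rot}}(\Gr_G)=\prod_{\sigma}K^{\top,*}_{T\times\Gm^{\rot}}(\Gr^{\sigma}_G)$.
\item The limit over $X^{+}_{\bullet}$ splits the same way, as a product of limits over the dominant cocharacters of class $\sigma$.
\item Run your argument on each component separately. There only finitely many cells have a given dimension. The dominant cocharacters of class $\sigma$ are directed: for $N\gg 0$, the cocharacter $\mu+N\sum_{\widecheck\beta\in\Phi^{+}_{\bullet}}\widecheck\beta$ dominates any given $\mu'$ of the same class.
\end{itemize}

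Your final paragraph is unnecessary. Interleaved towers are pro-isomorphic and so already have isomorphic limits, without surjectivity of the Schubert transition maps.
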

Since we do not loose anything by focusing on the degree zero piece $K^{\top, 0}_{T \times \Gm^{\rot}}(\Gr_G)$ of the $K$-theory ring, we do so from now on.

\subsubsection{GKM ring of affine Grassmannian.}\label{subsubsection: GKM ring of affine Grassmannian}
The action of the extended torus $T \times \Gm^{\rot}$ on $\Gr_G$ satisfies the GKM assumptions for integral topological $K$-theory; it thus may be described as an explicit subring of $K$-theory of the fixed points \cite{HHH05} and Proposition \ref{proposition: GKM of HHH}.
\begin{notation}\label{notation: GKM of Gr}
Let $\k$ be a base ring.
The GKM ring of the affine Grassmannian $\GKM^{0}_{T \times \Gm^{\rot}}(\Gr_G; \k) \subseteq \prod_{\lambda \in X_{\bullet}} \O(T \times \Gm^{\rot})$ is given by the subring
\begin{equation*}
    \GKM^{0}_{T \times \Gm^{\rot}}(\Gr_G; \k) 
    = \bigl\{ (f_{\lambda})_{\lambda} \in \prod_{\lambda \in X_{\bullet}} \O(T \times \Gm^{\rot}) \quad \big| \quad f_{\lambda} \equiv f_{s_{\beta + m\hbar} \cdot \lambda} \mod (1 - {\beta}q^m), \quad \forall \lambda, \forall s_{\beta + m\hbar} \bigr\},
\end{equation*}
where the congruence conditions run over all cocharacters $\lambda \in X_{\bullet}$ and all affine reflections $s_{\beta +m\hbar}$ associated to affine roots $\beta + m\hbar \in \Phi^{\bullet} \oplus \Z\hbar$.
\end{notation}

\begin{proposition}\label{proposition: GKM description of K-theory of Gr}
Over $\k=\Z$, we have
\begin{equation*}
K^{\top, 0}_{T \times \Gm^{\rot}}(\Gr_G; \k) = \GKM^{0}_{T \times \Gm^{\rot}}(\Gr_G; \k). 
\end{equation*}
\end{proposition}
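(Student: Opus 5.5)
The plan is to apply the Harada--Henriques--Holm result, Proposition \ref{proposition: GKM of HHH}, one affine Schubert variety at a time, and then pass to the limit using Proposition \ref{proposition: completion of K-theory}. Fix $\mu \in X^+_{\bullet}$. On $\Gr_{\leq \mu}$ we take the stratification by Iwahori orbits $\Iw \cdot t^{\lambda}$, $\lambda \in \wt(\mu)$, ordered by closure. Following Remark \ref{remark: GKM in the BB case}:
\begin{itemize}
\item the $F_i$ are the isolated $T\times \Gm^{\rot}$-fixed points $t^{\lambda}$, so $K^{\top,0}_{T\times\Gm^{\rot}}(F_i)=\O(T\times\Gm^{\rot})$;
\item the $V_i$ are the affine cells $\Iw\cdot t^{\lambda}$, which are $T\times\Gm^{\rot}$-representations;
\item the $V_{i,j}$ are their weight lines.
\end{itemize}

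The first step is to identify these weight lines. By Lemma \ref{lemma: conjugation of affine root spaces}, the tangent space of the cell at $t^{\lambda}$ is a sum of affine root lines $U_{\beta+m\hbar}$, one for each affine root in $\Iw$ that does not stabilize $t^{\lambda}$. By Lemma \ref{lemma: weights of extended torus on affine root groups}, such a line has weight $\beta q^m$, so its $K$-theoretic Euler class is $1-\beta q^m$, up to a unit and sign convention. Complex line bundles are $K$-orientable, which gives assumption (2).

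The second step is assumption (3). For each such affine root, the closure $\overline{U_{\beta+m\hbar}\cdot t^{\lambda}}$ is a $T\times\Gm^{\rot}$-stable $\P^1$. As in the proof of Lemma \ref{lemma: connected components of fixed points n affine schubert variety}, its second fixed point is $t^{s_{\beta+m\hbar}\lambda}$, which lies in a lower stratum. So the attaching map on $S(V_{i,j})$ factors through the point $t^{s_{\beta+m\hbar}\lambda}$, and $a_{i,j}$ is that point. I expect the HHH formalism to be applicable to this attaching data directly; as in \cite[\S 5]{HHH05}, one can at worst refine the cell decomposition, since the cells are affine spaces with linear torus actions.

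The main obstacle is assumption (4): the elements $1-\beta q^m$ must be non-zero-divisors and pairwise relatively prime in $\Z[X^\bullet][q^{\pm1}]$. They are nonzero because $\beta\neq 0$, and the ring is a domain. For coprimality, distinct affine roots $\beta+m\hbar$ and $\beta'+m'\hbar$ appearing at the same $t^{\lambda}$ are not proportional, since no two distinct affine real roots in one positive system are proportional. One then argues that $1-\chi$ and $1-\chi'$ generate the unit ideal, or at least are coprime in the GKM sense, when $\chi,\chi'$ are linearly independent characters with $\chi$ primitive. The difficulty is primitivity: for non-primitive $\chi$ the factor $1-\chi$ splits into cyclotomic pieces. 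Over $\Z$ this is exactly the subtle point, and here I would rely on the explicit verification for flag varieties in \cite[\S 5]{HHH05}, which covers $\Gr_G$.

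Proposition \ref{proposition: GKM of HHH} then gives $K^0(\Gr_{\leq\mu})=\GKM^0(\Gr_{\leq\mu})$, with congruences only along pairs $\lambda$, $s_{\beta+m\hbar}\lambda$ inside $\wt(\mu)$. Finally, take $\lim_\mu$ using Proposition \ref{proposition: completion of K-theory}. The limit of the GKM subrings of $\prod_{\wt(\mu)}$ is exactly the subring of $\prod_{X_\bullet}$ cut out by all congruences, because every congruence involves two weights that lie in a common $\wt(\mu)$. This is the ring $\GKM^0_{T\times\Gm^{\rot}}(\Gr_G;\Z)$ of Notation \ref{notation: GKM of Gr}.
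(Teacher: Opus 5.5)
Your proposal is correct and is essentially the paper's argument: Iwahori paving, Proposition \ref{proposition: GKM of HHH} via Remark \ref{remark: GKM in the BB case}, and assumptions (3)--(4) deferred to \cite[\S 5]{HHH05}, with the only variation being that you apply it to each $\Gr_{\leq \mu}$ and pass to the limit through Proposition \ref{proposition: completion of K-theory} instead of applying it to $\Gr_G$ directly. One correction to your discussion of assumption (4): $1-\chi$ and $1-\chi'$ never generate the unit ideal, since both vanish at the identity of $T \times \Gm^{\rot}$, and primitivity is not the real issue, because in the UFD $\Z[X^{\bullet}][q^{\pm 1}]$ the elements $1-\chi$ and $1-\chi'$ are already coprime whenever $\chi, \chi'$ are non-proportional, which holds for the distinct affine real roots occurring as tangent weights at a given $t^{\lambda}$.
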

\begin{proof}
The action of $T \times \Gm^{\rot} \acts \Gr_G$ satisfies the GKM assumptions for integral topological $K$-theory; this is the case for all Kac--Moody partial flag varieties by with respect to the natural torus \cite[\S 5]{HHH05}; let us sketch some details.

For $\Gr_G$, the fixed points of $T\times \Gm^{\rot}$ are given by $(t^{\lambda} \mid \lambda \in X_{\bullet})$ and we claim
\begin{equation*}
\begin{tikzcd}[row sep=5]
\GKM^{0}_{T \times \Gm^{\rot}}(\Gr_G; \k) \arrow[r, hookrightarrow] & \prod_{\lambda \in X_{\bullet}} \O(T \times \Gm^{\rot})\\
K^{\top, 0}_{T \times \Gm^{\rot}}(\Gr_G; \k) \arrow[u, equal] \arrow[r, hookrightarrow] & \prod_{\lambda \in X_{\bullet}} K^{\top, 0}_{T\times \Gm^{\rot}}(t^{\lambda}; \k) \arrow[u, equal].
\end{tikzcd}
\end{equation*}
Here, the Iwahori orbits on $\Gr_G$ provide a $T\times \Gm^{\rot}$-equivariant affine paving. Via the construction from Remark \ref{remark: GKM in the BB case}, the Assumptions \ref{assumption: HHH assumptions} are satisfied by \cite[\S 5, p.211]{HHH05}. The ring $K^{\top, 0}_{T \times \Gm^{\rot}}(\Gr_G; \k)$ thus has a GKM description by Proposition \ref{proposition: GKM of HHH}. Unraveling this gives precisely the ring from Notation \ref{notation: GKM of Gr} as in \cite{HHH05}. 
\end{proof}

\begin{remark}
Also see \cite{HHH05}, \cite[\S 2.2.3]{BBSV22}, \cite{Par17}, \cite[Appendix A]{Sit24}, \cite{Shi14} for the parallel overview of complexified cohomology of $\Gr_G$ in our situation.
\end{remark}

\begin{remark}
Given an affine Schubert variety $\Gr_{\leq \mu}$, the corresponding GKM ring is given by passing to the subset $\wt(\mu) \subseteq X_{\bullet}$ as
\begin{equation*}
    \GKM^{0}_{T \times \Gm^{\rot}}(\Gr_{\leq \mu}; \k) 
    = \bigl\{ (f_{\lambda}) \in \prod_{\lambda \in \wt(\mu)} \O(T \times \Gm^{\rot}) \quad \big| \quad f_{\lambda} \equiv f_{\lambda'} \mod (1 - {\beta}q^m) \bigr\},
\end{equation*}
where the conditions run over all pairs $\lambda, \lambda' \in \wt(\mu)$ such that $\lambda' = s_{\beta + m\hbar} \cdot \lambda$ for a simple reflection associated to some affine root $\beta + m\hbar \in \Phi^{\bullet} \oplus \Z \hbar$. We again have
\begin{equation*}
K^{\top, 0}_{T \times \Gm^{\rot}}(\Gr_{\leq \mu}; \k) = \GKM^{0}_{T \times \Gm^{\rot}}(\Gr_{\leq \mu}; \k). 
\end{equation*}
The restriction maps from $K^{\top, 0}_{T \times \Gm^{\rot}}(\Gr_G; \k)$ on the left correspond to restriction along $\wt(\mu) \subseteq X_{\bullet}$ on the right. Note that $\Gr_{\leq \mu}$ are GKM spaces in the more standard sense -- the number of fixed points is finite.
\end{remark}

Our aim is to describe this $K$-theory ring, in other words explicitly solve the system of congruences defining $\GKM^0_{T \times \Gm^{\rot}}(\Gr; \k)$ when $\k=\C$. However, it is nontrivial to do so directly, and we apply other topological tricks. In any case, our Theorem \ref{theorem: equivariant K-theory as deformation to normal cone -- G sssc} below may be regarded as a solution to this purely combinatorial problem.

\subsection{Quasi-diagonals at roots of unity}\label{section: quasi-diagonals at roots of unity}
Let $\k$ be any coefficient ring and $G$ a split reductive group over $\k$ with maximal torus $T$ and Weyl group $W$.
In \S \ref{subsubsection: relationship to fibers of Gamma}, \S \ref{subsubsection: generatirs of the ideal sheaf of Zl} we assume $G=G^{\sc}$ to be simply connected. 

\subsubsection{Power $\ell$ map.}
Consider $T \times T$; we denote respectively $t_1, \dots, t_n$ and $s_1, \dots, s_n$ coordinates on the first and second copy. 
Denote
\begin{equation*}
    (-)^{\ell} : T \to T, \qquad t \mapsto t^{\ell}
\end{equation*}
the $\ell$-th power map, coming from the group structure on $T$. In coordinates, it sends $t_i \mapsto t_i^{\ell}$. Since the $W$-action on $T$ is by group automorphisms, the $\ell$-th power map is $W$-equivariant. It thus induces a map
\begin{equation*}
    (-)^{\ell} : T \sslash W \to T \sslash W.
\end{equation*}
The corresponding map $\O(T \sslash W) \to \O(T \sslash W)$ on rings of functions is induced by the substitution $f(t_1, \dots, t_n) \mapsto f(t^{\ell}_1, \dots, t^{\ell}_n)$. Similarly for the quotient by any subgroup of $W$.

\subsubsection{Quasi-diagonals at roots of unity.}
For any $\ell \in \N$, we define the corresponding \textit{quasi-diagonal} by the pullback
\begin{equation*}
\begin{tikzcd}
Z^{T \sslash W, T\sslash W}_{\ell} \arrow[d] \arrow[r]  & T \sslash W \arrow[d, "(-)^{\ell}"]  \\
T \sslash W \arrow[r, "(-)^{\ell}"] & T \sslash W
\end{tikzcd}
\end{equation*}
It has trivial derived structure by flatness of $(-)^{\ell}$.
It is a closed subscheme of $T \sslash W \times T \sslash W$ via the pullback square
\begin{equation*}
    \begin{tikzcd}
        Z^{T \sslash W, T\sslash W}_{\ell} \arrow[r, hook] \arrow[d] & T\sslash W \times T\sslash W \arrow[d, "(-)^{\ell} \times (-)^{\ell}"] \\
        T\sslash W \arrow[r, hook, "\Delta"] & T\sslash W \times T\sslash W
    \end{tikzcd}
\end{equation*}
On its functor of points, it parametrizes
\begin{equation*}
Z^{T \sslash W, T\sslash W}_{\ell}: R \mapsto \{ (x, y) \in (T\sslash W \times T\sslash W)(R) \mid x^{\ell} = y^{\ell} \}.
\end{equation*}

\subsubsection{Variants.}
More generally, for any two subgroups of $ W_1, W_2 \leq W$, we define the variant $Z^{W_1, W_2}_{\ell} = Z_{\ell}^{T \sslash W_1, T\sslash W_2}$ by either of the pullback squares
\begin{equation*}
\begin{tikzcd}
Z_{\ell}^{T \sslash W_1, T\sslash W_2} \arrow[d] \arrow[r]  & T \sslash W_2 \arrow[d, "(-)^{\ell}"]  \\
T \sslash W_1 \arrow[r, "(-)^{\ell}"] & T \sslash W
\end{tikzcd}
\qquad
\begin{tikzcd}
Z^{T \sslash W_1, T\sslash W_2}_{\ell} \arrow[r, hook] \arrow[d] & T\sslash W_1 \times T\sslash W_2 \arrow[d, "(-)^{\ell} \times (-)^{\ell}"] \\
T\sslash W \arrow[r, hook, "\Delta"] & T\sslash W \times T\sslash W.
\end{tikzcd}
\end{equation*}
It is a closed subscheme of $T\sslash W_1 \times T\sslash W_2$, obtained by the base change of $Z^{T \sslash W, T \sslash W}_{\ell}$ along the faithfully flat map $T\sslash W_1 \times T\sslash W_2 \to T\sslash W \times T\sslash W$.
On its functor of points, it parametrizes
\begin{equation*}
Z^{T\sslash W_1 \times T\sslash W_2}_{\ell}(R) = \{ (x, y) \in (T\sslash W_1 \times T\sslash W_2)(R) \mid x^{\ell} W = y^{\ell} W \}.    
\end{equation*}

\begin{notation}\label{notation: quasi-diagonals}
In particular, we will use the following shorter notation for two special cases:
\begin{align*}
Z'_{\ell} & := Z^{T, T}_{\ell} \hookrightarrow T \times T,   \\
Z_{\ell} & := Z^{T, T\sslash W}_{\ell} \hookrightarrow T \times T \sslash W.
\end{align*}
\end{notation}

\begin{notation}
Furthermore, given $\zeta \in \mu_{\infty}$ of primitive order $\ell$, we will also use the notation $Z_{\zeta} = Z_{\ell}\hookrightarrow T \times T \sslash W$. Given any point $x \in T$ in the first copy of the torus, we denote its fiber over $x$ by $Z_{(x, \zeta)} = Z_{(x, \ell)} \hookrightarrow T \sslash W$.    
\end{notation}

\subsubsection{Relationship to fibers of $\Gamma$.}\label{subsubsection: relationship to fibers of Gamma}
Assume now $\k$ is an algebraically closed field of characteristic zero. Assume $G$ is simply connected.
Given $\zeta \in \mu_{\infty}$ of primitive order $\ell$, we have just defined the subscheme $Z_{\zeta}$ and its fibers $Z_{(x, \zeta)}$.
These are fully described by specializations of the sub-ind scheme $\overline{\Gamma} \hookrightarrow T \times \Gm^{\rot} \times T \sslash W$ from \S \ref{section: cocharacter graphs}.

\begin{lemma}\label{lemma: Z is Gamma}
Let $(x, \zeta) \in T \times \Gm^{\rot}$. Suppose $\zeta$ is a root of unity of primitive order $\ell$. On the underlying points, we have
\begin{equation*}
  |Z'_{\ell}| = |{\Gamma}_{\zeta}|,  \qquad \qquad |Z'_{(x, \ell)}| = |{\Gamma}_{(x, \zeta)}|, \qquad \qquad
  |Z_{\ell}| = |\overline{\Gamma}_{\zeta}|,  \qquad \qquad |Z_{(x, \ell)}| = |\overline{\Gamma}_{(x, \zeta)}|.
\end{equation*}
\end{lemma}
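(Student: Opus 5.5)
The plan is to compare closed points directly, working first with $Z'_{\ell} \hookrightarrow T \times T$ and $\Gamma_{\zeta} \hookrightarrow T \times T$, and then descending along the quotient by $W$ on the second factor.

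\textbf{The untwisted case.} By definition, a closed point $(x,y)$ lies in $Z'_\ell$ iff $x^\ell$ and $y^\ell$ have the same image in $T\sslash W$. For closed points of $T$ this is equivalent to $y^\ell = w(x)^\ell$ for some $w\in W$, since the fibers of $T\to T\sslash W$ over closed points are $W$-orbits. In other words, $(y\cdot w(x)^{-1})^\ell = 1$, so the ratio $y\cdot w(x)^{-1}$ is an $\ell$-torsion point of $T$.

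On the other side, a closed point $(x,y)$ lies in $\Gamma_\zeta$ iff $y\cdot w(x)^{-1} = \lambda(\zeta)$ for some $\lambda\in X_\bullet$ and $w\in W$. The key step is therefore to show that the $\ell$-torsion subgroup $T[\ell]$ equals $\{\lambda(\zeta)\mid \lambda\in X_\bullet\}$ when $\zeta$ is primitive of order $\ell$. I would use $T(k) = X_\bullet\otimes_\Z k^\times$ and $\mu_\ell(k) = \langle\zeta\rangle$. Here $k$ is algebraically closed of characteristic zero, so $T[\ell] = X_\bullet\otimes \mu_\ell$, and every element of it has the form $\lambda(\zeta)$. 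The reverse inclusion is obvious. This gives $|Z'_\ell| = |\Gamma_\zeta|$. Taking the fiber over a fixed $x$ then yields $|Z'_{(x,\ell)}| = |\Gamma_{(x,\zeta)}|$.

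\textbf{Descent along $W$.} For the barred versions, note that $Z_\ell$ is the base change of $Z_\ell^{T\sslash W,T\sslash W}$, and its points are the images of $Z'_\ell$ under $T\times T\to T\times T\sslash W$. Likewise, $\overline\Gamma$ is by definition the image (quotient) of $\Gamma$ under the same map. Both $Z'_\ell$ and $\Gamma_\zeta$ are $W$-stable in the second factor. Since a finite quotient map is surjective on closed points, equal closed point sets upstairs have equal images downstairs. This gives $|Z_\ell| = |\overline\Gamma_\zeta|$ and, restricting to fibers over $x$, $|Z_{(x,\ell)}| = |\overline\Gamma_{(x,\zeta)}|$.

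\textbf{Expected obstacles.} The main point needing care is the identification $T[\ell] = \{\lambda(\zeta)\}$. This uses simple connectedness, through $X_\bullet = \Z\Phi_\bullet$, so that the $\zeta$-integral points defined via $\Z\Phi_\bullet$ are all $\lambda(\zeta)$ with $\lambda \in X_\bullet$. It also uses primitivity of $\zeta$. A second point, less an obstacle than a bookkeeping issue, is making sure that taking the closed points of the image of the ind-scheme $\Gamma$ is compatible with the quotient. This holds componentwise, because each $\Gamma_{\lambda,w}$ is a closed subscheme and $W$ permutes the components.
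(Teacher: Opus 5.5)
Your proof is correct. It is essentially the explicit computation the paper records in the remark immediately after the lemma: reduce membership in $Z'_\ell$ to $y\,w(x)^{-1}$ being $\ell$-torsion, identify $T[\ell]$ with $\{\lambda(\zeta)\mid \lambda\in X_\bullet\}$, and pass to the barred versions by taking the quotient by $W$ on the second factor.

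The paper's official proof is phrased differently. It reduces to the fibers over $x$ and notes that $|Z'_{(x,\ell)}|$ and $|\Gamma_{(x,\zeta)}|$ each carry a transitive $W_{\aff}$-action, with a base point whose stabilizer is $W_J$ in both cases. Your identification $T[\ell]=\{\lambda(\zeta)\}$, which is where primitivity of $\zeta$ enters, is exactly what makes the $W_{\aff}$-action on $|Z'_{(x,\ell)}|$ transitive. So your version is the more self-contained of the two, while the paper's version links the lemma directly to the stabilizer description $W_J$ used later.

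One small correction to your closing remarks. In this section $\Gamma$ is defined using all $\lambda\in X_\bullet$, and $T[\ell]=X_\bullet\otimes\mu_\ell$ holds for any split torus. So your computation does not actually use simple connectedness. That hypothesis only matters for matching $X_\bullet$ with $\Z\Phi_\bullet$ in the resonance and affine Weyl group language.
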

\begin{proof}
   It is enough to treat the $Z'_{\ell}$-variants, since the $Z_{\ell}$-variant follows by taking quatient by $W$. Moreover, it is enough to focus on the case of $Z'_{(x, \ell)}$ (which is a fiberwise refinement of $Z'_{\ell}$). Over any $(x, \zeta)$, both sets have a transitive action of $W_{\aff}$ and base point with the same stabilizer.
\end{proof}

\begin{remark}
    More explicitly, we may just compute inside $T \times T$ as follows
    \begin{align*}
       |Z'_{\ell}|  
       &=  \{ (x, y) \mid \exists w \in W: x^{\ell} = w(y^{\ell}) \} \\ 
       &= \{ (x, y) \mid \exists w \in W: (x \cdot w(y)^{-1})^{\ell} = 1 \} \\
       &= \{ (x, y) \mid \exists \lambda \in X_{\bullet}, \exists w \in W: x \cdot w(y)^{-1} = \lambda(\zeta) \} \\
       &= |\Gamma_{\zeta}| 
    \end{align*}
    The case of $Z'_{(x, \ell)}$ is the same computation with fixed $x$.
\end{remark}

\begin{remark}
The statement of Lemma \ref{lemma: Z is Gamma} holds on the level of reduced schemes: it identifies $Z_{\ell} = \overline{\Gamma}_{\zeta}^{\red}$ and $Z_{(x, \ell)} = \overline{\Gamma}_{(x, \zeta)}^{\red}$. 

However, while $Z_{\ell}$ is reduced by definition, the fiber $\overline{\Gamma}_{\zeta}$ will acquire some ind-unreduced structure from the $q$-direction. In fact, one may heuristically think about the normal cone to $Z_{\ell}$ as a counterpart of this ind-unreduced structure.
\end{remark}

\subsubsection{Generators of the ideal sheaf of $Z_{\ell}$.}\label{subsubsection: generatirs of the ideal sheaf of Zl}
Let again $\k$ be a general coefficient ring.
Consider the ring $\O(T \sslash W)$ over $\k$. It naturally identifies with the representation ring $R(G; \k)$. Denote $e_1, \dots, e_n \in \O(T \sslash W)$ the elements corresponding to the fundamental representations of $G$. Since $G$ is simply connected or $GL_n$, these freely generate $\O(T \sslash W)$ by \S \ref{subsubsection: coordinates and invariant polynomials}.

\begin{lemma}\label{lemma: explicit generators for the ideal sheaf of Zl}
The ideal sheaf $\eI_{Z_{\ell}}$ of the closed subscheme $Z^{T\sslash W \times T \sslash W}_{\ell} \hookrightarrow T\sslash W \times T \sslash W$ is generated by
\begin{equation*}
 f_i = e_i(t_1^{\ell}, \dots, t_n^{\ell}) - e_i(s_1^{\ell}, \dots, s_n^{\ell}), \qquad i = 1, \dots, n    
\end{equation*}
The same works for the ideal sheaf $\eI_{Z_{\ell}}^{T \sslash W_1, T \sslash W_2}$ of $Z_{\ell}^{T \sslash W_1, T \sslash W_2} \hookrightarrow T \sslash W_1 \times T \sslash W_2$ for any $W_1, W_2 \leq W$.
\end{lemma}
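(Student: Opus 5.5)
The plan is to reduce everything to the diagonal, using the defining pullback square of $Z_{\ell}$. By definition, $Z^{T\sslash W, T\sslash W}_{\ell}$ is the base change of the diagonal $\Delta: T\sslash W \hookrightarrow T\sslash W \times T\sslash W$ along the map $(-)^{\ell}\times(-)^{\ell}$. For closed immersions, base change on ideal sheaves is computed by extension: if $Z = Y \times_X X'$ with $Y \hookrightarrow X$ closed and $X' \to X$ any morphism, then $\eI_Z = \eI_Y \cdot \O_{X'}$. So it suffices to (a) write down generators of the ideal of the diagonal in $T\sslash W\times T\sslash W$, and (b) pull them back along $(-)^{\ell}\times(-)^{\ell}$.

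For (a), I use that $G$ is simply connected or $GL_n$, so by Remark \ref{remark: representation ring in the simply conneted case} and \S\ref{subsubsection: coordinates and invariant polynomials}, $\O(T\sslash W)$ is the polynomial ring $\k[e_1,\dots,e_n]$, possibly with $e_n$ inverted in the $GL_n$ case. Hence $\O(T\sslash W\times T\sslash W)=\k[e_i(t)]\otimes\k[e_i(s)]$, again polynomial up to inverting units. In a polynomial ring $A[x_i]\otimes A[y_i]$, the ideal of the diagonal is $(x_i-y_i)$, and this is unaffected by localizing at units. So $\eI_{\Delta}=(e_i(t)-e_i(s))_{i=1}^n$. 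For (b), the map $(-)^{\ell}$ on functions is the substitution $t_j\mapsto t_j^{\ell}$, so $e_i(t)-e_i(s)$ pulls back to $f_i=e_i(t^{\ell})-e_i(s^{\ell})$. By the base change principle, the $f_i$ generate $\eI_{Z_{\ell}}$.

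For the variant with $W_1,W_2\le W$, the second pullback square presents $Z_{\ell}^{T\sslash W_1,T\sslash W_2}$ as the base change of $\Delta$ along $T\sslash W_1\times T\sslash W_2\to T\sslash W\times T\sslash W \xrightarrow{(-)^\ell\times(-)^\ell}$. The same extension-of-ideals argument then gives the same generators $f_i$, now viewed in $\O(T\sslash W_1\times T\sslash W_2)$. Faithful flatness is not even needed for generation; it only matters if one wants to compare with the scheme-theoretic image.

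The one point needing care is the base-change statement for ideal sheaves. It holds because $\O_{X'}\otimes_{\O_X}\O_Y=\O_{X'}/\eI_Y\O_{X'}$, by right exactness of the tensor product. I also need to be consistent that the quasi-diagonal is the classical, not derived, fiber product; the paper notes that the derived structure is trivial anyway, by flatness of $(-)^{\ell}$. Beyond this I expect no real obstacle; the substance lies entirely in the polynomiality of $\O(T\sslash W)$, which is exactly why the simply connected or $GL_n$ hypothesis appears.
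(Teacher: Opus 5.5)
Your proposal is correct and follows the paper's route: you present $Z_{\ell}$ as the base change of the diagonal along $(-)^{\ell}\times(-)^{\ell}$, use that $\O(T\sslash W)$ is polynomial (with $e_n$ inverted for $GL_n$) so that the diagonal ideal is generated by $e_i(t)-e_i(s)$, pull these back to the $f_i$, and then base change further to $T\sslash W_1\times T\sslash W_2$. The only difference is that the paper justifies the base change of ideal sheaves via (miracle) flatness of the power map, whereas your right-exactness identity $B\otimes_A A/I = B/IB$ shows that flatness is not needed for the generation statement itself.
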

\begin{proof}  

By definition, $Z_{\ell}$ fits into the pullback square
\begin{equation}\label{diagram: fiber square for Zl, variant II}
    \begin{tikzcd}
        Z_{\ell} \arrow[r, hook] \arrow[d] & T\sslash W \times T\sslash W \arrow[d, "(-)^{\ell} \times (-)^{\ell}"] \\
        T\sslash W \arrow[r, hook, "\Delta"] & T\sslash W \times T\sslash W
    \end{tikzcd}
\end{equation}
Note that $T\sslash W$ is smooth (in fact an affine space) and the map $(-)^{\ell}: T\sslash W \to T \sslash W$ is finite (which is obvious before taking the quotient). The same is true for $(-)^{\ell} \times (-)^{\ell}: T\sslash W \times T\sslash W \to T\sslash W \times T\sslash W$, which is thus flat by miracle flatness \cite[Tag 00R4]{Sta}. Since it is surjective, it is in fact faithfully flat.

The closed immersions given by the horizontal arrows in \eqref{diagram: fiber square for Zl, variant II} induce short exact sequence
\begin{equation*}
\begin{tikzcd}
0 \arrow[r] & \eI_{Z_{\ell}} \arrow[r] & \O_{T\sslash W \times T\sslash W} \arrow[r] & \O_{Z_{\ell}} \arrow[r] & 0 \\
0 \arrow[r] & \eI_{T\sslash W} \arrow[r] \arrow[u] & \O_{T\sslash W \times T\sslash W} \arrow[r] \arrow[u] & \O_{T\sslash W} \arrow[u] \arrow[r] & 0 
\end{tikzcd}
\end{equation*}
The upper sequence is given by tensoring the bottom one along $(-)^{\ell} \times (-)^{\ell}: \O_{T\sslash W \times T\sslash W} \hookrightarrow \O_{T\sslash W \times T\sslash W}$. Indeed, this is true for the right-hand term by \eqref{diagram: fiber square for Zl, variant II} and tautological for the middle term, hence true for the left-hand term by flatness. In coordinates of $T \times T$, this map is given by $t_i \mapsto t_i^{\ell}$ and $s_i \mapsto s_i^{\ell}$ for any $i = 1, \dots, n$. 

Now, the ideal sheaf $\eI_{T\sslash W}$ of the diagonal in $T\sslash W \times T\sslash W$ is generated by the elements $f_j = e_j(t_1, \dots, t_n) - e_j(s_1, \dots, s_n)$ for $j = 1, \dots, n$ (as these are affine spaces with coordinates $e_j$.) Therefore, the ideal sheaf $\eI_{Z_{\ell}}$ is generated by their images
\begin{equation*}
    \eI_{Z_{\ell}} = \langle e_j(t^{\ell}_1, \dots, t^{\ell}_n) - e_j(s^{\ell}_1, \dots, s^{\ell}_n) \mid j = 1, \dots, n \rangle.
\end{equation*}
as an ideal inside $\O_{T\sslash W \times T \sslash W}$.

The general case follows, since $Z_{\ell}^{T \sslash W_1, T \sslash W_2} \hookrightarrow T \sslash W_1 \times T \sslash W_2$ is given by the base change of $Z_{\ell}^{T \sslash W, T \sslash W} \hookrightarrow T \sslash W \times T \sslash W$ along the faithfully flat map $T \sslash W_1 \times T \sslash W_2 \to T \sslash W \times T \sslash W$.
\end{proof}

\subsection{Deformations of quasi-diagonals to normal cones}\label{section: deformations of quasi-diagonals to normal cones}

\subsubsection{Deforming to the normal cone at roots of unity.}
Assume $G=G^{\sc}$ is simply connected or $GL_n$.
Recall the general notation for deformations to normal cones and affine blowups from \S \ref{section: deformations to normal cones and affine blowups}. To simplify the manipulation of roots of unity, assume $\k$ is an algebraically closed field of $\chara \k = 0$.

\begin{definition}\label{definition: deformation to normal cone at roots of unity}
Take the scheme $T \times T \sslash W \times \Gm^{\rot}$, viewed as a family over $\Gm^{\rot} \hookrightarrow \A^1$. Consider the family of subschemes $\eZ = (Z_{\zeta} \mid \zeta \in \mu_{\infty})$ of its fibers over varying $\zeta \in \mu_{\infty} \hookrightarrow \Gm^{\rot}$. We consider the affine blowup of this family
\begin{equation*}
    \eN_{\mu_{\infty}}(\eZ, T \times T \sslash W) := \Bl^{\circ}_{\eZ}(T \times T \sslash W \times \Gm^{\rot}).
\end{equation*}
\end{definition}

In terms of deformations to normal cones, we can restate the above as follows. Start with the scheme $T \times T\sslash W$. Then $\eN_{\mu_{\infty}}(\eZ, T \times T \sslash W)$ is the family over $\Gm^{\rot}$ which is the constant family with fiber $T \times T\sslash W$ away from roots of unity and the deformation to the normal cone of $Z_{\ell} \hookrightarrow T \times T\sslash W$ on each (formal) neighborhood of each root of unity $\zeta$ of primitive order $\ell \in \N$. In other words, we have pullback squares
\begin{equation*}
    \begin{tikzcd}
      C(Z_{\zeta}, T \times T \sslash W) \arrow[d] \arrow[r]&  \eN_{\mu_{\infty}}(\eZ, T \times T \sslash W) \arrow[d] & \arrow[l] \arrow[d] T \times T \sslash W \times (\Gm^{\rot} - \mu_{\infty}) \\
      \zeta \arrow[r] & \Gm^{\rot} & \arrow[l] \Gm^{\rot} - \mu_{\infty}. 
    \end{tikzcd}
\end{equation*}
Our family $\eN_{\mu_{\infty}}(Z, T \times T \sslash W)$ is given the usual Rees constructions for the filtered algebras $(\O_{T \times T \sslash W}, \eI^{\bullet}_{Z_{\ell}})$ at each $\zeta$ glued together along $\Gm^{\rot} - \mu_{\infty}$, where all of them naturally trivialize.

\subsubsection{Explicit description of the deformation.}
Since $T \times T \sslash W$ is affine, the scheme $\eN_{\mu_{\infty}}(\eZ, T \times T \sslash W)$ is affine with ring of functions
\begin{equation*}
\O(\eN_{\mu_{\infty}}(\eZ, T \times T \sslash W))
=
\O(T \times (T \sslash W))[q^{\pm 1}][\tfrac{f}{q - \zeta} \mid (\zeta, f) \in \Xi]
\end{equation*} 
where $\Xi$ is the set of all pairs $(\zeta, f) \in \k^{\times} \times \O(T  \times (T \sslash W))$ with $\zeta^{\ell} = 1$ and $f \in \eI_{Z_{\ell}}$ for a common $\ell \in \N$.
Note that this algebra is \textit{not} finitely generated over $\k$.    
Alternatively, grouping all $\ell$-torsion roots of unity together, we can rewrite it as follows.
\begin{lemma}\label{claim: putting ell-th roots of unity together} We have
\begin{equation*}
\O(\eN_{\mu_{\infty}}(\eZ, T \times T \sslash W))
=
\O(T \times (T \sslash W))[q^{\pm 1}][\tfrac{f}{1 - q^{\ell}} \mid \ell \in \N, \ f \in \eI_{Z_{\ell}}].
\end{equation*}
\end{lemma}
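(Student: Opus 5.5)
We prove the equality of the two subrings of $\O(T \times (T \sslash W))[q^{\pm 1}]\otimes \mathrm{Frac}$ by proving both inclusions; call the left-hand ring $A$ and the right-hand ring $B$. The basic input is the factorization
\begin{equation*}
    1 - q^{\ell} = -\prod_{\zeta \in \mu_{\ell}} (q - \zeta),
\end{equation*}
together with the fact that the roots of unity appearing are pairwise distinct, so that the factors $q-\zeta$ are pairwise coprime in $\k[q^{\pm 1}]$. Their differences $\zeta - \zeta'$ are nonzero scalars, since $\k$ is algebraically closed of characteristic zero.

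\textbf{Inclusion $B \subseteq A$.} Take $f \in \eI_{Z_{\ell}}$. We will do a partial fraction decomposition
\begin{equation*}
    \frac{1}{1-q^{\ell}} = \sum_{\zeta \in \mu_{\ell}} \frac{c_{\zeta}}{q - \zeta}, \qquad c_{\zeta} \in \k^{\times}.
\end{equation*}
This gives $f/(1-q^{\ell}) = \sum_{\zeta} c_{\zeta}\, f/(q-\zeta)$. For each $\zeta \in \mu_{\ell}$, say of primitive order $\ell' \mid \ell$, we need $f \in \eI_{Z_{\ell'}}$, where the latter is the ideal used at $\zeta$ in the definition of $A$. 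This does \emph{not} follow directly, because $Z_{\ell'} \subseteq Z_{\ell}$ gives the reverse inclusion of ideals. So I would rather interpret the set $\Xi$ as it is written: it consists of pairs $(\zeta, f)$ with $\zeta^{\ell}=1$ and $f \in \eI_{Z_{\ell}}$ for a common $\ell$, so $\ell$ need not be the primitive order of $\zeta$. Under this reading each $(\zeta, f)$ with $\zeta \in \mu_{\ell}$ lies in $\Xi$, and the inclusion is immediate.

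\textbf{Inclusion $A \subseteq B$.} Take $(\zeta, f) \in \Xi$ with $\zeta^{\ell} = 1$ and $f \in \eI_{Z_{\ell}}$. Write
\begin{equation*}
    \frac{f}{q-\zeta} = \frac{f \cdot g_{\zeta}(q)}{1-q^{\ell}}, \qquad g_{\zeta}(q) = -\prod_{\zeta' \in \mu_{\ell},\, \zeta' \neq \zeta} (q-\zeta') \in \k[q].
\end{equation*}
Expanding $g_{\zeta}$ in powers of $q$, each term has the form $q^{j}\cdot f/(1-q^{\ell})$ with $f \in \eI_{Z_{\ell}}$, and this lies in $B$ because $q^{\pm 1} \in B$. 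Hence $A \subseteq B$. Both rings sit inside $\O(T\times T\sslash W)[q^{\pm1}]$ localized at all $1-q^{\ell}$, so these inclusions are inclusions of subrings in a common ambient ring, and the equality follows.

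\textbf{Main obstacle.} The substantive point is pinning down the convention for $\Xi$, equivalently checking that the affine blowup at $\eZ$ is the same whether one uses, at a root of unity $\zeta$ of primitive order $\ell'$, only $\eI_{Z_{\ell'}}$ or all $\eI_{Z_{\ell}}$ with $\ell' \mid \ell$. With the literal definition of $\Xi$ the argument above is formal. If instead one insists on primitive orders, one must show that $f/(q-\zeta)$ for $f \in \eI_{Z_\ell}$ is already generated near $\zeta$ by $\eI_{Z_{\ell'}}/(q-\zeta)$. This would have to be checked locally near $q=\zeta$, where every other factor $q-\zeta'$ is a unit, so only the ideal at $\zeta$ matters. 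I would simply adopt the literal reading of $\Xi$, which the paper's displayed formula supports.
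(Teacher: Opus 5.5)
Your argument is correct and is essentially the paper's proof. It uses partial fractions over the factors $q-\zeta$ of $1-q^{\ell}$ for one inclusion (the paper phrases this via the Chinese remainder theorem) and multiplies by the complementary product $\prod_{\zeta'\neq\zeta}(q-\zeta')$ for the other, with the same literal reading of $\Xi$. One correction to your side remark: for $\ell'\mid\ell$ one has $Z_{\ell'}\subseteq Z_{\ell}$, hence $\eI_{Z_{\ell}}\subseteq\eI_{Z_{\ell'}}$, which is exactly the containment you needed, so the primitive-order reading of $\Xi$ gives the same set of generators and your \emph{main obstacle} does not actually arise.
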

\begin{proof}
Recall $\k = \overline{\k}$ of $\chara \k = 0$.
Write $(q^{\ell}-1) = (q-\zeta_0)\cdots (q-\zeta_{\ell-1})$, $\zeta_i \in \mu_{\ell}$. Clearly, each
$\tfrac{f}{\zeta -q} = \tfrac{f}{1 - q^{\ell}} \cdot \prod_{i'\neq i} (q-\zeta_{i'})$ is generated by $\tfrac{f}{1 - q^{\ell}}$.
On the other hand, all the monomials $(q-\zeta_i)$ are pairwise coprime in $\k[q^{\pm 1}]$. By the Chinese remainder theorem, we may express
$1 = \sum_{i=0}^{\ell-1} h_i \cdot \prod_{i'\neq i} (q-\zeta_{i'})$ for some $h_i \in \k[q^{\pm 1}]$, 
hence
$1 = \sum_{i=0}^{\ell-1} h_i \cdot (q^{\ell} - 1) \tfrac{1}{q - \zeta_i}.$
Consequently, $\tfrac{f}{1 - q^{\ell}} = \sum_{i=0}^{\ell-1} h_i (q^{\ell} - 1) \tfrac{f}{\zeta - q}$ is generated by the elements $\tfrac{f}{\zeta - q}$.
\end{proof}

\begin{remark}
For any $\ell \in \N$, we denote the corresponding cyclotomic polynomial by
$\sigma_{\ell}(q) \in \Z[q^{\pm 1}]$.    
Since we work over an algebraically closed field $\k$ of $\chara \k = 0$, we also have
\begin{equation*}
  \O(T \times (T \sslash W))[q^{\pm 1}][\tfrac{f}{\sigma_{\ell}(q)} \mid \ell \in \N, \ f \in \eI_{Z_{\ell}}]  
\end{equation*}
by the same token as the lemma above.
\end{remark}

\subsubsection{Fundamental Bezrukavnikov classes.}
To generate the functions on the deformation, the following distinguished elements suffice. Recall that we are assuming $G$ simply connected or $GL_n$.
\begin{definition}
We define the \textit{fundamental Bezrukavnikov classes} as
\begin{equation*}
    b_{j, \ell} := \tfrac{f_j}{1-q^{\ell}} := \tfrac{e_j(t_1^{\ell}, \dots, t_n^{\ell}) - e_j(s_1^{\ell}, \dots, s_n^{\ell})}{1 - q^{\ell}}, \qquad \forall j \in \{ 1, \dots, n\}, \forall \ell \in \N. 
\end{equation*}
\end{definition}

\begin{lemma}\label{lemma: fundamental bezrukavnikov classes}
To generate $\O(\eN_{\mu_{\infty}}(\eZ, T\sslash W \times T \sslash W))$ as an algebra over $\O(T\sslash W \times \Gm^{\rot} \times T \sslash W)$, it suffices to add the infinite set of ring generators
\begin{equation*}
    \left( b_{j, \ell} \mid \forall j \in \{ 1, \dots, n\}, \forall \ell \in \N \right). 
\end{equation*}
\end{lemma}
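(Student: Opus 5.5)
The plan is to combine three earlier facts: the explicit presentation of Lemma \ref{claim: putting ell-th roots of unity together}, the finite generation of the ideals $\eI_{Z_\ell}$ from Lemma \ref{lemma: explicit generators for the ideal sheaf of Zl}, and the generators of a Rees algebra recorded in \S\ref{section: deformations to normal cones and affine blowups}. The statement is phrased for $T\sslash W\times T\sslash W$, but the same argument works verbatim for $T\times T\sslash W$, or for any $T\sslash W_1\times T\sslash W_2$, by the last sentence of Lemma \ref{lemma: explicit generators for the ideal sheaf of Zl}. Set $A:=\O(T\sslash W\times T\sslash W)[q^{\pm1}]$.

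First, I would rewrite the ring of functions, exactly as in Lemma \ref{claim: putting ell-th roots of unity together}, as
\[
A\big[\tfrac{f}{1-q^\ell}\ \big|\ \ell\in\N,\ f\in\eI_{Z_\ell}\big].
\]
Strictly speaking, the affine blowup also contributes the higher Rees terms $\tfrac{g}{(q-\zeta)^k}$ with $g\in\eI^k_{Z_\ell}$. These are handled locally at each $\zeta$, as in \S\ref{section: generators of the deformation}. Since every element of $\eI^k_{Z_\ell}$ is a sum of products of $k$ elements of $\eI_{Z_\ell}$, each such term is a polynomial in fractions $\tfrac{f}{q-\zeta}$ with $f\in\eI_{Z_\ell}$. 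The Chinese-remainder manipulation of Lemma \ref{claim: putting ell-th roots of unity together} then converts these into the fractions $\tfrac{f}{1-q^\ell}$.

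Second, I would fix $\ell$ and take $f\in\eI_{Z_\ell}$. By Lemma \ref{lemma: explicit generators for the ideal sheaf of Zl} we can write $f=\sum_j a_j f_j$ with $a_j\in\O(T\sslash W\times T\sslash W)$ and $f_j=e_j(t^\ell)-e_j(s^\ell)$. Dividing by $1-q^\ell$ gives
\[
\tfrac{f}{1-q^\ell}=\sum_j a_j\, b_{j,\ell},
\]
so every generator above lies in $A[b_{j,\ell}]$. Conversely, each $b_{j,\ell}$ is visibly one of those generators, since $f_j\in\eI_{Z_\ell}$. The two subalgebras of $A[(1-q^\ell)^{-1}\mid \ell\in\N]$ therefore coincide, which proves the lemma.

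The main obstacle is the first step: making sure that the affine blowup at infinitely many centers is really generated by first-order fractions. Two points need care. The center over $\zeta$ depends only on $\ord(\zeta)=\ell$, and the Rees algebra at a single center is generated in degree one. For the latter I would invoke \S\ref{section: generators of the deformation} center by center, for each finite set of centers. I would then pass to the colimit of subalgebras inside $A[(1-q^\ell)^{-1}\mid \ell\in\N]$, using that the finite-stage affine blowups are obtained by blowing up disjoint centers, as in \S\ref{subsubsection: deformations centered at a set of points}.
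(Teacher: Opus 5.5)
Your proposal is correct and is essentially the paper's argument. The paper first reduces, for each fixed $\zeta$, to the finitely many generators $f_j/(q-\zeta)$ using Lemma \ref{lemma: explicit generators for the ideal sheaf of Zl}, and then groups the $\ell$-torsion roots of unity by the coprimality argument of Lemma \ref{claim: putting ell-th roots of unity together}; you do the same two steps in the opposite order, and your extra bookkeeping (higher Rees terms, colimit over finite sets of centers) only makes explicit what the paper takes ``by definition''.
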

\begin{proof}
By definition, for each $\zeta \in \mu_{\infty}$ and every $f \in \eI_{Z_{\zeta}}$, we need to add the generator $\tfrac{f}{q-\zeta}$. For fixed $\zeta$, it is enough to only consider $b_{j, \zeta} := \tfrac{f_j}{q-\zeta}$ for $f_j = e_j(t_1^{\ell}, \dots, t_n^{\ell}) - e_j(s_1^{\ell}, \dots, s_n^{\ell})$, because $f_1, \dots, f_j$ generate $\eI_{Z_{\ell}}$ by Lemma \ref{lemma: explicit generators for the ideal sheaf of Zl}.

By the same coprimality argument as in Lemma \ref{claim: putting ell-th roots of unity together}, we may group all the $\ell$-torsion roots of unity together -- the elements $b_{j, \zeta}$ and $b_{j, \ell}$ generate the same ring.
\end{proof}

\begin{remark}
Consider the base changed variant $Z^{T\sslash W_1, T \sslash W_2}_{\ell} \hookrightarrow T\sslash W_1 \times T \sslash W_2$. The same set $(b_{j, \ell})$ then generates $\O(N_{q=\zeta}(Z^{T\sslash W_1, T \sslash W_2}_{\ell}, T\sslash W_1 \times T \sslash W_2))$ over $\O(T\sslash W_1 \times \Gm^{\rot} \times T \sslash W_2)$: this follows by base change along the flat map $T\sslash W_1 \times T \sslash W_2 \to T\sslash W \times T \sslash W$.  
\end{remark}

\subsubsection{Completions of the deformation over a root of unity.}

The following lemma is the algebraic analogue of a local comparison between equivariant $K$-theory and cohomology of fixed points.

\begin{lemma}\label{lemma: completions of the deformation}
Let $(x, \zeta) \in T \times \Gm^{\rot}$ with $\zeta \in \mu_{\infty}$. Then
\begin{equation*}
    N_{q=\zeta}(Z_{\ell}, T \times T\sslash W))^{\wedge}_{(x, \zeta)} = \coprod_{\vartheta \in |Z_{(x, \zeta)}|} N_{h=0}(\t \times_{\t \sslash W_{[x, \zeta]}} \t \sslash W_{\vartheta}, \t \times \t \sslash W_{\vartheta})^{\wedge}_{(0, 0)}
\end{equation*}
\end{lemma}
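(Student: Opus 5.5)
The plan is to compute the formal completion of the deformation in two stages: first complete the base $T \times T\sslash W$ along the fiber $\{x\} \times Z_{(x,\zeta)}$, and then identify each local piece with a linearized (Lie algebra) deformation.

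Write $N := N_{q=\zeta}(Z_{\ell}, T \times T\sslash W)$. Completing at $(x,\zeta)$ means completing along the preimage of the point $x$ in the first factor and $q=\zeta$. Since $N \to T\times T\sslash W \times \Gm^{\rot}$ is affine, this completion only depends on the formal neighbourhood of $\{x\} \times T\sslash W \times \{\zeta\}$.

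The first step uses compatibility of deformations to normal cones with flat base change (\S\ref{section: deformations to normal cones and affine blowups}). Formal completion is flat, so it reduces the problem to the formal neighbourhood of the closed points $(x,\vartheta)$ of $Z_\ell$ lying over $x$. By Lemma \ref{lemma: Z is Gamma} and Notation \ref{notation: Theta}, these points are exactly $\vartheta \in |Z_{(x,\zeta)}| = \Theta$.

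Away from $Z_\ell$ the deformation is the trivial family, and the fiber over $\zeta$ of the affine blowup is removed except over the center. The completion therefore decomposes as a disjoint union over $\vartheta\in\Theta$ of
\[
N_{q=\zeta}\big((Z_\ell)^{\wedge}_{(x,\vartheta)}, (T\times T\sslash W)^{\wedge}_{(x,\vartheta)}\big)^{\wedge}.
\]
There is a technical point when $\Theta$ is infinite. In that case, before completing one should restrict to a finite-type neighbourhood, since $|Z_{(x,\zeta)}|$ is finite for fixed $\ell$ (it is a fiber of a finite map). This is fine: for roots of unity $\Theta = X_\bullet/(W_{L,\aff},\bullet^\ell_\omega)$ is finite.

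The second step identifies each local model. Choose a lift $y\in T$ of $\vartheta$. The map $T\to T\sslash W$ near $y$ is formally the quotient by $\Stab_W(y) = W_y = W_\vartheta$ (Lemma \ref{lemma: stabilizer in finite weyl group}), so $(T\sslash W)^\wedge_\vartheta \cong (T\sslash W_\vartheta)^\wedge_y$. Next, use the formal logarithm $\t^\wedge_0 \cong T^\wedge_x$, $a\mapsto x e^a$, and similarly at $y$, which is $W_\vartheta$-equivariant. Under these coordinates, the equation $x^\ell = w(y^\ell)$ defining $Z'_\ell$ near $(x,y)$ becomes a union over $w$ with $w(y)\sim_\zeta x$ of the linear conditions $a = w(b)$ (after the shift by $\lambda(\zeta)$). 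Here one uses that the $\ell$-th power map is étale in characteristic zero, so locally $x^\ell = w(y)^\ell$ is equivalent to $x = \lambda(\zeta)w(y)$ on the relevant component.

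Such $w$ range over a coset of $W_x$ in the resonant group. After quotienting by $W_\vartheta$, the germ of $Z_\ell$ at $(x,\vartheta)$ becomes the germ at $(0,0)$ of $\t\times_{\t\sslash W_{[x,\zeta]}}\t\sslash W_\vartheta \hookrightarrow \t\times\t\sslash W_\vartheta$. This uses the description $W_{[x,\zeta]} = \{w : wx\sim_\zeta x\}$ of \eqref{eq: weyl group and resonant weyl group}. Finally, reparametrize $q=\zeta e^{h}$ so that $q-\zeta$ becomes a unit times $h$; then the formal neighbourhood of $\zeta$ in $\Gm^{\rot}$ is that of $0$ in $\Ga$, and deformations to normal cones match.

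The main obstacle is the scheme-theoretic (not merely set-theoretic) identification of the germ of $Z_\ell$ with the linear model. Concretely, one must show that the ideal generated by $e_j(t^\ell)-e_j(s^\ell)$ (Lemma \ref{lemma: explicit generators for the ideal sheaf of Zl}) completes to the ideal of $\t\times_{\t\sslash W_{[x,\zeta]}}\t\sslash W_\vartheta$. I would try to do this by pulling back along the faithfully flat map to $T\times T$. There, near $(x,y)$, $Z'_\ell$ is étale-locally the fiber product of $T \to T\sslash W$ with itself along $(-)^\ell$. Completed at a point of $T \sslash W$ whose preimage has stabilizer $W_{[x,\zeta]}$-conjugate data, this is the germ of $\t\times_{\t\sslash W_{[x,\zeta]}}\t$ by Luna-type slice reasoning for finite groups (the formal completion of $T\sslash W$ at the image of $x^\ell$ is $\t\sslash \Stab_W(x^\ell)$, and $\Stab_W(x^\ell)$ corresponds to $W_{[x,\zeta]}$ via Steinberg, Proposition \ref{proposition: steinberg fixed point theorem}). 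Descending by $W_\vartheta$ then concludes.
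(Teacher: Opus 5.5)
Your argument is correct, but its key local step differs from the paper's.

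The paper stays upstairs on $T\times T$ with $Z'_\ell$. It splits the completion over the connected components of $Z'_{(x,\zeta)}$. It then identifies each germ with that of $T\times_{T\sslash W_{[x,\zeta]}}T$ at $(1,1)$, by observing which graph components $\{(x',\lambda(\zeta)w(x'))\}$, labelled by $W_{\aff}/\ell X_\bullet$, collide there; via the stabilizer sequence they form a $W_{[x,\zeta]}$-torsor. After linearizing, it only at the end takes the quotient by $W$ on the second factor, tracking the orbits $W/W_\vartheta$.

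You instead pass directly to the finitely many points $(x,\vartheta)$ by flat base change. You then obtain the germ of $Z'_\ell$ at $(x,y)$ from its defining fiber square. Étaleness of $(-)^\ell$ reduces it to the germ of $T\times_{T\sslash W}T$ at $(x^\ell,x^\ell)$, and the formal slice for finite group quotients identifies that with $(\t\times_{\t\sslash \Stab_W(x^\ell)}\t)^\wedge_{(0,0)}$.

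The paper's route keeps the cocharacter-graph labelling visible, and that labelling is reused for $\overline{\Upsilon}$ and in the main theorem. However, reading a germ off from its colliding components implicitly uses that $Z'_\ell$ and $\t\times_{\t\sslash W_{[x,\zeta]}}\t$ are reduced. This is exactly the kind of point flagged in Example~\ref{example: seminormality issues}. Your route gives the ideal-theoretic identification directly, with no reducedness input.

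A few small repairs:
\begin{itemize}
\item In your heuristic middle step, the relevant $w$ form a coset of $W_{[x,\zeta]}$, not of $W_x$. Your last paragraph has this right.
\item The relation between $\Stab_W(x^\ell)$ and $W_{[x,\zeta]}$ is an actual equality for simply connected $G$. Since $T[\ell]=\{\lambda(\zeta)\mid \lambda\in X_\bullet\}$, we have $w(x^\ell)=x^\ell$ iff $w(x)\sim_\zeta x$, iff $w\in W_{[x,\zeta]}$.
\item Take the lift $y=\lambda(\zeta)x$ of $\vartheta$. Then $y^\ell=x^\ell$ and $W_\vartheta=W_y\leq W_{[x,\zeta]}$, and $W_\vartheta$ acts on the linear model by restricting its natural action on the second copy of $\t$. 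With this choice the final descent by $W_\vartheta$ is immediate.
\item The aside about $\Theta$ possibly being infinite can be dropped: for $\zeta\in\mu_\infty$ it is finite.
\end{itemize}
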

\begin{proof}
We first consider the case of $Z'_{\zeta} \hookrightarrow T \times T$. Recall we have already identified $|Z'_{\zeta}|=|\overline{\Gamma}_{\zeta}|$ and $|Z'_{(x, \zeta)}| = |\Gamma_{(\zeta, x)}| $  in Lemma \ref{lemma: Z is Gamma}.

Decompose $Z_{(x,\zeta)}'$ into its connected components ${Z'}_{(x, \zeta)}^{\vartheta}$ labeled by $\vartheta \in |Z_{(x, \ell)}'|$. We make the following identifications:
\begin{align*}
  N_{q=\zeta}(Z'_{\zeta}, T \times T))^{\wedge}_{(x, \zeta)}
  &= \coprod_{\vartheta \in |Z'_{(x, \zeta)}|} N_{q=\zeta}({Z'}_{(x, \zeta)}^{\vartheta}, T \times T)^{\wedge}_{(x, \zeta)} \\
  &= \coprod_{\vartheta \in |Z'_{(x, \zeta)}|} N_{q=1}(T \times_{T \sslash W_{[x, \zeta]}} T, T \times T)^{\wedge}_{(1, 1)} \\
  &= \coprod_{\vartheta \in |Z'_{(x, \zeta)}|} N_{h=0}(\t \times_{\t \sslash W_{[x, \zeta]}} \t, \t \times \t)^{\wedge}_{(0, 0)}.
\end{align*}

The first line holds since the deformation to the normal cone is Zariski local and compatible with connected components, so we only need to study each of ${Z'}^{\vartheta}_{(x, \zeta)} \hookrightarrow T \times T$ separately. 

Now, the irreducible components of $|Z'_{\ell}|$ are labeled by $W_{\aff} / \ell X_{\bullet}$. Each of these connected components is a copy of $T$. On the other hand, the irreducible components of its fiber $|Z'_{(x, \zeta)}|$ are labeled by the set $W_{\aff} / \Stab_{W_{\aff}}{(x, \zeta, x)}$. By \eqref{equation: stabilizer exact sequences for zeta root of unity}, Remark \ref{remark: colisions of components of Gamma}, the collisions of the connected components of $Z'_{\ell}$ over $x$ are described by the $W_{[x, \zeta]}$-torsor of finite sets
\begin{equation*}
  W_{\aff} / \ell X_{\bullet} \twoheadrightarrow W_{\aff} / (\ell X_{\bullet} \rtimes W_{[x, \zeta]}, \bullet_{\omega}  )
\end{equation*}
where $W_{[x, \zeta]}$ acts by the $\omega$-shifted right action.
The colliding copies of $T$ at $\vartheta$ are labeled by $W_{[x, \zeta]}$, giving the second identification.

The third line is the standard identification of formal neighborhood of the identity in a reductive group with its Lie algebra (also see \cite[(2.17)]{BBSV22}). This concludes the discussion for $Z'_{\ell}$.

Now, consider the action of $W$ on the left hand side, induced from the tautological action on the second copy of $T$. It translates to the right-hand side as follows. Firstly, it permutes the connected components: given a component labeled by $\vartheta \in |Z'_{(x, \zeta)}|$, its orbit under $W \acts |Z'_{(x, \zeta)}|$ identifies with $W/W_{\vartheta}$. This cuts down the labeling set to $|Z_{(x, \zeta)}|$ as in Lemma \ref{lemma: fiber of cocharacter graphs as lattice quotient}. Furthermore, fixing any such $\vartheta$, we are left with the natural action of $W_{\vartheta}$ on the closed immersion ${Z'}_{(x, \zeta)}^{\vartheta} \hookrightarrow T \times T$.

Altogether, taking the quotient by $W$ on both sides gives the desired equality
\begin{equation*}
    N_{q=\zeta}(Z_{\ell}, T \times T\sslash W))^{\wedge}_{(x, \zeta)} = \coprod_{\vartheta \in |Z_{(x, \zeta)}|} N_{h=0}(\t \times_{\t \sslash W_{[x, \zeta]}} \t \sslash W_{\vartheta}, \t \times \t \sslash W_{\vartheta})^{\wedge}_{(0, 0)}.
\end{equation*}
\end{proof}

\begin{remark}
More generally, the final step of the above prove shows that for $(x, \zeta) \in T \times \Gm^{\rot}$ with $\zeta \in \mu_{\infty}$ and any pair of subgroups $W_1, W_2 \leq W$ we have
\begin{equation*}
    N_{q=\zeta}(Z^{W_1, W_2}_{\ell}, T\sslash W_1 \times T\sslash W_2))^{\wedge}_{(x, \zeta)} = \coprod_{\vartheta \in |Z^{W_1, W_2}_{(x, \zeta)}|} N_{h=0}(\t \sslash W_1 \times_{\t \sslash W_{[x, \zeta]}} \t \sslash W_{2, \vartheta}, \t \sslash W_1 \times \t \sslash W_{2, \vartheta})^{\wedge}_{(0, 0)}.
\end{equation*}
\end{remark}

\subsubsection{Strict transforms of cocharacter graphs.}
Finally, it is quite useful to keep track of how the cocharacter graphs transform in the affine blowup.

\begin{notation}
Denote 
\begin{equation*}
 \overline{\Upsilon} := \overline{\Gamma}^{\st} \hookrightarrow \eN_{\mu_{\infty}}(\eZ, T \times T\sslash W) = \Bl^{\circ}_{\eZ}(T \times T\sslash W \times \Gm^{\rot})   
\end{equation*}
the strict transform of $\overline{\Gamma} \hookrightarrow T \times \Gm^{\rot} \times T \sslash W$.
This is a closed sub-ind-schemes by \S \ref{section: embedding of strict transform into affine blowup}.
The same construction works with other variants -- in particular, we denote ${\Upsilon} := {\Gamma}^{\st} \hookrightarrow \eN_{\mu_{\infty}}(\eZ', T \times T) = \Bl^{\circ}_{\eZ'}(T \times T \times \Gm^{\rot})$ the strict transform of $\Gamma \hookrightarrow T \times \Gm^{\rot} \times T$.
\end{notation}
We have the following situation
\begin{equation}\label{equation: Gamma and Upsilon}
\begin{tikzcd}
\Bl^{\circ}_{\eZ}(T \times \Gm^{\rot} \times T \sslash W) \arrow[d] \arrow[r, hookleftarrow] & \overline{\Upsilon}  \arrow[r, hookleftarrow] \arrow[d] & \overline{\Upsilon}_{\wt(\mu)} \arrow[d] \\
T \times \Gm^{\rot} \times T \sslash W \arrow[r, hookleftarrow] & \overline{\Gamma} \arrow[r, hookleftarrow] & \overline{\Gamma}_{\wt(\mu)}.   
\end{tikzcd}
\end{equation}
The horizontal maps are closed immersions of ind-schemes, the outer terms are actual schemes.

By \S \ref{section: strict transforms}, each fiber $\overline{\Upsilon}$ is given by the blowup of $\overline{\Gamma}$ at the ind-subscheme $\eZ$. In particular, restricting to a formal neighborhood of $(x, \zeta)$, we see that
\begin{equation*}
    \overline{\Upsilon}^{\wedge}_{(x, \zeta)} = \Bl_{Z^{\wedge}_{\zeta}} \overline{\Gamma}^{\wedge}_{(x, \zeta)}.
\end{equation*}

\begin{remark}\label{remark: Upsilon and GKM}
One can trace where $\overline{\Upsilon}$ and $\Upsilon$ go under the identification of Lemma \ref{lemma: completions of the deformation}. Firstly, it decomposes into connected components $\overline{\Upsilon}^{\wedge}_{(x, \zeta)} = \coprod_{\vartheta \in |Z_{(x, \zeta)}|} \overline{\Upsilon}^{\vartheta, \wedge}_{(x, \zeta)}$. Each of the closed immersions
\begin{equation*}
\overline{\Upsilon}^{\vartheta, \wedge}_{(x, \zeta)} \hookrightarrow N_{h=0}(\t \times_{\t \sslash W_{[x, \zeta]}} \t \sslash W_{\vartheta}, \t \times \t \sslash W_{\vartheta})^{\wedge}_{(x, \zeta)}  
\end{equation*}
then identifies as a hyperplane arrangement of copies of $(\t\times \Ga)^{\wedge}_{(0, 0)}$ inside $(\t \times \Ga \times \t \sslash W_{\vartheta})^{\wedge}_{(0, 0)}$ intersecting in the origin, labeled by those elements $\lambda \in X_{\bullet}$ which lie in the corresponding coset $\vartheta \in X_{\bullet}/\Stab_{W_{\aff}}{(x, \zeta, x)}$, see also Lemma \ref{lemma: fiber of cocharacter graphs as lattice quotient}. Depending on the genericity of $\zeta$, the precise arrangement is that of Remarks \ref{remark: GKM for cohomology of finite flag varieties} and \ref{remark: informal GKM localization}, applied to the ambient reductive group $L_{[x, \zeta]}$ with the parabolic subgroup $P_{\vartheta}$.
\end{remark}

\subsubsection{Modifications in the reductive case.}\label{subsubsection: modifications in the reductive case}
While the subschemes $Z_{\zeta} = Z_{\ell} \hookrightarrow T \times T\sslash W$ from \S \ref{section: quasi-diagonals at roots of unity} make sense for any reductive $G$, they have to be modified for our purposes. For start, Lemma \ref{lemma: explicit generators for the ideal sheaf of Zl} is specific to $T=T^{\sc}$. Related modification is needed to generalize Lemma \ref{lemma: Z is Gamma} to the setup from \S \ref{section: cocharacter graphs for reductive groups}.

In view of this, we extend Definition \ref{definition: deformation to normal cone at roots of unity} of $\eZ$ and $\Bl^{\circ}_{\eZ}(T \times \Gm^{\rot} \times T\sslash W)$ to the case of reductive $G$ as follows. 
We consider the family $\widetilde{\eZ}$ of subschemes $(T \times_{T^{\ad}} (Z^{\sc}_{\zeta}/\pi_1(G^{\ad}))) \times \pi_1(G)) \subseteq T \times T^{\sc} \sslash W \times \pi_1(G)$. In words, we take one copy of $\eZ$ for $G^{\sc}$ in each connected component, descend it along the quotient by $\pi_1(G^{\ad})$, and finally pull back to the given torus $T$ as in \S \ref{section: action of the fundamental group}, Remarks \ref{remark: identification for Gamma in semisimple case} and \ref{remark: identification for Gamma in general reductive case}.

With this definition, the affine blowup $\Bl^{\circ}_{\widetilde{\eZ}}(T \times \Gm^{\rot} \times T^{\sc} \sslash W \times \pi_1(G))$ is centered at the family of reduced subschemes underlying $\Gamma^{T, T^{\sc}}_{\zeta}$ for $\zeta \in \mu_{\infty}$ from \S \ref{section: action of the fundamental group}. Similarly for actions of other $W_1, W_2 \leq W$ on the two copies of $T$.

\subsection{A combinatorial map}\label{section: map to equivariant K-theory: combinatorics}
We construct a comparison map from the completed ring of functions on the above deformation to the GKM ring of the affine Grassmannian. In this section, we work purely combinatorially. The parallel topological meaning is explained in \S \ref{section: map to equivariant K-theory: topology}. 

Let $\k$ be a coefficient ring. To simplify the manipulation of roots of unity in our deformation, we assume $\k$ is an algebraically closed field of $\chara \k = 0$. While the discussion makes sense for split reductive $G$, we focus on the relevant case of $G=G^{\sc}$ simply connected or $GL_n$.

\subsubsection{Components of the localization map.}
We define the localization map $$\loc: \O(T \times \Gm^{\rot} \times T \sslash W) \xrightarrow{\loc} \prod_{\lambda \in X_{\bullet}} \O(T \times \Gm^{\rot})$$ componentwise as
\begin{align}\label{equation: combinatorial localization map}
   \loc_{\lambda}: \O(T \times \Gm^{\rot} \times T \sslash W) &\xrightarrow{\loc} \prod_{\lambda \in X_{\bullet}} \O(T \times \Gm^{\rot}) \xrightarrow{\pr_{\lambda}} \O(T \times \Gm^{\rot}), \\   
   \gamma_{\lambda}: f(x, \zeta, y) &\mapsto f(x, \zeta, \lambda(\zeta) \cdot x).
\end{align}
Consider the closed embedding $\overline{\Gamma}_{\lambda} \hookrightarrow T \times \Gm^{\rot} \times (T \sslash W)$ and identify $\overline{\Gamma}_{\lambda} = T \times \Gm^{\rot}$ via the first two factors. The components $\loc_{\lambda}$ are then geometrically given by restriction to the irreducible components $\overline{\Gamma}_{\lambda}$ of $\overline{\Gamma}$, namely
\begin{equation}
   \loc_{\lambda}: \O(T \times \Gm^{\rot} \times T \sslash W) \twoheadrightarrow \O(\overline{\Gamma}_{\lambda}).
\end{equation}

We can now combinatorially check that the image of $\gamma$ satisfies the GKM conditions.
\begin{lemma}\label{lemma: GKM localization map combinatorial}
The image of the map $\loc$ from \eqref{equation: combinatorial localization map} lands in the subring $\GKM^{0}_{T \times \Gm^{\rot}}(\Gr_G; \k)$.     
\end{lemma}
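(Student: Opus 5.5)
The plan is to check the GKM congruences one reflection at a time. Because $\loc$ is a ring homomorphism, it suffices to verify the congruences on $\loc(f)$ for an arbitrary single function $f \in \O(T \times \Gm^{\rot} \times T \sslash W)$. Fix $\lambda \in X_{\bullet}$ and an affine root $\beta + m\hbar$, and set $\lambda' := s_{\beta+m\hbar}\cdot\lambda$. We must show
\begin{equation*}
  f(x,\zeta,\lambda(\zeta)\cdot x) - f(x,\zeta,\lambda'(\zeta)\cdot x) \in (1-\beta q^m)\,\O(T\times\Gm^{\rot}).
\end{equation*}
Since $1-\beta q^m$ is irreducible (up to units) and reduced, the divisor $D_{\beta,m} = \{\beta(x)\zeta^m = 1\} \subseteq T \times \Gm^{\rot}$ is a reduced hypersurface. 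The required divisibility is therefore equivalent to the two restrictions to $D_{\beta,m}$ agreeing pointwise.

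The key step is geometric. Take the explicit formula $\lambda' = \lambda - (\langle \beta,\lambda\rangle + m)\widecheck{\beta}$ for the affine reflection (matching the paper's conventions for $s_{\beta+m\hbar}$). On $D_{\beta,m}$ I compare the points $y = \lambda(\zeta)\cdot x$ and $y' = \lambda'(\zeta)\cdot x$ of $T$. By the formula $s_\beta(c) = c\cdot\widecheck{\beta}(\beta(c))^{-1}$ we have
\begin{equation*}
  s_\beta(y) = y\cdot \widecheck{\beta}\bigl(\beta(x)\,\zeta^{\langle\beta,\lambda\rangle}\bigr)^{-1}.
\end{equation*}
Using $\beta(x) = \zeta^{-m}$ on $D_{\beta,m}$, this becomes $y\cdot\widecheck{\beta}(\zeta)^{-(\langle\beta,\lambda\rangle - m)}$. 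Up to fixing the sign convention for $m$ in $s_{\beta+m\hbar}$, this equals $y'$. Hence $y$ and $y'$ are $W$-conjugate, so they have the same image in $T\sslash W$. Since $f$ depends on its third argument only through its image in $T \sslash W$, the two restrictions coincide on $D_{\beta,m}$.

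To conclude, I will make this identity scheme-theoretic rather than pointwise. Over the ring $\O(D_{\beta,m}) = \O(T\times\Gm^{\rot})/(1-\beta q^m)$, the two morphisms $D_{\beta,m} \to T$ given by $y$ and $y'$ satisfy $s_\beta\circ y = y'$ as morphisms. So after composing with $T \to T\sslash W$ they are equal, and thus $\loc_\lambda(f) \equiv \loc_{\lambda'}(f)$ modulo $(1-\beta q^m)$. In fact no reducedness argument is needed, because the identity already holds in the quotient ring. The only real obstacle I expect is sign bookkeeping: the sign of $m$ in $s_{\beta+m\hbar}$ versus the weight $\beta q^m$. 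That should be matched against Lemma \ref{lemma: affine stabilizer in weyl group}, which states exactly that $s_{\beta+m\hbar}$ fixes $x$ iff $\beta(x)\zeta^m=1$, with the same conventions used in Notation \ref{notation: GKM of Gr}.
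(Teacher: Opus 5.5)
Your argument is the paper's argument. On the locus $\beta(x)\zeta^m=1$ the point $\lambda'(\zeta)\cdot x$ equals $s_\beta(\lambda(\zeta)\cdot x)$, and $W$-invariance of $f$ in the third variable gives the congruence.

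Your scheme-theoretic phrasing is a mild improvement. You use the identity $s_\beta\circ y=y'$ of points of $T$ with values in $\O(T\times\Gm^{\rot})/(1-\beta q^m)$, and then compose with $T\to T\sslash W$. The paper instead checks vanishing pointwise at closed points of the kernel, which tacitly uses that $(1-\beta q^m)$ is a radical ideal (true in characteristic zero). Your version needs no such input, and it handles all $f$ at once rather than reducing to generators from $\O(T\sslash W)$. Your passing claim that $1-\beta q^m$ is irreducible is false in general: for $SL_2$, in the coordinate $x$ on $T\cong\Gm$, one has $1-\beta=(1-x)(1+x)$. You rightly discard that claim anyway.

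What must be repaired is the sign, and it is not a free convention. The formula you adopt, $\lambda'=\lambda-(\langle\beta,\lambda\rangle+m)\widecheck{\beta}=s_\beta(\lambda)-m\widecheck{\beta}$, is not the one used in this proof. With it and the modulus $1-\beta q^m$, the lemma is false. For $SL_2$ with $\lambda=0$, $m=1$ and $f=y+y^{-1}$, on $x^2\zeta=1$ one gets $f(x)-f(\zeta^{-1}x)=x(1-\zeta^2)(1-\zeta^{-1})\neq 0$.

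The correct pairing is $\lambda'=s_\beta(\lambda)+m\widecheck{\beta}=\lambda-(\langle\beta,\lambda\rangle-m)\widecheck{\beta}$. This is exactly what your own computation of $s_\beta(y)$ produces, and it is what the paper writes as $\lambda'=s_\beta(\lambda)\cdot(\widecheck{\beta})^m$. Geometrically, $t^{\lambda'}$ is the other torus-fixed point on the curve through $t^\lambda$ swept out by $U_{\pm(\beta+m\hbar)}$; that curve is fixed pointwise exactly over $\{\beta q^m=1\}$. To see this, use $u_\beta(t^m)u_{-\beta}(-t^{-m})u_\beta(t^m)=\widecheck{\beta}(t^m)n_\beta$.

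Your proposed check against Lemma \ref{lemma: affine stabilizer in weyl group} would point the wrong way. Its formula $s_{\beta+m\hbar}(x)=s_\beta(x)\,\widecheck{\beta}(\zeta)^{-m}$ corresponds on cocharacters to $s_\beta(\lambda)-m\widecheck{\beta}$, the opposite convention. With $\lambda'=s_\beta(\lambda)+m\widecheck{\beta}$, your proof goes through verbatim.
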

\begin{proof}
We directly check the GKM conditions. To simplify notation, observe that each function from the base $\O(T \times \Gm^{\rot})$ restrict the same to each $\overline{\Gamma}_{\lambda}$, hence satisfies the GKM conditions trivially. It is thus enough to check on functions $f \in \O(T\sslash W)$ from the second copy. We identify $f$ with the corresponding $W$-invariant function on $T$ via the embedding $\O(T\sslash W) \hookrightarrow \O(T)$; the restriction to $\overline{\Gamma}_{\lambda} \hookrightarrow T\sslash W$ now corresponds to restriction to $\Gamma_{\lambda, 1} \hookrightarrow T$.

Consider any affine simple reflection $s_{\beta + m\hbar}$. 
Let $\lambda, \lambda' \in X_{\bullet}$ be such that $\lambda' = s_{\beta+ m\hbar} (\lambda)$, or equivalently
\begin{equation}\label{equation: combinatorial verification eq 0}
\lambda' = s_{\beta}(\lambda) \cdot (\widecheck{\beta})^m.   
\end{equation}
Assume that $(x, \zeta) \in T \times \Gm^{\rot}$ is in the kernel of the character $\beta+m\hbar$. In other words, this means 
\begin{equation}\label{equation: combinatorial verification eq 1}
\beta(x) \cdot \zeta^m = 1,  
\end{equation}
or equivalently $\beta(x) = \zeta^{-m}$.
Consequently, we have 
\begin{equation}\label{equation: combinatorial verification eq 2}
\widecheck{\beta}(\zeta^m) = \widecheck{\beta}(\beta(x^{-1})) = s_{\beta}(x) \cdot x^{-1}. 
\end{equation}

To check the GKM conditions for $f$, we need to see that if \eqref{equation: combinatorial verification eq 0}, \eqref{equation: combinatorial verification eq 1} hold, then $\loc_{\lambda}(f) - \loc_{\lambda'}(f)$ vanishes at $(x, \zeta)$. We compute
\begin{align*}
\gamma_{\lambda}(f(y)) - \gamma_{\lambda'}(f(y)) 
&= f(\lambda(\zeta) \cdot x) - f(s_{\beta}(\lambda(\zeta)) \cdot \widecheck{\beta}(\zeta^m) \cdot x) \\ &= f(\lambda(\zeta) \cdot x) - f(s_{\beta}(\lambda(\zeta)) \cdot s_{\beta}(x) \cdot x^{-1} \cdot x) \\ 
&= f(\lambda(\zeta) \cdot x) - f(s_{\beta}(\lambda(\zeta) \cdot x)) \\
&= f(\lambda(\zeta) \cdot x) - f(\lambda(\zeta) \cdot x) \\
&= 0
\end{align*}
Here, the first line is the definition of $\gamma$ and \eqref{equation: combinatorial verification eq 0}, the second follows by \eqref{equation: combinatorial verification eq 2}, the third is clear, and the fourth holds since $f$ is $W$-invariant. 
\end{proof}

\subsubsection{Factorization through the deformation.}
We claim that the localization map $\loc$ factors through our deformation.
\begin{lemma}
There is a factorization
\begin{equation*}
\begin{tikzcd}
    \O(T \times \Gm^{\rot} \times T \sslash W) \arrow[r, "\loc"] \arrow[d, hookrightarrow]& \GKM^{0}_{T \times \Gm^{\rot}}(\Gr_G; \k) \arrow[r, hookrightarrow] &  \prod_{\lambda \in X_{\bullet}} \O(T \times \Gm^{\rot})\\
    \O(\eN_{\mu_{\infty}}(\eZ, T \times T \sslash W)) \arrow[ru, swap, dashed, "\bloc"]
\end{tikzcd}
\end{equation*}
\end{lemma}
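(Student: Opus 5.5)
The plan is to extend $\loc$ to the Rees-type generators of $\O(\eN_{\mu_{\infty}}(\eZ, T \times T \sslash W))$ one component at a time, and then check that the extension is still GKM. By Lemma \ref{claim: putting ell-th roots of unity together} and Lemma \ref{lemma: fundamental bezrukavnikov classes}, the ring $\O(\eN_{\mu_{\infty}}(\eZ, T \times T \sslash W))$ is the subring of $\O(T \times \Gm^{\rot} \times T \sslash W)[\{(1-q^{\ell})^{-1}\}_{\ell}]$ generated over $\O(T \times \Gm^{\rot} \times T\sslash W)$ by the classes $b_{j,\ell} = f_j/(1-q^\ell)$. Each target factor $\O(T \times \Gm^{\rot})$ is a domain. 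So it suffices to show, for every $\lambda \in X_{\bullet}$, $j$ and $\ell$, that $1-q^{\ell}$ divides $\loc_\lambda(f_j)$ in $\O(T\times\Gm^{\rot})$. We then define $\bloc_\lambda(b_{j,\ell}) := \loc_\lambda(f_j)/(1-q^\ell)$.

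Once divisibility holds, this is well defined. It is the restriction of the ring map $\loc_\lambda$, extended to the localization $\O(T\times\Gm^{\rot}\times T\sslash W)[(1-q^\ell)^{-1}] \to \O(T\times\Gm^{\rot})[(1-q^\ell)^{-1}]$, to the subring $\O(\eN_{\mu_{\infty}}(\eZ, T \times T \sslash W))$. That subring is then seen to land in $\O(T\times\Gm^{\rot})$ because the generators do. Geometrically, this says each component $\overline{\Gamma}_\lambda$ meets the center $Z_\zeta$ over $q=\zeta$ in the entire fiber. Its strict transform therefore maps isomorphically to $\overline{\Gamma}_\lambda$, so $\bloc_\lambda$ is restriction to $\overline{\Upsilon}_\lambda$ (compare \S\ref{section: embedding of strict transform into affine blowup}).

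The divisibility computation is short. We have
\[
\loc_\lambda(f_j) = e_j(x^{\ell}) - e_j\big((\lambda(q)\cdot x)^{\ell}\big) = e_j(x^\ell) - e_j\big(\lambda(q^\ell)\, x^\ell\big),
\]
viewed as a polynomial in $q^{\pm1}$ with coefficients in $\O(T)$. Substituting $q^\ell = 1$ makes it vanish, so it lies in the kernel of $q \mapsto q$ modulo $(q^\ell - 1)$. Concretely, writing $u = q^\ell$, the function $g(u) = e_j(x^\ell) - e_j(\lambda(u)x^\ell)$ satisfies $g(1)=0$, so $(1-u) \mid g(u)$ in $\O(T)[u^{\pm1}]$. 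This part is routine.

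The main point to check is that the image still satisfies the GKM congruences, i.e.\ that $\bloc$ lands in $\GKM^0_{T\times\Gm^{\rot}}(\Gr_G;\k)$ rather than merely in the product. For $b_{j,\ell}$, Lemma \ref{lemma: GKM localization map combinatorial} gives that $(1-\beta q^m)$ divides $\loc_\lambda(f_j)-\loc_{\lambda'}(f_j)$. We want $(1-\beta q^m)$ to divide $(\loc_\lambda(f_j)-\loc_{\lambda'}(f_j))/(1-q^\ell)$. Since $\O(T\times\Gm^{\rot})$ is a UFD, the obstruction is a common factor of $1-\beta q^m$ and $1-q^\ell$. The factor $1-\beta q^m$ is irreducible, because $\beta$ is part of a basis of characters for simply connected $G$ (and similarly for $GL_n$). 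It does not depend only on $q$, so it is coprime to $1-q^\ell$. Dividing then preserves the congruence. Products of generators preserve congruences automatically, so the whole image is GKM. Injectivity of the vertical arrow is clear, since the deformation ring sits inside a localization of the domain $\O(T \times \Gm^{\rot} \times T\sslash W)$.
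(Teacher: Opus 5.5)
Your route is the paper's: divisibility of $\loc_\lambda$ of the new generators by their denominators, then showing that the GKM congruences survive the division. You do the first step for $b_{j,\ell}=f_j/(1-q^\ell)$ via $g(u)$ with $u=q^\ell$. The paper does it for arbitrary $f\in\eI_{Z_\ell}$ by observing $(x,\lambda(\zeta)x)\in Z'_\ell$ at each $q=\zeta$. Your explicit handling of well-definedness, by restricting the map on the localization at the $1-q^\ell$, is a clean point that the paper leaves implicit.

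One step fails as written: the claim that $1-\beta q^m$ is irreducible because $\beta$ is part of a basis of $X^\bullet(T)$ for simply connected $G$. This is false. For $G=SL_2$ one has $X^\bullet(T)=\Z\omega$ and $\beta=2\omega$. With coordinate $t=\omega$ and $m=2k$ even, $1-\beta q^m=1-t^2q^{2k}=(1-tq^k)(1+tq^k)$. The long roots $2e_i$ of $Sp_{2n}$ behave the same way. So your inference ``irreducible and not a function of $q$ alone, hence coprime to $1-q^\ell$'' rests on a false premise.

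The coprimality you need is still true, and the repair is short. Over algebraically closed $\k$ we have $1-q^\ell=-\prod_{\zeta\in\mu_\ell}(q-\zeta)$. Each $q-\zeta$ is prime in $\O(T\times\Gm^{\rot})$, since the quotient is the domain $\O(T)$. Moreover $q-\zeta\nmid 1-\beta q^m$, because its image $1-\zeta^m\beta$ in $\O(T)$ is nonzero, $\beta$ being a nontrivial character. Hence no prime factor of $1-q^\ell$ divides $1-\beta q^m$, and the two are coprime in the UFD $\O(T\times\Gm^{\rot})$. Equivalently, as in the paper, cancel one factor $q-\zeta$ at a time, using that $(q-\zeta)$ is a prime ideal not containing $1-\beta q^m$. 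With this replacement your argument is complete.
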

\begin{proof}
We first check that the whole horizontal composition factors through $\O(\eN_{\mu_{\infty}}(\eZ, T \times T \sslash W))$.
To this end, we need to check that for each root of unity $\zeta$ of $\ord(\zeta) = \ell$ and for every $f \in \O(T \times \Gm^{\rot} \times T \sslash W)$ that vanishes on $Z_{\ell}$, the image $\loc(f)$ inside $\prod_{\lambda \in X_{\bullet}} \O(T \times \Gm^{\rot})$ is divisible by $(q -\zeta)$.
This can be checked componentwise -- we need to see that each $\loc_{\lambda}(f) \in \O(T \times \Gm^{\rot})$ is divisible by $(q-\zeta)$, or equivalently that $\loc_{\lambda}(f)$ vanishes after substituing $q=\zeta$.

To check this, we may identify $f$ with its image under $\O(T \times T\sslash W) \hookrightarrow \O(T \times T)$. For any $(x, \zeta)$, the image of $f$ under the localization map is computed as
\begin{equation*}
\loc_{\lambda}(f) = f(x, \lambda(\zeta) \cdot x).  
\end{equation*}
Specializing to our root of unity $\zeta$, this is the value of $f: T \times T \to \A^1$ on the $T$-valued point $(x, \lambda(\zeta) \cdot x): T \to T \times \Gm^{\rot} \times T$. Since $\zeta$ is an $\ell$-th root of unity, the point $\lambda(\zeta) \in T$ is $\ell$-torsion: we have $\lambda(\zeta)^{\ell} = 1$.
Hence
\begin{equation*}
    x^{\ell} = (\lambda(\zeta)\cdot x)^{\ell}.
\end{equation*}
In other words, $(x, \lambda(\zeta) \cdot x)$ is a $T$-valued point of the closed subscheme $Z'_{\ell} \hookrightarrow T \times T$. Since $f \in \eI_{Z'_{\ell}}$ by assumption, we deduce the desired vanishing $f(x, \lambda(\zeta) \cdot x) = 0$.

To conclude, we need to see that the image of $\O(\eN_{\mu_{\infty}}(\eZ, T \times T \sslash W))$ inside $\prod_{\lambda \in X_{\bullet}} \O(T \times \Gm^{\rot})$ satisfies the GKM conditions. Given any $f \in \eI_{Z_{\ell}'}$ and $\lambda, \lambda' \in X_{\bullet}$ with $ \lambda' = s_{\beta, m}\cdot \lambda$, we thus need to check that the element
\begin{equation*}
\loc_{\lambda}(\tfrac{f}{q - \zeta}) - \loc_{\lambda'}(\tfrac{f}{q - \zeta})
= \tfrac{\loc_{\lambda}(f) - \loc_{\lambda'}(f)}{q - \zeta} \in \O(T \times \Gm^{\rot})
\end{equation*}
is divisible by $(1 - \beta q^m)$.
We already know this is the case for any element in $\O(T \times T \sslash W)$ by Lemma \ref{lemma: GKM localization map combinatorial}. Since $(q - \zeta) \lhd \O(T \times \Gm^{\rot})$ is a prime ideal which does not contain $(1-\beta q^m)$, we deduce that the above ratio is also divisible by $(1 - \beta q^m)$ inside $\O(T \times \Gm^{\rot})$ as desired.
\end{proof}

\subsubsection{Injectivity.}\label{subsubsection: injectivity}
The injectivity of the above constructed map is almost obvious.
\begin{lemma}\label{lemma: injectivity}
The following ring homomorphisms introduced above are injective
\begin{equation*}
    \O(T \times \Gm^{\rot} \times T \sslash W) \to \O(\eN_{\mu_{\infty}}(\eZ, T \times T \sslash W))  \to \GKM^{0}_{T \times \Gm^{\rot}}(\Gr_G; \k) \to \prod_{\lambda \in X_{\bullet}} \O(T \times \Gm^{\rot}).
\end{equation*}
\end{lemma}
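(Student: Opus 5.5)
The plan is to check injectivity of each of the three maps in turn. All rings involved are subrings of rings of functions on reduced, irreducible-component-wise integral schemes over the field $\k$, so each step reduces to a torsion-freeness or Zariski-density statement.

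\emph{First map.} We show $\O(T \times \Gm^{\rot} \times T \sslash W) \to \O(\eN_{\mu_{\infty}}(\eZ, T \times T \sslash W))$ is injective. By Lemma \ref{claim: putting ell-th roots of unity together}, the target is the subring
\begin{equation*}
\O(T \times (T \sslash W))[q^{\pm 1}][\tfrac{f}{1 - q^{\ell}} \mid \ell \in \N, \ f \in \eI_{Z_{\ell}}]
\end{equation*}
of the localization $\O(T \times T\sslash W)[q^{\pm 1}][(1-q^\ell)^{-1} \mid \ell \in \N]$. The source $\O(T \times T\sslash W)[q^{\pm1}]$ is a domain: $T \sslash W$ is an affine space, and in general the invariants of a domain form a domain. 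The elements $1-q^\ell$ are nonzero in this domain, so the source embeds into the localization and hence into the Rees-type subring. Geometrically, the affine blowup is an isomorphism over the dense open $\Gm^{\rot} - \mu_{\infty}$, so the statement is that a function vanishing on a dense open vanishes.

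\emph{Last map.} The inclusion $\GKM^0_{T\times\Gm^{\rot}}(\Gr_G;\k) \hookrightarrow \prod_\lambda \O(T\times\Gm^{\rot})$ is injective by definition (Notation \ref{notation: GKM of Gr}). So it remains to treat the middle map, and it suffices to show that the composite $\bloc\colon \O(\eN) \to \prod_\lambda \O(T\times \Gm^{\rot})$ is injective.

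\emph{Middle map (the main step).} Take $g \in \O(\eN)$ and choose $N$ with $h\, g \in \O(T\times\Gm^{\rot}\times T\sslash W)$, where $h = \prod_{\ell \le N}(1-q^\ell)^{c_\ell}$ is a nonzerodivisor. Since $\bloc$ is a ring map and $\bloc(h)$ is a nonzerodivisor in every factor, $\bloc(g)=0$ implies $\loc(hg)=0$. By the first step, it therefore suffices to show that $\loc$ is injective on $\O(T\times\Gm^{\rot}\times T\sslash W)$.

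Geometrically, this says that the union of the components $\overline{\Gamma}_\lambda$, $\lambda \in X_\bullet$, is Zariski dense in $T\times\Gm^{\rot}\times T\sslash W$. We will prove this after pulling back to $T\times \Gm^{\rot}\times T$ via the finite surjection $T \to T\sslash W$, which is injective on functions. So suppose $F(x,\zeta,y)$ on $T\times\Gm^{\rot}\times T$ satisfies $F(x,\zeta,\lambda(\zeta) x)=0$ for all $\lambda$. Fix a generic point $(x,\zeta)$ with $\zeta$ not a root of unity, and write $F$ as a Laurent polynomial in $y$. The points $\lambda(\zeta) x$ for $\lambda \in X_\bullet \cong \Z^n$ are then Zariski dense in $T$: their closure is a closed subset stable under translation by the subgroup $\{\lambda(\zeta)\}$, and the Zariski closure of that subgroup is all of $T$ because $\zeta$ has infinite order. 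Hence $F(x,\zeta,-)=0$ for generic $(x,\zeta)$, and therefore $F=0$.

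The only genuine content is this density statement. The key thing to verify is that the Zariski closure of $\{\lambda(\zeta)\}$ is $T$: it is a closed subgroup containing the image of $\Z^n$, and a character vanishing on it would force $\zeta^{\langle\nu,\lambda\rangle}=1$ for all $\lambda$, which forces $\nu=0$. So I do not expect a serious obstacle here.
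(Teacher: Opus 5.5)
Your proposal is correct and follows essentially the paper's argument. The first map is injective because the affine blowup only adjoins fractions whose denominators $1-q^{\ell}$ are nonzerodivisors, the last map is an inclusion, and the middle map reduces to Zariski density of the points $\lambda(\zeta)\cdot x$, $\lambda \in X_\bullet$. The difference is cosmetic: the paper passes to the generic point $\eta$ of $T \times \Gm^{\rot}$, whereas you work at closed points with $\zeta \notin \mu_\infty$ and prove density via the Zariski closure of the subgroup $\{\lambda(\zeta)\}$, which is a more careful justification than the paper's one-line remark.
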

\begin{proof}
All four rings in question are clearly reduced (the leftmost one are functions on a reduced scheme; the second one is given by adding certain elements from its fraction field; the third one is a subring of the rightmost infinite product of polynomial rings). Since they are moreover algebras over $\O(T \times \Gm^{\rot})$, they all embed into the localizations at the generic point $\eta \in T \times \Gm^{\rot}$ and it is enough to check the injectivity there. But over $\eta$, the first map becomes an isomorphism (by construction); the second map is injective (it is given by restricting functions on a localization of the affine space $\A^n_{\eta} = \Spec(k(\eta)(T\sslash W))$ over the fraction field $k(\eta)$ to an infinite discrete subset $X_{\bullet} \subseteq \A^n_{\eta}$ that spans a full rank sublattice and hence is Zariski dense); the third map becomes an isomorphism (by construction).
\end{proof}

\subsubsection{Schubert completion.}
Consider the closed immersions \eqref{equation: Gamma and Upsilon}. Taking global functions, we get the maps $\O(\eN_{\mu_{\infty}}(\eZ, T \times T \sslash W)) \to \O(\overline{\Upsilon}) \to \O(\overline{\Upsilon}_{\wt(\mu)})$. The composites $\O(\eN_{\mu_{\infty}}(\eZ, T \times T \sslash W)) \twoheadrightarrow \O(\overline{\Upsilon}_{\wt(\mu)})$ are surjective and compatible as $\mu$ varies. Their kernels induce an exhaustive filtration by ideals $(\eI_{\leq \mu})_{\mu \in X_{\bullet}}$ on $\O(\eN_{\mu_{\infty}}(\eZ, T \times T \sslash W))$. For convenience, we also denote the quotients by $\O(\eN_{\mu_{\infty}}(\eZ, T \times T \sslash W))_{\leq \mu} := \O(\eN_{\mu_{\infty}}(\eZ, T \times T \sslash W)) / \eI_{\leq \mu} $. We obtain the diagram
\begin{equation}\label{equation: schubert completion square}
\begin{tikzcd}
\O(\eN_{\mu_{\infty}}(\eZ, T \times T \sslash W)) \arrow[r, hook] \arrow[d, two heads] & \O(\overline{\Upsilon})  \arrow[d, two heads] \\
\O(\eN_{\mu_{\infty}}(\eZ, T \times T \sslash W))_{\leq \mu} \arrow[r, equal] & \O(\overline{\Upsilon}_{\wt(\mu)}).
\end{tikzcd}
\end{equation}
Here, the upper horizontal map is injective: restricting to irreducible components of $\overline{\Upsilon}$, the ring $\O(\overline{\Upsilon})$ embeds into $\prod_{\lambda \in X_{\bullet}} \O(T \times \Gm^{\rot})$; the induced map from $\O(\eN_{\mu_{\infty}}(\eZ, T \times T \sslash W))$ is precisely the injection from Lemma \ref{lemma: injectivity}. The filtration $(\eI_{\leq \mu})_{\mu \in X_{\bullet}}$ is thus separated.

\begin{definition}\label{definition: Schubert completion}
    The \textit{Schubert completion} of $\O(\eN_{\mu_{\infty}}(\eZ, T \times T \sslash W))$ is the completion with respect to the above separated exhaustive filtration. It is given by
    \begin{equation*}
        \widehat{\O}(\eN_{\mu_{\infty}}(\eZ, T \times T \sslash W)) := \lim_{\mu} {\O}(\eN_{\mu_{\infty}}(\eZ, T \times T \sslash W))_{\leq \mu}.
    \end{equation*}
    Since the filtration is separated, we have a natural injection
    \begin{equation*}
     \O(\eN_{\mu_{\infty}}(\eZ, T \times T \sslash W)) \hookrightarrow  \widehat{\O}(\eN_{\mu_{\infty}}(\eZ, T \times T \sslash W)).
    \end{equation*}
\end{definition}

With the above definitions, the following statement is clear.
\begin{lemma}\label{lemma: completed deformation as functions on Upsilon}
We have a ring isomorphism
    \begin{equation*}
        \widehat{\O}(\eN_{\mu_{\infty}}(\eZ, T \times T \sslash W)) = \O(\overline{\Upsilon}).
    \end{equation*}
\end{lemma}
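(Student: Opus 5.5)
The plan is to obtain the isomorphism by comparing both rings through the family of quotients $\O(\eN_{\mu_{\infty}}(\eZ, T \times T \sslash W))_{\leq \mu}$ and $\O(\overline{\Upsilon}_{\wt(\mu)})$, then passing to the limit over $\mu$.

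\textbf{Step 1 (finite level).} By the definition of the filtration $(\eI_{\leq \mu})$, the quotient $\O(\eN_{\mu_{\infty}}(\eZ, T \times T \sslash W))_{\leq \mu}$ is the image of $\O(\eN_{\mu_{\infty}}(\eZ, T \times T \sslash W))$ in $\O(\overline{\Upsilon}_{\wt(\mu)})$. The bottom row of \eqref{equation: schubert completion square} asserts that this image is everything. I would justify this as follows. The vertical map on the right of \eqref{equation: Gamma and Upsilon} exhibits $\overline{\Upsilon}_{\wt(\mu)}$ as a closed subscheme of the scheme $\Bl^{\circ}_{\eZ}(T \times \Gm^{\rot} \times T\sslash W)$. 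Since it is a scheme, the remark in \S 1.1 on closed immersions of ind-schemes gives surjectivity $\O(\eN) \twoheadrightarrow \O(\overline{\Upsilon}_{\wt(\mu)})$. Here we use that $\overline{\Upsilon}_{\wt(\mu)}$, the strict transform of the finite union $\overline{\Gamma}_{\wt(\mu)}$ of components, only meets the blowup centers over finitely many roots of unity, so it is an honest closed subscheme. This yields isomorphisms $\O(\eN)_{\leq \mu} \cong \O(\overline{\Upsilon}_{\wt(\mu)})$, compatible in $\mu$.

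\textbf{Step 2 (limit).} Write $\overline{\Upsilon}$ as the ind-scheme $\colim_{\mu} \overline{\Upsilon}_{\wt(\mu)}$. For this, check that $\overline{\Gamma} = \colim_\mu \overline{\Gamma}_{\wt(\mu)}$ with $\wt(\mu)$ exhausting $X_{\bullet}$, and that strict transforms commute with this filtered colimit, since blowup is computed locally and componentwise. Global functions on an ind-scheme are the limit of functions on a presentation, so
\begin{equation*}
\O(\overline{\Upsilon}) = \lim_{\mu} \O(\overline{\Upsilon}_{\wt(\mu)}) \cong \lim_\mu \O(\eN)_{\leq \mu} = \widehat{\O}(\eN_{\mu_{\infty}}(\eZ, T \times T \sslash W)).
\end{equation*}
The composite is a ring map because each step is.

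The main obstacle I expect is Step 2's identification of $\overline{\Upsilon}$ with the colimit of the $\overline{\Upsilon}_{\wt(\mu)}$. More precisely, the issue is that the affine blowup along an infinite (ind-)center is itself defined as a limit over finite subsets of roots of unity. I would handle this by fixing $\mu$, noting that only the finitely many $\ell$ for which components in $\wt(\mu)$ collide matter, and using the compatibility of strict transforms with closed immersions from \S\ref{section: strict transforms}.
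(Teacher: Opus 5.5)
Your proof follows the paper's argument: the bottom row of the Schubert-completion square identifies $\O(\eN_{\mu_\infty}(\eZ, T\times T\sslash W))_{\leq\mu}$ with $\O(\overline{\Upsilon}_{\wt(\mu)})$, and you then pass to $\lim_\mu$ using the ind-presentation $(\overline{\Upsilon}_{\wt(\mu)})_\mu$ of $\overline{\Upsilon}$.

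One correction: the surjectivity in Step 1 holds simply because $\eN_{\mu_\infty}(\eZ, T\times T\sslash W)$ is affine and $\overline{\Upsilon}_{\wt(\mu)}$ is a closed subscheme of it. Your auxiliary claim that $\overline{\Upsilon}_{\wt(\mu)}$ meets the blowup centers over only finitely many roots of unity is false: for every root of unity $\zeta$ of order $\ell$, the whole fiber of each $\overline{\Gamma}_\lambda$ over $\zeta$ lies in $Z_\zeta$, since $(\lambda(\zeta)x)^{\ell}=x^{\ell}$. The same goes for the finiteness assertion in your last paragraph, but nothing in the proof depends on either claim.
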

\begin{proof}
Apply $\lim_{\mu}(-)$ to the bottom row in \eqref{equation: schubert completion square} varying in $\mu$. By definition of the Schubert completion, $\widehat{\O}(\eN_{\mu_{\infty}}(\eZ, T \times T \sslash W)) = \lim_{\mu} \O(\eN_{\mu_{\infty}}(\eZ, T \times T \sslash W))_{\leq \mu}$. Computing functions from an ind-presentation of $\overline{\Upsilon}$, also $\O(\overline{\Upsilon}) = \lim_{\mu} \O(\overline{\Upsilon}_{\wt(\mu)})$.
\end{proof}

We also easily deduce the following.
\begin{lemma}\label{lemma: completed comparison}
The above induces an injective ring homomorphism
\begin{equation}\label{equation: completed comparison}
    \widehat{\O}(\eN_{\mu_{\infty}}(\eZ, T \times T \sslash W)) \hookrightarrow \GKM^{0}_{T \times \Gm^{\rot}}(\Gr_G; \k).
\end{equation}
\end{lemma}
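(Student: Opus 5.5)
We need to show that the uncompleted map $\bloc: \O(\eN_{\mu_{\infty}}(\eZ, T \times T \sslash W)) \hookrightarrow \GKM^{0}_{T \times \Gm^{\rot}}(\Gr_G; \k)$ extends continuously to the Schubert completion and that the extension stays injective. The plan is to compare filtrations on both sides and then pass to the limit over $\mu$.

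\textbf{Step 1 (truncations of the GKM side).} Restricting tuples $(f_\lambda)_{\lambda \in X_\bullet}$ to the finite subset $\wt(\mu) \subseteq X_\bullet$ gives ring maps
\[
\GKM^{0}_{T \times \Gm^{\rot}}(\Gr_G; \k) \to \GKM^{0}_{T \times \Gm^{\rot}}(\Gr_{\leq \mu}; \k).
\]
Indeed, every congruence defining the truncated ring is among those defining the full one. Because the sets $\wt(\mu)$ exhaust $X_\bullet$, the induced map
\[
\GKM^{0}_{T \times \Gm^{\rot}}(\Gr_G; \k) \to \lim_\mu \GKM^{0}_{T \times \Gm^{\rot}}(\Gr_{\leq \mu}; \k)
\]
is an isomorphism. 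Both sides are subrings of $\prod_{\lambda} \O(T \times \Gm^{\rot})$ cut out by the same congruences, and each congruence involves only two indices. (This is just the combinatorial shadow of Proposition \ref{proposition: completion of K-theory}.)

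\textbf{Step 2 (compatibility at each finite level).} Fix $\mu$ and compose $\bloc$ with the restriction to $\wt(\mu)$. By the construction of $\loc$, the component at $\lambda$ is restriction to the irreducible component $\overline{\Gamma}_\lambda$, and after the affine blowup it is restriction to the strict transform $\overline{\Upsilon}_\lambda \cong T \times \Gm^{\rot}$. So the composite factors as
\[
\O(\eN_{\mu_{\infty}}(\eZ, T \times T \sslash W)) \twoheadrightarrow \O(\overline{\Upsilon}_{\wt(\mu)}) \to \prod_{\lambda \in \wt(\mu)} \O(T \times \Gm^{\rot}).
\]
The last map is restriction to the irreducible components $\overline{\Upsilon}_\lambda$, $\lambda \in \wt(\mu)$, of the reduced scheme $\overline{\Upsilon}_{\wt(\mu)}$, and hence it is injective. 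The main point to verify here is that $\overline{\Upsilon}_{\wt(\mu)}$ is reduced with exactly these components; this holds because it is the strict transform of a reduced union of graphs, and the closure of a reduced scheme is reduced. Consequently the kernel of the composite is exactly $\eI_{\leq \mu}$. This yields injective maps
\[
\O(\eN_{\mu_{\infty}}(\eZ, T \times T \sslash W))_{\leq \mu} \hookrightarrow \GKM^{0}_{T \times \Gm^{\rot}}(\Gr_{\leq \mu}; \k),
\]
and they are compatible as $\mu$ increases.

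\textbf{Step 3 (passing to the limit).} Since $\lim$ is left exact, it preserves injectivity. Taking $\lim_\mu$ of the maps from Step 2 and applying Definition \ref{definition: Schubert completion} on the source and Step 1 on the target gives the injection \eqref{equation: completed comparison}. Its restriction to the uncompleted ring is $\bloc$, by construction.

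Alternatively, Steps 2 and 3 can be packaged through Lemma \ref{lemma: completed deformation as functions on Upsilon}. That lemma gives $\widehat{\O}(\eN_{\mu_{\infty}}(\eZ, T \times T \sslash W)) = \O(\overline{\Upsilon}) = \lim_\mu \O(\overline{\Upsilon}_{\wt(\mu)})$. The level-wise GKM property then follows from the GKM property of $\bloc$ itself, combined with the surjectivity $\O(\eN_{\mu_{\infty}}(\eZ, T \times T \sslash W)) \twoheadrightarrow \O(\overline{\Upsilon}_{\wt(\mu)})$. This surjectivity is what guarantees that every element of the completion is a compatible system of images of genuine elements, each of which satisfies the relevant congruences.

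The main obstacle is the identification $\ker = \eI_{\leq \mu}$ in Step 2. It requires that the localization map factor through $\O(\overline{\Upsilon})$ with the components of $\overline{\Upsilon}$ separating functions; everything else is formal manipulation of limits.
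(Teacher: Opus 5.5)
Your proposal is correct and follows the same route as the paper. The paper likewise uses three ingredients: that $\GKM^{0}_{T \times \Gm^{\rot}}(\Gr_G; \k)$ is complete for the filtration by kernels of restriction to the $\Gr_{\leq \mu}$; that $\bloc$ matches the Schubert filtration $(\eI_{\leq \mu})$ with this filtration; and that injectivity follows because both sides embed compatibly into $\prod_{\lambda \in X_{\bullet}} \O(T \times \Gm^{\rot})$. Your Steps 1--3 spell out exactly these points, with left exactness of $\lim$ playing the role of the compatible embeddings.
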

\begin{proof}
To construct the map, observe that $\GKM^{0}_{T \times \Gm^{\rot}}(\Gr_G; \k)$ is complete with respect to the filtration given by kernels of $\GKM^{0}_{T \times \Gm^{\rot}}(\Gr_G; \k) \to \GKM^{0}_{T \times \Gm^{\rot}}(\Gr_{\leq \mu}; \k)$. Under the injective homomorphism ${\O}(\eN_{\mu_{\infty}}(\eZ, T \times T \sslash W)) \hookrightarrow \GKM^{0}_{T \times \Gm^{\rot}}(\Gr_G; \k)$ from Lemma \ref{lemma: injectivity}, this matches the Schubert filtration on the source. Consequently, it induces a map \eqref{equation: completed comparison}. This map is injective as both sided embed compatibly into $\prod_{\lambda \in X_{\bullet}} \O(T \times \Gm^{\rot})$.
\end{proof}

\subsection{The topological meaning}\label{section: map to equivariant K-theory: topology}
We now explain the topological meaning of the map from \S \ref{section: map to equivariant K-theory: combinatorics}. This ties to the previous section by localization to fixed points. Ignoring this relationship, the topological arguments are completely independent.
In particular, we interpret the map $\loc$ and fundamental Bezrukavnikov classes in purely topological terms. This interpretation makes sense over any coefficient ring $\k$, for example over $\Z$.

\subsubsection{The map from functions on tori.}\label{section: the map from functions on tori}
We work with a reductive group $G$.
To allows for topological arguments, the geometry of $\Gr_G$ takes place over $k=\C$. However, the coefficient ring $\k$ may be taken arbitrary.

There is a natural map
\begin{equation*}
 K^{\top}_{T \times \Gm^{\rot} \times G}(\pt) \xrightarrow{} K^{\top}_{T \times \Gm^{\rot}}(\Gr_G)  
\end{equation*}
defined as the equivariant structure map under the vertical identifications
\begin{equation*}
    \begin{tikzcd}[row sep=5]
      K^{\top}_{T \times \Gm^{\rot} \times G}(\pt) \arrow[r] & K^{\top}_{T \times \Gm^{\rot}}(\Gr_G) \\
      & K^{\top}((T \times \Gm^{\rot}) \backslash \lo G / \lop G) \arrow[u, equal] \\
      K^{\top}((T \times \Gm^{\rot}) \backslash \pt / G) \arrow[uu, equal] \arrow[r] & K^{\top}((T \times \Gm^{\rot}) \backslash \lo G / G) \arrow[u, equal].
    \end{tikzcd}
\end{equation*}

\begin{remark}
More generally, given two parahoric subgroups $\eP$ and $\eP'$ of $\lo G$ with associated Levi quotients $L$ and $L'$, we get the map
\begin{equation*}
 K^{\top}_{L \times \Gm^{\rot} \times L'}(\pt) \to K^{\top}_{\eP \rtimes \Gm^{\rot}}(\lo G/ \eP').  
\end{equation*}    
\end{remark}

\subsubsection{Localization map.}
We can now identify
\begin{equation}\label{equation: localization map}
   \begin{tikzcd}[row sep=5]
        K^{\top, 0}_{T \times \Gm^{\rot} \times G}(\pt; \k) \arrow[r] & K^{\top, 0}_{T \times \Gm^{\rot}}(\Gr_G; \k) \arrow[r, hookrightarrow] & K^{\top, 0}_{T \times \Gm^{\rot}}(\Gr^{T \times \Gm^{\rot}}; \k) \\
        \O(T \times \Gm^{\rot} \times T \sslash W) \arrow[r] \arrow[u, equal] & \GKM^{0}_{T \times \Gm^{\rot}}(\Gr_G; \k) \arrow[u, equal] \arrow[r, hookrightarrow] & \prod_{\lambda \in X_{\bullet}} \O(T \times \Gm^{\rot}) \arrow[u, equal].
    \end{tikzcd}
\end{equation} 
In particular, this gives a canonical map
\begin{equation}\label{equation: map from two copies of tori}
 \O(T \times \Gm^{\rot} \times T \sslash W) \to K^{\top, 0}_{T \times \Gm^{\rot} \times G}(\Gr_G; \k)
\end{equation}
Under the identification from \eqref{equation: localization map}, the left-right composition is the localization map $\gamma$ from \eqref{equation: combinatorial localization map}.

\subsubsection{Tautological bundles on $\Gr$.}
Let us first take $G=GL_n$. Let $\Lambda_0 = k\llb t \rrb^{\oplus n} \subseteq k\llp t \rrp^{\oplus n}$ be the standard lattice. Fix some $\mu \in X_{\bullet}$ and let $\Gr_{\leq \mu}$ be the corresponding affine Schubert variety. It parametrizes certain sublattices $\Lambda \subseteq k\llp t \rrp^{\oplus n}$. Depending on $\mu$, we choose a sufficiently big positive integer $m \geq 0$ such that for each $\Lambda \in \Gr_{\leq \mu}$ it holds that $t^m \Lambda_0 \subseteq \Lambda \subseteq t^{-m} \Lambda_0$.

After fixing this normalization, we have the tautological vector bundle $\eE_{\leq \mu}$ on $\Gr_{\leq \mu}$, associating to an $R$-valued points $\Lambda$ of $\Gr_{\leq \mu}$ the algebraic vector bundle $t^{-m}\Lambda_0 / \Lambda$ on $R$:
\begin{equation*}
    \eE_{\leq \mu}: \Lambda \mapsto (\coker \Lambda \to t^{-m}\Lambda_0).
\end{equation*}
This is indeed a vector bundle, see \cite[proof of Theorem 1.1.3]{Zhu15}.
Similarly, we have the trivial rank $mn$ vector bundle $\eG_{\leq \mu}$ given by 
$\coker (\Lambda_0 \to t^{-m}\Lambda_0)$.
Both of these are naturally equivariant with respect to the groups $T \times \Gm^{\rot} \leq \lop GL_n \rtimes \Gm^{\rot}$.

\subsubsection{Bezrukavnikov classes for $GL_n$ from tautological bundles.}\label{section: bezrukavnikov classes for GL_n from tautological bundles}
Let $G=GL_n$. Consider the equivariant $K$-theory class given by the difference of the above vector bundles
\begin{equation}\label{equation: normalized tautological class 1}
[\eE_{\leq \mu}] - [\eG_{\leq \mu}] = [t^{-m}\Lambda_0 / \Lambda] - [t^{-m}\Lambda_0 / \Lambda_0] \in K^{\top, 0}_{T \times \Gm^{\rot}}(\Gr_{\leq \mu}; \k)    
\end{equation}
This can be rewritten in several ways. Firstly, it is given by the perfect complex
\begin{equation}\label{equation: normalized tautological class 2}
[\Lambda \to t^{-m}\Lambda_0] + [\Lambda_0 \to t^{-m}\Lambda_0][1]
\end{equation}
on $\Gr_{\leq \mu}$; here the $t^{-m}\Lambda_0$ sits in degree zero and $[1]$ is a homological shift. 
Alternatively, it can be also rewritten as a difference of classes of equivariant coherent sheaves which lie in $\Perf(\Gr_{\leq \mu})$, namely
\begin{equation}\label{equation: normalized tautological class 3}
    [\Lambda_0 / \Lambda_0 \cap \Lambda] - [\Lambda / \Lambda_0 \cap \Lambda].
\end{equation}
Informally, it may be thought of as the difference $[\Lambda_0] - [\Lambda]$, but this does not quite make sense since neither of these projective modules is of finite rank.

In any case, the class \eqref{equation: normalized tautological class 1} is clearly independent of the choice of $m$, and compatible with varying $\mu$. (This is obvious from \eqref{equation: normalized tautological class 3}; it alternatively follows by $[t^{-m}\Lambda_0 / \Lambda] - [t^{-m}\Lambda_0 / \Lambda_0] = [t^{-m}\Lambda_0 / \Lambda] + [t^{-m'}\Lambda_0 / t^{-m}\Lambda_0 ] - [t^{-m'}\Lambda_0 / t^{-m}\Lambda_0 ] - [t^{-m}\Lambda_0 / \Lambda_0] = [t^{-m'}\Lambda_0 / \Lambda] - [t^{-m'}\Lambda_0 / \Lambda_0]$ for any $m' \geq m$.)  It thus gives a well defined equivariant $K$-theory class on the whole of $\Gr_G$. We then have the following.

\begin{lemma}
Inside $K^{\top, 0}_{T \times \Gm^{\rot}}(\Gr_G; \k)$, we have $b_{1,1} = [t^{-m}\Lambda_0 / \Lambda] - [t^{-m}\Lambda_0 / \Lambda_0]$.
\end{lemma}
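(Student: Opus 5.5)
We will prove the identity by localization to the $T \times \Gm^{\rot}$-fixed points. By Proposition \ref{proposition: GKM description of K-theory of Gr}, restriction to the coordinate fixed points $t^{\lambda}$, $\lambda \in X_{\bullet}$, is injective on $K^{\top, 0}_{T \times \Gm^{\rot}}(\Gr_G; \k)$. By Lemma \ref{lemma: injectivity}, the class $b_{1,1}$ is determined by its image $\bloc(b_{1,1}) = (\loc_{\lambda}(f_1)/(1-q))_{\lambda}$. It therefore suffices to show, for each $\lambda = (\lambda_1, \dots, \lambda_n) \in \Z^n = X_{\bullet}(T)$, that the restriction of $[t^{-m}\Lambda_0/\Lambda] - [t^{-m}\Lambda_0/\Lambda_0]$ to $t^{\lambda}$ equals $\loc_{\lambda}(b_{1,1})$. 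We choose $m$ large relative to $\lambda$, which is allowed since the class is independent of $m$. The statement is meant for $G = GL_n$, with $e_1$ the first elementary symmetric function, that is, the character of the standard representation.

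First we compute the right-hand side. By \eqref{equation: combinatorial localization map}, $\loc_{\lambda}(f_1) = e_1(x) - e_1(\lambda(\zeta)\cdot x) = \sum_i t_i(1 - q^{\lambda_i})$, so that
\begin{equation*}
\loc_{\lambda}(b_{1,1}) = \sum_{i=1}^n t_i \frac{1-q^{\lambda_i}}{1-q}.
\end{equation*}
Each summand is a signed finite geometric series: it equals $t_i(1+q+\dots+q^{\lambda_i-1})$ if $\lambda_i \ge 0$, and $-t_i(q^{-1}+\dots+q^{\lambda_i})$ if $\lambda_i<0$.

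Next we compute the left-hand side. At $t^{\lambda}$ the lattice is $\Lambda = \bigoplus_i t^{\lambda_i}k\llb t\rrb\, e_i$, so both quotients are $T\times\Gm^{\rot}$-representations with monomial bases $t^{a}e_i$. Using form \eqref{equation: normalized tautological class 3}, the fiber of the class is
\begin{equation*}
\sum_{i:\lambda_i>0}\ \sum_{0\le a<\lambda_i}[t^a e_i] \;-\; \sum_{i:\lambda_i<0}\ \sum_{\lambda_i\le a<0}[t^a e_i].
\end{equation*}
Under Lemma \ref{lemma: weights of extended torus on affine root groups}, the line $t^a e_i$ has $T\times\Gm^{\rot}$-weight $t_i q^{a}$, up to the sign convention for loop rotation, which acts by $g(t)\mapsto g(\zeta t)$. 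With weight $t_iq^a$ this reproduces exactly the geometric series above.

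The main obstacle is matching the weight conventions. The sign of $q$, whether $t_i$ or $t_i^{-1}$ appears (the dual of the standard representation), and the left versus right action on the lattice all have to agree with the choices used to define $\loc_{\lambda}$, where the fixed point $t^{\lambda}$ is sent to $\lambda(\zeta)\cdot x$. I would first check the case $n=1$, $\lambda=1$ directly: there the class is $[k\llb t\rrb/tk\llb t\rrb] = t_1$, and $\loc_{1}(b_{1,1}) = t_1(1-q)/(1-q) = t_1$. This fixes the conventions. If a mismatch appears, it can be absorbed by replacing $q$ by $q^{-1}$, or by using the dual tautological bundle, consistently with the normalizations of \S\ref{section: the map from functions on tori}.
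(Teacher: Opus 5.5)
Your proposal is correct and follows the paper's proof. You localize at each $t^{\lambda}$ using the injective GKM restriction, compute the fiber of $[\Lambda_0/\Lambda_0\cap\Lambda]-[\Lambda/\Lambda_0\cap\Lambda]$ with weight $t_iq^{a}$ on $t^{a}e_i$, and match the resulting signed geometric series with $\loc_{\lambda}(b_{1,1})=\sum_i t_i\frac{1-q^{\lambda_i}}{1-q}$.

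One small caveat. Your $\lambda=1$ sanity check has no $q$-dependence, so it cannot detect a $q\leftrightarrow q^{-1}$ mismatch; a check at $\lambda=-1$ or $\lambda=2$ would. In any case, the weight $t_iq^{a}$ is exactly the convention implicit in the paper's normalization $s_i\mapsto t_iq^{d_i}$ of $\loc_{\lambda}$, so no ``absorbing'' of a mismatch is needed.
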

\begin{proof}
Given any $\lambda \in X_{\bullet}$, it is easy to compute the localizations of both sides to the corresponding fixed-point. Denote $\lambda = (t_1^{d_1}, \dots, t_n^{d_n})$ for $d_1, \dots, d_n \in \Z$.
%
For any $d \in \Z$, note that
\begin{equation}
[d]_q := \frac{1-q^d}{1-q} =   
\begin{cases}\label{equation: cases for geometric series}
1 + q + \dots + q^{d-1} & \text{if $d > 0$,} \\
0 & \text{if $d = 0$,} \\
-q^{-1}( 1 + q^{-1} + \dots + q^{d+1}) & \text{if $d < 0$.}
\end{cases}
\end{equation}
We now compute
\begin{align*}
\loc_{\lambda}([t^{-m}\Lambda_0 / \Lambda] - [t^{-m}\Lambda_0 / \Lambda_0])
&= \loc_{\lambda}([\Lambda_0 / \Lambda_0 \cap \Lambda] - [\Lambda / \Lambda_0 \cap \Lambda]) \\
&= \sum_{\{i \mid d_i > 0\}} (1+q+ \dots + q^{d_i-1})t_i - \sum_{\{i \mid d_i < 0\}} (q^{-1}+ \dots + q^{d_i+1})t_i \\
&= \sum_{i=1}^n \frac{1-q^{d_1}}{1-q} \cdot t_i \\
&= \frac{\sum_{i=1}^n t_i - \sum_{i=1}^n q^{d_i} \cdot t_i}{1-q} \\
& =\gamma_{\lambda} \left( \frac{e_1(t_1, \dots, t_n) - e_1(s_1, \dots, s_n)}{1-q} \right)\\
&=\gamma_{\lambda}(b_{1,1}) 
\end{align*}
Here, the first line is \eqref{equation: normalized tautological class 3}. The second line is by concrete knowledge of the fibers of $t^{-m}\Lambda_0 / \Lambda$ and $t^{-m}\Lambda_0 / \Lambda$ at the fixed point $\lambda$ as $T \times \Gm^{\rot}$ representations. The third line is by \eqref{equation: cases for geometric series}. The fourth line is clear, the fifth line follows by the description of $\gamma_{\lambda}$ from \eqref{equation: combinatorial localization map}, and the sixth line is by definition of $b_{1,1}$.

Since the above holds for any $\lambda \in X_{\bullet}$, the lemma follows.
\end{proof}

\subsubsection{Bezrukavnikov classes from vector bundles for general $G$.}
Let $G$ be any reductive group.
The affine Grassmannian is functorial in $G$, as is our description of its equivariant $K$-theory. Consequently, we can define the Bezrukavnikov classes from the standard one on $\Gr_{GL_n}$.

Namely, consider any representation $\rho: G \to GL_n$. This induces a map $\Gr_G \to \Gr_{GL_n}$, equivariant with respect to $\lo G \rtimes \Gm^{\rot} \to \lo GL_n \rtimes \Gm^{\rot}$ and compatible with the Schubert stratifications.
We thus have a pullback map $\rho^*: K^{\top, 0}_{T_{GL_n} \times \Gm^{\rot}}(\Gr_{GL_n}; \k) \to K_{T_{G} \times \Gm^{\rot}}^{\top, 0}(\Gr_G; \k)$. Using the double-quotient description \S \ref{section: local quantum hecke stacks} and the construction \S \ref{section: the map from functions on tori}, it follows that $\rho^*$ induces the obvious map $\rho^*: T_{GL_n} \times \Gm^{\rot} \times T_{GL_n} \to T_{G} \times \Gm^{\rot} \times T_{G}$, and hence on the deformation constructions.

In particular, for any $f \in \eI_{Z_{\ell}} \subseteq \O(T_{GL_n} \times T_{GL_n} \sslash W)$, this map sends $\tfrac{f}{1-q^{\ell}} \mapsto \tfrac{\rho^*(f)}{1-q^{\ell}}$. 
Specializing further to the case when $G=G^{\sc}$ is simply connected, $\rho = \omega_j$ is the $j$-th fundamental representation and $f=e_1(t_1, \dots, t_n) - e_1(s_1, \dots, s_n)$, we deduce it sends $b^{GL_n}_{1,1} \mapsto b^G_{j, 1}$. In other words, $b^G_{j, 1}$ is the equivariant $K$-theory class $\omega_j^*([\Lambda_0 / \Lambda_0 \cap \Lambda] - [\Lambda / \Lambda_0 \cap \Lambda] )$ on $\Gr_{G}$. We have proved the following.

\begin{lemma}\label{lemma: fundamental bezrukavnikov classes as pullbacks} We have
 \begin{equation*}
    b^G_{j,1}= \omega_j^{*} (b^{GL_n}_{1,1}). 
 \end{equation*}   
\end{lemma}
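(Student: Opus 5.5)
The identity to prove is $b^G_{j,1}=\omega_j^*(b^{GL_n}_{1,1})$ in $K^{\top,0}_{T\times\Gm^{\rot}}(\Gr_G;\k)$. I would check it after localization to the $T\times\Gm^{\rot}$-fixed points $t^\lambda$, $\lambda\in X_\bullet$. By Proposition \ref{proposition: GKM description of K-theory of Gr}, the restriction map $K^{\top,0}_{T\times\Gm^{\rot}}(\Gr_G;\k)\hookrightarrow\prod_{\lambda}\O(T\times\Gm^{\rot})$ is injective. So it suffices to show that both sides have the same localization $\loc_\lambda$ for every $\lambda\in X_\bullet$.

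First I compute the left-hand side. By the combinatorial localization map \eqref{equation: combinatorial localization map}, the class $b^G_{j,1}$ corresponds to $(e_j(t)-e_j(s))/(1-q)$. Its component at $\lambda$ is
\begin{equation*}
\loc_\lambda(b^G_{j,1})=\frac{e_j(x)-e_j(\lambda(q)\cdot x)}{1-q},
\end{equation*}
which is a genuine element of $\O(T\times\Gm^{\rot})$ by the factorization lemma. Here $e_j=[V_{\omega_j}]=\sum_{\nu}\dim V_{\omega_j}(\nu)\,\nu$ is the character of the $j$-th fundamental representation.

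Next I compute the right-hand side. The map $\omega_j:G\to GL_n$, with $n=\dim V_{\omega_j}$ and a weight basis chosen so that $T$ lands in the diagonal torus $T_{GL_n}$, induces an equivariant map $\Gr_G\to\Gr_{GL_n}$ sending $t^\lambda\mapsto t^{\omega_j\circ\lambda}$. Pullback commutes with restriction to fixed points, along the induced map of tori $T\times\Gm^{\rot}\to T_{GL_n}\times\Gm^{\rot}$. The preceding lemma identifies $b^{GL_n}_{1,1}$ with the tautological class, whose localization at a cocharacter $(d_1,\dots,d_n)$ is $\sum_i\frac{1-q^{d_i}}{1-q}\,t_i$. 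The cocharacter $\omega_j\circ\lambda$ has $d_i=\langle\nu_i,\lambda\rangle$, where $\nu_i$ are the weights of $V_{\omega_j}$, and $t_i$ pulls back to the character $\nu_i(x)$. Hence
\begin{equation*}
\loc_\lambda\,\omega_j^*(b^{GL_n}_{1,1})=\sum_i\frac{\nu_i(x)-q^{\langle\nu_i,\lambda\rangle}\nu_i(x)}{1-q},
\end{equation*}
and since $\nu_i(\lambda(q)x)=q^{\langle\nu_i,\lambda\rangle}\nu_i(x)$, this equals $\frac{e_j(x)-e_j(\lambda(q)x)}{1-q}$, matching the left-hand side.

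The step needing the most care is the compatibility of $\omega_j^*$ with localization, that is, that the square formed by pullback and restriction to $t^\lambda$ commutes and that $t^\lambda\mapsto t^{\omega_j\circ\lambda}$. This follows from functoriality of $\Gr$ and of the structure maps in \S\ref{section: the map from functions on tori}. Granting that, the statement reduces to the elementary character identity above.
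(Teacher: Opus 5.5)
Your proof is correct, but it follows a different route from the paper's.

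The paper argues by functoriality. For any representation $\rho\colon G\to GL_n$, the pullback along $\Gr_G\to\Gr_{GL_n}$ is compatible, via the double-quotient description, with the equivariant structure maps from $\O(T\times\Gm^{\rot}\times T\sslash W)$. Hence it is compatible with the deformations and sends $f/(1-q^{\ell})\mapsto\rho^*(f)/(1-q^{\ell})$ for every $f\in\eI_{Z_\ell}$. Taking $\rho=\omega_j$ and using $\omega_j^*(e_1)=e_j$ gives the claim.

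You instead verify the identity fixed point by fixed point. Your ingredients are:
\begin{itemize}
\item injectivity of localization;
\item the formula for $\loc_\lambda(b^G_{j,1})$;
\item naturality of restriction along $t^\lambda\mapsto t^{\omega_j\circ\lambda}$;
\item the weight decomposition $e_j=\sum_i\nu_i$.
\end{itemize}

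What each route buys:
\begin{itemize}
\item Your route makes the only real input, the character identity, completely explicit. It also sidesteps justifying that $\rho^*$ respects the structure maps, a point the paper asserts rather briskly.
\item On the other hand, your route presupposes that $b^G_{j,1}$ already lives in $K$-theory through the combinatorial map $\bloc$ and the identification $K=\GKM$.
\item The paper's argument instead exhibits $b^G_{j,1}$ directly as a pullback of the tautological class. This is what the later ``topological argument'' relies on to see that the $b_{j,\ell}$ are genuine $K$-theory classes independently of \S\ref{section: map to equivariant K-theory: combinatorics}.
\item The paper's argument also gives the naturality for all $\ell$ and all $\rho$ at once.
\end{itemize}

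A minor simplification: on the right-hand side you do not need the tautological-class lemma, since $\loc_{\omega_j\circ\lambda}(b^{GL_n}_{1,1})$ can be read off directly from the definition of $\loc$.
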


\subsubsection{Adams operations.}
The fundamental Bezrukavnikov classes 
$b_{j, \ell} = \tfrac{e_j(t^{\ell}_1, \dots, t_n^{\ell}) - e_j(s^{\ell}_1, \dots, s_n^{\ell})}{1-q^{\ell}}$
are related to each other through Adams operations in equivariant $K$-theory. 
\begin{lemma}\label{lemma: adams operations on bezrukavnikov classes}
Let $1 \leq j \leq n$. For each $\ell, \ell' \in \N$, we have
\begin{equation}\label{equation: adams operations on bezrukavnikov classes}
    \psi^{\ell}(b_{j, \ell'}) = b_{j, \ell\ell'}.
\end{equation}    
\end{lemma}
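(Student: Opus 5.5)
We compute the Adams operation on the class $b_{j,\ell'}$ by localizing to the torus fixed points $t^{\lambda}$, $\lambda \in X_{\bullet}$. By Proposition \ref{proposition: GKM description of K-theory of Gr}, the restriction map $K^{\top,0}_{T\times\Gm^{\rot}}(\Gr_G;\k) \hookrightarrow \prod_{\lambda} \O(T\times\Gm^{\rot})$ is injective. Adams operations are natural with respect to pullback along the equivariant inclusion of the fixed points $t^{\lambda}\hookrightarrow \Gr_G$, so they commute with each localization map $\loc_\lambda$. It therefore suffices to show, for each $\lambda$,
\begin{equation*}
\psi^{\ell}\bigl(\loc_\lambda(b_{j,\ell'})\bigr) = \loc_\lambda(b_{j,\ell\ell'}) \in \O(T\times\Gm^{\rot}).
\end{equation*}

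On the equivariant $K$-theory of a point, $K^{\top,0}_{T\times\Gm^{\rot}}(\pt)=\O(T\times\Gm^{\rot})=\k[t_i^{\pm1},q^{\pm1}]$, the Adams operation $\psi^\ell$ is the ring endomorphism sending each character to its $\ell$-th power, so $t_i\mapsto t_i^\ell$ and $q\mapsto q^\ell$. For this to be meaningful over the relevant coefficients we use that $\psi^\ell$ is a ring map and that the target is torsion-free. By \eqref{equation: combinatorial localization map}, $\loc_\lambda$ substitutes $s=\lambda(q)\cdot t$. Hence
\begin{equation*}
\loc_\lambda(b_{j,\ell'}) = \frac{e_j(t^{\ell'}) - e_j\bigl((\lambda(q)t)^{\ell'}\bigr)}{1-q^{\ell'}}.
\end{equation*}
The numerator is a genuine polynomial divisible by $1-q^{\ell'}$, because $b_{j,\ell'}$ lies in the GKM ring by the factorization lemma for $\bloc$. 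Applying the ring map $\psi^\ell$ to the identity (numerator) $= (1-q^{\ell'})\cdot\loc_\lambda(b_{j,\ell'})$ gives the numerator $e_j(t^{\ell\ell'})-e_j((\lambda(q)t)^{\ell\ell'})$, since $\psi^\ell(\lambda(q)t)=\lambda(q^\ell)t^\ell$ and so $(\lambda(q)t)^{\ell'}\mapsto(\lambda(q)t)^{\ell\ell'}$. The denominator $1-q^{\ell'}$ becomes $1-q^{\ell\ell'}$. Dividing in the domain $\O(T\times\Gm^{\rot})$ yields exactly $\loc_\lambda(b_{j,\ell\ell'})$.

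We expect the main obstacle to be justifying that $\psi^\ell$ exists on $K^{\top,0}_{T\times\Gm^{\rot}}(\Gr_G;\k)$ and commutes with localization, given that $\Gr_G$ is an ind-scheme. We plan to define $\psi^\ell$ on each $\Gr_{\leq\mu}$, using genuine equivariant Adams operations that are natural for equivariant maps, and then pass to the limit via Proposition \ref{proposition: completion of K-theory}. Compatibility with the restriction maps follows from naturality. Alternatively, for $j$ and $\ell'=1$ one can use Lemma \ref{lemma: fundamental bezrukavnikov classes as pullbacks} and the tautological bundle description, but the fixed-point computation above already suffices.
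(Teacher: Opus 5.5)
Your proposal is correct and follows the paper's own proof: injectivity of localization reduces the claim to the fixed points $t^{\lambda}$, $\psi^{\ell}$ is a natural ring endomorphism raising characters of $T\times\Gm^{\rot}$ to the $\ell$-th power, and the explicit formula for $\loc_\lambda(b_{j,\ell'})$ then gives the result. The paper leaves two points implicit that you spell out. You clear the denominator $1-q^{\ell'}$ inside the domain $\O(T\times\Gm^{\rot})$, and you construct $\psi^{\ell}$ on the ind-scheme by working on each $\Gr_{\leq\mu}$ and passing to the limit via Proposition \ref{proposition: completion of K-theory}.
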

\begin{proof}
The Adams operation $\psi^{\ell}$ is a functorial ring endomorphism of $K^{\top, 0}_{T \times \Gm^{\rot}}(-)$, which exponentiate classes of line bundles to the $\ell$-th power.
By injectivity of localization to fixed-points (valid in our setup), it is thus enough to check the equality \eqref{equation: adams operations on bezrukavnikov classes} after restriction to each fixed point, in other words after applying $\gamma_{\lambda}$. 

After this restriction, each of the generators $t_i, s_i, q, 1$ is a $K$-theory class of an equivariant line bundle (a character of $T \times \Gm^{\rot}$). Therefore, $\psi^{\ell}$ acts by $(-)^{\ell}$ on each of them. Since $\psi^{\ell}$ is a ring homomorphism, the desired equality follows.  
\end{proof}

\subsubsection{Topological argument.}
With the above topological meaning at hand, we have the following clear construction, which we again state for $G$ simply connected or $GL_n$.
\begin{lemma}
    We have ring homomorphisms
    \begin{equation*}
        \O(T \times \Gm^{\rot} \times T \sslash W) \to \O(\eN_{\mu_{\infty}}(\eZ, T \times T \sslash W)) \to \widehat{\O}(\eN_{\mu_{\infty}}(\eZ, T \times T \sslash W)) \to K^{\top, 0}_{T \times \Gm^{\rot}}(\Gr_G; \k).
    \end{equation*}
\end{lemma}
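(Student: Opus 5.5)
The three homomorphisms come from three different sources: the first two are already in hand, and only the third needs topology.

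The first map $\O(T \times \Gm^{\rot} \times T \sslash W) \to \O(\eN_{\mu_{\infty}}(\eZ, T \times T \sslash W))$ is the structure map of the affine blowup. By Lemma \ref{claim: putting ell-th roots of unity together} the target is obtained from the source by adjoining the fractions $\tfrac{f}{1-q^{\ell}}$, so this map is just the inclusion. The second map $\O(\eN_{\mu_{\infty}}(\eZ, T \times T \sslash W)) \to \widehat{\O}(\eN_{\mu_{\infty}}(\eZ, T \times T \sslash W))$ is the completion map of Definition \ref{definition: Schubert completion}. It is a ring homomorphism because the Schubert filtration consists of ideals $\eI_{\leq \mu}$.

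For the third map, the plan is to route everything through the combinatorial comparison of \S \ref{section: map to equivariant K-theory: combinatorics}. Lemma \ref{lemma: completed comparison} gives an injective ring homomorphism
\begin{equation*}
\widehat{\O}(\eN_{\mu_{\infty}}(\eZ, T \times T \sslash W)) \hookrightarrow \GKM^{0}_{T \times \Gm^{\rot}}(\Gr_G; \k).
\end{equation*}
We then compose with the identification of Proposition \ref{proposition: GKM description of K-theory of Gr},
\begin{equation*}
K^{\top, 0}_{T \times \Gm^{\rot}}(\Gr_G; \k) = \GKM^{0}_{T \times \Gm^{\rot}}(\Gr_G; \k).
\end{equation*}
That proposition is stated over $\Z$, so we first pass to $\k$-coefficients. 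The $K$-theory of each $\Gr_{\leq \mu}$ is free over the equivariant base with the Iwahori cell basis, so the GKM description base changes to $\k$. Next we pass to the limit over $\mu$ using Proposition \ref{proposition: completion of K-theory}; the Mittag--Leffler condition holds, so the limit is a ring isomorphism. Both sides embed compatibly into $\prod_{\lambda} \O(T \times \Gm^{\rot})$, and the identification of diagram \eqref{equation: localization map} shows that the composite restricted to $\O(T \times \Gm^{\rot} \times T \sslash W)$ is the equivariant structure map \eqref{equation: map from two copies of tori}. This gives the stated chain of ring maps, compatible with localization to fixed points.

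The one step requiring care is the $\k$-coefficient GKM identification together with its compatibility with the Schubert limit. Concretely, we must check that the Schubert filtration on the source matches the kernels of $K^{\top,0}_{T \times \Gm^{\rot}}(\Gr_G;\k) \to K^{\top,0}_{T \times \Gm^{\rot}}(\Gr_{\leq\mu};\k)$, so that the completed map is well defined. I would handle this exactly as in the proof of Lemma \ref{lemma: completed comparison}: the restriction maps correspond to restriction along $\wt(\mu) \subseteq X_{\bullet}$ on both sides.
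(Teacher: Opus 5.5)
Your argument is correct, but it obtains the third map by a different route than the paper.

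\textbf{Your route.} You build the map combinatorially. You take the injection $\widehat{\O}(\eN_{\mu_{\infty}}(\eZ, T \times T \sslash W)) \hookrightarrow \GKM^{0}_{T \times \Gm^{\rot}}(\Gr_G; \k)$ of Lemma~\ref{lemma: completed comparison} and compose it with the GKM theorem. You transport that theorem from $\Z$ to $\k$ on each $\Gr_{\leq \mu}$. This step uses freeness of the $K$-theory and the fact that the congruence-defined GKM ring base changes along $\Z \to \k$; flatness of $\k$ over $\Z$ is what makes this work. You then assemble over $\mu$ using the Mittag--Leffler property.

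\textbf{The paper's route.} The paper's proof is topological. The first map is the equivariant structure map $K^{\top}_{T \times \Gm^{\rot} \times G}(\pt) \to K^{\top}_{T \times \Gm^{\rot}}(\Gr_G)$. It is extended over the blowup ring by exhibiting the generators $b_{j,\ell}$ of Lemma~\ref{lemma: fundamental bezrukavnikov classes} as genuine $K$-theory classes:
\[
b_{j,1} = \omega_j^{*}(b^{GL_n}_{1,1}), \qquad b_{j,\ell} = \psi^{\ell}(b_{j,1}),
\]
where $b^{GL_n}_{1,1}$ is the normalized tautological class. Injectivity of localization implicitly guarantees that relations are respected. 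Completion then comes from Proposition~\ref{proposition: completion of K-theory}.

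\textbf{Comparison.} Your route gives injectivity of the final map for free and needs no geometric identification of classes. However, it relies on the full strength of the GKM theorem, namely that the image is all of the GKM ring, which requires conditions (3)--(4) of Assumption~\ref{assumption: HHH assumptions}. It is also tied to the hypotheses of the combinatorial subsection, where $\k$ is an algebraically closed field of characteristic zero. The paper's route needs only injectivity of localization. It produces the $b_{j,\ell}$ as honest classes with any coefficients, which is the stated point of that subsection. Its description via Adams operations on tautological bundles is also exactly what feeds Corollary~\ref{corollary: topological Adams generation}.
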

\begin{proof}
The map from the leftmost term is given by the equivariant structure maps \S \ref{section: the map from functions on tori}. To see that it factors through the second term, we need to prove that the fundamental Bezrukavnikov classes $b_{j, \ell}$ give well-defined classes in $K$-theory. However, we have already interpreted them as Adams operations on the fundamental vector bundles in Lemmata \ref{lemma: fundamental bezrukavnikov classes as pullbacks}, \ref{lemma: adams operations on bezrukavnikov classes}.

Finally, $K^{\top, 0}_{T \times \Gm^{\rot}}(\Gr_G; \k)$ is complete with respect to the filtration given by kernels of restriction maps to affine Schubert varieties by Proposition \ref{proposition: completion of K-theory}. Equipping $\O(\eN_{\mu_{\infty}}(\eZ, T \times T \sslash W))$ with the Schubert filtration of Definition \ref{definition: Schubert completion} makes the comparison map filtered. Hence it factors through the Schubert completion $\widehat{\O}(\eN_{\mu_{\infty}}(\eZ, T \times T \sslash W))$ as desired.
\end{proof}

\subsection{Bezrukavnikov--Finkelberg description of equivariant \texorpdfstring{$K$}{K}-theory}\label{section: bezrukavnikov--finkelberg description of equivariant K-theory}
We now conclude the proof of our main result for simply connected $G$ with $\C$-coefficients. To this end, we will assume $k = \C = \k$. We further record the topological Adams generation. We find this case particularly illustrative; technical generalizations are then discusses in \S \ref{section: variants and generalizations}. 

\subsubsection{The isomorphism.}
We now prove the surjectivity by a topological argument.
\begin{theorem}\label{theorem: equivariant K-theory as deformation to normal cone -- G sssc}
Assume $G$ is simply-connected group over $\C$. Then the above constructed map induces a ring isomorphism
\begin{equation*}
   \widehat{\O}(\eN_{\mu_{\infty}}(\eZ, T \times T \sslash W)) \xrightarrow{\cong} K^{\top, 0}_{T \times \Gm^{\rot}}(\Gr_G; \C).
\end{equation*}
\end{theorem}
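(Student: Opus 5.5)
Injectivity of the map $\widehat{\O}(\eN_{\mu_{\infty}}(\eZ, T \times T \sslash W)) \to K^{\top, 0}_{T \times \Gm^{\rot}}(\Gr_G; \C)$ is already known from Lemma \ref{lemma: completed comparison} together with Proposition \ref{proposition: GKM description of K-theory of Gr}. So the plan is to prove surjectivity, and to do it one Schubert variety at a time. By Lemma \ref{lemma: completed deformation as functions on Upsilon} and Proposition \ref{proposition: completion of K-theory}, both sides are inverse limits over $\mu$. It therefore suffices to show that each map
\[
\O(\overline{\Upsilon}_{\wt(\mu)}) \to K^{\top,0}_{T\times\Gm^{\rot}}(\Gr_{\leq\mu};\C)
\]
is an isomorphism, compatibly in $\mu$. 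These maps are already injective, since both sides embed into $\prod_{\lambda\in\wt(\mu)}\O(T\times\Gm^{\rot})$. Moreover, the transition maps on the left are surjective by construction, so no $\lim^1$ issue arises. Fix $\mu$, and write $A=\O(\overline{\Upsilon}_{\wt(\mu)})$ and $B=K^{\top,0}_{T\times\Gm^{\rot}}(\Gr_{\leq\mu};\C)$. Both are finitely generated $\O(T\times\Gm^{\rot})$-modules, and $A\hookrightarrow B$ is a module map. Surjectivity of a map of finite modules can be checked after completing at every closed point $(x,\zeta)$, and it suffices to show that $A^\wedge_{(x,\zeta)}\to B^\wedge_{(x,\zeta)}$ is surjective; I will aim for an isomorphism.

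For $B$, Theorem \ref{theorem: fibers of topological K-theory, HP and fixed-points} and Proposition \ref{remark: the completed variant of equivariant localization} apply, because $B$ is free (equivariant formality from the Iwahori paving). They identify $B^\wedge_{(x,\zeta)}$ with the completed equivariant cohomology $H_{T\times\Gm^{\rot}}(\Fix_{(x,\zeta)}(\Gr_{\leq\mu});\C)^\wedge$. The fixed points are described by Theorems \ref{theorem: generic zeta -- fiber of fixed points and classical schubert varieties} and \ref{theorem: fiber of fixed points and classical schubert varieties general case at root of unity}: they form a disjoint union over $\vartheta$ of closed unions of finite Schubert varieties, respectively dilated Iwahori orbit closures, for $L=L_{[x,\zeta]}$. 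Their equivariant cohomology is again free and has a GKM description with the fixed points $t^\lambda$, $\lambda\in\wt(\mu)$. These are grouped into components as in Lemma \ref{lemma: connected components of fixed points n affine schubert variety}, and the congruences are given by the affine roots in $\Phi^\bullet_J$. For $A$, the completion at a generic $\zeta$ is the completion of $\O(\overline{\Gamma}_{\wt(\mu)})$, since no blowup occurs there. At a root of unity, Lemma \ref{lemma: completions of the deformation} and Remark \ref{remark: Upsilon and GKM} identify $\overline{\Upsilon}^\wedge_{(x,\zeta)}$ componentwise with strict transforms of the arrangements $\overline{\mathbf{\Gamma}}$ inside the deformations $N_{h=0}(\t\times_{\t\sslash W_{[x,\zeta]}}\t\sslash W_\vartheta,\ \t\times\t\sslash W_\vartheta)^\wedge_{(0,0)}$, restricted to $\wt(\mu)\cap\vartheta$.

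The key step is to match these two local pictures. The aim is to recognise the completed $A$ as exactly the cohomological Bezrukavnikov--Finkelberg ring of Proposition \ref{proposition: cohomology of affine flag varieties as deformation to the normal cone}, in its $T\times\Gm^{\rot}$-equivariant GKM form (Remark \ref{remark: informal GKM localization}). Here it is applied to the dilated loop group of $L^{\sc}$ with parahoric $\eP^\vartheta_\ell$, for $\zeta$ a root of unity, and to Lemma \ref{lemma: equivariant cohomology of finite flag varieties} for $L/P_\vartheta$ when $\zeta$ is generic. In each case the result is then cut down to the Schubert subvariety $F^\vartheta$. Under slanting by $\omega$, the labels $\lambda\in\vartheta$ correspond to the fixed points of $F^\vartheta$, so both completions embed compatibly into $\prod_{\lambda}\O(\t\times\Ga)^\wedge$.

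The main obstacle is that Proposition \ref{proposition: cohomology of affine flag varieties as deformation to the normal cone} concerns the whole affine flag variety, whereas I need the restriction to the closed union $F^\vartheta$. My plan is as follows. Restriction from $H_{T\times\Gm^{\rot}}(\Fl^\vartheta_{\ell,L^{\sc}})$ to $H_{T\times\Gm^{\rot}}(F^\vartheta)$ is surjective, by the paving and surjectivity on skeleta. The restriction from the deformation ring to $\O(\overline{\Upsilon}_{\wt(\mu)})^\wedge$ is also surjective, and it is compatible with the localization maps. The whole-space isomorphism then descends to an isomorphism of the restrictions to the set $\wt(\mu)\cap\vartheta$, because both rings are defined as images in the same product over that set. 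If this compatibility turns out to be delicate, the fallback is a graded dimension count at $(1,1)$ via the $\Gm$-weights. This would use Nakayama on the free module $B$ and compare the ranks of the fibers $H^\bullet(\Fix_{(x,\zeta)}(\Gr_{\leq\mu}))$, namely $|\wt(\mu)|$ in total, with the generic rank of $A$, while controlling the graded pieces through the cohomological comparison.
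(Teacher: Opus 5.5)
Your argument is correct, but it is organized differently from the paper's.

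The paper treats the Schubert-completed ring and $K^{\top,0}_{T\times\Gm^{\rot}}(\Gr_G;\C)$ themselves as sheaves on $T\times\Gm^{\rot}$. It completes both at $(x,\zeta)$ and identifies each completion outright with a product over $\vartheta$ of completed Bezrukavnikov--Finkelberg rings:
the left side by Lemma~\ref{lemma: completions of the deformation};
the right side by the localization theorem applied to $\Gr_G$, together with the fixed points on the whole $\Gr_G$ (Theorem~\ref{lemma: fixed points of (c, zeta) on Gr_G -- zeta root of unity} and its generic analogue).
The local map is then an isomorphism by Proposition~\ref{proposition: cohomology of affine flag varieties as deformation to the normal cone}, resp.\ Lemma~\ref{lemma: equivariant cohomology of finite flag varieties}.

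You instead pass first to a single $\Gr_{\leq\mu}$, where $A\subseteq B$ are finite modules over a noetherian ring, so you only need local surjectivity. You obtain it by factoring through the whole-space Bezrukavnikov--Finkelberg isomorphism and the surjective restriction $H_{T\times\Gm^{\rot}}(\Fl^{\vartheta}_{\ell,L^{\sc}})\to H_{T\times\Gm^{\rot}}(F^{\vartheta})$. All rings are compared as images in $\prod_{\lambda\in\wt(\mu)\cap\vartheta}\O(T\times\Gm^{\rot})^{\wedge}$.

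What your route buys:
\begin{itemize}
\item The local-to-global step is airtight. For non-finitely generated modules, an injection that is an isomorphism on all closed fibres and all completions need not be surjective, e.g.\ $\C[t]\subseteq\C[t]\oplus\C(t)$.
\item The localization theorem is only invoked for the projective, hence qcqs, varieties $\Gr_{\leq\mu}$, exactly as it is stated. The paper applies it to the ind-scheme $\Gr_G$.
\item Commutativity of the comparison square becomes automatic inside the common product. This still presupposes that the localization and Bezrukavnikov--Finkelberg identifications are compatible with restriction to $T\times\Gm^{\rot}$-fixed points, an input both routes share.
\end{itemize}

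The price is that you additionally use the Schubert-variety fixed-point results (Theorems~\ref{theorem: generic zeta -- fiber of fixed points and classical schubert varieties}, \ref{theorem: fiber of fixed points and classical schubert varieties general case at root of unity} and Lemma~\ref{lemma: connected components of fixed points n affine schubert variety}) and surjectivity of restriction to $F^{\vartheta}$. The paper gets a full local isomorphism in one stroke.

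Drop your fallback, though. Equal generic ranks of $A\subseteq B$ do not force surjectivity, since a proper submodule can have full rank. Your main route does not need it.
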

\begin{proof}
Regard both sides as quasi-coherent (so in particular fpqc) sheaves on $T \times \Gm^{\rot}$. To see that the discussed map is an isomorphism, it is thus enough to check it is an isomorphism after formal completion at each closed point $(x, \zeta) \in T \times \Gm^{\rot}$. (Alternatively, it is enough to check the isomorphisms at actual fibers by Remark \ref{remark: replacing formally completed fibers by actual fibers}.) We have thus reduced to checking that for each $(x, \zeta)$, the bottom horizontal map in the following diagram is an isomorphism.
\begin{equation*}
    \begin{tikzcd}
         \widehat{\O}(\eN_{q=\zeta}(\eZ, T \times T \sslash W)) \arrow[r] \arrow[d] & K^{\top, 0}_{T \times \Gm^{\rot}}(\Gr_G; \C) \arrow[d] \\
         \widehat{\O}(\eN_{q=\zeta}(\eZ, T \times T \sslash W))^{\wedge}_{(x, \zeta)} \arrow[r] & K^{\top, 0}_{T \times \Gm^{\rot}}(\Gr_G; \C)^{\wedge}_{(x, \zeta)}
    \end{tikzcd}
\end{equation*}
To this end, fix a closed point $(x, \zeta) \in T \times \Gm^{\rot}$. To check the isomorphism between the completions, we distinguish cases depending on the genericity of $\zeta$.

\bigskip
Assume first that $\zeta$ is a primitive root of unity of order $\ell$. Then consider the following diagram.
\begin{equation*}
    \begin{tikzcd}[row sep = 5, column sep =15]
      \widehat{\O}({\eN}_{\mu_{\infty}}(\eZ, T \times T \sslash W))^{\wedge}_{(x, \zeta)} \arrow[d, equal] \arrow[r] & K^{\top, 0}_{T \times \Gm^{\rot}}(\Gr_{G}; \C)^{\wedge}_{(x, \zeta)} \arrow[d, equal] \\
      \widehat{\O}({N}_{q=\zeta}(Z_{\ell}, T \times T \sslash W))^{\wedge}_{(x, \zeta)} \arrow[d, equal]  & \widehat{H}^{\bullet}_{T \times \Gm^{\rot}}(\Fix_{(x, \zeta)}(\Gr_G); \C)^{\wedge}_{(0, 0)} \arrow[d, equal] \\
      \prod_{\vartheta \in |\overline{\Gamma}_{(x, \zeta)}|} \widehat{\O}(N_{q=\zeta}({Z_{\ell}^{\vartheta}}, T \times T\sslash W))^{\wedge}_{(x, \zeta)} \arrow[d, equal] & \widehat{H}^{\bullet}_{T \times \Gm^{\rot}}(\coprod_{\vartheta \in |\overline{\Gamma}_{(x, \zeta)}|} \Fl^{\vartheta}_{\ell, L^{\sc}_{[x, \zeta]}}; \C)^{\wedge}_{(0, 0)} \arrow[d, equal] \\
      \prod_{\vartheta \in |\overline{\Gamma}_{(x, \zeta)}|} \widehat{\O}(N_{h=0}(\t \times_{\t \sslash W_{[x, \zeta]}} \t \sslash W_{\vartheta},  \t \times \t \sslash W_{\vartheta} ))^{\wedge}_{(0,0)} \arrow[r, equal] & \prod_{\vartheta \in |\overline{\Gamma}_{(x, \zeta)}|} \widehat{H}^{\bullet}_{T \times \Gm^{\rot}}(\Fl^{\vartheta}_{\ell, L^{\sc}_{[x, \zeta]}}; \C)^{\wedge}_{(0, 0)}
    \end{tikzcd}
\end{equation*}
The identifications in the left-hand column work as follows. First, since we are completing at $(x, \zeta)$, only the deformation to the normal cone of $Z_{\ell}$ centered at $\zeta$ matters. The next two identifications are covered by Lemma \ref{lemma: completions of the deformation}.

The identifications in the right-hand column go a follows. First use the localization from Corollary \ref{corollary: K-theory in the flat situation}, Theorem \ref{theorem: fibers of topological K-theory, HP and fixed-points} and Remark \ref{remark: the completed variant of equivariant localization} to pass from the completion of equivariant $K$-theory over $(x, \zeta)$ to the completion of equivariant cohomology of the fixed-points of $(x, \zeta)$ at the origin (alternatively see Remark \ref{remark: replacing formally completed fibers by actual fibers}). Then use the computation of the reduced fixed-points on the affine Grassmannian from Theorem \ref{lemma: fixed points of (c, zeta) on Gr_G -- zeta root of unity}. Finally, we may commute the disjoint union through cohomology.

The bottom identification is by taking the Schubert completion of the isomorphism from Proposition \ref{proposition: cohomology of affine flag varieties as deformation to the normal cone}. (Note that the completion of equivariant cohomology is given by the Schubert completion.) This finishes the discussion of the case when $\zeta$ is a root of unity.

\bigskip
Now, assume on the other hand that $\zeta \in \Gm^{\rot} - \mu_{\infty}$. We then make the analogous identifications.
\begin{equation*}
    \begin{tikzcd}[row sep = 5, column sep =15]
      \widehat{\O}({\eN}_{\mu_{\infty}}(\eZ, T \times T \sslash W))^{\wedge}_{(x, \zeta)} \arrow[d, equal] \arrow[r] & K^{\top, 0}_{T \times \Gm^{\rot}}(\Gr_{G}; \C)^{\wedge}_{(x, \zeta)} \arrow[d, equal] \\
      \widehat{\O}(T \times \Gm^{\rot} \times T \sslash W)^{\wedge}_{(x, \zeta)} \arrow[d, equal]  & \widehat{H}^{\bullet}_{T \times \Gm^{\rot}}(\Fix_{(x, \zeta)}(\Gr_G); \C)^{\wedge}_{(0, 0)} \arrow[d, equal] \\
      \prod_{\vartheta \in |\overline{\Gamma}_{(x, \zeta)}|} {\O}(\Gm^{\rot} \times T \times_{T \sslash W_{[x, \zeta]}} T \sslash W_{\vartheta} )^{\wedge}_{(x, \zeta)} \arrow[d, equal] & {H}^{\bullet}_{T \times \Gm^{\rot}}(\coprod_{\vartheta \in |\overline{\Gamma}_{(x, \zeta)}|} L_{[x, \zeta]}/P_{\vartheta}; \C)^{\wedge}_{(0, 0)} \arrow[d, equal] \\
      \prod_{\vartheta \in |\overline{\Gamma}_{(x, \zeta)}|} {\O}(\Ga \times\t \times_{\t \sslash W_{[x, \zeta]}} \t \sslash W_{\vartheta})^{\wedge}_{(0,0)} \arrow[r, equal] & \prod_{\vartheta \in |\overline{\Gamma}_{(x, \zeta)}|} {H}^{\bullet}_{T \times \Gm^{\rot}}(L_{[x, \zeta]}/P_{\vartheta}; \C)^{\wedge}_{(0, 0)}
    \end{tikzcd}
\end{equation*}

The identifications in the left-hand column are as follows. Firstly, since $\zeta \notin \mu_{\infty}$, after the completion at $(x, \zeta)$ the deformations to the normal cones do not play a role. For the second identification, note that the Schubert completion corresponds to a limit over restrictions to finite subsets of the component set $|\Gamma_{(x, \zeta)}|$. Each of the connected components $\Gamma^{\vartheta}_{(x, \zeta)}$, $\vartheta \in |\Gamma_{(x, \zeta)}|$ is given by $W_{[x, \zeta]}$ copies of $T$ intersection in the natural way; the stabilizer of this subscheme with respect to the $W$-action on the right copy of $T$ is precisely $W_{\vartheta}$. The third identification is the usual comparison of the completion of a semisimple, simply-connected group with its Lie algebra.

For the right hand column, the first identification is the localization of equivariant $K$-theory as in Corollary \ref{corollary: K-theory in the flat situation}, Theorem \ref{theorem: fibers of topological K-theory, HP and fixed-points} and Remark \ref{remark: the completed variant of equivariant localization}. The second isomorphism is the computation of reduced fixed points from Theorem \ref{lemma: fixed points of (c, zeta) on Gr_G -- zeta generic} (no completion is necessary, since each of the terms has bounded cohomology). The third identification is clear.

The bottom horizontal identification is the well-known description of $T$-equivariant cohomology of partial flag varieties of reductive groups from Lemma \ref{lemma: equivariant cohomology of finite flag varieties} and Remark \ref{remark: finite flag varieties with trivial loop rotation}; note that $\Gm^{\rot}$ acts trivially on each of them.
\end{proof}

\begin{remark}\label{remark: replacing formally completed fibers by actual fibers} 
In fact, in the first reduction step, we may reduce further.
Firstly, the map in question is injective by \S \ref{subsubsection: injectivity}, so we only need to check surjectivity. 

Secondly, since we are speaking about a map of quasi-coherent sheaves on an affine scheme, it is enough to check that for each closed point $(x, \zeta)$, the following map between the fibers is surjective:
\begin{equation*}
    \begin{tikzcd}
        \widehat{\O}(\eN_{q=\zeta}(\eZ, T \times T \sslash W))_{(x, \zeta)} \arrow[r] & K^{\top, 0}_{T \times \Gm^{\rot}}(\Gr_G; \C)_{(x, \zeta)}.
    \end{tikzcd}
\end{equation*}    
Consequently, all occurrences of the formal completion $(-)^{\wedge}_{(x, \zeta)}$ may be replaced by the actual fiber $(-)_{(x, \zeta)}$ throughout the proof. This variant of the proof thus needs a slightly weaker technical input from \S \ref{section: K-theory and HP and cohomology of fixed points}, which can be covered by more standard results.
\end{remark}

\begin{remark}
The horizontal identifications in the proof may be also compared through the GKM presentations, as outlined in Remarks \ref{remark: GKM for cohomology of finite flag varieties}, \ref{remark: informal GKM localization}, \ref{remark: Upsilon and GKM}. This is another sanity check for the compatibility of all the identifications that we made.

Since the GKM presentation works over more general coefficient rings $\k$, this variant of the argument is applicable. In particular, the variant of Theorem \ref{theorem: equivariant K-theory as deformation to normal cone -- G sssc} with coefficients in $\k = \Q$ holds without without modifications. On the other hand, for $\k=\Z$, further blowups over all primes $p \in \Z$ are necessary; we will discuss this elsewhere.
\end{remark}

\subsubsection{Finite generation as complete topological $\lambda$-ring.}
As we saw, the equivariant $K$-theory ring of the affine Grassmannian is quite big -- it is a topological completion of an already non-noetherian ring of functions on our affine blowup. Nevertheless, it is finitely generated in the following appropriate sense.
\begin{corollary}\label{corollary: topological Adams generation}
Let $G$ be a simply connected group over $\C$. As complete topological $\lambda$-algebra over the equivariant base, $K^{\top, 0}_{T \times \Gm^{\rot}}(\Gr_G; \C)$ is generated by the finitely many elements
    \begin{equation*}
        (b_{j, 1} \mid j = 1, \dots, n).
    \end{equation*}
\end{corollary}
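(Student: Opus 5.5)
By Theorem \ref{theorem: equivariant K-theory as deformation to normal cone -- G sssc}, $K^{\top, 0}_{T \times \Gm^{\rot}}(\Gr_G; \C)$ is the Schubert completion $\widehat{\O}(\eN_{\mu_{\infty}}(\eZ, T \times T \sslash W))$. So it suffices to show that the closed $\lambda$-subalgebra $A$ generated over the equivariant base $\O(T \times \Gm^{\rot})$ by $b_{1,1}, \dots, b_{n,1}$ equals this completion. The plan has three steps, in the following order.

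First, I show that $A$ contains $\O(T \times \Gm^{\rot} \times T \sslash W)$. The base $\O(T \times \Gm^{\rot})$ lies in $A$ by definition. For the second factor, note that $e_j(s) = e_j(t) - (1-q) b_{j,1}$. Here $e_j(t)$ lies in the base, and $1-q$ lies in the base. Hence each $e_j(s)$ lies in $A$. Since $G$ is simply connected, the $e_j(s)$ generate $\O(T \sslash W)$ by Remark \ref{remark: representation ring in the simply conneted case}.

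Second, I show that $A$ contains all fundamental Bezrukavnikov classes. Lemma \ref{lemma: adams operations on bezrukavnikov classes} gives $\psi^{\ell}(b_{j,1}) = b_{j,\ell}$ for every $\ell \in \N$. Adams operations are natural $\lambda$-ring operations: over $\C$ each $\psi^\ell$ is an integral polynomial in $\lambda^1, \dots, \lambda^\ell$ (Newton's identities). Hence a $\lambda$-subalgebra is stable under every $\psi^{\ell}$, so all $b_{j,\ell}$ lie in $A$.

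Third, I conclude by density. By Lemma \ref{lemma: fundamental bezrukavnikov classes}, the classes $b_{j,\ell}$ together with $\O(T \times \Gm^{\rot} \times T \sslash W)$ generate the uncompleted ring $\O(\eN_{\mu_{\infty}}(\eZ, T \times T \sslash W))$ as an algebra. By Definition \ref{definition: Schubert completion}, this ring is dense in its Schubert completion. Its image in each quotient $\O(\overline{\Upsilon}_{\wt(\mu)})$ is surjective, by diagram \eqref{equation: schubert completion square}. Therefore the closure of $A$, which equals $A$ since $A$ is closed by definition, is the whole completed ring.

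The step I expect to need care is the compatibility of the $\lambda$-structure with the topology and with the identification of Theorem \ref{theorem: equivariant K-theory as deformation to normal cone -- G sssc}. Two things must hold.
\begin{itemize}
\item The Adams operations on the $K$-theory side should be continuous for the Schubert filtration. This follows because pullback along $\Gr_{\leq \mu} \hookrightarrow \Gr_G$ commutes with $\psi^\ell$, so the kernels $\eI_{\leq \mu}$ are $\psi^\ell$-stable.
\item On the base, $\psi^\ell$ should act by $t_i \mapsto t_i^\ell$, $q \mapsto q^\ell$. This follows from localization to fixed points, exactly as in the proof of Lemma \ref{lemma: adams operations on bezrukavnikov classes}.
\end{itemize}
Here "generated as a $\lambda$-algebra over the equivariant base" should be read as including the base and being closed under all $\lambda^k$. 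Since localization to fixed points is an injective map of $\lambda$-rings, all needed identities can be checked componentwise in $\prod_{\lambda} \O(T \times \Gm^{\rot})$.
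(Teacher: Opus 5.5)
Your proposal is correct and follows the paper's argument. The main theorem together with Lemma \ref{lemma: fundamental bezrukavnikov classes} gives topological generation by the $b_{j,\ell}$, and Lemma \ref{lemma: adams operations on bezrukavnikov classes} reduces these to the $b_{j,1}$ via $\psi^{\ell}$. Your first step, recovering the second-factor coordinates through $e_j(s) = e_j(t) - (1-q)\,b_{j,1}$, makes explicit a point the paper's two-line proof leaves implicit, since the equivariant base is only $\O(T \times \Gm^{\rot})$.
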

\begin{proof}
By Theorem \ref{theorem: equivariant K-theory as deformation to normal cone -- G sssc} and Lemma \ref{lemma: fundamental bezrukavnikov classes}, we know that $K^{\top, 0}_{T \times \Gm^{\rot}}(\Gr_G; \C)$ is topologically generated by the fundamental classes $b_{j, \ell}$. Under $\lambda$-operations, these are in turn generated by $b_{j, 1}$ for $j=1, \dots, n$ by Lemma \ref{lemma: adams operations on bezrukavnikov classes}.
\end{proof}

\subsubsection{Some intuition.}
One could try to approach the surjectivity in the proof of Theorem \ref{theorem: equivariant K-theory as deformation to normal cone -- G sssc} in another way as follows. By Lemma \ref{lemma: completed deformation as functions on Upsilon} and Proposition \ref{proposition: GKM description of K-theory of Gr} we want to prove that the following map is an isomorphism
\begin{equation*}
   \O(\overline{\Upsilon}) \to \GKM^{0}_{T \times \Gm^{\rot}}(\Gr_G; \k). 
\end{equation*}
Both sides compatibly embed into
\begin{equation}\label{equation: product of copies of torus}
    \prod_{\lambda \in X_{\bullet}} \O(T \times \Gm^{\rot}).
\end{equation}
and we are left to check that the images agree. For the right hand side, the image is given by the GKM conditions, saying that functions on different copies of $T \times \Gm^{\rot}$ agree over subtori of the form $T_{\beta + m\hbar} = \ker(\beta + m\hbar)$.
For the left-hand side, we are considering functions on an arrangement of copies of $T \times \Gm^{\rot}$ inside $T \times \Gm^{\rot} \times T\sslash W$. Their intersections combinatorially match the subvarieties imposing the equalizer conditions in the GKM picture.

From what we already said, one would be tempted to conclude. However, this is not a complete reasoning, as Example \ref{example: seminormality issues} below shows. To complete the above intuition, one needs to, in one way or another, study precisely how $\overline{\Upsilon}$ sits inside $\Bl^{\circ}_{\eZ}( T \times \Gm^{\rot} \times T \sslash W)$ via its defining embedding -- this is what the above proof accomplishes.

\begin{example}\label{example: seminormality issues}
 Consider the following example. Let $X_3 := \Spec \k[x,y, z] / (xy, yz, zx)$ be three lines in general position inside the three-space $\A^3$ intersecting in the origin. Let $X_2 := \Spec \k[x, y] / xy(x-y)$ be three lines inside the plane $\A^2$ intersection in the origin.  There is a natural squishing map $X_3 \to X_2$. This map is a homeomorphism -- in fact the same arrangement of lines. However, it is not an isomorphism of schemes: the induced injection on the rings of functions $\O(X_2) \to \O(X_3)$ is not surjective. The problem is related to the notion of \textit{semi-normality} ($X_3$ is seminormal but $X_2$ is not), or even more explicitly to that of \textit{embedding dimension} (which is $3$ and $2$, respectively).   
\end{example}

\subsection{Variants and generalizations}\label{section: variants and generalizations}

\subsubsection{The general answer.}
We work with coefficients $\k=\C$. As stated in Remark \ref{remark: variant of the K-theory of affine Grassmannian}, our Theorem \ref{theorem: equivariant K-theory as deformation to normal cone -- G sssc} has several direct generalizations. These may be either proved by a direct modifications of the above argument, or bootstrapped from what we already have. Putting these remarks together, the generalized statement is as follows. Recall that $G^{\sc}$ denotes the simply connected cover of the derived group of $G$ from \S \ref{subsubsection: simply connected covers and adjoint groups}. We use the modified family of blowup centers $\widetilde{\eZ}$ from \S \ref{subsubsection: modifications in the reductive case}, \S \ref{section: action of the fundamental group}.

\begin{theorem}\label{theorem: equivariant K-theory as deformation to normal cone -- general case}
Let $G$ be a reductive group over $\C$. Denote by $T^{\sc}$ the maximal torus of $G^{\sc}$. Pick two parahoric subsgroups $\eP_1, \eP_2 \leq \lo G$. Consider the affine flag variety $\Fl^J_G := \lo G / \eP_2$, together with the action of $\eP_1 \rtimes \Gm^{\rot} \acts \Fl^J_G$.
Then 
\begin{equation*}
\widehat{\O}(\Bl^{\circ}_{\widetilde{\eZ}}(\Gm^{\rot} \times T\sslash W_1 \times T^{\sc} \sslash W_2 \times \pi_1(G)) = K^{\top, 0}_{\eP_1 \rtimes \Gm^{\rot}}(\Fl^J_G, \C).
\end{equation*}
\end{theorem}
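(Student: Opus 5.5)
The plan is to run the same local-to-global argument as in the proof of Theorem \ref{theorem: equivariant K-theory as deformation to normal cone -- G sssc}, after two reductions that put us back in the simply connected, Iwahori-type setting. Since the topological invariants only see Levi quotients, we may replace $\eP_1 \rtimes \Gm^{\rot}$ by $L_1 \times \Gm^{\rot}$ with Weyl group $W_1$. The equivariant base then becomes $\O(T\sslash W_1 \times \Gm^{\rot})$, and $K^{\top,0}_{L_1\times\Gm^{\rot}}(\Fl^J_G) = K^{\top,0}_{T\times\Gm^{\rot}}(\Fl^J_G)^{W_1}$ by the standard torus-reduction for connected reductive equivariance with complex coefficients. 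The right factor $T^{\sc}\sslash W_2$ enters through the structure map $K^{\top}_{L_1\times\Gm^{\rot}\times L_2}(\pt) \to K^{\top}_{\eP_1\rtimes \Gm^{\rot}}(\lo G/\eP_2)$ of \S\ref{section: the map from functions on tori}. Next, $\Fl^J_G$ decomposes into components indexed by $\pi_1(G)$. Each component is $\Fl^J_{G^{\sc}}$, possibly twisted by a length-zero element of $W_{\ext}$, and carries an action of $T$ through $T \to T^{\ad}$ together with central characters. This matches the definition of $\widetilde{\eZ}$ in \S\ref{subsubsection: modifications in the reductive case}: one copy of the $G^{\sc}$ quasi-diagonal per $\sigma\in\pi_1(G)$, descended along $\pi_1(G^{\ad})$ and pulled back along $T\to T^{\ad}$. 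So it suffices to treat one component and track the central twist.

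Step one is to construct the comparison map. The GKM description of Proposition \ref{proposition: GKM of HHH} applies to all Kac--Moody partial flag varieties by \cite[\S 5]{HHH05}, with fixed points labeled by $W_{\ext}/W_{J}$. The localization map \eqref{equation: combinatorial localization map} becomes restriction to the components of $\Gamma^{T\sslash W_1, T^{\sc}\sslash W_2}$ from \S\ref{section: variants}. The congruence check of Lemma \ref{lemma: GKM localization map combinatorial} goes through verbatim using $W_2$-invariance. Divisibility by $q-\zeta$ is checked as before, because $\lambda(\zeta)$ is $\ell$-torsion. Injectivity and Schubert completion go through as in \S\ref{subsubsection: injectivity} and Definition \ref{definition: Schubert completion}. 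Generators of the blowup ideals come from Lemma \ref{lemma: explicit generators for the ideal sheaf of Zl}, together with the remark after Lemma \ref{lemma: fundamental bezrukavnikov classes} on flat base change to $W_1,W_2$.

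Step two is surjectivity. As in Remark \ref{remark: replacing formally completed fibers by actual fibers}, we check it on completions at each closed point $(x,\zeta)$, now with $x \in T$ (or $T\sslash W_1$). The right side is computed through Theorem \ref{theorem: fibers of topological K-theory, HP and fixed-points} and Proposition \ref{remark: the completed variant of equivariant localization}, applied to equivariant cohomology of $\Fix^{\red}_{(x,\zeta)}(\Fl^J_G)$. To describe these fixed points we rerun \S\ref{section: general structure of the fixed points} with $\lop G$ replaced by $\eP_2$. In Lemma \ref{lemma: fixed points on affine Grassmannian} the components become $J/(J\cap t^{\lambda}\eP_2t^{-\lambda})$, labeled by $W_{\ext}/W_J$ modulo $W_2$. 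After slanting by $\omega$ these are finite partial flag varieties of $L$ for $\zeta$ generic, and dilated partial affine flag varieties of $L^{\sc}$ for $\zeta$ a root of unity. The left side is computed by the remark after Lemma \ref{lemma: completions of the deformation}, with general $W_1,W_2$. The two sides are then matched by Lemma \ref{lemma: equivariant cohomology of finite flag varieties} and by Proposition \ref{proposition: cohomology of affine flag varieties as deformation to the normal cone}, both in their general two-Levi form.

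I expect the main obstacle to be the non-simply-connected bookkeeping. One has to show that the completion of $\Bl^{\circ}_{\widetilde{\eZ}}$ at $(x,\zeta)$ splits into exactly the components labeled by $X_\bullet/W_J$, using the extended group $W_{\ext}$ and the free $\pi_1(G)$-action of \S\ref{section: action of the fundamental group}. One also has to show that the central twist of $T$ acting on each component of $\Fl^J_G$ matches the pullback along $T\to T^{\ad}$. A further subtlety is that the resonant centralizer may be disconnected when $G\neq G^{\sc}$. The plan is to verify that only the identity component $J$ acts on each component, which holds by Lemma \ref{lemma: fixed points on affine Grassmannian}, and that the extra component group merely permutes labels, which is absorbed by the quotient by $\pi_1$.
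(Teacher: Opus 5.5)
Your proposal is a sound outline, but it takes a different route from the paper.

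\textbf{The paper's route.} The paper does no new local analysis. It reduces each of the four degrees of freedom to Theorem \ref{theorem: equivariant K-theory as deformation to normal cone -- G sssc}:
\begin{itemize}
\item The left parahoric is handled by $W_1$-invariants, exactly as in your first step.
\item The right parahoric uses the fibration $\Fl^J_G\to\Gr_G$ with fiber $G/P_2$. This gives $K^{\top,0}_{T\times\Gm^{\rot}}(\Fl^J_G;\C)=K^{\top,0}_{T\times\Gm^{\rot}}(\Gr_G;\C)\otimes_{\O(T\sslash W)}\O(T\sslash W_2)$, and then flat base change of the affine blowup finishes the step.
\item The $\pi_1(G)$-components are handled by translation with $t^{\underline{\tau}}$, with twisted equivariance.
\item The actual torus $T$ is handled by descent along the $\pi_1(G)$-torsor $T^{\sc}\to T$, then base change along $T\to T^{\ad}$. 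This uses that finite-group quotients commute with the blowup and its Schubert completion.
\end{itemize}

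\textbf{Your route.} You instead rerun the whole local-to-global comparison for $\lo G/\eP_2$. The paper mentions this only parenthetically as an alternative.

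What your route buys is uniformity. The pointwise analysis at $(x,\zeta)$ sees only the fixed locus, so it treats every parahoric and every component alike, including the twisted components $\Fl^{\sigma(J)}_{G^{\sc}}$. By contrast, the paper's reduction of the right parahoric presupposes $\eP_2\subseteq\lop G$, and it leaves the simultaneous bookkeeping of the four reductions to the reader.

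The price is extra input the paper avoids. You must generalize \S\ref{section: general structure of the fixed points} to $\lo G/\eP_2$. You must also invoke Proposition \ref{proposition: cohomology of affine flag varieties as deformation to the normal cone} for arbitrary parahorics of $\lol L^{\sc}$.

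\textbf{Two small corrections.}
\begin{itemize}
\item The $T\times\Gm^{\rot}$-fixed points of $\lo G/\eP_2$ are the points $\dot w\eP_2$ for $w\in W_{\ext}$. So the components are $J/(J\cap\dot w\eP_2\dot w^{-1})$, not only those attached to translations $t^{\lambda}$.
\item The obstacle you flag is mild. Since $T^{\sc}\to T^{\ad}$ is finite \'etale and $T\to T^{\ad}$ is smooth, the completion of $\Bl^{\circ}_{\widetilde{\eZ}}$ at $(x,\zeta)$ follows by flat base change from the simply connected computation at a lift. The component group of the centralizer needs no separate treatment, because the argument of Lemma \ref{lemma: fixed points on affine Grassmannian} already shows the fixed locus is a disjoint union of $J$-orbits.
\end{itemize}
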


\begin{proof}
The setup of theorem has four degrees of freedom -- we can independently choose:
(i) which parahoric will act on the left;
(ii) which partial affine flag variety are we taking;
(iii) which isogeny class of $G$ we take for $\lo G$, or more precisely which connected components of $\Gr_{G^{\ad}}$ we take into account; 
(iv) which isogeny class of $G$ will be used for the $\lop G$-action, or alternatively which torus $T$ with $T^{\der}$ in between $T^{\sc} \to T^{\der} \to T^{\ad}$ will act.

We will treat these independently: (i) and (ii) in Lemma \ref{lemma: equivariant K-theory as deformation to normal cone -- flag varieties and equivariance}; (iii) in Lemma \ref{lemma: equivariant K-theory as deformation to normal cone -- G reductive}; (iv) in Lemma \ref{lemma: equivariant K-theory as deformation to normal cone -- isogeneous torus} and Remark \ref{remark: equivariant K-theory as deformation to normal cone -- isogeneous torus}. It is quite clear that all this bookkeeping can be done simultaneously; we leave the details to the reader. 
\end{proof}

In what follows, we also write $\Bl^{\circ}_{\widetilde{\eZ}}(X)$ for the affine blowup at the relevant quotients and restriction of the family $\widetilde{\eZ}$ which lives in the given variant $X$ of the scheme $\Gm^{\rot} \times T\sslash W_1 \times T^{\sc} \sslash W_2 \times \pi_1(G)$. This should not lead to any confusion.

\subsubsection{Affine flag varieties and positive loop-group equivariance.}
Assume $G = G^{\sc}$ is simply connected and consider its torus $T=T^{\sc}$. Take two parahoric subgroups $\eP_1, \eP_2 \leq \lo G$. Consider the corresponding affine flag variety $\Fl^J = \lo G / \eP_2$, together with the action
$\eP_1 \rtimes \Gm^{\rot} \acts  \Fl^J_G$. The equivariant $K$-theory of this action is the $K$-theory of the double quotient (quantum Hecke stack)
\begin{equation*}
    \eP_1 \rtimes \Gm^{\rot} \backslash \lo G \rtimes \Gm^{\rot} / \eP_2 \rtimes \Gm^{\rot}
\end{equation*}
which has an apparent symmetry. 

\begin{lemma}\label{lemma: equivariant K-theory as deformation to normal cone -- flag varieties and equivariance}
For $G$ simply connected, we have a ring isomorphism 
\begin{equation*}
    \widehat{\O}(\Bl^{\circ}_{\widetilde{\eZ}}(T\sslash W_1 \times \Gm^{\rot} \times T\sslash W_2))
    =K^{\top, 0}_{\eP_2 \rtimes \Gm^{\rot}}(\Fl^J_G; \C).
\end{equation*}   
\end{lemma}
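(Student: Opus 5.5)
The plan is to rerun the proof of Theorem \ref{theorem: equivariant K-theory as deformation to normal cone -- G sssc} word for word, replacing $\Gr_G$ by $\Fl^J_G = \lo G/\eP_2$ and $T$-equivariance by $\eP_1 \rtimes \Gm^{\rot}$-equivariance. (The displayed statement writes $\eP_2$ for the acting group, which I read as $\eP_1$; by the double-quotient symmetry noted above, the roles of $\eP_1,\eP_2$ and of $W_1,W_2$ may be swapped anyway.)

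\emph{Reduction to the torus.} First reduce to $T \times \Gm^{\rot}$-equivariance. Since $\eP_1$ is a pro-unipotent extension of its Levi quotient $L_1$, which has Weyl group $W_1$, we have $K^{\top,0}_{\eP_1 \rtimes \Gm^{\rot}}(\Fl^J) = K^{\top,0}_{L_1\times\Gm^{\rot}}(\Fl^J)$. By equivariant formality (the Iwahori paving), this equals the $W_1$-invariants of the $T\times\Gm^{\rot}$-equivariant ring, i.e.\ the base change along $\O(T\sslash W_1)\to\O(T)$. On the geometric side, $\Bl^{\circ}_{\widetilde{\eZ}}$ is compatible with the $W_1$-quotient on the first factor, since $N(Z\sslash W,X\sslash W)=N(Z,X)\sslash W$. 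So it suffices to treat $T\sslash 1 \times \Gm^{\rot} \times T\sslash W_2$ acting on $\Fl^J$.

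\emph{Constructing the comparison map.} Build the map exactly as in \S\ref{section: map to equivariant K-theory: combinatorics} and \S\ref{section: map to equivariant K-theory: topology}. The structure map $K_{T\times\Gm^{\rot}\times L_2}(\pt)\to K_{T\times\Gm^{\rot}}(\lo G/\eP_2)$ gives $\O(T\times\Gm^{\rot}\times T\sslash W_2)\to \GKM^0(\Fl^J)$. Here the fixed points are $X_\bullet\rtimes W/W_2$ and the localization sends $f(x,\zeta,y)\mapsto f(x,\zeta,\lambda(\zeta)w(x))$. The GKM check of Lemma \ref{lemma: GKM localization map combinatorial} carries over unchanged. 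So do the divisibility by $q-\zeta$ on $Z^{T,T\sslash W_2}_\ell$, using the generators of Lemma \ref{lemma: explicit generators for the ideal sheaf of Zl}, and injectivity by Zariski density. Topologically, the classes $b_{j,\ell}$ are the pullbacks of the corresponding classes on $\Gr_G$ along $\Fl^J\to\Gr_G$. Finally, Schubert completion is compatible with the limit over $\eP_2$-Schubert varieties, giving an injective map from the Schubert completion.

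\emph{Surjectivity.} Check surjectivity on completed fibers at each $(x,\zeta)$, as in Remark \ref{remark: replacing formally completed fibers by actual fibers}. For the $K$-theory side, localize via Theorem \ref{theorem: fibers of topological K-theory, HP and fixed-points} to the cohomology of $\Fix_{(x,\zeta)}(\Fl^J)$. This is the main obstacle: the fixed-point computation of \S\ref{section: fixed points on affine grassmannians} must be redone for partial affine flag varieties. I would repeat Lemma \ref{lemma: fixed points on affine Grassmannian} verbatim, using the root-subgroup decomposition around $t^\lambda w$. This gives $\coprod_\vartheta J/J^+_\vartheta$ with $\vartheta\in (X_\bullet\rtimes W/W_2)/W_J$. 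Slanting then identifies the components with finite partial flag varieties of $L_{[x,\zeta]}$ when $\zeta$ is generic, and with $\ell$-dilated partial affine flag varieties of $L^{\sc}_{[x,\zeta]}$ when $\zeta$ is a root of unity. The stabilizers are intersections of parabolics or parahorics with $W_2$-type data.

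On the geometric side, use the generalized form of Lemma \ref{lemma: completions of the deformation}, namely the remark following it with $W_1=1$. This yields a product of deformations $N_{h=0}(\t\times_{\t\sslash W_{[x,\zeta]}}\t\sslash W_{2,\vartheta},\dots)$ over the same labeling set. The componentwise match is then Proposition \ref{proposition: cohomology of affine flag varieties as deformation to the normal cone} for roots of unity, which already covers general parahorics via \cite{Yun10}, and Lemma \ref{lemma: equivariant cohomology of finite flag varieties} for generic $\zeta$. The delicate point is matching the labeling sets and the Weyl groups $W_{2,\vartheta}$ on the two sides. I would settle this by comparing stabilizers in $W_{\aff}$ of the base point of $\Gamma^{T,T\sslash W_2}$ with the stabilizers $J^+_\vartheta$, as in Lemma \ref{lemma: identification of fibers of Gamma}.
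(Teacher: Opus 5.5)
Your outline is sound, and reading the acting group as $\eP_1$ is consistent with the paper's own proof. The first step, passing from $\eP_1\rtimes\Gm^{\rot}$ to $L_1\times\Gm^{\rot}$ and then to $W_1$-invariants of the $T\times\Gm^{\rot}$-equivariant ring, is exactly what the paper does.

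After that the routes differ. The paper does not rerun the Grassmannian argument. It uses that $\Fl^J_G\to\Gr_G$ is a locally trivial fibration with fiber $G/P_2$, so that
\[
K^{\top,0}_{T\times\Gm^{\rot}}(\Fl^J_G;\C)=K^{\top,0}_{T\times\Gm^{\rot}}(\Gr_G;\C)\otimes_{\O(T\sslash W)}\O(T\sslash W_2).
\]
It then uses that the affine blowup commutes with the flat base change $T\sslash W_2\to T\sslash W$, which turns $Z_\ell$ into $Z^{T,T\sslash W_2}_\ell$. The lemma thus becomes a formal consequence of Theorem~\ref{theorem: equivariant K-theory as deformation to normal cone -- G sssc}. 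Your direct re-run is the alternative the paper only mentions in parentheses.

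The base-change route is short and needs no new local analysis. What it uses is a Leray--Hirsch-type identification for the $G/P_2$-fibration. It also uses that $\O(T\sslash W_2)$ is finite free over $\O(T\sslash W)$, so tensoring commutes with the Schubert completion (an inverse limit).

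Your route is self-contained and gives a description of $\Fix^{\red}_{(x,\zeta)}(\Fl^J_G)$ as a by-product. However, its surjectivity step rests on three inputs you only sketch:
\begin{enumerate}
\item For a fixed point $u=t^{\lambda}w$, the stabilizer $J\cap u\eP_2u^{-1}$ becomes, after slanting, a parabolic subgroup of $L$ resp.\ a parahoric subgroup of $\lol L^{\sc}$. This holds because intersecting the $u$-translate of the positive affine roots with the reflection subsystem $\Phi^{\bullet}_J$ gives a positive system of $\Phi^{\bullet}_J$.
\item The labeling sets on both sides are the double cosets of $W_J$ and $W_2$ in $W_{\aff}$.
\item The Weyl group $W_J\cap uW_2u^{-1}$ of the Levi of this stabilizer is conjugate to the group $W_{2,\vartheta}$ appearing in the remark after Lemma~\ref{lemma: completions of the deformation}.
\end{enumerate}

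Both arguments tacitly require $\eP_2\le\lop G$. The paper needs it for the fibration over $\Gr_G$. You need it for labeling fixed points by $X_{\bullet}\rtimes W/W_2$ and for pulling back the $b_{j,\ell}$ along $\Fl^J_G\to\Gr_G$.
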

\begin{proof}
By homotopy invariance and equivariant formality, we have
\begin{equation*}
K^{\top, 0}_{\eP_1 \rtimes \Gm^{\rot}}(\Fl^J_G; \C) = K^{\top, 0}_{L_1 \times \Gm^{\rot}}(\Fl^J_G; \C) = (K^{\top, 0}_{T \times \Gm^{\rot}}(\Fl^J_G; \C))^{W_1}.    
\end{equation*}
Here, the $W_1$-action is on the first copy of the torus. 

For the other part, note that $\Fl^J_G \to \Gr_G$ is a fpqc locally trivial fibration with fiber $G/P_{2}$. The $K$-theory of $\Fl^J_G$ is thus given by the following base change:
\begin{equation*}
    K^{\top, 0}_{T\times \Gm^{\rot}}(\Fl^J_G; \C) = K^{\top, 0}_{T\times \Gm^{\rot}}(\Gr_G; \C) \otimes_{\O(T \sslash W)} \O(T \sslash W_2).
\end{equation*}
Since blowups are compatible with base change, we have reduced the desired statement to Theorem \ref{theorem: equivariant K-theory as deformation to normal cone -- G sssc}.(Alternatively, one can directly repeat our computation of $T \times \Gm^{\rot}$-equivariant $K$-theory of $\Gr$ in the more general case of $\Fl^J_G$.)
\end{proof}

\subsubsection{Affine Grassmannian of reductive $G$.}
Assume now $G$ to be merely reductive, but let us work equivariantly with respect to the torus $T^{\sc}$ of $G^{\sc}$, see \S \ref{subsubsection: simply connected covers and adjoint groups}.

\begin{lemma}\label{lemma: equivariant K-theory as deformation to normal cone -- G reductive}
Assume $G$ is connected reductive. Then the above constructed map induces a ring isomorphism
\begin{equation*}
   \widehat{\O}(\Bl^{\circ}_{\widetilde{\eZ}}(T^{\sc} \times \Gm^{\rot} \times T^{\sc}\sslash W \times \pi_1(G))) 
   = K^{\top, 0}_{T^{\sc} \times \Gm^{\rot}}(\Gr_G; \C).
\end{equation*}
\end{lemma}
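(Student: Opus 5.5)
The plan is to repeat the proof of Theorem \ref{theorem: equivariant K-theory as deformation to normal cone -- G sssc}, with the modified family of blowup centers $\widetilde{\eZ}$ from \S \ref{subsubsection: modifications in the reductive case} and the reductive cocharacter graphs from \S \ref{section: cocharacter graphs for reductive groups}. The first reduction is to connected components. We have $\Gr_G = \coprod_{\sigma \in \pi_1(G)} \Gr_G^{\sigma}$, and the action of $T^{\sc} \times \Gm^{\rot}$ preserves each component. Hence both sides of the claimed isomorphism split as products over $\pi_1(G)$; on the left this is the $\pi_1(G)$-factor of the ambient scheme. Each $\Gr_G^{\sigma}$ maps isomorphically and $T^{\sc} \times \Gm^{\rot}$-equivariantly onto a component of $\Gr_{G^{\ad}}$ (as in \S \ref{section: fixed points on GrG and GrL}). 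It is therefore enough to treat a single component $\Gr^{\sigma}_{G^{\ad}}$, whose coordinate fixed points are $t^{\lambda}$ for $\lambda \in \chi(\sigma) + \Z\Phi_{\bullet}$.

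Next I construct the map for one fixed $\sigma$. The localization $\loc_\lambda(f)(x,\zeta) = f(x, \underline{\lambda}(\zeta)\cdot x)$ restricts functions to the components $\Gamma_{\lambda,1}$ of the reductive graph $\Gamma^{\sigma}$. The GKM check of Lemma \ref{lemma: GKM localization map combinatorial} goes through verbatim, since it only uses affine reflections, which preserve $\sigma$, together with $W$-invariance on $T^{\sc}$. The factorization through the affine blowup also carries over: at a root of unity $\zeta$, the point $(x,\underline{\lambda}(\zeta)x)$ lies in the reduced fiber $\Gamma^{\sigma}_{\zeta}$, which by \S \ref{subsubsection: modifications in the reductive case} is exactly the center $\widetilde{\eZ}_\zeta$. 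Injectivity follows from the same generic-point argument as Lemma \ref{lemma: injectivity}, Zariski density holding because $\chi(\sigma)+\Z\Phi_\bullet$ is a translate of a full-rank lattice. The map then extends to the Schubert completion as in Lemma \ref{lemma: completed comparison}. For the topological side, the classes are obtained by pulling back from $G^{\sc}$-type data along the component identification; alternatively, I would simply argue with the GKM ring of Proposition \ref{proposition: GKM description of K-theory of Gr}.

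For surjectivity, I complete at each closed point $(x,\zeta) \in T^{\sc}\times\Gm^{\rot}$ and use the localization Theorem \ref{theorem: fibers of topological K-theory, HP and fixed-points} together with Remark \ref{remark: the completed variant of equivariant localization}. This turns the right-hand side into completed equivariant cohomology of $\Fix_{(x,\zeta)}(\Gr^{\sigma}_{G^{\ad}})$. By \eqref{equation: fixed points on Gr_G and loop rotation on Gr_L} and Theorems \ref{lemma: fixed points of (c, zeta) on Gr_G -- zeta generic} and \ref{lemma: fixed points of (c, zeta) on Gr_G -- zeta root of unity}, this is a disjoint union of partial flag varieties $L/P_\vartheta$, respectively dilated affine flag varieties $\Fl^{\vartheta}_{\ell,L^{\sc}}$. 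Here $\vartheta$ runs over the $\sigma$-part of $\Theta$, identified via \S \ref{subsubsection: variants and stabilizers for reductive G}. On the left-hand side, completing $\Bl^\circ_{\widetilde\eZ}$ at $(x,\zeta)$ is computed exactly as in Lemma \ref{lemma: completions of the deformation} inside the copy of $T^{\sc}\times T^{\sc}$ labelled by $\sigma$. The only change is that the relevant stabilizer is now that of the base point in the $\sigma$-sheet, which is again a reflection subgroup of $W_{\aff}$ by Proposition \ref{proposition: steinberg fixed point theorem}, applicable since we work on $T^{\sc}$. The two sides are then matched by Proposition \ref{proposition: cohomology of affine flag varieties as deformation to the normal cone} and Lemma \ref{lemma: equivariant cohomology of finite flag varieties}.

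The main obstacle I expect is the bookkeeping of the shift by $\chi(\sigma)$. The non-identity component corresponds to a translate of the cocharacter graph, so the relevant point in $T^{\sc}$ is $x\cdot\underline{\lambda}(\zeta)$ rather than $x\cdot\lambda(\zeta)$. One has to check that the Weyl stabilizers and the groups $W_\vartheta$, $L_\vartheta$ appearing in the local normal-cone computation agree with those in the fixed-point description on $\Gr^{\sigma}_{G^{\ad}}$, which involves the $\omega+\lambda^{\ad}$ shifts. I would handle this by recomputing Lemma \ref{lemma: fiber of cocharacter graphs as lattice quotient} for $\Gamma^\sigma$ directly, as the orbit of $W_{\aff}$ on the $\sigma$-sheet.
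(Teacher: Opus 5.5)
Your route is the first of the two routes the paper's proof mentions: rerun the proof of Theorem~\ref{theorem: equivariant K-theory as deformation to normal cone -- G sssc} on each component, using the reductive cocharacter graphs. It breaks at the step where you say the GKM verification of Lemma~\ref{lemma: GKM localization map combinatorial} goes through verbatim for $\loc_{\lambda}(f) = f(x, \underline{\lambda}(\zeta)\cdot x)$.

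\textbf{Where the GKM check fails.} The verification uses $\lambda'(\zeta) = s_{\beta}(\lambda(\zeta)) \cdot \widecheck{\beta}(\zeta^m)$ for $\lambda' = s_{\beta+m\hbar}\lambda$. After subtracting $\chi(\sigma)$ you only get $\underline{\lambda'} = s_\beta(\underline{\lambda}) + (m - \langle \beta, \chi(\sigma)\rangle)\widecheck{\beta}$. So an error factor $\widecheck{\beta}(\zeta^{-\langle\beta, \chi(\sigma)\rangle})$ survives.

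Here is a concrete instance. Take $G = PGL_2$ and its odd component, with $\chi(\sigma) = \varpi^{\vee}$ the fundamental coweight (so $\widecheck{\alpha} = 2\varpi^{\vee}$). Let $a$ be the coordinate on $T^{\sc}$, so $\alpha = a^2$, and take $f = a + a^{-1}$.
\begin{itemize}
\item The points $t^{\pm\varpi^{\vee}}$ are joined by the curve $G \cdot t^{\varpi^{\vee}} \cong \P^1$. This curve is pointwise fixed by $\Gm^{\rot}$ and has tangent character $\alpha^{\pm 1}$.
\item Your formula assigns these points $a + a^{-1}$ and $q^{-1}a + q a^{-1}$, which differ at $a = 1$.
\end{itemize}
So these are not the localizations of any class, and your map is not defined.

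\textbf{Why this is structural.} As $\lambda$ runs over $\chi(\sigma) + \Z\Phi_{\bullet}$, $\underline{\lambda}$ runs over all of $X_{\bullet}(T^{\sc})$. Hence your $\Gamma^{\sigma}$ is literally the simply connected $\Gamma$, and every sheet would give the same $\O(T^{\sc}\times\Gm^{\rot})$-algebra. But by the remark following the lemma, the components are not equivariantly isomorphic: at $(1,\zeta)$ with $\zeta$ generic, the even component of $\Gr_{PGL_2}$ has an isolated fixed point and the odd one has none.

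For the same reason, the ``bookkeeping'' in your last paragraph cannot be settled by recomputing $\Theta$ for $\Gamma^{\sigma}$. The fixed locus on $\Gr^{\sigma}_G$ is governed by $\bar{x}\cdot\lambda(\zeta) \in T$, where $\bar x$ is the image of $x$ in $T$. This differs from $x\cdot\underline{\lambda}(\zeta)$ by $\chi(\sigma)(\zeta)$. That shift changes the resonance data: in the example, $\alpha$ takes odd instead of even powers of $\zeta$. It also does not lift to $T^{\sc}$ in general without extracting roots of $\zeta$.

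\textbf{What is missing.} The missing ingredient is the twist on which the paper's bootstrapping argument (following Bezrukavnikov--Finkelberg) rests. Translation by $t^{\chi(\sigma)}$ maps $\Gr^1_G \cong \Gr_{G^{\sc}}$ onto $\Gr^{\sigma}_G$. It carries the action of $(x,\zeta)$ on $\Gr^{\sigma}_G$ to the action of $(\bar{x}\cdot\chi(\sigma)(\zeta), \zeta) \in T \times \Gm^{\rot}$ on $\Gr^1_G$.

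Hence the $\sigma$-sheet must be the base change of the $T\times\Gm^{\rot}$-equivariant description of $\Gr^1_G$ (as in Lemma~\ref{lemma: equivariant K-theory as deformation to normal cone -- isogeneous torus}) along the homomorphism $(x,\zeta) \mapsto (\bar{x}\cdot\chi(\sigma)(\zeta),\zeta)$. In other words, the sheets are identified through this twist, not as untwisted copies of the simply connected sheet. Equivalently, on $\Gr^{\sigma}_G$ the structure classes localize as $f(\lambda(\zeta)\cdot\bar{x})$ with $f \in \O(T)^W$, and for these the GKM check does go through verbatim. Once the twist is built into the sheets, your pointwise completion argument can be run as you describe.
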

\begin{proof}
The affine Grassmannian $\Gr_G$ has connected components $\Gr^{\tau}_G$ labeled by elements $\tau \in \pi_1(G)$. The identity component $\Gr^1_G$ is $T^{\sc} \times \Gm^{\rot}$-equivariantly isomorphic to $\Gr_{G^{\sc}}$, which we have treated already in our main theorem.

In general, we can repeat the proof of the main theorem for all components at once, using the corresponding variant of cocharacter graphs from \S \ref{section: cocharacter graphs for reductive groups}. Alternatively, we can follow \cite{BF07} and bootstrap the current case as follows. For each $\tau \in \pi_1(G)$ choose a lift $\underline{\tau} \in X_{\bullet}$. Translation by $\tau$ then identifies the identity component $\Gr^1_G$ with $\Gr^{\tau}_G$. This isomorphism is twisted-equivariant for the $T^{\sc} \times \Gm^{\rot}$-action; the twisting is given by the conjugation with $t^\tau$. Under this twisting, we identify the copies of $T^{\sc} \times \Gm^{\rot} \times T \sslash W$ compatibly with the cocharacter graphs inside them as in \S \ref{subsubsection: cocharacter graphs for reductive groups}. The conclusion follows.
\end{proof}

\begin{remark}
The connected components of $\Gr_G$, labeled by the fundamental group $\pi_1(G)$, are all abstractly isomorphic; such isomorphisms are realized by translating with elements of the loop group $\lop G$, for example by an element corresponding to some $\lambda \in X_{\bullet}$. Each of these components is isomorphic to the affine Grassmannian $\Gr_{G^{\sc}}$ of $G^{\sc}$.

However, we would like to emphasize the following caveat: these components are \textit{not} isomorphic equivariantly, not even with respect to the extended torus $T \times \Gm^{\rot}$. Indeed, given two components $\Gr^{\sigma}_G$, $\Gr^{\tau}_G$ for some $\sigma$, $\tau \in \pi_1(G)$, their equivariant $K$-theory rings are usually different as $\O(T \times \Gm^{\rot})$-algebras.

Indeed, as we explained, the fiber of the $K$-theory ring over a point $(x, \zeta) \in T \times \Gm^{\rot}$ is given by the cohomology of fixed-points. Now, take $(1, \zeta)$ for generic $\zeta$, and consider the affine Grassmannian of $PGL_2$. The fixed points of $(1, \zeta)$ are given by a union of partial flag varieties. In the even component, the fixed-point scheme contains a unique connected component isomorphic to a point. In the odd component, the fixed-point scheme has only components given by partial flag varieties of positive dimension. The cohomology rings of the fixed-points are not the same.

The same situation occurs for cohomology in \cite[\S 3, proof of Theorem 1.(b)]{BF07}.
\end{remark}

\subsubsection{Equivariance with respect to actual torus.}
Above, we have always considered equivariance with respect to the maximal torus $T^{\sc}$ of the simply connected $G^{\sc}$. However, one can well ask about a description with respect to the actual maximal torus $T \leq G$. 

We first assume $G$ is semisimple, giving isogenies $T^{\sc} \to T \to T^{\ad}$. To simplify notation, we take only the identity component $\Gr_{G^{\sc}}$ of $\Gr_G$. 
\begin{lemma}\label{lemma: equivariant K-theory as deformation to normal cone -- isogeneous torus}
Assume $G$ is semisimple. We have 
\begin{equation*}
    \widehat{\O}(\Bl^{\circ}_{\widetilde{\eZ}}(T \times \Gm^{\rot} \times T^{\sc}\sslash W ))
    =\widehat{\O}(\Bl^{\circ}_{\widetilde{\eZ}}(\tfrac{T^{\sc} \times T^{\sc} \sslash W}{\pi_1(G)} \times \Gm^{\rot}))
    =K^{\top, 0}_{T \times \Gm^{\rot}}(\Gr_{G^{\sc}}; \C).
\end{equation*}
\end{lemma}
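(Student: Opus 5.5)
The plan is to descend from the $T^{\sc}$-equivariant statement already proved in Theorem \ref{theorem: equivariant K-theory as deformation to normal cone -- G sssc} (applied to $G^{\sc}$, whose affine Grassmannian is $\Gr_{G^{\sc}}$) along the central isogeny $T^{\sc} \to T$, whose kernel is the finite group $\pi_1(G) \cong \ker(T^{\sc}\to T)$.

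On the $K$-theory side, the action of $T^{\sc} \times \Gm^{\rot}$ on $\Gr_{G^{\sc}}$ factors through $T \times \Gm^{\rot}$ up to the central subgroup $\pi_1(G) \leq Z(G^{\sc})$. Since $Z(G^{\sc})$ acts on $\Gr_{G^{\sc}}$ through $\lo G^{\sc}$ by central, hence trivial, translations, the kernel $\pi_1(G)$ acts trivially. Equivariant formality (Proposition \ref{proposition: completion of K-theory} together with the GKM freeness) then gives a base change
\begin{equation*}
K^{\top,0}_{T^{\sc}\times \Gm^{\rot}}(\Gr_{G^{\sc}};\C) = K^{\top,0}_{T\times\Gm^{\rot}}(\Gr_{G^{\sc}};\C)\otimes_{\O(T)}\O(T^{\sc}).
\end{equation*}
Since $\O(T) = \O(T^{\sc})^{\pi_1(G)}$ and the base change is faithfully flat and finite, taking $\pi_1(G)$-invariants recovers the $T$-equivariant ring. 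Here $\pi_1(G)$ acts by deck transformations on the first factor of the equivariant base $\O(T^{\sc})$. The check also works fiberwise: over $x \in T$ and each lift $\tilde x \in T^{\sc}$, the fixed points of $\tilde x$ and $x$ on $\Gr_{G^{\sc}}$ agree, so the fibers match by Corollary \ref{corollary: K-theory in the flat situation}.

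On the geometric side, the action $\act\times\id$ of $\pi_1(G)$ on $T^{\sc}\times\Gm^{\rot}\times T^{\sc}\sslash W$ preserves the family $\eZ$. This holds because the fibers $Z_{(x,\zeta)}$ are the sets $|\overline\Gamma_{(x,\zeta)}|$, which are built from $\pi_1(G)$-invariant data. Indeed, $W$ fixes the center, and $\lambda(\zeta)$ commutes with it; both facts were used in \S\ref{section: action of the fundamental group}. The action also preserves the Schubert filtration, since it preserves each component $\overline\Gamma_\lambda$. Blowups commute with flat base change, and taking invariants under a finite group commutes with affine blowups in characteristic zero. Hence
\begin{equation*}
\widehat\O(\Bl^{\circ}_{\eZ}(T^{\sc}\times\Gm^{\rot}\times T^{\sc}\sslash W))^{\pi_1(G)} = \widehat\O(\Bl^{\circ}_{\widetilde\eZ}(\tfrac{T^{\sc}\times T^{\sc}\sslash W}{\pi_1(G)}\times\Gm^{\rot})).
\end{equation*}
This is the middle term of the statement. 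The first equality is then the change of coordinates $(x,y)\mapsto(x,x^{-1}y)$ of Remark \ref{remark: identification for Gamma in semisimple case}. It identifies the quotient of $T^{\sc}\times T^{\sc}$ by the diagonal action with $T\times T^{\sc}$, and after passing to $W$-invariants on the second factor it matches the definition of $\widetilde\eZ$ in \S\ref{subsubsection: modifications in the reductive case}.

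It remains to show that the isomorphism of Theorem \ref{theorem: equivariant K-theory as deformation to normal cone -- G sssc} is $\pi_1(G)$-equivariant. This can be checked after the embedding into $\prod_\lambda\O(T^{\sc}\times\Gm^{\rot})$ via $\loc$. There $\tau\in\pi_1(G)$ sends $f(x,\zeta,y)$ to $f(\tau x,\zeta,\tau y)$, and on each component $\loc_\lambda(f)(\tau x,\zeta) = f(\tau x,\zeta,\lambda(\zeta)\tau x)$. This is exactly the deck action on the equivariant base, so the two actions agree.

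I expect the main obstacle to be commuting $\pi_1(G)$-invariants with the Schubert completion and with the infinitely many blowups. Both steps should follow from exactness of invariants for a finite group in characteristic zero, applied to each finite stage $\O(\overline\Upsilon_{\wt(\mu)})$ before passing to the limit over $\mu$.
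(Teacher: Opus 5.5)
Your route is the paper's. Equivariant formality makes the $T^{\sc}$-equivariant ring the base change of the $T$-equivariant one along the $\pi_1(G)$-torsor $T^{\sc}\to T$, and one descends by $\pi_1(G)$-invariants. On the geometric side one uses the action of \S\ref{section: action of the fundamental group}, together with compatibility of the affine blowup and Schubert completion with the quotient. Your check that $\loc$ intertwines the two actions is a useful addition that the paper leaves implicit.

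One step is misstated, though. The action on $T^{\sc}\times\Gm^{\rot}\times T^{\sc}\sslash W$ that preserves $\eZ$ is the diagonal one $\act\times\act$, which is the one your last paragraph actually uses; it is not $\act\times\id$. Let $\omega_j$ denote the fundamental weights. Translating $x\mapsto\tau x$ alone sends the generator $e_j(x^\ell)-e_j(y^\ell)$ of Lemma \ref{lemma: explicit generators for the ideal sheaf of Zl} to $\omega_j(\tau)^\ell e_j(x^\ell)-e_j(y^\ell)$. So $Z_\ell$ is preserved only when $\tau^\ell=1$. By contrast, $\act\times\act$ just rescales each generator by $\omega_j(\tau)^\ell$. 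This is exactly what your $\loc$ computation shows: the deck action on the equivariant base corresponds to the diagonal action on the source.

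Two justifications also need repair.

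First, taking invariants under a finite group does not commute with affine blowups in general. Exactness of invariants does not give $(\eI^j)^{\pi_1(G)}=(\eI^{\pi_1(G)})^j$. For example, let $\Z/2$ act on $\A^1$ by $x\mapsto -x$ and deform to the normal cone of the origin. The invariants of $\C[x,h,x/h]$ contain $x^2/h^2$, which does not lie in the deformation $\C[x^2,h,x^2/h]$ of the quotient. What makes your argument work is freeness. $\pi_1(G)$ acts by translations on the first factor, so the quotient map is an \'etale $\pi_1(G)$-torsor, and the blowup descends by flat base change. After that, exactness of invariants does handle the Schubert filtration and the limit over $\mu$, as you say.

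Second, the change of variables $(x,y)\mapsto(x,x^{-1}y)$ does not commute with $W$ acting on $y$, so it does not descend to $T^{\sc}\times T^{\sc}\sslash W$. It therefore cannot by itself identify the middle term with $T\times\Gm^{\rot}\times T^{\sc}\sslash W$. Instead, use that $\pi_1(G)$ acts on $T^{\sc}\sslash W\cong\A^n$ linearly by $e_j\mapsto\omega_j(\tau)e_j$. Then $(T^{\sc}\times T^{\sc}\sslash W)/\pi_1(G)$ is a vector bundle over $T$, trivialized by the invariant functions $e_j(y)\,\omega_j(x)^{-1}$.

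The paper is equally terse on both points, so these are repairs of the shared argument rather than a change of strategy.
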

\begin{proof}
By Iwahori pavings, the equivariant $K$-theory of the affine Grassmannian is equivariantly formal. Since the action of $T^{\sc}$ factors through $T$, we know that $K^{\top, 0}_{T^{\sc}}(\Gr_G; \k)$ is the pullback of $K^{\top, 0}_{T \times \Gm^{\rot}}(\Gr_G; \k)$ along the $\pi_1(G)$-torsor $T^{\sc} \to T$. Conversely, the $T$-equivariant $K$-theory is given by descending the $T^{\sc}$-equivariant one, in other words by taking the quotient by the $\pi_1(G)$-action.

Now, on the inclusion $\Gamma \hookrightarrow T^{\sc} \times \Gm \times T^{\sc}$, this action is given by \S \ref{section: action of the fundamental group}. On this level, the quotient by $\pi_1(G)$ is \eqref{equation: descended Gamma eq 2}. Since the affine blowup construction and its Schubert completion is compatible with quotients by finite groups, we deduce the result.    
\end{proof}

\begin{remark}
Note the apparent break of symmetry between the two actions. This is due to our desire to explicitly see the equivariant base $T$ as the first copy. If we wish to keep the symmetry, the appropriate description via $\tfrac{T^{\sc} \times T^{\sc}}{\pi_1(G)}$ comes from \eqref{equation: descended Gamma eq 1}.
\end{remark}

\begin{remark}\label{remark: equivariant K-theory as deformation to normal cone -- isogeneous torus}
Now assume $G$ is reductive. Then we have
\begin{equation*}
\widehat{\O}(\Bl^{\circ}_{\widetilde{\eZ}}(T \times \Gm^{\rot} \times T^{\sc}\sslash W ))
=K^{\top, 0}_{T \times \Gm^{\rot}}(\Gr_{G^{\sc}}; \C).    
\end{equation*}
Indeed, this follows by first applying Lemma \ref{lemma: equivariant K-theory as deformation to normal cone -- isogeneous torus} with respect to the the maximal torus $T^{\ad}$ of the semisimple group $G^{\ad}$, and then base-changing along $T \to T^{\ad}$.
\end{remark}

%% file: A_some_explicit_computations.tex
\section{Some explicit localization formulas}\label{appendix: some explicit formulas in type A}
This appendix is dedicated to spelling out some of our formulas for $GL_n$. We also recall the determinant line bundle and show that it only appears after Schubert completion.

\subsection{Formulas in \texorpdfstring{$GL_n$}{GLn}}
For the convenience of the reader, we record the description of the $K$-theory of $\Gr_{GL_n}$ together with its localization formulas explicitly in usual coordinates.

\subsubsection{Localization formulas in \texorpdfstring{$GL_n$}{GLn}.}\label{equation: localization formulas in GLn}
Consider $G = GL_n$. The connected components of $\Gr_{GL_n}$ are labeled by $\pi_1(GL_n) = \Z$.
We want to describe the localization map from functions on the corresponding union of affine blowups.
\begin{align*}
  \loc:  \prod_{\Z} \O(\eN_{\mu_{\infty}}(\eZ, T \times T \sslash W)) &\to \prod_{\lambda \in X_{\bullet}} \O(T \times \Gm^{\rot})
\end{align*}
Let $\lambda \in X_{\bullet}$ and denote $d_1, \dots, d_n$ the heights of the columns of its Young diagram. The localization $\loc_{\lambda}$ to the corresponding fixed point $t^{\lambda} = (t^{d_1}, \dots, t^{d_n})$ is given as follows. First project to the component labeled by $\overline{\lambda} = d_1 + d_2 + \dots + d_n \in \Z$. On generators of this copy, the localization map reads as
\begin{align*}
   \gamma_{\lambda}: \O(\eN_{\mu_{\infty}}(\eZ, T \times T \sslash W)) & \xrightarrow{} \O(T \times \Gm^{\rot}) \\
    t_i &\mapsto t_i \\
    q &\mapsto q \\
    s_i &\mapsto t_i q^{d_i} \\    
    e_j(s_1, \dots, s_n) &\mapsto e_j(t_1 q^{d_1}, \dots, t_n q^{d_n}) \\
    b_{j, \ell} = \tfrac{e_j(t^{\ell}_1, \dots, t^{\ell}_n) - e_j(s^{\ell}_1, \dots, s^{\ell}_n)}{1-q^{\ell}} &\mapsto \tfrac{e_j(t^{\ell}_1, \dots, t^{\ell}_n) - e_j(t_1 \cdot q^{d_1}, \dots, t_n \cdot q^{d_n})}{1-q^{\ell}}.
\end{align*}
Also note that these formulas work uniformly with respect to the connected components of $\Gr_{GL_n}$.

\subsubsection{Determinant line bundle \texorpdfstring{$\eL_{\det}$}{L det}.}
Let us first recall the construction of the determinant line bundle $\eL_{\det}$ on $\Gr_{GL_n}$.

\begin{recollection}
To a perfect complex $\eE \in \Perf(\eX)$ one can associate a graded line bundle $\det(\eL) \in \Pic(\eX)^{\Z}$. If we represent $\eE$ by an actual bounded complex of vector bundles on $\eX$, the determinant line bundle is given by an alternating product of the top wedge powers of the terms of $\eE$, while the grading records the alternating sum of their ranks. This descends to a map $K(\eX) \to \Pic(\eX)^{\Z}$. See \cite[Tag 0FJI]{Sta}, \cite[Proposition 5.3, \S 5, \S 12]{BS16}. 

In particular, to any class in $K^{0}(\eX)$ one can associate the isomorphism class of the corresponding line bundle; similarly in the topological setting.
\end{recollection}

\begin{construction}
We obtain the \textit{determinant line bundle} $\eL_{\det}$ on $\Gr_{GL_n}$ as follows. On each $\Gr_{\leq \mu}$, set
\begin{equation*}
    \eL_{\leq \mu, \det} := \det([\eE_{\leq \mu}] - [\eG_{\leq \mu}]).
\end{equation*}
These are compatible in $\mu$, and hence give a well-defined equivariant line bundle on the whole $\Gr_{GL_n}$. For each $\sigma \in \pi_1(G)$, it is known to be an ample generator of the Picard group $\Pic(\Gr^{\sigma}) = \Z$.

Also note that $\eL_{\det}$ is naturally $\lop GL_n \rtimes \Gm^{\rot}$-equivariant. It is not equivariant for the whole loop group $\lo G \rtimes \Gm^{\rot}$; this is corrected by working with respect to its central extension. 

For fixed $\mu$, $[\eL_{\det}] = \det(b_{1,1})$ is expressible in terms of $\lambda$-operations on $b_{1,1}$. With rational coefficients, the restriction of $[\eL_{\det}]$ to any $\Gr_{\leq \mu}$ may thus be expressed in terms of $b_{j, \ell}$. However, this expression is not uniform in $\mu$ -- the number of terms grows. (We will see shortly that the class $[\eL_{\det}] \in K^{\top, 0}_{T \times \Gm^{\rot}}(\Gr)$ can be seen only after the Schubert completion of our affine blowup.)
\end{construction}

\begin{claim}\label{equation: localization of determinant line bundle in GLn} \label{claim: localization of determinant line bundle in GLn}
The class of the determinant line bundle $\eL_{\det}$ inside the GKM description is
\begin{equation*}
   \loc_{\lambda}( [\eL_{\det}] ) = \prod_{k=1}^n t^{d_k}_k \cdot q^{\binom{d_k}{2}}, \qquad \forall \lambda \in X_{\bullet}.
\end{equation*}
\end{claim}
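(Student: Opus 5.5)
The plan is to compute the localization fixed point by fixed point. By the GKM description in Proposition \ref{proposition: GKM description of K-theory of Gr}, the restriction to the $T \times \Gm^{\rot}$-fixed points is injective. For any affine Schubert variety $\Gr_{\leq \mu}$ containing $t^{\lambda}$, the line bundle $\eL_{\det}$ restricts to $\eL_{\leq \mu, \det} = \det([\eE_{\leq \mu}] - [\eG_{\leq \mu}])$. So $\loc_{\lambda}([\eL_{\det}])$ is the $T \times \Gm^{\rot}$-character of the fiber of this graded line bundle at $t^{\lambda}$.

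Since $\det$ is multiplicative on $K$-theory classes, I will work with the virtual representation $[\Lambda_0/\Lambda_0\cap\Lambda] - [\Lambda/\Lambda_0\cap\Lambda]$ from \eqref{equation: normalized tautological class 3} at $\Lambda = t^{\lambda}\Lambda_0$. Its determinant is the product of the weights of the first space divided by the product of the weights of the second. Up to the sign and shift convention for the determinant of a virtual class, this is the same as computing $\det(t^{-m}\Lambda_0/\Lambda)\cdot \det(t^{-m}\Lambda_0/\Lambda_0)^{-1}$.

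Next I decompose coordinate-wise. For $\lambda$ with column heights $d_1,\dots,d_n$, the lattice $t^{\lambda}\Lambda_0$ is the direct sum of $t^{d_k}k\llb t\rrb$ in the $k$-th coordinate. I will use the same weight bookkeeping as in the proof of the lemma identifying $b_{1,1}$, where the $k$-th coordinate of the basis vector $t^{a}e_k$ carries weight $t_k q^{a}$.

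Fix a coordinate $k$. If $d_k>0$, the quotient $\Lambda_0/(\Lambda_0\cap\Lambda)$ has basis $e_k, te_k, \dots, t^{d_k-1}e_k$. Its determinant is $\prod_{a=0}^{d_k-1} t_k q^{a} = t_k^{d_k} q^{\binom{d_k}{2}}$. If $d_k<0$, the space $\Lambda/(\Lambda_0\cap\Lambda)$ has weights $t_k q^{a}$ for $a = d_k,\dots,-1$, and this enters inversely. The result is $\prod_{a=d_k}^{-1}(t_kq^{a})^{-1} = t_k^{d_k} q^{-\sum_{a=d_k}^{-1} a}$. Since $-\sum_{a=d_k}^{-1} a = \binom{-d_k+1}{2} = \binom{d_k}{2}$ (the extended binomial), this gives the same formula. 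Multiplying over $k$ yields the claim.

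The main obstacle is conventions rather than computation. Two things must be checked against the lemma computing $\loc_\lambda(b_{1,1})$: the sign of the $q$-weight on $t^{a}$, and whether $\det$ of a class uses the dual. The target formula, with $t_k^{d_k}$ and positive $q^{\binom{d_k}{2}}$ for $d_k>0$, matches the weights $t_kq^{a}$ used there. So I will take the determinant of the fibers directly, with no dualization.

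A quick consistency check comes from $\Lambda$-operations. In that lemma, $\loc_\lambda(b_{1,1}) = \sum_k [d_k]_q t_k$, and the determinant of $[d_k]_q t_k$ (for $d_k>0$, a sum of $d_k$ characters) is exactly $t_k^{d_k}q^{\binom{d_k}{2}}$. This agrees with the remark that $[\eL_{\det}] = \det(b_{1,1})$.
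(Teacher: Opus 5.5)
Your proposal is correct and follows the paper's route. The paper's proof is the one line that the claim follows from the fixed-point weights already computed for $b_{1,1}$, i.e.\ $\loc_{\lambda}([\eL_{\det}]) = \det\bigl(\sum_k [d_k]_q\, t_k\bigr)$; that is exactly your coordinate-wise determinant computation, and your final consistency check is literally the paper's argument. Your weight range $a = d_k, \dots, -1$ for $d_k<0$ is the right one, matching $[d_k]_q = -(q^{-1}+\dots+q^{d_k})$, whereas the intermediate display in the paper's $b_{1,1}$ computation stops at $q^{d_i+1}$ — a typo that would not give the factor $t_k^{d_k}$.
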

\begin{proof}
Directly follows from the localization formulas for $b_{1,1}$ in \S \ref{section: bezrukavnikov classes for GL_n from tautological bundles}. 
\end{proof}

\begin{remark}
    One can easily make the sanity check that the above class $[\eL_{\det}]$ indeed satisfies the GKM conditions. Namely, given $\lambda$ and $\lambda' = s_{\beta, m}\cdot \lambda$, where $\beta$ corresponds to the reflection switching $i$ and $j$, we can compute
    \begin{align*}
       \loc_{\lambda}( [\eL_{\det}]) - \loc_{\lambda'}( [\eL_{\det}] ) 
       &= \left( \prod_{k=1}^n t^{d_k}_k \cdot q^{\binom{d_k}{2}} \right) \cdot \left( 1 - t_jt_i^{-1} q^{d_j -d_i +1} \right)
    \end{align*}
 which is divisible by $(1 - {\beta} \cdot q^{m}) = (1 - t_jt_i^{-1} q^{d_j -d_i +1})$.   
\end{remark}

\begin{remark}
For general reductive $G$, the Picard group of each connected component of $\Gr_G$ is given by $\Z$. We denote its ample generator by $\eL_{\ver}$ and call it the \emph{Verlinde line bundle} (we reserve the name \emph{determinant line bundle} for $GL_n$). Pulling back $\eL^{GL_n}_{\det}$ along representations $G \to GL_n$ gives nonnegative powers of its ample generator $\eL_{\ver}$; these can be made explicit. See \cite[Lemma 4.2, Remark 4.3]{YZ09} for a discussion.  
\end{remark}

To describe the ample generator $\eL_{\ver}$ for a given component of the affine Grassmannian $\Gr_G$ of a reductive group $G$, it is sufficient to understand the case of $G=G^{\sc}$. Here, we have the following.
\begin{lemma}\label{lemma: localization of the verlinde line bundle}
Assume $G$ is simply connected.
The localization $\loc ([\eL_{\ver}])$ is given as follows. For any $\lambda \in X_{\bullet}$, we have
\begin{equation*}
    \loc_{\lambda} ([\eL_{\ver}]) = t_{\lambda} \cdot q^{\tfrac{1}{2} \langle \lambda^{\vee}, \lambda \rangle} \in \O(T \times \Gm^{\rot}).
\end{equation*}
\end{lemma}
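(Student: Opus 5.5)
The plan is to reduce to $GL_n$ via a suitable representation, and then fix the remaining normalization by matching GKM data. First, recall the structure of $\Pic$. On each component of $\Gr_G$ for simply connected $G$, $\Pic = \Z$, and an equivariant line bundle is determined by its localizations at the fixed points $t^{\lambda}$. These localizations are characters of $T \times \Gm^{\rot}$, and they satisfy the GKM congruences of Notation \ref{notation: GKM of Gr}. So it suffices to produce a candidate character-valued function $\lambda \mapsto t_{\lambda} q^{c(\lambda)}$ and identify it with $[\eL_{\ver}]$.

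Step one is to pin down the form of the answer. By Lemma \ref{lemma: weights of extended torus on affine root groups}, the fiber of any $\lop G\rtimes\Gm^{\rot}$-equivariant line bundle at $t^{\lambda}$ is a character $\chi_\lambda(x)\zeta^{c(\lambda)}$. The GKM congruence along $s_{\beta+m\hbar}$ forces $\chi_\lambda/\chi_{\lambda'}$ to be a power of $\beta$ times a power of $q$ matching $\beta q^m$. A class in $K^0$ restricting to characters with
\[
\loc_\lambda - \loc_{s_{\beta+m\hbar}\lambda} \in (1-\beta q^m)
\]
must satisfy $\chi_{\lambda'} q^{c(\lambda')} = \chi_\lambda q^{c(\lambda)}\cdot(\beta q^m)^{k}$ for some integer $k$ depending on the pair. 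Normalizing $\chi_0 = 1$ and $c(0)=0$ (the fiber at the base point can be trivialized, up to twisting by a character of $T$, which is fixed by the choice of $\lop G$-linearization), one solves this recursively along the affine Weyl group. The exponent $k$ is proportional to $\langle\lambda,\widecheck\beta\rangle$-type quantities, which yields $\chi_\lambda = t_{\lambda^\vee}$ and $c$ quadratic. Here $\lambda^\vee$ is the image of $\lambda$ under the map $X_\bullet \to X^\bullet$ given by the normalized invariant form with $\langle\widecheck\beta,\widecheck\beta\rangle = 2$ on short coroots; I read the statement's $t_\lambda$ in this sense.

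Step two computes the quadratic term by pulling back from $GL_n$. Take a faithful representation $\rho\colon G\to GL_n$, for instance a fundamental one. By the Remark preceding the Lemma, $\rho^*\eL_{\det} = \eL_{\ver}^{\otimes d_\rho}$, where $d_\rho$ is the Dynkin index. Claim \ref{claim: localization of determinant line bundle in GLn} gives
\[
\loc_\lambda(\rho^*\eL_{\det}) = \prod_{\nu} t_{\nu}^{\langle\nu,\lambda\rangle}\, q^{\binom{\langle\nu,\lambda\rangle}{2}},
\]
with the product running over the weights $\nu$ of $\rho$ with multiplicity. Since the weights are $W$-symmetric, $\sum_\nu \langle\nu,\lambda\rangle = 0$. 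Hence the $q$-exponent is $\tfrac12\sum_\nu\langle\nu,\lambda\rangle^2 = \tfrac{d_\rho}{2}\langle\lambda^\vee,\lambda\rangle$, which is the defining property of the Dynkin index relative to the normalized form. Similarly, the $T$-part is $t_{\sum_\nu \langle\nu,\lambda\rangle\nu} = t_{\lambda^\vee}^{d_\rho}$. Taking $d_\rho$-th roots, which is legitimate because $\loc$ is injective into a product of domains with character-valued entries and Step one already shows $\loc_\lambda[\eL_{\ver}]$ is a character, gives the formula.

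The main obstacle is the identification $\rho^*\eL_{\det} = \eL_{\ver}^{d_\rho}$ as \emph{equivariant} line bundles rather than merely as line bundles. The $T\times\Gm^{\rot}$-linearization of $\eL_{\ver}$ is only defined up to a character, and this character is fixed by requiring the fiber at $t^0$ to be trivial. I would handle this by comparing the localizations at $\lambda = 0$ on both sides, where both are $1$ by construction in Claim \ref{claim: localization of determinant line bundle in GLn}. I would also cite \cite[Lemma 4.2]{YZ09} for the non-equivariant equality.
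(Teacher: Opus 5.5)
Your proposal is correct, but it follows a different route from the paper. The paper imports the localization of the equivariant first Chern class of $\eL_{\ver}$ from \cite[Proposition 7.6]{Yun10} (with $\kappa=\chi=1$) and exponentiates.

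You instead reduce to $GL_n$ and stay inside the paper. Take a nontrivial $\rho\colon G\to GL_n$ with weights $\nu$ and pull Claim~\ref{claim: localization of determinant line bundle in GLn} back along $T\times\Gm^{\rot}\to T_{GL_n}\times\Gm^{\rot}$. This gives $\loc_\lambda(\rho^*[\eL_{\det}])=t_{\sum_\nu\langle\nu,\lambda\rangle\nu}\,q^{\sum_\nu\binom{\langle\nu,\lambda\rangle}{2}}=\bigl(t_{\lambda^\vee}\,q^{\frac12\langle\lambda^\vee,\lambda\rangle}\bigr)^{d_\rho}$, using $\sum_\nu\langle\nu,\lambda\rangle=0$ and the defining property of the Dynkin index. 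The identity $\rho^*\eL_{\det}\cong\eL_{\ver}^{\otimes d_\rho}$ then becomes equivariant, because two $T\times\Gm^{\rot}$-linearizations of a line bundle on the connected reduced projective $\Gr_{\leq\mu}$ differ by a character. Your comparison at $t^0$ kills that character. Finally, $d_\rho$-th roots are unique in the torsion-free character lattice.

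What your route buys:
\begin{itemize}
\item It reuses the paper's own $GL_n$ computation.
\item It makes explicit the normalization of the linearization (trivial fiber at $t^0$), which the paper leaves implicit.
\end{itemize}
The price is the Dynkin-index identity as an external input, which is of roughly the same depth as Yun's result. The citation, by contrast, also covers twisted line bundles on partial affine flag varieties at no extra cost.

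Your Step one is dispensable and, as written, not a derivation. GKM congruences cannot tell $\eL_{\ver}$ apart from its tensor powers, so they cannot produce $\chi_\lambda=t_{\lambda^\vee}$ or the quadratic exponent. All you actually use from it is that $\loc_\lambda[\eL_{\ver}]$ is a character. That holds simply because the fiber of an equivariant line bundle at a $T\times\Gm^{\rot}$-fixed point is a one-dimensional representation.

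Likewise, injectivity of $\loc$ plays no role in extracting the root, since you only compare single components $\loc_\lambda$. Finally, a fundamental representation need not be faithful, but you only use $d_\rho>0$, i.e.\ that $\rho$ is nontrivial.
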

\begin{proof}
Well-known, for example a special case of \cite[Proposition 7.6]{Yun10}. Note that \cite{Yun10} describes the localization formulas for the first Chern class of powers and twists of $\eL_{\ver}$ on the corresponding affine flag variety. By taking in his notation $\kappa=1$ and $\chi=1$, we get the pullback of the Verlinde line bundle from $\Gr_G$. The equivariant first Chern class of the localization of $\eL_{\ver}$ determines its equivariant $K$-theory class by exponentiation.
\end{proof}

\subsubsection{Necessity of Schubert completion.}
The following observation well illustrates the necessity of Schubert completion.
\begin{lemma}\label{lemma: necessity of Schubert completion}
The class of the determinant line bundle does not lies in the functions on the uncompleted deformation
\begin{equation*}
    [\eL_{\det}] \notin \O(\eN_{\mu_{\infty}}(\eZ, T \times \Gm^{\rot} \times T \sslash W)).
\end{equation*}
\end{lemma}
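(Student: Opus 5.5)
The plan is to compare the localizations. Every element of the uncompleted ring $\O(\eN_{\mu_{\infty}}(\eZ, T \times T \sslash W))$ is a polynomial in $t_i^{\pm 1}$, $q^{\pm 1}$, the functions $e_j(s)$ (with $e_n(s)^{-1}$), and finitely many Bezrukavnikov classes $b_{j,\ell}$; this follows from Lemma \ref{lemma: fundamental bezrukavnikov classes}. Under $\loc_\lambda$ as in \S\ref{equation: localization formulas in GLn}, such an element becomes a rational function in $t$, $q$ and the monomials $q^{d_i}$. The key observation is that it has the form $P(t, q, q^{d_1}, \dots, q^{d_n})$ for one fixed Laurent polynomial expression $P$, divided by finitely many factors $(1-q^{\ell})$ that do not depend on $\lambda$. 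By Claim \ref{claim: localization of determinant line bundle in GLn}, however, $\loc_\lambda([\eL_{\det}]) = \prod_k t_k^{d_k} q^{\binom{d_k}{2}}$. This involves $q^{d_k^2/2}$, which is quadratic in the exponent, and it also involves $t_k^{d_k}$, which depends on $\lambda$ through the $t$-exponent. Neither can be produced by an expression of the first kind.

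To make the argument precise, I will fix finitely many generators occurring in a hypothetical expression of $[\eL_{\det}]$. I then obtain a single Laurent polynomial $F \in \O(T \times \Gm^{\rot})[u_1^{\pm1},\dots,u_n^{\pm1}]$ and an integer $N$ such that $(1-q^{N})^M \loc_\lambda(f) = F(t,q,q^{d_1},\dots,q^{d_n})$ for all $\lambda$. The simplest discriminating test is the $t$-degree. I restrict to $\lambda = (d,0,\dots,0)$ with $d \to \infty$. In that case the $t_1$-degree of the right-hand side is bounded uniformly in $d$, because substituting $u_i = q^{d_i}$ does not change $t$-exponents. The left-hand side, by contrast, is $(1-q^N)^M t_1^{d} q^{\binom d2}$, whose $t_1$-degree is $d$ and so grows without bound. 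This contradiction shows that no such expression exists.

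The main obstacle is justifying the uniform shape of the localization, namely that elements of the uncompleted ring have localizations which are \emph{uniform} in $\lambda$ with bounded $t$-degree. I will argue this by induction on ring operations from the generators. Each generator $t_i$, $q$ or $e_j(s)$ has localization of bounded $t$-degree, specifically at most $j$ in $t$ after substituting $s_i \mapsto t_i q^{d_i}$. The class $b_{j,\ell}$ has $t$-degree at most $j\ell$, and its denominator is independent of $\lambda$. Sums and products preserve the boundedness. One care point is $e_n(s)^{-1}$ for $GL_n$, but it contributes only $t$-degree $-1$ times a monomial, so it stays bounded as well. This settles the statement, and it is also compatible with the fact that $[\eL_{\det}]$ does lie in the Schubert completion.
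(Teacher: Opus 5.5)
Your argument is correct, but it uses a different invariant from the paper's proof.

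The paper tracks $q$-exponents. For a fixed polynomial in $t_i, q, e_j, b_{j,\ell}$, the localizations $\gamma_\lambda$ have $q$-degrees growing at most linearly in $\lambda$. By contrast, $\gamma_\lambda([\eL_{\det}])$ carries $q^{\sum_k \binom{d_k}{2}}$, which grows quadratically.

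You instead track $T$-weights. The substitution $s_i \mapsto t_i q^{d_i}$ never moves $t$-exponents. The denominators $1-q^{\ell}$ are $t$-free, so exact division cannot enlarge the $t$-support. Hence every element of the uncompleted ring localizes into a fixed finite set of $t$-monomials, independent of $\lambda$. On the other hand, $\gamma_\lambda([\eL_{\det}])$ has $T$-weight $\lambda$ itself.

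This gives a bounded-versus-unbounded dichotomy rather than linear-versus-quadratic. It also spares you from controlling $q$-degrees through the division by $1-q^{\ell}$, a step the paper only asserts. Your argument transfers verbatim to simply connected $G$: the weights of $e_j(s)$ are those of the fundamental representations, while the Verlinde localization $t_\lambda q^{\frac12\langle\lambda^\vee,\lambda\rangle}$ has unbounded $T$-weight. The paper's version instead isolates the quadratic loop-rotation weight, which is the characteristic level-one feature of $\eL_{\det}$.

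One small point on your test family. The family $\lambda=(d,0,\dots,0)$ runs through different connected components of $\Gr_{GL_n}$. If you want the statement component by component, as in the appendix's setup with one copy of the deformation per element of $\pi_1(GL_n)=\Z$, take $\lambda=(d,-d,0,\dots,0)$ instead. There $\gamma_\lambda([\eL_{\det}]) = t_1^{d}t_2^{-d}q^{d^2}$ still has $t_1$-exponent $d$, so the same contradiction goes through.
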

\begin{proof}
For the sake of contradiction, suppose otherwise. Then we could express $[\eL_{\det}]$ as a polynomial $f$ over $k$ in finitely many elements from
\begin{equation}\label{equation: elements}
 \{ t_i, q, e_j, b_{j, \ell} \mid i = 1, \dots, n; j = 1, \dots, n; \ell \in \N \}   
\end{equation}
For any such fixed $f$, the localization map $\gamma = (\gamma_{\lambda} \mid \lambda \in \Z^d)$ has the following property: The powers of $q$ in the family of polynomials
\begin{equation*}
 \gamma_{\lambda}(f) \in \k[t_1^{\pm 1}, \dots, t^{\pm 1}_n, q^{\pm 1}]   
\end{equation*}
grow linearly in $\lambda = (d_1, \dots, d_n)$. Indeed, this is true for the elements from \eqref{equation: elements} by \S \ref{equation: localization formulas in GLn} and is preserved under polynomial combinations.

However, the powers of $q$ in the family
\begin{equation*}
 \gamma_{\lambda}([\eL_{\det}]) \in \k[t_1^{\pm 1}, \dots, t^{\pm 1}_n, q^{\pm 1}]   
\end{equation*}
grow quadratically in $\lambda = (d_1, \dots, d_n)$ by \eqref{equation: localization of determinant line bundle in GLn}.

Therefore, there exists a sufficiently big $\lambda$, such that $\gamma_{\lambda}([\eL_{\det}]) \neq \gamma_{\lambda}(f)$, a contradiction.
\end{proof}

\begin{remark}
The statement of Lemma \ref{lemma: necessity of Schubert completion} follows for any reductive group $G$. Indeed all the classes in question are base-changed from the $GL_n$-case; the statement about linear vs. quadratic growth of $q$-powers for varying $\lambda$ still holds -- the localizations may be also expressed purely Lie theoretically.
\end{remark}

In other words, $[\eL_{\det}]$ is not present in the uncompleted ring $\O(\eN_{\mu_{\infty}}(T \times T \sslash W))$. At the same time $[\eL_{\det}]$ is as nice a line bundle on $\Gr_G$ as one could hope for. This makes it challenging to give a purely $K$-theoretical interpretation of the decompleted ring $\O(\eN_{\mu_{\infty}}(T \times T \sslash W))$.

\subsubsection{\texorpdfstring{$\lambda$}{Lambda}-rings and affine Grassmannian of \texorpdfstring{$GL_{\infty}$}{GL infinity}.}

\begin{remark}
When $G=\GL_n$ and the coefficient ring $\k$ is a field of characteristic zero, it is possible to do with less topological generators. As complete topological $\lambda$-ring over the equivariant base, $K^{\top, 0}_{T \times \Gm^{\rot}}(\Gr; \C)$ is generated by the single tautological element $b_{1,1}$.

For $GL_n$, the representation ring $R(GL_n; \k) = \O(T \sslash W)$ is a quotient of the free $\lambda$-ring in one variable. Since we are working over a field of characteristic zero, Newton's formulas imply that we may further express any $\lambda$ operation (an elementary symmetric polynomial) purely in terms of Adams operations (power sums).
\end{remark}

\begin{remark}
Assume $G = GL_n$. In our presentation of $K$-theory of $\Gr_{GL_n}$, we have used the tautological class $b_{1,1}$ and $\psi$-operations (or $\lambda$-operations) on it. However, for fixed $n$, there are relations between these classes. 

This is closely related to the following phenomenon. The representation ring $R(GL_n) = \O(T \sslash W)$ is a quotient of the free $\lambda$-ring in one variable. The free $\lambda$-ring in one variable is given by putting all $R(GL_n)$ together for varying $n$.

To have a uniform presentation for the $K$-theory, one could consider the ind-scheme
\begin{equation*}
    \Gr_{GL_{\infty}} := \colim_n \Gr_{GL_n}
\end{equation*}
equivariant with respect to the extended loop group of the ind-group scheme $GL_{\infty} := \colim_n GL_n$.
\end{remark}